\theoremstyle{plain}
\newtheorem{theorem}{Theorem}[section]
\newtheorem{lemma}[theorem]{Lemma}
\newtheorem{cor}[theorem]{Corollary}
\newtheorem{prop}[theorem]{Proposition}
\theoremstyle{remark}
\theoremstyle{definition}
\newtheorem{defn}[theorem]{Definition}
\newtheorem{Remark}[theorem]{Remark}
\newtheorem*{remark}{Remark} 
\numberwithin{equation}{subsection}
\numberwithin{theorem}{subsection}
\newcommand{\CC}{\mathbb C}
\newcommand{\A}{\mathbb A}
\newcommand{\isom}{\stackrel{\backsim}{\longrightarrow}}
\newcommand\un{\underline}
\newcommand\hfld[2]{\smash{\mathop{\hbox to 10mm{\rightarrowfill}}
     \limits^{\scriptstyle#1}_{\scriptstyle#2}}}
\newcommand\hflg[2]{\smash{\mathop{\hbox to 10mm{\leftarrowfill}}
     \limits^{\scriptstyle#1}_{\scriptstyle#2}}}
\newcommand\vfld[2]{\llap{$\scriptstyle#1$}
     \left\downarrow\vbox to 6mm{}\right.\rlap{$\scriptstyle #2$}}
\title{The base change fundamental lemma for central \\ elements in parahoric Hecke algebras}
\author{Thomas J. Haines}
\date{}
\begin{document}

\thanks{Research partially supported by NSF grants DMS-0303605 and FRG-0554254, and by a Sloan 
Research Fellowship.}

\subjclass{Primary 22E50; Secondary 20G25}

\begin{abstract} 
Let $G$ be an unramified group over a $p$-adic field $F$, and let $E/F$ be a finite unramified extension field.  Let $\theta$ denote a generator of ${\rm Gal}(E/F)$.  
This paper concerns the matching, at all semi-simple elements, of orbital integrals on $G(F)$ with $\theta$-twisted orbital integrals on $G(E)$.  More precisely, suppose $\phi$ belongs to the center of a parahoric Hecke algebra for $G(E)$.  This paper introduces a {\em base change homomorphism} $\phi \mapsto b\phi$ taking values in the center of the corresponding parahoric Hecke algebra for $G(F)$.  It proves that the functions $\phi$ and $b\phi$ are {\em associated}, in the sense that the stable orbital integrals (for semi-simple elements) of $b\phi$ can be expressed in terms of the stable twisted orbital integrals of $\phi$.  In the special case of spherical Hecke algebras (which are commutative) this result becomes precisely the base change fundamental lemma proved previously by Clozel \cite{Cl90} and Labesse \cite{Lab90}.  
As has been explained in \cite{H05}, the fundamental lemma proved in this paper is a key ingredient for the study of Shimura varieties with parahoric level structure at the prime $p$.  \end{abstract}

\maketitle



\markboth{T. Haines}
{Base change fundamental lemma for parahoric Hecke algebras}

\section{Introduction}

Let $F$ be a $p$-adic field, and $E/F$ an unramified extension of
degree $r$. Let $\theta$ denote a generator for ${\rm Gal}(E/F)$.
Let $G$ be an unramified connected reductive group over $F$. Let
${\rm Res}_{E/F}G_E$ denote the Weil restriction of scalars of 
$G_E=G\otimes_F E$ to a group over $F$. The
automorphism $\theta$ of $E$ determines an $F$-automorphism of
${\rm Res}_{E/F}G_E$ as well as an automorphism of its $F$-points
$G(E)$, which will also be denoted by the symbol $\theta$.  

Let ${\mathcal H}(G)$ denote the convolution algebra of locally
constant and compactly supported $\CC$-valued functions on $G(F)$,
convolution being defined using some choice of Haar measure,
specified later. For a compact open subgroup ${\mathcal P}
\subset G(F)$, we denote by ${\mathcal H}_{\mathcal P}(G)$ the
subalgebra of ${\mathcal H}(G)$ consisting of ${\mathcal
P}$-bi-invariant functions.

To simplify things, for the remainder of this introduction we assume that $G_{\rm der}$
is simply connected.  In this case stable conjugacy classes in $G(F)$
are intersections of $G(\overline{F})$-conjugacy classes with $G(F)$.  
Consider the {\em concrete norm} $N:G(E) \rightarrow
G(E)$ given by $N\delta = \delta \theta(\delta) \cdots
\theta^{r-1}(\delta)$. It is known (\cite{Ko82}) that $N\delta$ is
stably conjugate to an element ${\mathcal N}\delta \in G(F)$, and
that this determines a well-defined {\em norm map}
$$
{\mathcal N} : \lbrace \mbox{stable} \,\, \theta\mbox{-conjugacy
classes in} \,\, G(E) \rbrace \rightarrow \lbrace \mbox{stable
conjugacy classes in} \,\, G(F) \rbrace.
$$
(Of course the norm map still exists when $G_{\rm der} \neq G_{\rm sc}$; see \cite{Ko82}.)  
We shall say $\gamma \in G(F)$ {\em is a norm} if $\gamma$ is
stably conjugate to ${\mathcal N}\delta$, for some $\delta \in
G(E)$.  

Fix a semi-simple element $\gamma \in G(F)$ and let $G_\gamma
\subset G$ denote its centralizer.  For $f \in {\mathcal H}(G)$
we can then define the {\em orbital integral}
$$
{\rm O}^G_\gamma(f) = \int_{G_\gamma(F) \backslash G(F)} f(g^{-1} \gamma g)
{dg \over dt},
$$
depending on the choice of Haar measures $dg$ and $dt$ on $G(F)$
and $G_\gamma(F)$. We may also consider the {\em stable orbital
integral}
$$
{\rm SO}^G_\gamma(f) = \sum_{\gamma'} e(G_{\gamma'}){\rm O}^G_{\gamma'}(f).
$$
Here $\gamma'$ ranges over the conjugacy classes in $G(F)$ which
are stably conjugate to $\gamma$, and $e(H) \in \lbrace 1,-1
\rbrace$ is the sign attached by Kottwitz \cite{Ko83} to a connected reductive group
$H$. Note that our assumption that $G_{\rm der}$ is simply connected
implies that the centralizers $G_{\gamma'}$ are connected.
If $\gamma$ and $\gamma'$ are stably conjugate, their centralizers
are inner forms of each other and therefore it makes sense to 
require the Haar measures on these groups to be compatible with each other, 
see \cite{Ko88}, p.~631.

Similarly, for an element $\delta \in G(E)$ such that $\mathcal N\delta$ is
semi-simple, we define its $\theta$-centralizer to be the 
connected reductive group $G_{\delta\theta}$ over $F$ such that 
$$
G_{\delta \theta}(F)= \lbrace x \in G(E) \,\,
| \,\, x^{-1} \delta \theta(x) = \delta \rbrace.
$$  
Then for $\phi
\in {\mathcal H}(G(E))$ we define the {\em twisted orbital
integral}
$$
{\rm TO}^{G(E)}_{\delta \theta}(\phi) = \int_{G_{\delta \theta}(F) \backslash
G(E)} \phi(h^{-1} \delta \theta(h)) \frac{dh}{dt}
$$
and its stable version
$$
{\rm SO}^{G(E)}_{\delta \theta}(\phi) = \sum_{\delta'} e(G_{\delta'
\theta}){\rm TO}^{G(E)}_{\delta' \theta}(\phi).
$$
Here $\delta'$ ranges over the $\theta$-conjugacy classes in $G(E)$
whose norm down to $G(F)$ is in the same stable conjugacy class as that 
of $\delta$. If $\gamma\in G(F)$ lies in the stable conjugacy class
of $N\delta$, $G_{\delta\theta}$ is an inner form of $G_\gamma$, and therefore 
it makes sense to require the Haar measures on these groups 
to be compatible (see loc.~cit.~). 

\begin{defn} \label{associated_defn}
The functions $f \in {\mathcal H}(G)$ and $\phi \in {\mathcal
H}(G(E))$ are {\em associated} if the following condition holds:
for every semi-simple $\gamma \in G(F)$, the stable orbital
integral ${\rm SO}^G_\gamma(f)$ vanishes if $\gamma$ is not a norm, and
if there exists $\delta \in  G(E)$ such that ${\mathcal N}\delta =
\gamma$, then
$$
{\rm SO}^G_\gamma(f) = {\rm SO}^{G(E)}_{\delta \theta}(\phi).
$$
\end{defn}

Denote by $K$ a hyperspecial maximal compact subgroup of $G(F)$ and let
${\mathcal H}_K(G)$ denote the corresponding spherical Hecke
algebra.  Of course $K$ also gives rise to $K(E) \subset G(E)$,
and a corresponding spherical Hecke algebra ${\mathcal H}_K(G(E))$.

The {\em base change homomorphism} for spherical Hecke algebras 
is a homomorphism of $\mathbb C$-algebras
$$
b: {\mathcal H}_K(G(E)) \rightarrow {\mathcal H}_K(G).
$$
It is characterized by the following property.  Let $W_F$ denote the Weil group of $F$.  For an unramified
admissible homomorphism $\psi: W_F \, \rightarrow \, ^LG$, let
$\psi':W_E \, \rightarrow \, ^LG$ denote its restriction to the
subgroup $W_E $ of $W_F$. Let $\pi_\psi$ and $\pi_{\psi'}$ denote
the corresponding representations of $G(F)$ and $G(E)$.  Then for
any $\phi \in {\mathcal H}_K(G(E))$ we have
$$
\langle {\rm trace}\, \pi_{\psi'}, \phi \rangle  = \langle {\rm trace} \,\pi_\psi, b\phi \rangle.
$$

The fundamental lemma for stable base change normally refers to the
following statement pertaining to spherical Hecke algebras, proved by
Clozel \cite{Cl90}, and also by Labesse \cite{Lab90}.

\begin{theorem} \label{sph_fl} \mbox{\rm ({Clozel, Labesse})} 
If $\phi \in {\mathcal H}_K(G(E))$, then $b\phi$ and $\phi$ are
associated.
\end{theorem}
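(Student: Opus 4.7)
The approach I would take is local, exploiting the Satake isomorphism together with Kazhdan's density theorem for regular semisimple orbital integrals. First I would translate the base change homomorphism $b$ into its dual form: under the Satake isomorphism $\mathcal H_K(G) \cong \mathbb C[\hat T]^W$ (and analogously for $E$), the map $b$ corresponds on the dual side to the Frobenius-twisted norm $\mu \mapsto \mu + \theta(\mu) + \cdots + \theta^{r-1}(\mu)$ on characters of $\hat T$. This description follows immediately from the trace-identity characterization of $b$ recalled just before Theorem \ref{sph_fl}, together with the Satake parametrization of unramified principal series.

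By Kazhdan's density theorem, it suffices to verify the matching $\mathrm{SO}^G_\gamma(b\phi) = \mathrm{SO}^{G(E)}_{\delta\theta}(\phi)$ at strongly regular semisimple $\gamma = \mathcal N\delta$. In this case the centralizers $G_\gamma$ and $G_{\delta\theta}$ are tori and are inner forms of one another, and both orbital integrals admit explicit expressions as weighted counts of lattices in the Bruhat--Tits buildings of $G$ over $F$ and $E$ respectively. Specializing to the spherical basis $\phi = \mathbf 1_{K(E) \mu K(E)}$ for $\mu$ a dominant cocharacter, and invoking the explicit formulas of Kottwitz (and their twisted analogs) for orbital integrals of spherical Hecke functions at regular elements in terms of the Kostant partition function, one arrives at two combinatorial expressions which must be compared.

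The heart of the proof, and the main obstacle, is matching these two expressions term by term, in a way that is compatible with the Kottwitz signs $e(G_{\gamma'})$ and $e(G_{\delta'\theta})$ that weight the stable sums. One needs a canonical correspondence between the $\theta$-conjugacy classes of $\delta' \in G(E)$ whose norm lies in the stable class of $\gamma$ and the ordinary conjugacy classes of $\gamma' \in G(F)$ in that stable class, under which these signs match. For this I would rely on Kottwitz's classification of unramified $\theta$-conjugacy classes via $B(G)$ together with the functoriality of the sign $e(\cdot)$ under passage to inner forms. With these ingredients in place, the identity collapses to the dual-side computation from the first step: the Frobenius-twisted sum of an unramified Langlands parameter over its Galois orbit in $\hat T$ agrees, under the Satake isomorphism, with the value of the transferred spherical function, which is precisely the statement that $b\phi$ and $\phi$ are associated.
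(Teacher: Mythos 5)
The paper does not prove Theorem \ref{sph_fl}; it cites it as a known result of Clozel \cite{Cl90} and Labesse \cite{Lab90}, and remarks that their proof ``relies on a global argument using the simple trace formula of Deligne--Kazhdan as well as Kottwitz's stabilization of the elliptic regular part of the twisted trace formula,'' with Kottwitz's unit-element result \cite{Ko86b} as an essential ingredient. Indeed, the author's own Theorem \ref{fl} (the parahoric generalization) is proved by exactly such a global argument in sections \ref{global_trace_formula_section} and \ref{putting_it_all_together_section}, after reductions to the strongly regular elliptic case.

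Your proposal claims to give a purely local proof, but there is a fatal gap at the step you yourself describe as ``the heart of the proof.'' You invoke ``the explicit formulas of Kottwitz (and their twisted analogs) for orbital integrals of spherical Hecke functions at regular elements in terms of the Kostant partition function'' and assert that the two resulting combinatorial expressions can be matched ``term by term.'' No such formulas exist in a form that makes the required matching transparent: the orbital integrals of $\mathbf 1_{K\mu K}$ at regular semisimple $\gamma$ are not given by clean closed-form expressions controlled by the Kostant partition function except in very special situations (split tori in $\mathrm{GL}_n$, for instance), and even the unit-element case $\mu = 0$---Kottwitz's theorem \cite{Ko86b}---was a substantial local result requiring nontrivial building-theoretic analysis, not an identity that ``collapses.'' Passing from the unit element to general $\mu$ is precisely what forces Clozel and Labesse to go global. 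Your final sentence, that ``the identity collapses to the dual-side computation from the first step,'' is circular: the dual-side characterization of $b$ is a statement about traces on unramified principal series, and the entire content of the fundamental lemma is the highly nontrivial implication from this spectral characterization to the stable matching of orbital integrals. That implication is what the global trace formula argument supplies, and your proposal offers no local substitute for it.

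There is a secondary issue with your reduction step as well. Kazhdan's density theorem controls the kernel of the orbital-integral map on regular semisimple elements, but reducing the matching of \emph{stable} orbital integrals at all semisimple elements to the strongly regular elliptic case requires the parabolic-descent machinery (descent formulas relating $\mathrm{O}^G$ to $\mathrm{O}^M$, compatibility of $b$ with constant terms, handling of elements that are not norms, and passage through $z$-extensions); the paper devotes sections \ref{descent} and \ref{reductions_section} to this for the parahoric case, and the spherical case requires the analogous work in \cite{Cl90}. Density alone does not accomplish this reduction.
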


The proof relies on a global argument using the simple trace
formula of Deligne-Kazhdan \cite{DKV} as well as Kottwitz's
stabilization of the elliptic regular part of the twisted trace
formula.  Another essential ingredient is the special case of the
theorem, wherein $\phi$ is the {\em unit element} of the spherical
Hecke algebra, also proved by Kottwitz \cite{Ko86b}.

In this article we prove an analogue of Theorem \ref{sph_fl} for central elements in
parahoric Hecke algebras.
We fix an Iwahori subgroup $I \subset G(F)$ which is contained in
$K$.  Also, fix a parahoric subgroup $J$ containing $I$.  The subgroups $I,J,K$ of $G(F)$ 
give rise to corresponding subgroups of $G(E)$, which we denote by the same symbols.
Let ${\mathcal H}_J(G)$ denote the
corresponding parahoric Hecke algebra. In general this 
algebra is non-commutative.  Denote its center by $Z({\mathcal
H}_J(G))$.

We can define a {\em base-change homomorphism} 
$$
b: Z(\mathcal H_J(G(E))) \rightarrow Z(\mathcal H_J(G))
$$
which is characterized in much the same way as in the spherical case (see section \ref{base_change_hom_section}).  Provided that $J \subseteq K$, it is compatible with the spherical case in the following sense.  Let $\mathbb I_K$ denote the characteristic function of $K$.  By virtue of the Bernstein isomorphism and its compatibility with the Satake isomorphism, we have an isomorphism of algebras
$$
-*_J \mathbb I_K : Z(\mathcal H_J(G)) ~ \widetilde{\rightarrow} ~ \mathcal H_K(G)
$$
(as well as the obvious analogue of this for $G(E)$ replacing $G(F)$).  Then the aforementioned compatibility is the commutativity of the following diagram 

$$\renewcommand\arraystretch{1.7}
\begin{array} {ccc}
Z({\mathcal H}_J(G(E))) & \hfld{- \, *_{J(E)}{\mathbb I}_{K(E)}}{\sim} &
{\mathcal H}_K(G(E)) \\
\vfld{b}{} && \vfld{}{b}\\
Z({\mathcal H}_J(G)) & \hfld{- \, *_J {\mathbb I}_K}{\sim} & {\mathcal
H}_K(G).
\end{array}
$$

The main theorem of this paper is the following result.

\begin{theorem} \label{fl}
If $\phi \in Z({\mathcal H}_J(G(E)))$, then $b\phi$ and $\phi$ 
are associated.
\end{theorem}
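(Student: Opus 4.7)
The plan is to reduce Theorem \ref{fl} to the spherical base change fundamental lemma (Theorem \ref{sph_fl}) by exploiting the Bernstein isomorphism $Z({\mathcal H}_J(G)) \simeq {\mathcal H}_K(G)$ encoded in the commutative diagram relating the two base-change homomorphisms.

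Concretely, fix $\phi \in Z({\mathcal H}_J(G(E)))$ and set $\phi^{\rm sph} := \phi *_{J(E)} {\mathbb I}_{K(E)}$; by the diagram, $b(\phi^{\rm sph}) = (b\phi) *_J {\mathbb I}_K$. Theorem \ref{sph_fl} applied to $\phi^{\rm sph}$ yields, for every semisimple $\gamma \in G(F)$ of the form $\gamma = {\mathcal N}\delta$,
$$
{\rm SO}^G_\gamma\bigl((b\phi) *_J {\mathbb I}_K\bigr) = {\rm SO}^{G(E)}_{\delta\theta}\bigl(\phi *_{J(E)} {\mathbb I}_{K(E)}\bigr),
$$
with vanishing of the left-hand side when $\gamma$ is not a norm. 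Theorem \ref{fl} therefore follows from the following strip-off identity: for every $z \in Z({\mathcal H}_J(G))$ and every semisimple $\gamma \in G(F)$,
$$
{\rm SO}^G_\gamma(z *_J {\mathbb I}_K) = c \cdot {\rm SO}^G_\gamma(z),
$$
with the analogous $\theta$-twisted identity on $G(E)$, where $c$ is a common normalization constant depending only on $J$ and $K$ and therefore cancelling between the $F$- and $E$-sides.

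The main obstacle is proving the strip-off identity. My plan is to use Bernstein's presentation of $Z({\mathcal H}_J(G))$ through the central Bernstein functions $z_\mu$ attached to dominant cocharacters $\mu$ of a maximal $F$-torus, which by construction satisfy $z_\mu *_J {\mathbb I}_K = c \cdot {\mathbb I}_{K\mu(\pi)K}$. The strip-off identity thus reduces to ${\rm SO}^G_\gamma(z_\mu) = {\rm SO}^G_\gamma({\mathbb I}_{K\mu(\pi)K})$ at semisimple $\gamma$. At regular semisimple $\gamma$ this comparison can be approached through the Iwahori-Bruhat decomposition and the combinatorics of the extended affine Weyl group acting on the Bruhat-Tits building. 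The extension from the regular case to arbitrary semisimple elements should then follow by a density argument in the style of Kottwitz's treatment of singular stable orbital integrals of spherical functions.

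The $\theta$-twisted version on $G(E)$ proceeds along the same lines. Since $E/F$ is unramified, $\theta$ preserves all of the relevant combinatorial data — parahoric subgroups, cocharacters, and Bernstein functions — so the argument parallels the untwisted case, with the extra bookkeeping required for $\theta$-centralizers, the concrete norm $N$, and the passage from twisted conjugacy classes in $G(E)$ to ordinary conjugacy classes in $G(F)$. This twisted strip-off identity, together with the cancellation of the normalization $c$ between the two sides, closes the argument.
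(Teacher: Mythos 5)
Your reduction hinges entirely on the claimed ``strip-off identity''
$$
{\rm SO}^G_\gamma(z *_J \mathbb I_K) \;=\; c \cdot {\rm SO}^G_\gamma(z), \qquad z \in Z(\mathcal H_J(G)),\ \gamma \text{ semisimple},
$$
with $c$ a constant depending only on $(J,K)$.  This identity is false, and in fact already fails for the unit element $z = \mathbb I_J$, where it would assert ${\rm SO}^G_\gamma(\mathbb I_K) = c\cdot {\rm SO}^G_\gamma(\mathbb I_J)$ for all semisimple $\gamma$.  Take $G = {\rm GL}_2$, $K = {\rm GL}_2(\mathcal O_F)$, $J = I$ the standard Iwahori, and $\gamma \in K$ regular elliptic with irreducible characteristic polynomial mod $\varpi$ (so $\gamma$ generates the unramified quadratic extension of $\mathcal O_F$).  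The conjugacy class of $\gamma$ meets $K$ but not $I$ (conjugation preserves the reduction type), so ${\rm O}_\gamma(\mathbb I_K) > 0$ while ${\rm O}_\gamma(\mathbb I_I) = 0$; since stable conjugacy is ordinary conjugacy here, the stable orbital integrals behave the same way.  This is exactly why Kottwitz's theorem on unit elements of parahoric Hecke algebras \cite{Ko86b} is a genuine theorem rather than a formal consequence of the spherical case: the Bernstein isomorphism $Z(\mathcal H_J) \simeq \mathcal H_K$ preserves \emph{traces on unramified principal series} but does \emph{not} preserve orbital integrals, even up to a uniform scalar.  (A secondary error: $z_\mu *_J \mathbb I_K$ is the Satake-inverse of the monomial symmetric function $\sum_{\lambda \in W\mu} t_\lambda$, which is \emph{not} a scalar multiple of $\mathbb I_{K\mu(\varpi)K}$ except in special cases such as minuscule $\mu$.)

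Because the strip-off identity fails, one cannot propagate the matching of orbital integrals from the spherical level down to the parahoric level by algebra alone.  The matching must be re-established directly for central elements of $\mathcal H_J$, which is what forces the paper's much longer route: descent formulas compatible with the constant-term and conjugation compatibilities of $B$ (section \ref{descent}), reduction to strongly regular elliptic elements of an adjoint group (section \ref{reductions_section}), and then a global argument via the simple twisted trace formula together with the compact-trace identity, its inversion, and Clozel's temperedness lemma (sections \ref{compact_trace_identity_section}--\ref{putting_it_all_together_section}).  Your proposed shortcut would, if it worked, render the whole machinery of sections \ref{descent}--\ref{putting_it_all_together_section} unnecessary and would in particular trivialize the unit-element theorem of \cite{Ko86b}; the counterexample above shows why it does not work.
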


As in the spherical case, the proof breaks naturally into two parts.  The theorem is proved by induction on the semi-simple rank of $G$.  The first part is to use descent formulas (see section \ref{descent}) and the induction hypothesis to reduce the problem to elliptic semi-simple elements $\gamma \in G(F)$.  Various other reductions (see section 
\ref{reductions_section}) allow us to assume $G$ is adjoint and $\gamma$ is strongly regular and elliptic.  The second part is to prove the theorem in the strongly regular elliptic case using a global argument (see sections \ref{global_trace_formula_section} and \ref{putting_it_all_together_section}). 

\medskip

Our initial approach to this problem, in the special case where $J= I$ and $G$ is $F$-split, followed closely the strategy of Labesse \cite{Lab90}.  This case of the theorem was proved in collaboration with Ng\^{o} Bao Ch\^{a}u.  Somewhat surprisingly, the fact that $\phi$ belongs to the center of the Iwahori-Hecke algebra permitted us to bypass certain technical difficulties that arise for the case of spherical functions, allowing for an even easier proof.  However, in attempting to generalize Labesse's arguments to general parahoric subgroups $J$, we ran into serious difficulties.  In fact it seems that Labesse's elementary functions do not in principle carry enough information to handle the general parahoric case.  Because of this we eventually settled on an approach much closer to that of Clozel \cite{Cl90}.  However, as suggested by a referee's remark recorded in the introduction of \cite{Cl90}, one can further streamline Clozel's original argument by replacing elliptic traces with compact traces.  We do so in this article, and in the process were heavily influenced by a paper of Hales \cite{Ha95} which carried out that idea in a different situation.

\medskip

The motivation for this article comes from our program to compute the local factors of the Hasse-Weil zeta functions of some Shimura varieties with parahoric level structure at a prime $p$.  This has already been carried out in a special case in \cite{H05}, to which we refer for more details.  Here, let us only indicate very briefly how Theorem \ref{fl} is used. 

Suppose $Sh$ is a PEL Shimura variety attached to Shimura data $({\bf G}, X, {\bf K})$, where ${\bf K} = K^pK_p$ and $K_p$ is a parahoric subgroup of ${\bf G}(\mathbb Q_p) = G(\mathbb Q_p)$.  We suppose $Sh$ is defined by a moduli problem over the local ring $\mathcal O_{{\bf E}_{\mathfrak p}}$, where $\mathfrak p$ is a prime ideal in the reflex field ${\bf E}$ dividing $p$.  To compute the semi-simple local Hasse-Weil zeta function at $\mathfrak p$ following the method of Kottwitz \cite{Ko92}, one needs to express the semi-simple Lefschetz number in the form
\begin{equation} \label{Lef}
\sum_{x \in Sh(k_r)} {\rm Tr}^{\rm ss}(\Phi_{\mathfrak p}^r, R\Psi_x(\overline{\mathbb Q}_\ell)) = \sum_{\gamma_0} \sum_{(\gamma, \delta)} c(\gamma_0; \gamma, \delta) \, {\rm O}_{\gamma}(f^p) \, {\rm TO}_{\delta \sigma}(\phi_r),
\end{equation}
for every finite extension $k_r$ of the residue field of ${\bf E}_{\mathfrak p}$ (notation as in \cite{H05}).  Now suppose $Sh$ is attached to a Siegel moduli problem (type C) or a ``fake unitary'' 
Shimura variety (type A), and the group $G = {\bf G}_{\mathbb Q_p}$ is split of type C resp. A.  The theory of Rapoport-Zink local models \cite{RZ} shows that $R\Psi$ is computed from a parahoric-equivariant perverse sheaf $R\Psi^{{\bf M}^{{\rm loc}}}$ on an appropriate parahoric flag variety.  The main theorem of \cite{HN02} shows that  $R\Psi^{{\bf M}^{{\rm loc}}}$ is ``central'' with respect to convolution of perverse sheaves.  From this, one can show (\cite{H05}) that the test function $\phi_r$ making (\ref{Lef}) hold belongs to the {\em center} of a parahoric Hecke algebra for $G$ (over an unramified extension of $\mathbb Q_p$).  

Next, a ``pseudo-stabilization'' is performed on the right hand side of (\ref{Lef}), and then via the Arthur-Selberg trace formula it is expressed (albeit with some strong assumptions here to avoid mention of endoscopy) in the form
\begin{equation} \label{pseudo_stab}
\sum_{(\gamma_0,\gamma, \delta)} c(\gamma_0; \gamma, \delta) \, {\rm O}_{\gamma}(f^p) \, {\rm TO}_{\delta \sigma}(\phi_r) = \sum_{\pi} m(\pi) ~ {\rm Tr} \, \pi(f^p \, f^{(r)}_p \, f_\infty).
\end{equation}
Here, $\pi$ runs over certain automorphic representations of ${\bf G}(\A_\mathbb Q)$.  The main 
problem is to find and describe a function $f^{(r)}_p$ on $G(\mathbb Q_p)$ which makes this identity hold.  Theorem \ref{fl} implies the following: {\em if we put $f^{(r)}_p = b\phi_r$, then (\ref{pseudo_stab}) holds}.  

The end result of all this is an expression of the semi-simple local $L$-factor at $\mathfrak p$ in terms of semi-simple automorphic $L$-functions (see \cite{H05}).  The generality of Theorem \ref{fl} (and the fact that the geometric techniques of \cite{HN02} will apply to certain forms of type A or C) means that we expect these methods to apply to certain unitary groups which are non-split at $p$.  For example, we expect to be able to handle those ``fake unitary'' cases where ${\bf G}_{\mathbb Q_p}$ is a quasi-split unitary group attached to a quadratic extension of $\mathbb Q_p$ in which $p$ remains prime, as well as some natural generalizations of this situation.

\medskip

Let us now summarize the contents of the paper.  Section \ref{preliminaries_section} gives notation and basic definitions.  Section \ref{base_change_hom_section} describes the Bernstein isomorphism, defines the base change homomorphism for centers of parahoric Hecke algebras, and gives some necessary properties thereof.  Section \ref{descent} establishes the descent formulas and proves the lemmas needed to compare them effectively.  Section \ref{reductions_section} contains the reduction steps mentioned above.  Sections \ref{compact_trace_identity_section} and \ref{temperedness_section} provide technical tools needed in the global argument explained in sections \ref{global_trace_formula_section} and \ref{putting_it_all_together_section}.

\medskip

We point out that the proof simplifies significantly in two special cases: (1) $J$ is a hyperspecial maximal parahoric subgroup $K$; and (2) $G$ is split over $F$ and $J$ is an Iwahori subgroup $I$.  The most technical aspects of the paper arise in connection with the subset $_EW(P,J)$ of the relative Weyl group $_EW$ over $E$, which parametrizes the set $P(E) \backslash G(E)/J(E)$, where $P(E)$ is a standard parabolic subgroup (see section \ref{descent}).  But the set $_EW(P,K)$ is a singleton, and for split groups the set $_EW(P,I)$ can be taken to be the Kostant representatives $W^P$ in the (absolute) Weyl group $W$.  Thus in both cases the equality $\theta(w) = w$ holds for all $w \in \, _EW(P,J)$, and we may ignore the somewhat technical Lemma \ref{lemma_C}.  Also, that equality makes the arguments in subsection \ref{simplification_subsection} much more transparent.

In case (1) our proof essentially reduces to Clozel's argument in \cite{Cl90}, with a few notable differences stemming from our use of compact traces.  The use of compact traces appears to force some minor changes to Clozel's global argument (see sections \ref{global_trace_formula_section} and \ref{putting_it_all_together_section}).  For example, in 8.1 (b),  the ``stabilizing functions'' at the place $v_1$ (which is inert here) are different from those used in \cite{Cl90} 
(where $v_1$ is split).  Why the difference?  Using compact traces seems to require one to work with adjoint groups (see e.g.~subsection \ref{deformation_subsection}).  Lemma \ref{A_injectivity}, which is used to justify the ``stabilizing property'' of the functions $\phi_{v_1}$ and $f_{v_1}$ of 8.1 (b), differs from Clozel's analogous assertion (in the proof of Lemma 6.5 of \cite{Cl90}) and comes in because it is adapted to adjoint groups.  (Clozel works instead with certain non-semi-simple groups with simply connected derived group.)  But the proof of Lemma \ref{A_injectivity} seems to require that the place $v_1$ be inert rather than split.  Generalized Kottwitz functions have all the necessary properties at inert places (whereas Clozel's original stabilizing function $\phi_{v_1}$ does not even make sense unless the place $v_1$ is split).  That is why we used generalized Kottwitz functions at $v_1$.

\medskip

Finally, we explain the relation of Theorem \ref{fl} to the recent progress on the fundamental lemma due to Ng\^{o}, Laumon, Waldspurger, and others.  Recently Ng\^{o} has proved the (standard endoscopy) fundamental lemma for Lie algebras \cite{Ngo}.  Combined with work of Hales \cite{Ha95} and Waldspurger (\cite{W97}, \cite{W07}, \cite{W08}), Ng\^{o}'s result implies the transfer theorem for standard endoscopy, and presumably also for twisted endoscopy.  Stable base change is related to a very special case of twisted endoscopy.  So in the special case related to base change, these theorems yield the following statement: suppose $\phi$ is any element in $C^\infty_c(G(E))$ (for example we could take $\phi \in Z(\mathcal H_J(G(E)))$).  Then there exists a function $f \in C^\infty_c(G(F))$ such that $(\phi,f)$ are associated in the sense of Definition \ref{associated_defn}.  However $f$ is not uniquely determined as a function (one can change it by any function all of whose stable orbital integrals vanish) and no explicit characterization of $f$ is given in general.  Theorem \ref{fl} proves that when $\phi \in Z(\mathcal H_J(G(E)))$, then $f$ can be taken to be in $Z(\mathcal H_J(G(F)))$ and in fact such an $f$ is determined from $\phi$ using a simple and explicit rule.

\medskip

\noindent {\em Acknowledgments:}  The main results of this paper were discovered in collaboration with Ng\^{o} Bao Ch\^{a}u.  In 2000-2001, Ng\^{o} and I worked out a proof of the main theorem in a special case 
(mentioned above).  Ng\^{o}'s influence remained an important factor in my subsequent attempts to prove the main theorem in its current generality, during which time the original plan of attack had to be significantly altered.  However, Ng\^{o} has declined to be named as a coauthor for the end result.  I wish to express my debt to Ng\^{o} for his insights and for the impetus he gave to this project in its early stages.

During the long gestation period for this paper, I also benefited from conversations and/or correspondence with several other people, including the following: J. Arthur, J.-F. Dat, T. Hales, R. Kottwitz, J.-P. Labesse, M. Rapoport, M. Reeder, A. Roche, and J.-K. Yu.  It is a pleasure to thank all these people.  In addition, I 
am especially grateful to R. Kottwitz and M. Rapoport for their continuing interest in this project, and to Kottwitz for some helpful remarks on the manuscript, and for pointing out an inaccuracy in section 5.

I am indebted to Christian Kaiser, who discovered an error in an early version of this article.  I heartily thank Ulrich G\"{o}rtz for his communication to me of Kaiser's remarks and for various helpful conversations over the years.

I thank the referees for their attention to detail and their numerous and very helpful remarks.


\section{Parahoric subgroups and other preliminaries} \label{preliminaries_section}

\subsection{Basic notation}
Let $F$ denote a p-adic field.  Let $\mathcal O_F$ denote the ring of integers in $F$, and $\varpi \in \mathcal O_F$ a uniformizer.  Let $q = p^n$ denote the cardinality of the residue field of $F$.  Fix an algebraic closure $\overline{F}$ for $F$, and let $L$ denote the completion of the maximal unramified extension of $F$ inside $\overline{F}$.  Let $\sigma \in {\rm Aut}(L/F)$ denote the Frobenius automorphism of $L$ over $F$.  Let $\mathcal O_L$ denote the ring of integers in $L$.  The valuation ${\rm val}_F : F^\times \rightarrow \mathbb Z$ will be normalized such that ${\rm val}_F(\varpi) = 1$.  Define $|x|_F = q^{-{\rm val}_F(x)}$ for $x \in F^\times$.

We let $G$ denote a connected reductive group which is defined and unramified over $F$.  Sometimes we use the symbol $G$ to denote the group $G(F)$ of $F$-points.  Let $A$ denote a maximal $F$-split torus in $G$, and set $T := {\rm Cent}_G(A)$, a maximal torus in $G$ defined over $F$ and split over $L$.  

We consider the (extended) Bruhat-Tits building $\mathcal B(G(L)))$ resp. $\mathcal B(G)$ for $G(L)$ resp. $G(F)$, cf. \cite{BT1},\cite{BT2}.  The Bruhat-Tits buildings associated to the semi-simple $F$-groups $G_{\rm ad}$ and $G_{\rm der}$  are canonically identified and will be denoted $\mathcal B_{\rm ss}(G)$.    The group $G(L) \rtimes {\rm Aut}(L/F)$ resp. $G(F)$ acts on $\mathcal B(G(L))$ resp. $\mathcal B(G)$.  The $\sigma$-fixed subset $\mathcal B(G(L))^\sigma$ can be identified with $\mathcal B(G)$. 

Let $\mathcal A^L$ resp. $\mathcal A$ denote the apartment of $\mathcal B(G(L))$ resp. 
$\mathcal B(G)$ corresponding to the torus $T$ resp. $A$.  
Then $\mathcal A^L$ resp. $\mathcal A$ is endowed with a family of hyperplanes given by the vanishing of the affine roots $\Phi_{\rm aff}(G, T,L)$ resp. $\Phi_{\rm aff}(G,A,F)$ (see \cite{Tits}).  Under the identification $\mathcal B(G) = \mathcal B(G(L))^\sigma$, the apartment $\mathcal A$ is identified with $(\mathcal A^L)^\sigma$.  Moreover, the affine roots $\Phi_{\rm aff}(G,A,F)$ are the non-constant restrictions to $\mathcal A = \mathcal (A^L)^\sigma$ of the affine roots $\Phi_{\rm aff}(G,T,L)$ (\cite{Tits},1.10.1).  The affine roots determine the notions of alcoves, facets, and Weyl chambers used throughout this article.

\subsection{Parahoric subgroups}
 
Fix once and for all a $\sigma$-invariant alcove ${\bf a}$ in $\mathcal A^L$.  Also, fix a $\sigma$-invariant facet ${\bf a}_J$ and a $\sigma$-invariant hyperspecial ``vertex'' ${\bf a}_0$, both contained in the closure of $\bf a$.  According to Bruhat-Tits theory (cf. \cite{Tits}, \cite{BT2}), associated to ${\bf a}_J$ is a smooth affine $\mathcal O_L$-group scheme ${\mathcal G}_{{\mathbf a}_J}^\circ$ with generic fiber $G$ and connected special fiber, with the property that ${\mathcal G}_{{\mathbf a}_J}^\circ(\mathcal O_L)$ fixes pointwise the facet $\mathbf a_J$.  The ``ambient'' group scheme ${\mathcal G}_{{\mathbf a}_J}$ here, of which ${\mathcal G}^\circ_{{\mathbf a}_J}$ is the maximal subgroup scheme with connected geometric fibers, is defined/characterized in \cite{Tits} 3.4.1 as the smooth affine $\mathcal O_L$-group scheme with generic fiber $G$ such that ${\mathcal G}_{{\mathbf a}_J}(\mathcal O_L)$ is the subgroup of $G(L)$ which fixes ${\mathbf a}_J $ pointwise.  Since we have assumed ${\bf a}_J$ is $\sigma$-invariant, $\mathcal G_{{\mathbf a}_J}$ is actually defined over $\mathcal O_F$.

\begin{defn}  A {\em parahoric subgroup} of $G(L)$ is one of the form ${\mathcal G}_{{\mathbf a}_J}^\circ(\mathcal O_L)$, which we shall denote simply by $J(L)$.  A parahoric subgroup of $G(F)$ will be a subgroup of $G(F)$ of the form $J(L) \cap G(F) =: J$.  
\end{defn}

Thus, the facets ${\bf a}_0$, ${\bf a}$ and ${\bf a}_J$ give rise to parahoric subgroups of $G(L)$, specifically a (hyperspecial) maximal parahoric subgroup $K(L)$, an Iwahori subgroup $I(L)$, and a parahoric subgroup $J(L)$.  We have $K(L) \supset I(L) \subset J(L)$.  Since the facets ${\bf a}_0$, ${\bf a}$ and ${\bf a}_J$ are $\sigma$-invariant, we get the corresponding parahoric subgroups $K$, $I$ and $J$ in $G(F)$ by intersecting the parahoric subgroups in $G(L)$ with $G(F)$.  

Of course, one defines similarly the notion of parahoric subgroup for an arbitrary connected reductive group over the field $L$ (or $F$).

\subsection{Alternative descriptions of parahoric subgroups} \label{alt_desc_subsection}

In this subsection we allow $G$ to be any connected reductive group over $L$.  Let $A^L$ denote a maximal $L$-split torus of $G$ (containing $A$ and defined over $F$ in case $G$ is defined over $F$).  Suppose $\mathcal A^L$ is the corresponding apartment in $\mathcal B(G(L))$.  In \cite{Ko97} Kottwitz defined a surjective homomorphism
$$
\kappa_G: G(L) \twoheadrightarrow X^*(Z(\widehat{G})^{\Gamma_0})
$$
where $\Gamma_0 := {\rm Gal}(\overline{L}/L)$ denotes the inertia group.  The homomorphisms $\kappa_G$ vary with $G$ in a functorial manner.  Let $G(L)_1$ denote the kernel of $\kappa_G$.  

The group $G(L)$ acts on the building $\mathcal B_{\rm ss} = \mathcal B(G_{\rm ad}(L))$, and the facet ${\bf a}_J$ determines a facet ${\bf a}^{\rm ss}_J$ in $\mathcal B_{\rm ss}$.  In fact there is a $G(L)$-equivariant isomorphism (which is $G(L) \rtimes {\rm Aut}(L/F)$-equivariant if $G$ is defined over $F$)
$$
\mathcal B(G(L)) = \mathcal B_{\rm ss} \times V_G,
$$
where $V_G := X_*(Z(G))_{\Gamma_0} \otimes \mathbb R$.  Furthermore, we have parallel decompositions
\begin{align*}
\mathcal A^L &= \mathcal A^L_{\rm ss} \times V_G \\
\mathbf a_J &= \mathbf a^{\rm ss}_J \times V_G,
\end{align*} 
where $\mathcal A^L_{\rm ss}$ denotes the apartment of $\mathcal B_{\rm ss}$ corresponding to the maximal $L$-split torus $A^L_{\rm der} \subset G_{\rm der}$. 

 Let ${\rm Fix}({\bf a}^{\rm ss}_J)$ denote the subgroup of $G(L)$ which fixes ${\bf a}^{\rm ss}_J$ pointwise.  

\begin{theorem}[\cite{HR}] \label{HR_parahoric_description}
The parahoric subgroup $J(L) := \mathcal G^\circ_{{\mathbf a}_J}(\mathcal O_L)$ of $G(L)$ can be described as
\begin{equation} \label{1st_description}
J(L) = {\rm Fix}({\bf a}^{\rm ss}_J) \cap G(L)_1 = {\mathcal G}_{{\mathbf a}_J}(\mathcal O_L) 
\cap G(L)_1.
\end{equation}
\end{theorem}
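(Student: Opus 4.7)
The plan is to prove the two equalities in (\ref{1st_description}) separately, exploiting the interplay between the $G(L)$-equivariant product decomposition $\mathcal B(G(L)) = \mathcal B_{\rm ss} \times V_G$ and the Kottwitz homomorphism $\kappa_G$.

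First I would dispose of the easier equality ${\rm Fix}(\mathbf a_J^{\rm ss}) \cap G(L)_1 = \mathcal G_{\mathbf a_J}(\mathcal O_L) \cap G(L)_1$. By the definition recalled from \cite{Tits} 3.4.1, $\mathcal G_{\mathbf a_J}(\mathcal O_L)$ coincides with the full pointwise stabilizer ${\rm Fix}(\mathbf a_J)$ of the facet $\mathbf a_J$ in $G(L)$. Under the product decomposition $\mathbf a_J = \mathbf a_J^{\rm ss} \times V_G$, this is the intersection of ${\rm Fix}(\mathbf a_J^{\rm ss})$ with the kernel of the $G(L)$-action on $V_G$. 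But that action is by translations via the composition of $\kappa_G$ with the natural map $X^*(Z(\widehat G)^{\Gamma_0}) \to V_G = X_*(Z(G))_{\Gamma_0} \otimes \mathbb R$; in particular, every element of $G(L)_1$ fixes $V_G$ pointwise. Intersecting with $G(L)_1$ therefore renders the $V_G$-condition redundant, which gives the equality.

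The substantive part is $\mathcal G_{\mathbf a_J}^\circ(\mathcal O_L) = \mathcal G_{\mathbf a_J}(\mathcal O_L) \cap G(L)_1$. The approach is a standard d\'evissage. One checks the statement directly in two extreme cases: when $G$ is a torus (both sides coincide with $T(L)_1$, the unique parahoric subgroup), and when $G$ is simply connected (where Kottwitz \cite{Ko97} gives $G(L) = G(L)_1$, and Bruhat-Tits theory ensures that $\mathcal G_{\mathbf a_J}$ already has connected special fiber, so both sides equal $\mathcal G_{\mathbf a_J}(\mathcal O_L)$). One then reduces the general case via a functorial short exact sequence $1 \to G_{\rm der} \to G \to D \to 1$ with $D$ a torus, together with the compatibility of $\kappa$ and of the associated $\mathcal O_L$-group schemes under such quotients.

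The main obstacle is precisely the general case, namely identifying the finite component group $\mathcal G_{\mathbf a_J}(\mathcal O_L)/\mathcal G_{\mathbf a_J}^\circ(\mathcal O_L)$ with the Kottwitz quotient $\mathcal G_{\mathbf a_J}(\mathcal O_L)/(\mathcal G_{\mathbf a_J}(\mathcal O_L)\cap G(L)_1)$. Both map into ${\rm im}(\kappa_G)$, and one must show that each map is injective with the same image. This matching of component groups with Kottwitz quotients is the technical heart of \cite{HR}, and it rests on the functoriality of $\kappa_G$ under $Z(G) \hookrightarrow G$ together with a careful Bruhat-Tits descent analysis of the special fiber of $\mathcal G_{\mathbf a_J}$.
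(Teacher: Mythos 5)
The paper does not prove Theorem \ref{HR_parahoric_description}: it is imported from \cite{HR} without an internal argument, so there is nothing to compare against directly, and your proposal is a reconstruction. Your treatment of the first equality is fine — $\mathcal G_{\mathbf a_J}(\mathcal O_L) = {\rm Fix}(\mathbf a_J)$ by \cite{Tits} 3.4.1, and under $\mathbf a_J = \mathbf a_J^{\rm ss}\times V_G$ the $V_G$-condition becomes redundant upon intersecting with $G(L)_1$, since the translation action of $G(L)$ on $V_G$ factors through $\kappa_G$.

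The gap is in the second equality. Your d\'evissage does not close: you verify the torus case and the simply connected case, and then propose to deduce the general case from $1 \to G_{\rm der} \to G \to D \to 1$ with $D$ a torus. But $G_{\rm der}$ is merely semisimple, not simply connected, so the kernel of that sequence is not covered by your base cases; the passage from $G_{\rm sc}$ to $G_{\rm der}$ is exactly where a nontrivial component group of $\mathcal G_{\mathbf a_J}$ can appear, and it cannot be dispatched by the simply connected case alone. Even granting correct base cases, the reduction step itself is not supplied: parahoric group schemes and the homomorphisms $\kappa$ do not transparently ``glue'' along a surjection of reductive groups with reductive kernel (the sequence need not remain exact on $L$-points, and $\mathcal G_{\mathbf a_J}$ is not built from the corresponding group schemes of $G_{\rm der}$ and $D$ by a simple product). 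You acknowledge this yourself by calling the identification of $\mathcal G_{\mathbf a_J}(\mathcal O_L)/\mathcal G_{\mathbf a_J}^\circ(\mathcal O_L)$ with the Kottwitz quotient ``the main obstacle''; deferring it means the proposal is an outline rather than a proof.

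A cleaner route, whose output the paper records in \eqref{2nd_description}--\eqref{2nd_description'}, avoids group-theoretic d\'evissage altogether: by \cite{BT2} 5.2.4 one has $\mathcal G_{\mathbf a_J}^\circ(\mathcal O_L) = T(L)_1 \cdot \mathfrak U_{\mathbf a_J}(\mathcal O_L)$ and $\mathcal G_{\mathbf a_J}(\mathcal O_L) = T(L)_b \cdot \mathfrak U_{\mathbf a_J}(\mathcal O_L)$; one observes that $\mathfrak U_{\mathbf a_J}(\mathcal O_L)\subseteq G(L)_1$ (it is generated by affine root groups, hence by unipotents, which lie in $\ker\kappa_G$) and $T(L)_1\subseteq G(L)_1$ (functoriality of $\kappa$); and the second equality then reduces to the single torus-level comparison $T(L)_b\cap G(L)_1 = T(L)_1$, i.e.\ a comparison of $\kappa_T$ with $\kappa_G|_{T(L)}$. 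That comparison is the genuine content, and it is precisely the piece your proposal leaves unproved.
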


\smallskip

Now suppose the torus $T$ is the centralizer of the maximal $L$-split 
torus $A^L$.  According to \cite{BT2}, the group $T(L)$ contains a unique parahoric subgroup $\mathcal T^\circ(\mathcal O_L)$, where $\mathcal T^\circ$ is the neutral component of the lft N\'{e}ron model $\mathcal T$ for the $L$-torus $T$.  Moreover, we have $\mathcal T^\circ(\mathcal O_L) = T(L)_1$ (cf. \cite{R}).  In \cite{BT2}, 5.2, the group $T^\circ(\mathcal O_L)$ is denoted $\mathfrak Z^\circ(\mathcal O_L)$.

Let $T(L)_b$ denote the maximal bounded subgroup of $T(L)$, that is, the set of $t \in T(L)$ such that ${\rm val}(\chi(t)) = 0$ for all $\chi \in X^*(T)^{\Gamma_0}$.  We have $T(L)_b \supseteq T(L)_1$, and $T(L)_b$ coincides with the kernel of the homomorphism $v_T: T(L) \rightarrow X_*(T)_{\Gamma_0}/torsion$ derived from $\kappa_T$ in the obvious way (\cite{Ko97}, 7.2).  In \cite{BT2}, 5.2, the group $T(L)_b$ is denoted by $\mathfrak Z(\mathcal O_L)$.

Putting the equalities $\mathfrak Z^\circ(\mathcal O_L) = T(L)_1$ and $\mathfrak Z(\mathcal O_L) = T(L)_b$ together with \cite{BT2} 5.2.4 (and identifying our $\mathcal G_{{\mathbf a}_J}$ with the group-scheme $\widehat{\mathcal G}_{{\mathbf a}^{\rm ss}_J}$ of loc.~cit.) we get the decompositions
\begin{align} \label{2nd_description}
J(L) = \mathcal G^\circ_{{\mathbf a}_J}(\mathcal O_L) &= T(L)_1 \cdot \mathfrak U_{{\bf a}_J}(\mathcal O_L) \\
\label{2nd_description'} \mathcal G_{{\mathbf a}_J}(\mathcal O_L) &= T(L)_b \cdot \mathfrak U_{{\bf a}_J}(\mathcal O_L). 
\end{align} 
Here, $\mathfrak U_{{\bf a}_J}$ denotes the group-scheme generated by the root-group schemes $\mathfrak U_{a,{\bf a}_J}$ of loc.~cit.  In particular $a$ denotes a (relative) root of $G_L$ for the torus $A^L$ with corresponding root subgroup $U_a \subset G_L$ and in the notation of \cite{Tits}, 1.4,
\begin{equation} \label{mathfrakU_defn}
\mathfrak U^*_{a,{\bf a}_J}(\mathcal O_L) = \{ u \in U^*_a(L) ~ | ~ \alpha(a,u)(x) \geq 0 \,\, \forall x \in {\bf a}_J \}.
\end{equation}
(The superscript $*$ designates the non-identity elements: $\alpha(a,u)$ is an affine-linear functional on $X_*(A^L)_\mathbb R$ which is defined for $u \neq 1$.)  
We recall that $\alpha(a,u)$ is used to define the filtration $U_{a+l}$ ($l \in \mathbb R$) of $U_a(L)$: an element $u \in U^*_a(L)$ belongs to $U^*_{a+l}$ if and only if $\alpha(a,u) \geq a + l$ (cf. \cite{Tits}, 1.4).  

Finally, suppose $G_L$ is {\em split}.  Then $T$ is $L$-split and so $T(L)_b = T(L)_1$.  Thus (\ref{2nd_description}) and (\ref{2nd_description'}) imply the following corollary.

\begin{cor} \label{HR_for_der=sc}
If $G$ splits over $L$, then $\mathcal G_{{\mathbf a}_J}^\circ(\mathcal O_L) = \mathcal G_{{\mathbf a}_J}(\mathcal O_L)$.
\end{cor}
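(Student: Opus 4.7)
The plan is to deduce the corollary directly from the two displayed decompositions (\ref{2nd_description}) and (\ref{2nd_description'}), which exhibit $\mathcal G^\circ_{{\mathbf a}_J}(\mathcal O_L)$ and $\mathcal G_{{\mathbf a}_J}(\mathcal O_L)$ as products over the same unipotent part $\mathfrak U_{{\mathbf a}_J}(\mathcal O_L)$, with toral parts $T(L)_1$ and $T(L)_b$ respectively. Hence the whole statement reduces to showing that the equality $T(L)_1 = T(L)_b$ holds whenever $G$ splits over $L$.

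When $G$ splits over $L$, the centralizer $T$ of the maximal $L$-split torus $A^L$ coincides with $A^L$ itself, so $T$ is an $L$-split torus. For a split torus $T \simeq \G^n$, one has $T(L) = (L^\times)^n$, and the maximal bounded subgroup is $T(L)_b = (\mathcal O_L^\times)^n$. On the other hand, the Kottwitz homomorphism $\kappa_T : T(L) \to X^*(Z(\widehat T)^{\Gamma_0}) = X_*(T)$ for a split torus is identified with the valuation map $v_T : T(L) \to X_*(T)$, whose kernel is likewise $(\mathcal O_L^\times)^n$. Thus $T(L)_1 = \ker \kappa_T = T(L)_b$ in the split case, which is the only thing needed.

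Given this, the proof is just a one-line chain of equalities:
$$
\mathcal G^\circ_{{\mathbf a}_J}(\mathcal O_L) \;=\; T(L)_1 \cdot \mathfrak U_{{\mathbf a}_J}(\mathcal O_L) \;=\; T(L)_b \cdot \mathfrak U_{{\mathbf a}_J}(\mathcal O_L) \;=\; \mathcal G_{{\mathbf a}_J}(\mathcal O_L),
$$
using (\ref{2nd_description}), the equality $T(L)_1 = T(L)_b$ established above, and (\ref{2nd_description'}). There is no serious obstacle here: the nontrivial content has already been absorbed into the two Bruhat--Tits decompositions and into Kottwitz's description of $T(L)_1$ (and $T(L)_b$ as the kernel of $v_T$); the corollary is simply the observation that for split tori the distinction between ``bounded'' and ``in the kernel of $\kappa_T$'' collapses.
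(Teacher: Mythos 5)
Your proposal is correct and follows exactly the same route as the paper: the text preceding the corollary observes that when $G_L$ is split, $T$ is $L$-split so $T(L)_b = T(L)_1$, and the corollary follows immediately from (\ref{2nd_description}) and (\ref{2nd_description'}). You have simply spelled out the standard details of why $T(L)_1 = T(L)_b$ for a split torus (both equal $(\mathcal O_L^\times)^n$, with $\kappa_T$ reducing to the valuation map), which the paper leaves implicit.
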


Clearly this applies to our usual situation, where $G$ is an unramified $F$-group.

\subsection{Unramified characters of $T(F)$} \label{unram_char_subsection}

For this subsection we temporarily change notation and let $T$ denote any $F$-torus and $A$ the maximal $F$-split subtorus of $T$.  Let $T(L)_b$ resp. $T_b$ denote the maximal bounded subgroup of $T(L)$ resp. $T(F)$.

We can embed $X_*(A)$ into $A(F)$ by identifying $\mu \in X_*(A)$ with $\varpi^\mu := \mu(\varpi) \in A(F)$.  

\begin{lemma} \label{T_b_description}   Suppose that $T$ splits over $L$.  Then
we have
\begin{enumerate}
\item[(i)] $T_b = T(F) \cap T(L)_1$;
\item[(ii)] $T(F)/T_b \cong X_*(A)$ via $\kappa_T$.
\end{enumerate}
\end{lemma}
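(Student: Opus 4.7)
The plan is to analyze Kottwitz's map $\kappa_T$ on the ambient group $T(L)$ first, and then restrict to $T(F)$. Since $T$ splits over $L$, the inertia $\Gamma_0$ acts trivially on $X_*(T) = X^*(\widehat T)$, so $\kappa_T$ takes the form of a surjection $\kappa_T : T(L) \twoheadrightarrow X_*(T)$ whose kernel equals $T(L)_1 = T(L)_b$ (the last equality is recorded in subsection \ref{alt_desc_subsection}); explicitly $\kappa_T(\varpi^\mu) = \mu$ for $\mu \in X_*(T)$. Moreover, $\kappa_T$ is Galois-equivariant, and because $T$ in fact splits over some finite unramified extension of $F$, the action of $\sigma$ on $X^*(T)$ and on $X_*(T)$ factors through a finite cyclic quotient.

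For (i), the inclusion $T(F)\cap T(L)_1 \subseteq T_b$ is immediate, since any element bounded in $T(L)$ is a fortiori bounded in $T(F)$. For the reverse inclusion, let $t\in T_b$ and take any $\chi\in X^*(T)$. I would form the orbit sum $\chi^{\rm sym} := \sum_{k=0}^{n-1}\sigma^k\chi$, where $n$ is the size of the $\sigma$-orbit of $\chi$; this $\chi^{\rm sym}$ is $\sigma$-fixed, hence $F$-rational. Using $\sigma(t) = t$ and that $\sigma$ preserves ${\rm val}_L$, one computes ${\rm val}_L(\chi^{\rm sym}(t)) = n\cdot {\rm val}_L(\chi(t))$. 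On the other hand $\chi^{\rm sym}(t)\in F^\times$, and because $L/F$ is unramified ${\rm val}_L$ restricts to ${\rm val}_F$ on $F^\times$, so the hypothesis $t\in T_b$ forces ${\rm val}_L(\chi^{\rm sym}(t)) = 0$. Therefore ${\rm val}_L(\chi(t)) = 0$ for every $\chi\in X^*(T)$, which places $t$ in $T(L)_b = T(L)_1$.

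For (ii), the $\sigma$-equivariance of $\kappa_T$ ensures that the restriction $\kappa_T|_{T(F)}$ takes values in $X_*(T)^\sigma = X_*(A)$ (the second equality being standard for tori split over a finite unramified extension of $F$). Its kernel is $T(F)\cap T(L)_1$, which equals $T_b$ by (i). Surjectivity is essentially built in: for each $\mu\in X_*(A)$ the element $\varpi^\mu\in A(F)\subseteq T(F)$ satisfies $\kappa_T(\varpi^\mu) = \mu$. Assembling these three ingredients gives the claimed isomorphism $T(F)/T_b \;\widetilde{\to}\; X_*(A)$. The main technical point of the whole argument is the Galois-averaging step in (i); everything else is a bookkeeping exercise in the properties of $\kappa_T$ already collected in the section.
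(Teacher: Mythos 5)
Your proof is correct, and it takes a genuinely different route from the paper's. For part (i), the paper invokes the commutative diagram from \cite{Ko97}, (7.4.5), which identifies $i\circ\kappa_T$ with $-\nu$ on $T(F)$ and thereby matches the two kernels directly; you instead give an elementary Galois-averaging argument showing that an element bounded in $T(F)$ already kills the valuation of every $\chi\in X^*(T)$, hence lies in $T(L)_b = T(L)_1$. For the surjectivity in part (ii), the paper takes $\sigma$-invariants of the exact sequence $0\to T(L)_1\to T(L)\to X_*(T)\to 0$ and quotes the cohomology vanishing $H^1(\langle\sigma\rangle,T(L)_1)=0$ from \cite{Ko97}, (7.6.1); you instead produce explicit preimages $\varpi^\mu\in A(F)$ for each $\mu\in X_*(A)$, using the functoriality of $\kappa$ applied to the split subtorus $A\hookrightarrow T$. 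Your version is more self-contained, trading external citations for a hands-on computation; the paper's version is terser and makes visible the structural fact (the $H^1$ vanishing) that recurs elsewhere in the article, for instance in the discussion of the Iwahori-Weyl group immediately after this lemma, where the same vanishing is reused. One minor point worth being explicit about: your argument for (i) silently uses the characterization of $T_b$ as $\{t\in T(F) : {\rm val}_F(\chi(t))=0 \ \forall\,\chi\in X^*(T)^\Gamma\}$, which is the same characterization the paper packages as $T_b=\ker(\nu|_{T(F)})$; stating this identification up front would make the comparison with $T(L)_1 = T(L)_b = \{t : {\rm val}_L(\chi(t))=0\ \forall\,\chi\in X^*(T)\}$ cleaner.
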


\begin{proof}  Let $V^L$ resp. $V$ denote the real vector space underlying $\mathcal A^L$ resp. $\mathcal A$.   That is, $V^L := X_*(T)_\mathbb R = {\rm Hom}(X^*(T),\mathbb R)$ and $V := X_*(A)_\mathbb R = {\rm Hom}(X^*(A),\mathbb R)$.  Following \cite{Tits}, define $\nu: T(L) \rightarrow V^L$ by mapping $t \in T(L)$ to the homomorphism 
\begin{align*}\nu(t) \,\, :\,\, &X^*(T) \rightarrow \mathbb R \\
&\chi \mapsto -{\rm val}_L(\chi(t)).
\end{align*}
Since $\nu$ is $\sigma$-equivariant and $(V^L)^\sigma = V$, this restricts to a homomorphism $\nu: T(F) \rightarrow V$; the kernel of the latter is $T_b$.  

 There is an inclusion $i: X_*(T) \hookrightarrow X_*(T)_\mathbb R$.  By \cite{Ko97}, (7.4.5), there is a commutative diagram with exact rows
$$
\xymatrix{
0 \ar[r] & T(L)_1 \ar[r] & T(L) \ar[r]^{i \circ \kappa_T} & X_*(T)_\mathbb R \\
0 \ar[r] & T_b \ar[r]  & T(F) \ar[r]^{-\nu} \ar[u] & X_*(A)_\mathbb R \ar[u],}
$$
where the vertical arrows are injective.  This shows that $i \circ \kappa_T$ and $-\nu$ agree as maps $T(F) \rightarrow V$, proving part (i) and the equality of the images in $V$ of $-\nu$ and $\kappa_T|_{T(F)}$.  Furthermore, consider the exact sequence 
$$
\xymatrix{
0 \ar[r] & T(L)_1 \cap T(F) \ar[r] & T(F) \ar[r]^{\kappa_T} & X_*(A) \ar[r] & 0
}
$$
 resulting from taking $\sigma$-invariants of 
$$
\xymatrix{
0 \ar[r] & T(L)_1 \ar[r] & T(L) \ar[r]^{\kappa_T} & X_*(T) \ar[r] & 0,}
$$
and using the fact that 
\begin{equation} \label{coh.vanish} 
H^1(\langle \sigma \rangle, T(L)_1) = 0
\end{equation}
(cf. \cite{Ko97}, (7.6.1)).  We see that $\kappa_T|_{T(F)}$ has image $X_*(A)$, proving (ii).  
\end{proof}

An {\em unramified character} of $T(F)$ is a homomorphism $\chi: T(F)/T(F) \cap T(L)_1 \rightarrow \mathbb C^\times$.  From the previous lemma we deduce the following result.

\begin{lemma} \label{unramified_characters}
If $T$ splits over $L$, an unramified character of $T(F)$ can be described as any of the following:
\begin{enumerate}
\item[(i)] a homomorphism $T(F)/T(F) \cap T(L)_1 \rightarrow \mathbb C^\times$;
\item[(ii)] a homomorphism $T(F)/T_b \rightarrow \mathbb C^\times$;
\item[(iii)] a homomorphism $X_*(A) \rightarrow \mathbb C^\times$.
\end{enumerate}
\end{lemma}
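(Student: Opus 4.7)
The lemma is essentially an immediate corollary of the preceding Lemma \ref{T_b_description}, so the plan is simply to chain together its two conclusions. By definition in the sentence preceding the lemma, an unramified character is a homomorphism as in (i), so nothing needs to be shown to place (i) at the start of the chain.

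For the equivalence (i) $\Leftrightarrow$ (ii), I would invoke part (i) of Lemma \ref{T_b_description}, which gives the equality $T_b = T(F) \cap T(L)_1$. The quotient groups appearing in (i) and (ii) are therefore literally the same, so homomorphisms out of one are homomorphisms out of the other.

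For the equivalence (ii) $\Leftrightarrow$ (iii), I would invoke part (ii) of the same lemma, namely the isomorphism $\kappa_T : T(F)/T_b \;\widetilde{\rightarrow}\; X_*(A)$. Pulling back along this isomorphism identifies $\mathrm{Hom}(T(F)/T_b, \mathbb{C}^\times)$ with $\mathrm{Hom}(X_*(A), \mathbb{C}^\times)$.

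There is no real obstacle here: the hypothesis that $T$ splits over $L$ is precisely what is needed to apply Lemma \ref{T_b_description}, and the rest is formal. The only thing to be slightly careful about is being explicit that the identification in (iii) is via $\kappa_T$ (which is also the natural map $-\nu$, by the diagram in the proof of Lemma \ref{T_b_description}), so that later references to ``the unramified character associated to $\lambda \in X_*(A)$'' are unambiguous.
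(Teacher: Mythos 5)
Your proposal is correct and matches the paper's reasoning: the paper simply deduces the lemma from Lemma \ref{T_b_description}, using part (i) for the equivalence of (i) and (ii) and part (ii) (the isomorphism via $\kappa_T$) for the equivalence of (ii) and (iii). Your explicit remark about tracking the identification in (iii) via $\kappa_T$ is a sensible addition but not substantively different.
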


\subsection{Weyl groups and Weyl chambers}

Now we return to the conventions preceding subsection \ref{alt_desc_subsection}, so that $T$ denotes the centralizer of the fixed maximal $F$-split torus $A$ in the unramified $F$-group $G$.  Let $N_S = {\rm Norm}_G(S)$ for any torus $S \subset G$.  We use $_FW$, or $W$, or sometimes $W(F)$ to denote the relative Weyl group $W := N_A(F)/T(F)$ of $G$ over $F$.  The absolute Weyl group of $G$ can be identified with $W(L) := N_T(L)/T(L)$.  

\begin{lemma}\label{absWeyl_relWeyl} We have  $W(L)^\sigma =\,_FW$.  
\end{lemma}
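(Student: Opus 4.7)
The inclusion $N_A \subseteq N_T$ (any element normalizing the $F$-split torus $A$ preserves $Z_G(A) = T$), together with $T(F) \hookrightarrow T(L)$, yields a natural homomorphism $\iota : {}_FW \to W(L)$, and the image clearly lies in $W(L)^\sigma$ since everything is built from $F$-rational data. For injectivity of $\iota$, I would note that if $n \in N_A(F)$ projects to the identity then $n \in N_A(F) \cap T(L) = T(F)$, because $T$ is $F$-closed in $G$.

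The substantive step is surjectivity. Since $G$ is unramified, it is $F$-quasi-split, so I would fix an $F$-rational Borel $B \supseteq T$ and let $\Delta \subseteq \Phi(G,T)$ be the associated simple roots. The Frobenius $\sigma$ permutes $\Delta$, and by Borel--Tits the $\sigma$-orbits $O \subseteq \Delta$ are in natural bijection with the simple roots of the relative root system $\Phi(G,A)$. For each orbit $O$, the parabolic subgroup $W_O := \langle s_\alpha : \alpha \in O \rangle \subseteq W(L)$ is $\sigma$-stable and finite, and its longest element $w_O$ is $\sigma$-fixed. A standard fact about Coxeter groups with a diagram automorphism then says that $W(L)^\sigma$ is itself a Coxeter group, generated by $\{w_O\}$, and $w_O$ projects to the simple reflection of ${}_FW$ attached to the corresponding relative simple root. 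Thus surjectivity reduces to lifting each $w_O$ to $N_A(F)$.

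For this lifting I would introduce the connected $F$-subgroup $G_O \subseteq G$ generated by $T$ together with the root subgroups $U_{\pm\alpha}$ for $\alpha \in O$: this is an $F$-quasi-split reductive group of relative $F$-rank one, with maximal $F$-split torus $A \cap G_O$, and with $w_O$ the longest element of its absolute Weyl group. Its relative Weyl group $N_{G_O}(A \cap G_O)(F)/T(F)$ has order two, and any representative of the nontrivial element, viewed inside $N_A(F)$, provides the desired lift of $w_O$. Alternatively, one can obtain all lifts uniformly from Tits's set-theoretic cross-section $W(L) \to N_T(L)$, which is $\sigma$-equivariant as soon as it is built from an $F$-rational pinning of $G$ (available because $G$ is $F$-quasi-split); any $w \in W(L)^\sigma$ then lifts to an element of $N_T(L)^\sigma = N_T(F) = N_A(F)$, the last equality holding because any $F$-rational automorphism of $T$ preserves its unique maximal $F$-split subtorus.

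\textbf{Main obstacle.} The tempting cohomological approach---lifting $w \in W(L)^\sigma$ to an arbitrary $n \in N_T(L)$ and correcting by some $t \in T(L)$ to achieve $\sigma$-invariance---would require the vanishing of $H^1(\langle \sigma \rangle, T(L))$, and this \emph{fails} in general (for example when $T$ contains a norm-one torus of an unramified quadratic extension, whose $H^1$ is cyclic of order two by local class field theory). Consequently the proof cannot be purely formal: one genuinely needs the $F$-quasi-split structure of $G$, either through the rank-one subgroup reductions described above or through the existence of an $F$-rational Tits cross-section.
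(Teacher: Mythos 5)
Your proof is correct, and it reaches the same conclusion by a genuinely more constructive route than the paper. Both arguments hinge on Steinberg's theorem about the $\sigma$-fixed subgroup of a Coxeter group with a diagram automorphism, but they deploy it differently. The paper's proof never exhibits a map between the two groups: it applies Steinberg to identify $W(L)^\sigma$, acting on $V^* = ((V^L)^*)^\sigma$, with the Weyl group of the Steinberg root system $R_\sigma$, invokes \cite{Bor91}, 21.8 to show $R_\sigma = ({}_FR)_{\rm nm}$, and then notes $W(R_\sigma) = W({}_FR) = {}_FW$, so that both groups coincide as reflection subgroups of $\mathrm{GL}(V^*)$. You instead build the natural homomorphism $\iota : {}_FW = N_A(F)/T(F) \to W(L)^\sigma$ coming from $N_A \subseteq N_T$, check injectivity directly from $T = Z_G(A)$ being $F$-closed, and obtain surjectivity by combining the Coxeter-generator form of Steinberg's theorem (that $W(L)^\sigma$ is generated by the longest elements $w_O$ of the $\sigma$-orbit parabolics) with an explicit lift of each $w_O$ to $N_A(F)$ --- either through the standard $F$-Levi $M_O$ of relative rank one, or through a Tits cross-section built from an $F$-rational pinning. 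Both lifting devices are valid: $M_O$ is defined over $F$ because $O$ is $\sigma$-stable, has $A$ as maximal $F$-split torus and $T = Z_{M_O}(A)$, and its order-two relative Weyl group supplies the representative; and an $F$-pinning makes the Tits section $\sigma$-equivariant, so it carries $W(L)^\sigma$ into $N_T(L)^\sigma = N_T(F) = N_A(F)$. What your approach buys is an explicit inverse to the abstract identification (useful, for instance, when one needs concrete representatives in $K \cap N_A(F)$, as the paper itself does later), at the cost of more bookkeeping; the paper's version is shorter. Your cautionary remark is also on target: the na\"ive attempt to correct an arbitrary lift $n \in N_T(L)$ by a $T(L)$-coboundary would require vanishing of $H^1(\langle \sigma \rangle, T(L))$, which genuinely fails for non-split unramified tori --- indeed the paper only ever invokes the weaker vanishing $H^1(\langle\sigma\rangle, T(L)_1) = 0$ of (\ref{coh.vanish}), which is not sufficient here. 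The quasi-split structure (equivalently, the existence of an $F$-rational pinning or of the Borel--Tits correspondence between $\sigma$-orbits in $\Delta$ and relative simple roots) is what both proofs are secretly using to circumvent this.
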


\begin{proof}
Let $R \subset (V^L)^*$ resp. $_FR \subset V^*$ denote the absolute resp. relative roots associated to $(G,T)$ resp. $(G,A)$.  By a result of Steinberg \cite{St}, $W(L)^\sigma$ is the Weyl group $W(R_\sigma)$ of the Steinberg 
root system $R_{\sigma} \subset V^*$.  Let $j: (V^L)^* \rightarrow V^*$ be the restriction homomorphism.  By its very construction $R_\sigma$ consists of the non-zero elements of $j(R)$ which are not smaller multiples of other elements in $j(R)$.  Further, by \cite{Bor91}, 21.8, $_FR$ consists of the non-zero elements of $j(R)$.  It follows that $R_\sigma = (\, _FR)_{\rm nm}$, the non-multipliable relative roots.   Thus, $W(R_\sigma) = W(\, _FR) =  \, _FW$.
\end{proof}   

We will think of ${\bf a}_0$ resp. ${\bf a}^\sigma_0$ as the ``origin'' of our apartment $\mathcal A^L$ resp. $\mathcal A$.  The {\em base alcove} ${\bf a}$ belongs to a unique Weyl chamber $\mathcal C^L_0$ having vertex ${\bf a}_0$, which we will call the {\em base chamber} or the {\em dominant chamber} in $\mathcal A^L$.  It determines the dominant Weyl chamber $\mathcal C_0$ in the apartment $\mathcal A$.

 Let ${\mathfrak B}(T)$ denote the set of Borel subgroups $B = TU$ which contain $T$ and are defined over $F$.  The set $\mathfrak B(T)$ is a torsor for the relative Weyl group $W$ (for $w \in W$ and $B \in \mathfrak B(T)$, let $wB$ or $^wB$ denote $wBw^{-1}$).  For each $B = TU \in \mathfrak B(T)$, define the Weyl chamber $\mathcal C_{U}$ in $\mathcal A$, and the notion of $B$-positive root, as follows.  The chamber $\mathcal C_U$ is the unique one with vertex ${\bf a}^\sigma_0$ 
such that $T_b \, U(F)$ is the union of the fixers of all ``quartiers'' $x + \mathcal C_U$ ($x \in V$) in $\mathcal B(G)$ having the direction of $\mathcal C_U$.  Furthermore, a $B$-positive root is one that appears in ${\rm Lie}(B)$.  Equivalently, a root $\alpha$ is $B$-positive if and only if it takes positive values on the chamber $\mathcal C_{\overline{U}}$, where $\overline{B} = T\overline{U}$ is the unique element of $\mathfrak B(T)$ which is {\em opposite} to $B$.

We may write $\mathcal C_0$ in the form $\mathcal C_{\overline{U}_0}$ for a unique Borel $B_0 = TU_0 \in {\mathfrak B}(T)$.  Thus, the roots $\alpha \in {\rm Lie}(B_0)$ are positive on $\mathcal C_{\overline{U}_0}$, and a coweight $\lambda$ belonging to the closure of $\mathcal C_{\overline{U}_0}$ is $B_0$-{\em dominant}.

In the sequel, a ``positive root'' will mean a $B_0$-positive root, and a ``dominant coweight'' will mean $B_0$-dominant coweight.  Note that by our conventions, the ``reduction modulo $\varpi$'' of $I$ is $\overline{B}_0$.  More precisely, we have $\overline{B}_0 \cap I = \overline{B}_0 \cap K$.

Throughout the paper, we will often write $B$ in place of $B_0$.  Occasionally $B$ will denote instead a general element of $\mathfrak B(T)$, but we shall indicate when this is what is meant.

\subsection{Iwahori Weyl group and extended affine Weyl group} \label{Iwahori_Weyl_grp}

In Bruhat-Tits theory is defined a homomorphism $\nu: N_A(F) \rightarrow V \rtimes W$, which extends the homomorphism $\nu:T(F) \rightarrow V$ discussed earlier (see \cite{Tits}).  Its kernel is $T_b$.  Via $\nu$ the group $\widetilde{W} := N_A(F)/T_b$ can be viewed as a group of affine-linear transformations of $V$.  It splits (the splitting depending on a choice of special vertex in $\mathcal A$) as a semi-direct product
$$
\widetilde{W} \cong \Lambda \rtimes W,
$$
where $\Lambda$  is the group of translations on $V$ isomorphic via $\nu$ to $T(F)/T_b$; $\widetilde{W}$ is termed the {\em extended affine Weyl group}.  There is a natural inclusion of lattices $X_*(A) \hookrightarrow \Lambda$.  In fact Lemma \ref{T_b_description} above shows that $X_*(A) = \Lambda$  (for another proof of this, see \cite{Bor79}, 9.5). 

Define the {\em Iwahori-Weyl group} $\widetilde{W}(L)$ to be the quotient group $N_T(L)/T(L)_1$.  Using $N_T(F) = N_A(F)$, (\ref{coh.vanish}), and Lemma \ref{T_b_description} we see that 
$$
\widetilde{W}(L)^{\sigma} = N_A(F)/T(F) \cap T(L)_1 = \widetilde{W}.
$$
(This can be used to give another proof of Lemma \ref{absWeyl_relWeyl}.)

Now suppose $J(L)$ and $J'(L)$ are parahoric subgroups of $G(L)$ associated to $\sigma$-invariant facets contained in $\overline{\bf a}$.  We may define the subgroup of $\widetilde{W}(L)$ resp. $\widetilde{W} = \widetilde{W}(L)^\sigma$
\begin{align*}
\widetilde{W}_J(L)  := &N_T(L) \cap J(L)/T(L)_1 \\
\mbox{resp.} \,\, \widetilde{W}_J := &\widetilde{W}_J(L)^\sigma = N_A(F) \cap J/T(F) \cap T(L)_1
\end{align*}  
where $J$ is defined as before to be $G(F)\cap J(L)$.

Then by a result from \cite{HR}, there is a canonical bijection
$$J \backslash G(F)/J' = \widetilde{W}_J \backslash \widetilde{W} /\widetilde{W}_{J'}.
$$
We have an analogous result holding for the objects over $L$ in place of $F$.  In particular, 
we have the Bruhat-Tits and also the Iwasawa decompositions over $L$ and $F$:
\begin{align*}
G(L) &= \coprod_{w \in \widetilde{W}(L)} I(L) \, w \, I(L) = \coprod_{w \in \widetilde{W}(L)} U(L) \, w \, I(L) \\
G(F) &= \coprod_{w \in \widetilde{W}} I \, w \, I = \coprod_{w \in \widetilde{W}} U(F) \, w \, I.
\end{align*}
The Iwasawa decompositions on the right hold for any $B = TU \in \mathfrak B(T)$.  Implicit in these decompositions is the choice of a $\sigma$-equivariant {\em set-theoretic} embedding $\widetilde{W}(L) = X_*(T) \rtimes W(L) \hookrightarrow N_T(L)$.  We embed $X_*(T)$ using $\mu \mapsto \mu(\varpi)$, and $W(L)$ by choosing fixed representatives of $W(L)$ in $K(L) \cap 
N_T(L)$.

\subsection{Bruhat orders and length functions}
 When we speak of the Bruhat order on $W$ or $\widetilde{W}$ or the {\em affine Weyl group} $W_{\rm aff}$, we will always mean the Bruhat order defined relative to the reflections through the walls of $\mathcal C_0$ resp. ${\bf a}^\sigma$.  Also, the length function $\ell$ on $\widetilde{W}$ is defined in terms of the reflections through the walls of ${\bf a}^\sigma$.  Analogous conventions specify the Bruhat order and length function on $\widetilde{W}(L)$.

\subsection{(Semi)standard Levi and parabolic subgroups}

In this paper, the following notions of (semi)standard Levi and parabolic subgroups will be useful.  A Levi subgroup $M \subseteq G$ is called an $F$-{\em Levi subgroup} if $M$ is the centralizer in $G$ of an $F$-split torus in $G$.  Equivalently, $M$ is a Levi component of a parabolic subgroup $P \subseteq G$ which is defined over $F$; moreover, in that case $M = {\rm Cent}_G(A_M)$, where $A_M$ is the $F$-split component of the center of $M$.  (See \cite{Bor91}, 20.4 and \cite{Spr}, 15.1).

Recall we have fixed a maximal $F$-split torus $A \subset G$.  
We call an $F$-Levi subgroup $M$ in $G$ {\em semistandard} provided that $A_M \subseteq A$ (it follows that $M \supseteq T$).

We call a parabolic subgroup $P \subset G$ an $F$-{\em parabolic} subgroup if $P$ is defined over $F$, and {\em semistandard} if $P \supset A$.  A semistandard $F$-Levi subgroup $M \subset G$ is a Levi factor of a semistandard $F$-parabolic subgroup (\cite{Bor91}, 20.4).  

A parabolic subgroup $P \subset G$ will be called {\em standard} if $P \supseteq B_0$.  Similarly, a Levi subgroup $M$ will be called {\em standard} if $M$ is the unique semistandard Levi factor of a standard parabolic subgroup.

\subsection{Parahoric subgroups of $F$-Levi subgroups} \label{parahoric_vs_Levis_section}

Let $M$ denote a semistandard $F$-Levi subgroup of $G$.  Suppose that $M$ is the Levi factor of an $F$-parabolic subgroup $P$.  Write $P = MN$, where $N$ denotes the unipotent radical of $P$.  

We wish to show that $J \cap M$ is a parahoric subgroup of $M$, and to describe the corresponding facet in the building $\mathcal B(M)$.  This gives rise to an important cohomology vanishing result (Lemma \ref{J_cap_M_is_parahoric} (c) below) which will be used in the descent of twisted orbital integrals (cf. section \ref{descent}).

Let $\mathcal A^L_M$ denote the apartment in $\mathcal B(M(L))$ corresponding to the torus $T$, so that neglecting the Coxeter complex structures we have $\mathcal A_M^L \cong X_*(T)_{\mathbb R} \cong \mathcal A^L$.  Since the Bruhat-Tits affine roots of $T$ associated to $M(L)$ form a subset of those which are associated to $G(L)$, the affine root hyperplanes in $X_*(T)_{\mathbb R}$ coming from $M$ form a subset of those coming from $G$.  Thus every facet in $\mathcal A^L$ is contained in a unique facet of $\mathcal A_M^L$.  We now define the facet ${\bf a}^M_J \subset \mathcal A_M^L$ to be the unique facet containing ${\bf a}_J$.

\begin{lemma} \label{J_cap_M_is_parahoric} Let $M$ denote a semistandard $F$-Levi subgroup of $G$, and let $P = MN$ denote an $F$-rational parabolic subgroup having $M$ as Levi factor.  Then the following statements hold:
\begin{enumerate}
\item[(a)] $J \cap M$ is the parahoric subgroup of $M$ corresponding to the facet ${\bf a}^M_J$; in particular 
$I \cap M$ is an Iwahori subgroup of $M$;
\item[(b)] $J \cap P = (J \cap M)(J \cap N)$;
\item[(c)] $H^1(\langle \sigma \rangle, J(L) \cap P) = 1$.
\end{enumerate}
\end{lemma}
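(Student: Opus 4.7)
The plan is to establish all three parts by first proving their $L$-point analogues from the Bruhat-Tits root-group description of parahoric subgroups, then deducing (c) from a short exact sequence via Galois cohomology, and finally descending to $F$-points by taking $\sigma$-invariants. The $\sigma$-invariance of both ${\bf a}_J$ and its ambient $M$-facet ${\bf a}^M_J$ makes this descent smooth.

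For the $L$-version of (a), I would use the decomposition (\ref{2nd_description}) to write $J(L) = T(L)_1 \, \mathfrak U_{{\bf a}_J}(\mathcal O_L)$; the parahoric of $M(L)$ attached to ${\bf a}^M_J$ is $T(L)_1 \, \mathfrak U^M_{{\bf a}^M_J}(\mathcal O_L)$, where the second factor is built from root-group schemes $\mathfrak U_{a, {\bf a}^M_J}$ for relative roots $a \in \Phi(M, A^L)$. Since ${\bf a}_J$ is contained in the $M$-facet ${\bf a}^M_J$, every $M$-affine root has constant sign on ${\bf a}^M_J$, so (\ref{mathfrakU_defn}) gives the key equality $\mathfrak U_{a, {\bf a}^M_J}(\mathcal O_L) = \mathfrak U_{a, {\bf a}_J}(\mathcal O_L)$ for every $a \in \Phi(M, A^L)$. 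Combining this with the Iwahori-type factorization $J(L) = \mathfrak U^{\bar N}_{{\bf a}_J}(\mathcal O_L) \, T(L)_1 \, \mathfrak U^M_{{\bf a}_J}(\mathcal O_L) \, \mathfrak U^N_{{\bf a}_J}(\mathcal O_L)$ (with unique factorization) and the open-cell uniqueness $\bar N \times M \times N \hookrightarrow G$, we see that any $x \in J(L) \cap M(L)$, when written in the Iwahori factorization, must have trivial $\bar N$- and $N$-components, so $x \in T(L)_1 \, \mathfrak U^M_{{\bf a}_J}(\mathcal O_L)$, which is the parahoric of $M(L)$ at ${\bf a}^M_J$. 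The same Iwahori-type factorization combined with open-cell uniqueness yields the $L$-version of (b): an element of $J(L) \cap P(L) = J(L) \cap M(L) N(L)$ must have trivial $\bar N$-component.

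For (c), the $L$-version of (b) combined with the semidirect structure $P = M \ltimes N$ gives a $\sigma$-equivariant short exact sequence
$$
1 \to J(L) \cap N(L) \to J(L) \cap P(L) \to J(L) \cap M(L) \to 1.
$$
The left term is pro-unipotent, being a product (in any fixed ordering) of the root subgroups $\mathfrak U_{a, {\bf a}_J}(\mathcal O_L)$ for $a \in \Phi(N, A^L)$, so its $H^1(\langle\sigma\rangle, -)$ vanishes by filtering by additive-type subquotients and applying Lang's theorem to each layer. The right term is a parahoric of $M(L)$ by the $L$-version of (a), whose $H^1(\langle\sigma\rangle, -)$ vanishes by the analogue of (\ref{coh.vanish}) applied to $M$, i.e., smoothness of $\mathcal M^\circ_{{\bf a}^M_J}$ over $\mathcal O_L$ with connected special fiber combined with Lang's theorem. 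The long exact sequence then yields (c).

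Finally, (a) over $F$ is the identification $(J(L) \cap M(L))^\sigma = J \cap M$, which by the $L$-version and the $\sigma$-invariance of ${\bf a}^M_J$ is the parahoric of $M$ at ${\bf a}^M_J$; and (b) over $F$ follows by taking $\sigma$-invariants of the above short exact sequence, with surjectivity of $J \cap P \to J \cap M$ supplied by the vanishing $H^1(\langle\sigma\rangle, J(L) \cap N(L)) = 1$. The main technical point is the reverse containment in the $L$-version of (a), which I expect to rest on the Iwahori-type factorization and open-cell uniqueness; once that is in hand, (b) at the level of $L$-points is a short formal argument, (c) is a one-step cohomological dévissage, and descending to $F$ is routine.
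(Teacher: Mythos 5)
The central step of your proof of the $L$-version of (a) and (b) rests on a factorization that is false for a general parahoric $J$. You assert a unique product decomposition
$$
J(L) \;=\; \mathfrak U^{\bar N}_{{\bf a}_J}(\mathcal O_L)\, T(L)_1\, \mathfrak U^{M}_{{\bf a}_J}(\mathcal O_L)\, \mathfrak U^{N}_{{\bf a}_J}(\mathcal O_L),
$$
but this holds only when the special fiber of $\mathcal G^\circ_{{\bf a}_J}$ is solvable, i.e.\ when $J$ is an Iwahori subgroup. For a general parahoric the special fiber is a connected reductive group and the big Bruhat cell does not cover it: the Weyl group elements of the special fiber are missing from the right-hand side. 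Concretely, take $G = {\rm SL}_2$, $J = K = {\rm SL}_2(\mathcal O_L)$, $P = B$, $M = T$. Then the element
$\left(\begin{smallmatrix} 0 & -1 \\ 1 & 0 \end{smallmatrix}\right)$
lies in $K$ but not in $\bar N(\mathcal O_L)\,T(\mathcal O_L)\,N(\mathcal O_L)$ (whose members all have a unit in the upper-left corner). The same failure persists for larger $M$ and for hyperspecial $K$ in higher rank, because $\bar N(\bar k)\,M(\bar k)\,N(\bar k)$ is never all of $\bar{\mathcal G}(\bar k)$. Your deductions in (a) (that an element of $J(L)\cap M(L)$ has trivial $\bar N$- and $N$-components) and in (b) are therefore unsupported, and since (c) is derived from (a) and (b) via the short exact sequence, the gap propagates to all three parts.

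The paper circumvents this by not attempting any product decomposition of $J(L)$. For (a), it invokes Theorem \ref{HR_parahoric_description}, identifying $J(L)$ with $\mathcal G_{{\bf a}_J}(\mathcal O_L)\cap\ker\kappa_G$, so the problem becomes the inclusion $\mathcal G_{{\bf a}_J}(\mathcal O_L)\cap\ker\kappa_G\cap M \subseteq \mathcal G_{{\bf a}^M_J}(\mathcal O_L)\cap\ker\kappa_M$. The ``easy'' direction follows from functoriality of $\kappa$; the hard direction is reduced (via the Bruhat--Tits decomposition for $M$) to a concrete statement about elements $x=\varpi^\lambda w\in\widetilde W_M(L)$: the constraint $\kappa_G(\varpi^\lambda)=0$ forces $\lambda\in Q^\vee$, and the requirement that $x$ fix a point of ${\bf a}^M_J$ forces $\lambda\in Q^\vee_M$, putting $x$ in $W_{M,\mathrm{aff}}$, hence (being type-preserving and fixing a point) $x$ fixes all of ${\bf a}^M_J$. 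This analysis of the Weyl-group part of $x$ is precisely what your proposed factorization silently omits. Part (b) is handled in the paper by an unrelated and much simpler device: choose a faithful $\mathcal O$-representation $\mathcal G^\circ_{{\bf a}_J}\hookrightarrow{\rm GL}_{n,\mathcal O}$ and reduce to the corresponding, elementary, statement for block parabolics in ${\rm GL}_n$. Your observation that $\mathfrak U_{a,{\bf a}^M_J}(\mathcal O_L)=\mathfrak U_{a,{\bf a}_J}(\mathcal O_L)$ for $M$-roots $a$ is correct and establishes the easy inclusion ${\bf a}^M_J$-parahoric $\subseteq J(L)\cap M(L)$; the difficult inclusion is the reverse one, and that is where a new idea — the Kottwitz homomorphism and the affine Weyl group argument — is needed.
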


\begin{proof}

\noindent Part (a):  As a first approximation to this result, note that ${\mathbf a}_J^M \supseteq {\mathbf a}_J$ implies the containment 
\begin{equation} \label{1st_approx_cont}
\mathcal G_{{\mathbf a}_J}(\mathcal O_L) \cap M \supseteq \mathcal G_{{\mathbf a}_J^M}(\mathcal O_L).
\end{equation}

 In view of Theorem \ref{HR_parahoric_description}, we need to show that 
\begin{equation*}
{\rm Fix}({\bf a}^{\rm ss}_J) \cap {\rm ker}(\kappa_G) \cap M = {\rm Fix}({\bf a}^{M,{\rm ss}}_J) \cap {\rm ker}(\kappa_M),
\end{equation*}
or equivalently that 
\begin{equation} \label{Fix_cap_ker}
\mathcal G_{{\mathbf a}_J}(\mathcal O_L) \cap {\rm ker}(\kappa_G) \cap M = \mathcal G_{{\mathbf a}^M_J}(\mathcal O_L) \cap {\rm ker}(\kappa_M).
\end{equation}
By the functoriality of $G \mapsto \kappa_G$ (see \cite{Ko97}) we have a commutative diagram
$$
\xymatrix{
M(L) \ar[r] \ar[d]_{\kappa_M} & G(L) \ar[d]_{\kappa_G} \\
X^*(Z(\widehat{M})^{\Gamma_0}) \ar[r] & X^*(Z(\widehat{G})^{\Gamma_0}),}
$$
which together with (\ref{1st_approx_cont}) shows that the inclusion $''\supseteq''$ holds 
in (\ref{Fix_cap_ker}).
 
We need to prove the inclusion ``$\subseteq$'' in (\ref{Fix_cap_ker}).
Let $I'_M$ denote any Iwahori subgroup of $M(L)$ corresponding to an alcove ${\mathbf a}'_M$ of $\mathcal A^L_M$ whose closure contains ${\mathbf a}_J^M$.  The following claim clearly suffices, since the affine Weyl group $W_{M, \rm aff}$ and the Iwahori subgroup $I'_M$ for $M$ belong to ${\rm ker}(\kappa_M)$.

\smallskip

 {\bf Claim:}  Any element of $G(L)_1 \cap M$ which fixes one point in 
${\bf a}^M_J$ belongs to $I'_M \, W_{M, \rm aff}\, I'_M$ and fixes every point in ${\bf a}^M_J$.

\smallskip

{\em Proof:}  Suppose $x \in G(L)_1 \cap M$ fixes a point in ${\bf a}^M_J \subset \overline{{\mathbf a}_M'}$.  
By the Bruhat-Tits decomposition $I'_M(L) \backslash M(L)/I'_M(L) = \widetilde{W}_M(L)$, we may assume $x \in \widetilde{W}_M(L) = X_*(T) \rtimes W_M(L)$.  Write $x = \varpi^\lambda \, w$ for $\lambda \in X_*(T)$ and $w \in W_M(L)$ (where $w$ is viewed in $M(L)$ using any choice of representative in $K(L) \cap M$).  We have $\varpi^\lambda \in G(L)_1$.

Now since $G$ is split over $L$, we have $X^*(Z(\widehat{G})^{\Gamma_0}) = X^*(Z(\widehat{G})) = X_*(T)/Q^\vee$, where $Q^\vee$ denotes the lattice in $X_*(T)$ spanned by the coroots for $T$ in ${\rm Lie}(G)$ (the final equality is due to Borovoi \cite{Bo}).  Furthermore, the compatibility of $\kappa_G$ and 
$$
\kappa_T : T(L) \rightarrow X_*(T)$$ 
means that $\kappa_G$ sends $\varpi^\lambda$ to the image of $\lambda$ in $X_*(T)/Q^\vee$.  It follows that $\lambda \in Q^\vee$.  

Let $Q_M^\vee$ denote the lattice in $X_*(T)$ spanned by the coroots for $T$ in ${\rm Lie}(M)$.  Let $n$ denote the order of $w$.  Then $x^n$ is the translation by the coweight $\mu := \sum_{i = 0}^{n-1} w^i\lambda$.  Since this fixes a point in ${\mathbf a}^M_J$, we must have $\mu = 0$.  But $w^i\lambda \equiv \lambda$ modulo $Q^\vee_M$, and it follows that $\lambda \in (Q^\vee_M)_\mathbb Q \cap Q^\vee = Q^\vee_M$.  Thus, $x \in W_{M,\rm aff}$.  This means that $x$ is a type-preserving automorphism of the apartment $\mathcal A^L_M$ which fixes a point in the facet ${\bf a}_J^M$, and this implies that it fixes ${\bf a}_J^M$ pointwise.  This proves the claim.  \qed

\medskip

\noindent Part (b):  Write $\mathcal O_L =: \mathcal O$.  Choosing a faithful $\mathcal O$-representation $\rho: 
\mathcal G^\circ_{{\mathbf a}_J} \rightarrow {\rm GL}_{n,\mathcal O}$, and viewing $J(L) = \mathcal G^\circ_{{\mathbf a}_J}(\mathcal O)$ as the subgroup of matrices in $G(L) \hookrightarrow {\rm GL}_n(L)$ which have entries in $\mathcal O$, the equality $\mathcal G^\circ_{{\mathbf a}_J}(\mathcal O) \cap P = (\mathcal G^\circ_{{\mathbf a}_J}(\mathcal O) \cap M)(\mathcal G^\circ_{{\mathbf a}_J}(\mathcal O) \cap N)$ follows from the corresponding one for parabolic subgroups of ${\rm GL}_n$.  

\medskip

\noindent Part (c):  Since by (a) $J(L) \cap M$ is the $\mathcal O_L$-points of an $\mathcal O_F$-group scheme with connected geometric fibers, we have $H^1(\langle \sigma \rangle, J(L) \cap M) = 1$ (by Greenberg's theorem \cite{Gr}).  Thus in view of (b) it is enough to show that $H^1(\langle \sigma \rangle, J(L) \cap N ) = 1$.  We use \cite{BT2}, 5.1.8, 5.1.16-18, 5.2.2-4.  Since $J(L) \cap N$ is $\sigma$-stable, by descent it comes from a smooth connected group scheme over $\mathcal O_F$.  
The latter is itself a product of $\sigma$-stable connected smooth $\mathcal O_F$-subgroups ${\mathfrak U}^\natural_{b,(k,l)}$ (notation of loc.~cit. 5.2.2) which is the scheme associated to a locally free $\mathcal O_F$-module of finite type.  It then follows as in loc.~cit. 5.1.18 (i.e. ultimately from Shapiro's lemma and 
$H^1(\langle \sigma \rangle, \mathcal O_L) = 1$) that $H^1(\langle \sigma \rangle, ({\mathfrak U}^{\natural}_{b,(k,l)})_L) = 1$, and from this $H^1(\langle \sigma \rangle, J(L) \cap N) = 1$ follows.
\end{proof}

\section{Definition and properties of the base change homomorphism} \label{base_change_hom_section}

Let $G,A,T$ etc. be as in the previous section.

\subsection{Bernstein isomorphism}

Let $\chi$ be an unramified character of $T(F)$, that is, a character $\chi: T(F) \rightarrow \mathbb C^\times$ which is trivial on $T_b = T(F) \cap T(L)_1$.  By Lemma \ref{unramified_characters}, $\chi$ can be identified with a homomorphism $X_*(A) \rightarrow \mathbb C^\times$.  Note that $\chi: X_*(A) \rightarrow \mathbb C^\times$ extends by linearity to give a $\mathbb C$-algebra homomorphism $\chi: \mathbb C[X_*(A)] \rightarrow \mathbb C$.  

Let $B \in \mathfrak B(T)$.  For any unramified character $\chi$, consider the (unitarily normalized) unramified 
principal series $i^G_B(\chi) := {\rm Ind}^G_B(\delta_B^{1/2}\chi)$.  
The Hecke algebra $\mathcal H_J(G)$ acts on the right on the $J$-invariants 
$i^G_B(\chi)^J$ by convolution $*$ of functions (using the Haar measure on $G$ giving $J$ measure 1).  

Now let $R := \mathbb C[X_*(A)]$, an algebra with action of the Weyl group $_FW =: W$.  Recall that for $J = I$, the Bernstein isomorphism is an isomorphism
$$
B: R^{W} ~ \widetilde{\rightarrow} ~ Z(\mathcal H_I(G)),
$$
characterized in terms of the action of $Z(\mathcal H_I(G))$ on unramified principal series, as follows.

Write $B = TU$ as before, and abbreviate $A(F) =: A$ and $A(\mathcal O_F) =: A_\mathcal O$.  Further write 
$\mathcal H$ in place of $\mathcal H_I(G)$.  Let ${\bf M} = {\bf M}_B$ be defined to be the complex vector space
$$
{\bf M} = C_c(A_\mathcal O U\backslash G/I) = C_c(T_bU \backslash G/I).
$$
The subscript ``c'' means that we consider functions supported on only finitely many double cosets.  As a complex vector space, ${\bf M}$ has a basis consisting of the functions  $v_x := 1_{A_\mathcal OUxI}$ ($x \in \widetilde{W}$).

The vector space ${\bf M}$ is an $(R,\mathcal H)$-bimodule.  It is clear that $\mathcal H$ acts on the right on ${\bf M}$ by right convolutions $*$.  One proves as in \cite{HKP} Lemma 1.6.1 that ${\bf M}$ is free of rank 1 as an $\mathcal H$-module, with canonical generator $v_1$.  (This fact was observed earlier by Chriss and Khuri-Makdisi \cite{CK}, who derived it from the Bernstein presentation of $\mathcal H$.)  The ring $R$ can be viewed as the Hecke algebra $C_c(A/A_\mathcal O)$, where convolution of functions is defined using the Haar measure $da$ on $A$ which gives $A_\mathcal O$ measure 1.  With this in mind, $R$ acts on the left on ${\bf M}$ by {\em normalized} left convolutions $\cdot$.  More precisely, letting $r \in R$, we define the left action of $r$ on $\varphi \in {\bf M}$ by the integral 
\begin{equation} \label{norm_left_convolution}
r \cdot \varphi(g) = \int_{A} r(a) \, \delta^{1/2}_B(a) \, \varphi(a^{-1}g) \, da.
\end{equation}
In other words, if $\lambda \in X_*(A)$ and if $\varpi^\lambda$ is regarded as both an element in $A/A_\mathcal O$ {\em and} as the characteristic function on $A/A_\mathcal O$ for the subset $\varpi^\lambda$, then
\begin{equation*}  
\varpi^\lambda \cdot v_{x} = \delta^{1/2}_B(\varpi^\lambda)\, v_{\varpi^\lambda x}.
\end{equation*}

The Bernstein isomorphism $B : R^W ~ \widetilde{\rightarrow} ~ Z(\mathcal H_I(G))$ is characterized by the identity
\begin{equation} \label{R_action_char_of_B}
B^{-1}(z) \cdot \varphi = \varphi * z
\end{equation}
for every $z \in Z(\mathcal H_I(G))$, $\varphi \in {\bf M}$.

We have an identification $\mathbb C \otimes_{R, \chi^{-1}} {\bf M} = i^G_B(\chi)^I$ which respects the right $\mathcal H$-actions on each side (see \cite{HKP}, \cite{H07}).  We use the identification to transport the left $R$-action on ${\bf M}$ to one on $i^G_B(\chi)^I$, also denoted $\cdot$.  Say $\phi \in i^G_B(\chi)^I$ corresponds to $\varphi \in {\bf M}$.  Then for $r \in R$ and $g \in G(F)$, we have
\begin{equation} \label{R_action}
(r \cdot \phi)(g) = \int_{A} r(a) \, \delta_B^{1/2}(a) \, \phi(a^{-1}g) \, da = 
\sum_{a \in A/A_{\mathcal O}} r(a) \, \delta_B^{1/2}(a) \, \phi(a^{-1}g),
\end{equation}
which since $\phi(a^{-1}g) = (\delta_B^{-1/2}\chi^{-1})(a) \, \phi(g)$ can be written as
\begin{equation} \label{R_action'}
 \Big(\sum_{a \in A/A_{\mathcal O}} r(a) \, \chi^{-1}(a)\Big) \, \phi(g) = \chi^{-1}(r) \, \phi(g).
\end{equation}
In other words, {\em  $z \in Z(\mathcal H_I(G))$ acts  by right convolutions on $i^G_B(\chi)^I$ by the scalar $ch_\chi(z) := \chi^{-1}(B^{-1}(z))$}.

An exposition of these facts for split groups can be found in \cite{HKP}, and for unramified groups they can be proved in a similar way, see \cite{H07}.   Some other references containing overlapping material are \cite{BD} and \cite{CK}.

\smallskip

The following generalization of the Bernstein isomorphism to parahoric Hecke algebras can be deduced from the theory of the Bernstein center (see \cite{BD}, Prop. 3.14), but we shall give a more elementary proof here, still based on ideas of Bernstein (as presented in \cite{HKP}, \cite{H07}).  We now change slightly our notation.  Let ${\bf \cdot}$ (resp. $*$) denote the convolution product in ${\mathcal H}_I(G)$ (resp. $\mathcal H_J(G)$) determined by the Haar measure which gives $I$ (resp. $J$) measure 1.

\begin{theorem}[Bernstein isomorphism] \label{Bernstein_isomorphism}
Convolution by the characteristic function $\mathbb I_J$ gives an isomorphism 
$$- {\bf \cdot} \mathbb I_J : Z(\mathcal H_I(G)) \rightarrow Z(\mathcal H_J(G)).$$  
The composition 
$$\xymatrix{
R^{_FW} \ar[r]^B & Z(\mathcal H_I(G)) \ar[r]^{-{\bf \cdot} \mathbb I_J} & Z(\mathcal H_J(G))
}$$
is an isomorphism (still denoted $B$) characterized by the following property:  under the right convolution action, any $z \in Z(\mathcal H_J(G))$ acts on the representation $i^G_B(\chi)^J$ by the scalar
$$
ch_\chi(z) := \chi^{-1}(B^{-1}(z)).
$$
\end{theorem}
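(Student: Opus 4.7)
The plan is to leverage the already-established Iwahori Bernstein isomorphism $B : R^{_FW} \xrightarrow{\sim} Z(\mathcal{H}_I(G))$, together with its characterization by the scalar action $ch_\chi(z) = \chi^{-1}(B^{-1}(z))$ on $i^G_B(\chi)^I$, and show that post-composition with $- {\bf \cdot} \mathbb{I}_J$ yields an isomorphism onto $Z(\mathcal{H}_J(G))$ with the analogous scalar characterization on $J$-invariants. I would split the work into three tasks: (i) showing that $- {\bf \cdot} \mathbb{I}_J$ actually maps $Z(\mathcal{H}_I(G))$ into $Z(\mathcal{H}_J(G))$; (ii) computing the scalar by which the output acts on $i^G_B(\chi)^J$; and (iii) establishing bijectivity of the resulting map $Z(\mathcal{H}_I(G)) \to Z(\mathcal{H}_J(G))$.

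Task (i) is almost formal. For $z \in Z(\mathcal{H}_I(G))$, the function $z {\bf \cdot} \mathbb{I}_J$ is right $J$-invariant, and equals $\mathbb{I}_J {\bf \cdot} z$ by centrality of $z$ in the ambient algebra $\mathcal{H}_I(G)$ (which contains $\mathbb{I}_J$), hence is left $J$-invariant as well, so it lies in $\mathcal{H}_J(G)$. Centrality in $\mathcal{H}_J(G)$ follows from the absorption identity $\mathbb{I}_J {\bf \cdot} f = [J:I]\, f$ for all $f \in \mathcal{H}_J(G)$ (under the $I$-normalized Haar measure underlying ${\bf \cdot}$): for such $f$ one computes $(z {\bf \cdot} \mathbb{I}_J) {\bf \cdot} f = [J:I]\,z {\bf \cdot} f = [J:I]\, f {\bf \cdot} z = f {\bf \cdot} (z {\bf \cdot} \mathbb{I}_J)$, and the overall factor of $[J:I]$ is absorbed when rescaling ${\bf \cdot}$ to the $\mathcal{H}_J(G)$-convolution $*$. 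For task (ii), if $\phi \in i^G_B(\chi)^J \subset i^G_B(\chi)^I$, the Iwahori-level identity gives $\pi(z) \phi = ch_\chi(z)\, \phi$ for the $\mathcal{H}_I$-action via the $I$-Haar measure; the factor $[J:I]$ picked up when $\mathbb{I}_J$ is applied to the $J$-fixed vector $\phi$ under the $I$-Haar measure is exactly cancelled by the reciprocal factor $[J:I]^{-1}$ arising in the change from the $I$-Haar to the $J$-Haar measure underlying the $\mathcal{H}_J(G)$-action, so $z {\bf \cdot} \mathbb{I}_J$ acts on $\phi$ by the same scalar $ch_\chi(z) = \chi^{-1}(B^{-1}(z))$.

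Task (iii) is the heart of the proof. Injectivity follows directly from (ii): if $z {\bf \cdot} \mathbb{I}_J = 0$, then $\chi^{-1}(B^{-1}(z)) = 0$ for every unramified character $\chi$, which forces $B^{-1}(z) = 0$ in $R^{_FW}$ since a nonzero element of $R^{_FW} \subset R = \mathbb{C}[X_*(A)]$ is nonzero at some point of the character torus ${\rm Spec}(R)$. For surjectivity, given $z' \in Z(\mathcal{H}_J(G))$, its scalar actions on the $i^G_B(\chi)^J$ as $\chi$ varies form a regular $W$-invariant function on the character torus, hence equal $\chi^{-1}(r)$ for a unique $r \in R^{_FW}$; by (ii), the element $B(r) {\bf \cdot} \mathbb{I}_J \in Z(\mathcal{H}_J(G))$ has the same scalar action as $z'$, and one concludes $z' = B(r) {\bf \cdot} \mathbb{I}_J$ provided a central element of $\mathcal{H}_J(G)$ is determined by its actions on the family $\{i^G_B(\chi)^J\}_\chi$. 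The main obstacle is precisely this last determination step: in the spherical case $J = K$ it is classical (Satake), and for split $G$ with $J = I$ it can be argued directly from the Bernstein presentation as in \cite{HKP}, but in general one must invoke either Bernstein's description of the center of the category of smooth representations \cite{BD} applied to the unramified principal series Bernstein component, or a hands-on construction using the $(R, \mathcal{H}_J(G))$-bimodule $\mathbf{M}_J := C_c(A_{\mathcal{O}} U \backslash G / J)$, which (unlike its Iwahori counterpart $\mathbf{M}$) is not free of rank one over $\mathcal{H}_J(G)$ and therefore demands extra care to extract the inverse map.
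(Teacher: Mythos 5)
Your tasks (i) and (ii) are correct and essentially unavoidable; the absorption/normalization bookkeeping with $[J:I]$ is exactly as you say, and the injectivity deduction in (iii) from the scalar formula (using $i^G_B(\chi)^J \neq 0$ for all $\chi$) is also fine. But there is a genuine gap in the surjectivity half of task (iii), which you in fact flag yourself without resolving. You write that ``one concludes $z' = B(r)\cdot\mathbb{I}_J$ provided a central element of $\mathcal H_J(G)$ is determined by its actions on the family $\{i^G_B(\chi)^J\}_\chi$,'' and then observe that this determination step is the main obstacle, gesturing at Bernstein's theory of the center \cite{BD} or a ``hands-on construction'' with the bimodule $\mathbf M_J$ as possible routes. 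Neither is carried out, and indeed the second is hard precisely for the reason you note ($\mathbf M_J$ is not free of rank one over $\mathcal H_J(G)$, so the Iwahori-case argument does not port over). There is also a smaller unaddressed point hiding in your surjectivity sketch: you assert that an arbitrary $z' \in Z(\mathcal H_J(G))$ acts on $i^G_B(\chi)^J$ by a scalar forming a regular $W$-invariant function on the torus, but that requires proof — $i^G_B(\chi)^J$ is only irreducible as an $\mathcal H_J(G)$-module for generic $\chi$, and extending the resulting scalar function to a regular function on all of $\widehat A/W$ is itself a nontrivial claim.

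The paper closes this gap not by addressing the representation-theoretic determination question but by a commutative-algebra argument that replaces it entirely. Writing $e_J = [J:I]^{-1}\mathbb I_J$ and identifying $\mathcal H_J(G) \cong e_J \mathcal H e_J$ (with $\mathcal H := \mathcal H_I(G)$), the paper passes to the fraction field $L^W$ of $R^W \cong Z(\mathcal H)$ and proves the key fact that the canonical map $L^W \to L^W \otimes_{R^W, e_J} Z(\mathcal H_J)$ is an isomorphism: one identifies $L^W \otimes_{R^W, e_J} Z(\mathcal H_J)$ with $Z\bigl((1\otimes e_J)(L^W \otimes_{R^W} \mathcal H)(1\otimes e_J)\bigr)$, recalls (as in \cite{HKP}) that $L^W \otimes_{R^W} \mathcal H$ is a matrix algebra over $L^W$, and uses the elementary fact that $Z(eM_n(k)e) = ek$ for any idempotent $e$. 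Combining this with the observations that $- \mathbf{\cdot}\, e_J : R^W \to Z(\mathcal H_J)$ is injective, that $Z(\mathcal H_J)$ is integral over $e_J R^W$ (since $\mathcal H$ is finite over $R^W$), and that $R^W$ is an integrally closed domain, one concludes $e_J R^W = Z(\mathcal H_J)$. You would do well to reproduce Lemma \ref{key_lemma} and the integrality argument rather than leave the determination step as an unresolved invocation.
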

 
\begin{proof}
Again write $\mathcal H$ in place of $\mathcal H_I(G)$. 
Write $L$ for the fraction field ${\rm Frac}(R)$ and note that $L^W = {\rm Frac}(R^W)$.  Write $e_J = [J:I]^{-1}{\mathbb I}_J$, an idempotent in $\mathcal H_I(G)$.  The map $h \mapsto [J:I]^{-1}h$ gives an isomorphism of algebras
$$
({\mathcal H}_J(G), *, \mathbb I_J) ~ \widetilde{\rightarrow} ~ (e_J{\mathcal H}e_J, {\bf \cdot}, e_J).$$
Thus, it suffices to show that $-{\bf \cdot} e_J: Z(\mathcal H) \rightarrow Z(e_J\mathcal H e_J)$ is an isomorphism.  We have a commutative diagram
$$
\xymatrix{
 \mathcal H \ar[r]^{e_J {\bf \cdot} - {\bf \cdot} e_J} & \mathcal H_J \\
  Z(\mathcal H) \ar[u] \ar[r]^{-{\bf \cdot} e_J} \ar[d] & Z(\mathcal H_J) \ar[d] \ar[u]\\
 L^W \ar[r] & L^W \otimes_{R^W, e_J} Z(\mathcal H_J),}$$
where we are using the Bernstein isomorphism for the case $J = I$ to identify $Z(\mathcal H)$ with $R^W$, and thus to embed $Z(\mathcal H)$ into $L^W$.  Further, the tensor product is formed using the inclusion $R^W \hookrightarrow L^W$ and $- {\bf \cdot} e_J: R^W \rightarrow Z(\mathcal H_J)$.  

The latter map is injective.  Indeed, suppose $z \in Z(\mathcal H)$.  Then for any $\chi$, the element $z e_J \in Z(\mathcal H_J)$ acts on the right on $i^G_B(\chi)^J$ by the same scalar, namely $ch_\chi(z)$, by which $z$ acts on $i^G_B(\chi)^I$.  Also, $i^G_B(\chi)^J \neq 0$ for all $\chi$.  So, if $z e_J = 0$, we have $ch_\chi(z) = 0$ for every $\chi$, and this implies that $z = 0$.

Further, $\mathcal H_J$ is torsion-free as an $R^W$-module (via $- {\bf \cdot} e_J$), and so $Z(\mathcal H_J) \rightarrow L^W \otimes_{R^W,e_J} Z(\mathcal H_J)$ is injective too.

\begin{lemma} \label{key_lemma}
The canonical map $L^W \rightarrow L^W \otimes_{R^W, e_J} Z(\mathcal H_J)$ is an isomorphism.  In particular, $L^W \otimes_{R^W,e_J} Z(\mathcal H_J)$ is a field, and its subring $Z(\mathcal H_J)$ is a domain.
\end{lemma}

To prove Lemma \ref{key_lemma}, first we show that
$$
L^W \otimes_{R^W,e_J} Z(\mathcal H_J) = Z(L^W \otimes_{R^W,e_J} {\mathcal H}_J).
$$
Indeed, it is clear that the left hand side is contained in the right hand side.  To prove the opposite inclusion, observe that elements in $L^W \otimes_{R^W,e_J} \mathcal H_J$ can be expressed as pure tensors $\frac{r}{s} \otimes h$ ($r,s \in R^W$, and $h \in \mathcal H_J$), and that $h \mapsto 1 \otimes h$ gives an 
{\em injection} $\mathcal H_J \hookrightarrow L^W \otimes_{R^W,e_J} \mathcal H_J$ (since $\mathcal H_J$ is torsion-free as an $R^W$-module via $- {\bf \cdot} e_J$).

Next, note that 
$$
L^W \otimes_{R^W,e_J} \mathcal H_J = (1 \otimes e_J) (L^W \otimes_{R^W} \mathcal H)(1 \otimes e_J),
$$
as $L^W$-algebras.  The isomorphism associates an element $\frac{1}{s} \otimes e_J h e_J$ on the left hand side to the element $(1 \otimes e_J)(\frac{1}{s} \otimes h)(1 \otimes e_J)$ on the right hand side.

Finally, as in the proof of Lemma 2.3.1 from \cite{HKP}, the algebra $L^W \otimes_{R^W} \mathcal H$ is a matrix algebra over $L^W$, and $1 \otimes e_J$ is an idempotent in that algebra.  But it is an elementary exercise in linear algebra to show that for any field $k$ and idempotent $e \in M_n(k)$, we have $Z(eM_n(k)e) = ek$.  This completes the proof of Lemma \ref{key_lemma}. \qed

\medskip

Now we complete the proof of the theorem.  The ring $R$ is finite over $R^W$, and $\mathcal H$ is finite over $R$.  Thus $\mathcal H$ is finite over $R^W$.  Since $R^W$ is Noetherian, $Z(\mathcal H_J)$ is finite hence integral over $e_JR^W$ as well.  By Lemma \ref{key_lemma} and the above remarks, $R^W \isom e_JR^W$ and $Z(\mathcal H_J)$ have the same fraction field, namely $L^W$.  The equality $e_JR^W = Z(\mathcal H_J)$ follows since $R$ (hence also $R^W$) is an integrally closed domain.

The assertion concerning the action of $Z(\mathcal H_J)$ on unramified principal series now follows from the special case $J =I$, see (\ref{R_action_char_of_B}-\ref{R_action'}).
\end{proof}

In the sequel, for a coweight $\mu \in X_*(A)$, the function $z_\mu = z^J_\mu$ will denote the unique element of $Z(\mathcal H_J(G))$ such that 
$$
B^{-1}z_\mu = \sum_{\lambda \in W(F) \mu} t_\lambda
$$
where $t_\lambda$ denotes the the element $\lambda \in X_*(A)$, viewed as an element of the group algebra $\mathbb C[X_*(A)]$.  

\subsection{Base change homomorphism}

Let $E \supset F$ be an unramified extension of degree $r$, and let $\theta$ denote a generator for ${\rm Gal}(E/F)$.  Let $A^E$ denote a maximal $E$-split torus of $G$, containing $A$.  We may assume $A^E$ is defined over $F$ 
(using e.g. \cite{BT2}, 5.1.12).  Write $W(E)$ (resp. $W(F)$) for the relative Weyl group of $G$ over $E$ (resp. $F$); $W(E)$ then acts on the cocharacter lattice $X_*(A^E)$.  For $\nu \in X_*(A^E)$ note that $\sum_{i=0}^{r-1} \theta^i\nu \in X_*(A)$.  This defines the {\em norm homomorphism}
$$
N: \mathbb C[X_*(A^E)]^{W(E)} \rightarrow \mathbb C[X_*(A)]^{W(F)}.
$$
(Here we used $W(F) = W(E)^\theta$; see Lemma \ref{absWeyl_relWeyl}.)  Now we define the {\em base change homomorphism} 
$$
b : Z(\mathcal H_J(G(E)))  \rightarrow Z(\mathcal H_J(G(F)))
$$
to be the unique map making the following diagram commute:
$$
\xymatrix{
\mathbb C[X_*(A^E)]^{W(E)} \ar[r]^B_{\sim} \ar[d]^N & Z(\mathcal H_J(G(E))) \ar[d]^b \\
\mathbb C[X_*(A)]^{W(F)} \ar[r]^B_{\sim}  & Z(\mathcal H_J(G(F))).}
$$

\subsection{Compatibilities of the Bernstein isomorphism}
\subsubsection{Compatibility with change of parahoric, and Satake isomorphism}
First suppose that $J_1 \subset J_2$ are two parahoric subgroups containing $I$.  By construction of the Bernstein isomorphisms $B_1$ and $B_2$, the following diagram commutes.
$$
\xymatrix{
R^W \ar[d]^{=} \ar[r]^{B_1\,\,\,\,\,\,\,}_{\sim \,\,\,\,\,\,\,} & Z(\mathcal H_{J_1}(G)) \ar[d]^{- *_{J_1} 
\mathbb I_{J_2}} \\
R^W \ar[r]^{B_2 \,\,\,\,\,\,\,}_{\sim \,\,\,\,\,\,\,} & Z(\mathcal H_{J_2}(G)).}
$$
Here $-*_{J_1}$ refers to convolution using the Haar measure giving $J_1$ measure 1.

In a special case this is a restatement of the compatibility of the Bernstein and Satake isomorphisms (see \cite{HKP}, section 4.6).  Namely, if $J_1 = J$ is contained in a (hyper)special  maximal compact $J_2 = K$, then the following diagram commutes
$$
\xymatrix{
R^W \ar[d]^{=}\ar[r]^{B \,\,\,\,\,\,}_{\sim \,\,\,\,\,\,\,} & Z(\mathcal H_J(G)) \ar[d]^{-*_J \mathbb I_K} \\
R^W & \mathcal H_K(G) \ar[l]_S^{\sim},
}
$$
where $S$ denotes the Satake isomorphism.

It follows that for the base change homomorphisms $b_1$ and $b_2$, and $z \in Z(\mathcal H_{J_1}G(E))$ we have
\begin{equation} \label{eq:b_vs_change_in_J}
b_2(z *_{J_1(E)} \mathbb I_{J_2(E)}) = b_1(z) *_{J_1(F)} \mathbb I_{J_2(F)}.
\end{equation}


\subsubsection{Compatibility with constant term homomorphism}

Fix an $F$-Levi subgroup $M \supset T$ and an $F$-parabolic $P=MN$.  Recall the definition of the modulus character for $P$, a function on $m \in M(F)$ given by
$$
\delta_P(m) := |{\rm det}({\rm Ad}(m); {\rm Lie} N(F))|_F,
$$
where $|\cdot|_F$ is the normalized absolute value on $F$.  Here ${\rm Ad}(m)$ is the derivative of the conjugation action $(m,n) \mapsto mnm^{-1}$, for $n \in N(F)$.

For a compactly-supported locally constant function $f$ on $G(F)$, we define the locally constant function $f^{(P)}$ on $M(F)$ by the formula
\begin{equation}
f^{(P)}(m) = \delta_P^{1/2}(m) \int_{N(F)} f(mn) \, dn = \delta_P^{-1/2}(m) \int_{N(F)} f(nm) \, dn,
\end{equation}
where $dn$ is the Haar measure giving $N(F) \cap J$ measure 1.  (Warning: $f^{(P)}$ depends on the choice of $J$ since the latter is used to define the Haar measure; in the sequel we will allow $J$ to vary.)  

Note that this definition of $f^{(P)}$ differs from the conventional one (as in, e.g. \cite{Cl90}, 2.1), in that it does not incorporate ``averaging over $K$-conjugacy''.  However, this notion is exactly what is needed in our context, in part because of the following important fact, which will be proved in subsection \ref{proof_of_B_constant_term_compatibility}.  Also, it is well-adapted to descent in our setting, as shown by the descent formulas (\ref{eq:descent_over_E}) and (\ref{eq:1st_descent_over_F}) below which relate (twisted) orbital integrals on $G$ and $M$.

\begin{prop} \label{B_and_constant_term_compatibility}
The constant term preserves centers of parahoric Hecke algebras.  If $f^{(P)}$ is defined using the parahoric subgroup 
$J$, then the following diagram is commutative.
$$
\xymatrix{
R^{W_G(F)} \ar[r]^{B \,\,\,\,\,\,\,\,}_{\sim \,\,\,\,\,\,\,\, } \ar[d]_{incl.} & Z(\mathcal H_J(G(F))) 
\ar[d]^{f \mapsto f^{(P)}} \\
R^{W_M(F)} \ar[r]^{B \,\,\,\,\,\,\,\,\,\,}_{\sim \,\,\,\,\,\,\,\,\,\,} & Z(\mathcal H_{J\cap M}(M(F))).}
$$
\end{prop}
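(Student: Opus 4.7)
The strategy is to characterize $f^{(P)}$ via its scalar action on $(J\cap M)$-invariants of unramified principal series of $M$, and then invoke Theorem \ref{Bernstein_isomorphism} for $M$ (applicable since $J \cap M$ is a parahoric of $M(F)$ containing the Iwahori $I \cap M$ by Lemma \ref{J_cap_M_is_parahoric}(a)) to conclude both centrality and commutativity of the diagram in one stroke. Concretely, fix $f \in Z(\mathcal H_J(G(F)))$ and let $r := B^{-1}(f) \in R^{W_G(F)}$; by Theorem \ref{Bernstein_isomorphism}, $f$ acts by right convolution on $i^G_B(\chi)^J$ as the scalar $\chi^{-1}(r)$, for every unramified character $\chi$ of $T(F)$. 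The crux is to prove that $f^{(P)}$ acts on $i^M_{B_M}(\chi)^{J \cap M}$ as the same scalar $\chi^{-1}(r)$, where $B_M := B \cap M = T(U \cap M)$. Once this is established, the element $B(r) \in Z(\mathcal H_{J \cap M}(M(F)))$ attached to $r$ viewed now in $R^{W_M(F)} \supseteq R^{W_G(F)}$ also acts as $\chi^{-1}(r)$ on every $i^M_{B_M}(\chi)^{J \cap M}$, so the difference $f^{(P)} - B(r) \in \mathcal H_{J \cap M}(M(F))$ annihilates every such space; a standard density/faithfulness argument (each irreducible $\mathcal H_{J \cap M}(M(F))$-module embeds into some $i^M_{B_M}(\chi)^{J \cap M}$, by Borel--Casselman) then forces $f^{(P)} = B(r)$, which is simultaneously the centrality of $f^{(P)}$ and the commutativity of the diagram.

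To establish the scalar identity, I would combine transitivity of normalized parabolic induction, $i^G_B(\chi) = i^G_P(i^M_{B_M}(\chi))$, with the classical compatibility of constant terms and normalized Jacquet modules: for any smooth $G(F)$-representation $V$, the action of $f \in \mathcal H(G(F))$ on $V$ descends via the canonical quotient $V \twoheadrightarrow r^G_P(V)$ to the action of $f^{(P)}$ on the normalized Jacquet module $r^G_P(V)$. The factor $\delta_P^{1/2}$ in the definition of $f^{(P)}$ is built in precisely so that this compatibility holds with the unitary normalizations used for parabolic induction. Applied to $V = i^G_P(i^M_{B_M}(\chi))$, normalized Frobenius reciprocity yields a canonical $M$-equivariant projection $r^G_P(V) \twoheadrightarrow i^M_{B_M}(\chi)$, and passing to $(J \cap M)$-invariants transfers the known scalar action of $f$ on $V^J$ to the required scalar action of $f^{(P)}$ on $i^M_{B_M}(\chi)^{J\cap M}$.

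The main obstacle will be the careful bookkeeping of normalizations together with the verification that the induced map $r^G_P(V)^{J \cap M} \twoheadrightarrow i^M_{B_M}(\chi)^{J \cap M}$ is nonzero, so that the scalar identity can actually be read off on $(J \cap M)$-invariants. For $\chi$ in general position, the Bernstein--Zelevinsky geometric lemma filters $r^G_P(i^G_P(i^M_{B_M}(\chi)))$ by subrepresentations indexed by $W_M \backslash W_G / W_M$, with the trivial double coset contributing $i^M_{B_M}(\chi)$ itself; generically this ensures the required nontriviality. Since the scalars on both sides depend polynomially on $\chi$ (treated as a point of the affine variety of unramified characters of $T(F)$), a density argument then propagates the identity from generic $\chi$ to all unramified characters, completing the proof.
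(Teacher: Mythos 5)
Your overall strategy matches the paper's: fix $f = B(r) \in Z(\mathcal H_J(G(F)))$, show $f^{(P)}$ acts on $i^M_{B_M}(\chi)^{J\cap M}$ by the scalar $\chi^{-1}(r)$, and then invoke Theorem~\ref{Bernstein_isomorphism} for $M$. The gap is in how you establish the scalar action. You attribute it to a ``classical compatibility of constant terms and normalized Jacquet modules'': that the action of $f \in \mathcal H(G(F))$ on $V$ descends along the canonical quotient $V \twoheadrightarrow r^G_P(V)$ to the action of $f^{(P)}$. For the \emph{un-averaged} constant term used in this paper, that assertion is simply false for general $f$. If it held for all $f \in \mathcal H_J(G)$, then $f \mapsto f^{(P)}$ would be an algebra homomorphism $\mathcal H_J(G) \to \mathcal H_{J\cap M}(M)$, which it is not: already for $G = {\rm GL}_2$, $J = I$ an Iwahori, $P = B$, $M = T$, and $f = \mathbb I_{IsI}$, one computes $f^{(B)} = 0$ while $(f * f)^{(B)} = q\,\mathbb I_{T(\mathcal O)} \neq 0 = f^{(B)} * f^{(B)}$; equivalently, $\pi_N(f^{(B)}) = 0$ while $p\circ\pi(f)$ is nonzero on $V^I$ since $p|_{V^I}$ is injective by Casselman's theorem. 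So the compatibility you invoke cannot be cited; it is precisely what needs to be proved.

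What \emph{is} true — and is the paper's actual technical input, Lemma~\ref{main_calc} specialized to $w=1$, $E=F$ — is the identity $\mathcal P \circ R(\phi) \circ \mathfrak s = R(\delta_P^{1/2}\phi^{(P)})$ on $\sigma^{J\cap M}$, where $\mathfrak s$ is the linear (not $M$-equivariant, not a Jacquet-module splitting) section sending $v$ to the unique $\widetilde\Phi \in i^G_B(\xi_1)^J$ supported on the single Iwasawa cell $PJ$ with $\widetilde\Phi(1)=v$, and $\mathcal P$ is evaluation at $1$. This identity holds for \emph{all} $\phi \in \mathcal H_J(G)$, and its proof is a concrete convolution computation hinging on the Iwahori factorization of $J\cap P$ and the support condition ${\rm supp}(\widetilde\Phi) \subseteq PJ$. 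One then uses centrality of $\phi$ to get $\mathcal P R(\phi)\mathfrak s = ch_{\xi_1}(\phi,J)\cdot{\rm id}$, whence $R(\delta_P^{1/2}\phi^{(P)}) = ch_{\xi_1}(\phi,J)\cdot{\rm id}$ on $i^M_{B_M}(\xi_1)^{J\cap M}$; this is where centrality enters, and it enters \emph{after} the convolution identity, not before. So the missing content of your proof is essentially Lemma~\ref{main_calc}. A smaller remark: once you have $\mathfrak s$ and $\mathcal P$ with $\mathcal P\circ\mathfrak s={\rm id}$, the nontriviality of the projection is automatic for every $\chi$, so the Bernstein--Zelevinsky geometric lemma and the density-in-$\chi$ argument you append are unnecessary.
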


Since the norm homomorphisms for $M$ and $G$ are obviously compatible with the inclusion 
$$\mathbb C[X_*(A^E)]^{W_G(E)} \hookrightarrow \mathbb C[X_*(A^E)]^{W_M(E)}$$ and its analogue for $F$ replacing $E$, 
we have the following corollary.

\begin{cor} \label{b_constant_term_compatibility}With the assumptions above,
the following diagram commutes
$$
\xymatrix{
Z(\mathcal H_J(G(E))) \ar[r]^{f \mapsto f^{(P)}\,\,\,\,\,\,} \ar[d]^b & Z(\mathcal H_{J \cap M}(M(E))) \ar[d]^b \\
Z(\mathcal H_J(G(F))) \ar[r]^{f \mapsto f^{(P)}\,\,\,\,\,\,\,} & Z(\mathcal H_{J \cap M}(M(F))).}
$$
\end{cor}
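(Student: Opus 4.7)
My plan is to deduce the corollary purely formally by stacking commuting squares; the substantive content is packaged in Proposition \ref{B_and_constant_term_compatibility}, and once that is granted the corollary is just bookkeeping.

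First, I would set up a cube of maps. The four vertical edges are the Bernstein isomorphisms
\begin{align*}
B_G^E: \mathbb C[X_*(A^E)]^{W_G(E)} &\stackrel{\sim}{\rightarrow} Z(\mathcal H_J(G(E))), \\
B_G^F: \mathbb C[X_*(A)]^{W_G(F)} &\stackrel{\sim}{\rightarrow} Z(\mathcal H_J(G(F))), \\
B_M^E: \mathbb C[X_*(A^E)]^{W_M(E)} &\stackrel{\sim}{\rightarrow} Z(\mathcal H_{J\cap M}(M(E))), \\
B_M^F: \mathbb C[X_*(A)]^{W_M(F)} &\stackrel{\sim}{\rightarrow} Z(\mathcal H_{J\cap M}(M(F))).
\end{align*}
The front and back faces of the cube are the definitions of the base change homomorphisms $b$ for $G$ and for $M$: each identifies $b$ with the norm map $N: \nu \mapsto \sum_{i=0}^{r-1} \theta^i \nu$ on the appropriate space of Weyl invariants. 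The left and right faces of the cube are furnished by Proposition \ref{B_and_constant_term_compatibility} applied over $E$ and over $F$ respectively: each identifies the constant term $f \mapsto f^{(P)}$ with the tautological inclusion of $W_G$-invariants into $W_M$-invariants inside the same polynomial algebra.

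Next, the one remaining face of the cube is the purely combinatorial square
$$
\xymatrix{
\mathbb C[X_*(A^E)]^{W_G(E)} \ar[r]^{\rm incl.} \ar[d]_N & \mathbb C[X_*(A^E)]^{W_M(E)} \ar[d]_N \\
\mathbb C[X_*(A)]^{W_G(F)} \ar[r]^{\rm incl.} & \mathbb C[X_*(A)]^{W_M(F)},
}
$$
which commutes trivially: both vertical arrows are given by the common formula $\nu \mapsto \sum_{i=0}^{r-1} \theta^i \nu$ on $X_*(A^E)$ (using that $W_G(F) = W_G(E)^\theta$ and $W_M(F) = W_M(E)^\theta$ by Lemma \ref{absWeyl_relWeyl}), and the horizontal arrows are just set-theoretic inclusions. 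With five faces of the cube commutative and all Bernstein isomorphisms bijective, commutativity of the sixth face -- which is exactly the diagram in the corollary -- follows by a short diagram chase: given $f = B_G^E(r)$ with $r \in \mathbb C[X_*(A^E)]^{W_G(E)}$, both $(bf)^{(P)}$ and $b(f^{(P)})$ are computed to be $B_M^F(N(r))$.

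The main obstacle therefore lies not in this deduction but in Proposition \ref{B_and_constant_term_compatibility} itself; the key point there will be that my unconventional normalization of $f^{(P)}$ (without ``averaging over $K$-conjugacy'') is precisely what makes the constant term correspond on the Bernstein side to the tautological inclusion of invariants, whereas the standard normalization would not enjoy such a clean description.
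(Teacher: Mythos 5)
Your proposal is correct and takes essentially the same route as the paper: the paper derives the corollary in one sentence by observing that the norm homomorphisms for $M$ and $G$ are compatible with the inclusion $\mathbb C[X_*(A^E)]^{W_G(E)} \hookrightarrow \mathbb C[X_*(A^E)]^{W_M(E)}$ (and its $F$-analogue) and then combining this with Proposition \ref{B_and_constant_term_compatibility}, which is exactly the content of your cube diagram chase.
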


\subsubsection{Compatibility with conjugation by $w \in W(F)$} \label{w_conjugation_subsection}

Recall that for $\phi \in \mathcal H_J(G(F))$, and $w \in W(F)$, we set $^w\phi(g) = \phi(w^{-1}gw)$.  The map $\phi \mapsto \, ^w\phi$ obviously gives an isomorphism $\mathcal H_J(G(F)) ~\widetilde{\rightarrow} \mathcal ~ H_{\, ^wJ}(G(F))$.

Fix an unramified character $\xi$ of $T(F)$, and define for $\Phi \in i^G_B(\xi)^J$ the function $^w\Phi \in i^G_{\,^wB}(\, ^w\xi)^{\, ^wJ}$ by $^w\Phi(g) := \Phi(w^{-1}gw)$.  The map $\Phi \mapsto \, ^w\Phi$ determines an isomorphism
$$
i^G_B(\xi)^J ~ \widetilde{\rightarrow} ~ i^G_{\, ^wB}(\, ^w\xi)^{\, ^wJ},
$$
where $^w\xi(t) := \xi(w^{-1}tw)$ for $t \in T(F)$.   This intertwines the right actions of $\mathcal H_J(G)$ resp. $\mathcal H_{\, ^wJ}(G)$, in the sense that
$$
^w\Phi * \, ^w\phi  = \, ^w(\Phi * \phi).
$$
From this it easily follows that the following diagram commutes:
$$
\xymatrix{
 R^{W(F)} \ar[r]^{B \,\,\,\,\,\,\,}_{\sim \,\,\,\,\,\,\,} \ar[d]^{=} & Z(\mathcal H_J(G(F))) \ar[d]^{\phi \mapsto \, ^w\phi} \\
 R^{W(F)} \ar[r]^{B \,\,\,\,\,\,\,\,\,}_{\sim \,\,\,\,\,\,\,\,\,} & Z(\mathcal H_{\, ^wJ}(G(F))).}
$$

Of course the analogue of this also holds for the field extension $E$ replacing $F$.  As a consequence we get 
the following lemma.

\begin{lemma} \label{w_conjugation_compatibility}
For $w \in W(F)$, the following diagram commutes
$$
\xymatrix{
Z(\mathcal H_J(G(E))) \ar[r]^{\phi \mapsto \, ^w\phi} \ar[d]^b & Z(\mathcal H_{\, ^wJ}(G(E)))  \ar[d]^b \\
Z(\mathcal H_J(G(F))) \ar[r]^{\phi \mapsto \, ^w\phi} & Z(\mathcal H_{\, ^wJ}(G(F))).}
$$
\end{lemma}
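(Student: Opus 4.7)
The plan is to reduce this to the $w$-conjugation compatibility of the Bernstein isomorphism established in subsection \ref{w_conjugation_subsection}, applied in parallel over $F$ and $E$, and then glued using the definition of $b$ via the norm map $N$.

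The first step is to verify that $W(F) \subseteq W(E)$. Applying Lemma \ref{absWeyl_relWeyl} to both $F$ and $E$ identifies $W(F) = W(L)^\sigma$ and $W(E) = W(L)^{\sigma^r}$, where $r = [E:F]$; hence $W(F) \subseteq W(E)$, and any $w \in W(F)$ acts trivially on $\mathbb C[X_*(A^E)]^{W(E)}$. Consequently the $E$-analogue of the diagram of subsection \ref{w_conjugation_subsection} (asserted to hold there) takes the form
$$
\xymatrix{
\mathbb C[X_*(A^E)]^{W(E)} \ar[r]^{B \,\,\,\,\,\,\,}_{\sim \,\,\,\,\,\,\,} \ar[d]^{=} & Z(\mathcal H_J(G(E))) \ar[d]^{\phi \mapsto \,^w\phi} \\
\mathbb C[X_*(A^E)]^{W(E)} \ar[r]^{B \,\,\,\,\,\,\,}_{\sim \,\,\,\,\,\,\,} & Z(\mathcal H_{\,^wJ}(G(E))),}
$$
and the analogous square over $F$ commutes with $\mathbb C[X_*(A)]^{W(F)}$ in place of the source.

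The second step is to stack these two squares with the trivially commutative middle square built from the norm homomorphism $N : \mathbb C[X_*(A^E)]^{W(E)} \to \mathbb C[X_*(A)]^{W(F)}$ and the identities on its source and target. Since the base change homomorphism $b$ is defined precisely as $B \circ N \circ B^{-1}$ on both the left and right columns of the resulting three-story diagram, the outer rectangle is exactly the square whose commutativity is asserted in the lemma, and it commutes because each of its three stories does.

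I do not foresee a serious obstacle. The only point to pin down is the availability of the compatibility of subsection \ref{w_conjugation_subsection} over $E$; this follows once $W(F) \subseteq W(E)$ is in hand, by using a representative of $w$ in $G(F) \subset G(E)$ to construct the intertwiner $\Phi \mapsto \,^w\Phi$ between the corresponding unramified principal series over $E$ verbatim as in the $F$-case.
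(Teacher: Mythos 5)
Your argument is correct and is precisely the one the paper leaves implicit: subsection \ref{w_conjugation_subsection} records the square relating $B$ to $\phi \mapsto {}^w\phi$ over $F$ and over $E$, and the lemma is deduced by stacking these with the trivially commuting square built from $N$, using $b = B \circ N \circ B^{-1}$. Your explicit check that $W(F) \subseteq W(E)$ (so that $w$ acts on the $E$-side and fixes $R^{W(E)}$) is exactly the point the paper is relying on when it says "the analogue also holds for $E$."
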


\subsection{Extending $b$ to a map $\overline{b}: \mathcal H_{J(E)}/{\rm ker} \rightarrow \mathcal H_J/{\rm ker}$}

The discussion in this and the following subsection is aimed toward proving the Schwartz-continuity of $b$ (Corollary \ref{b_is_Schwartz_continuous}).  This is needed in the final argument of this paper (section \ref{putting_it_all_together_section}), where we will apply Clozel's temperedness argument (section \ref{temperedness_section}) to a certain linear form which involves $b$.  The Schwartz-continuity of $b$ is necessary in order to know that that form is also Schwartz-continuous, as section 8 requires.  

In the spherical case considered in \cite{Cl90}, the situation is substantially simplified due to the commutativity of the spherical Hecke algebras, and much of the following material is unnecessary.  

\subsubsection{Preliminaries on $N: \widehat{A} \rightarrow \widehat{A^E}$}
Let $\widehat{A}$ denote the complex torus which is dual to the $F$-split torus $A$; it carries a canonical action of the relative Weyl group $W(F)$ associated to $A \subset G$.  Let $\widehat{A}_u$ denote the maximal compact subgroup of $\widehat{A}$.  Replacing the field $F$ with its extension $E$, we define analogous objects $\widehat{A^E}$ and $\widehat{A^E}_u$.  Let $\mathbb C[\widehat{A}/W(F)]$ denote the ring of regular functions on the complex variety $\widehat{A}/W(F)$, which we identify with $\mathbb C[X_*(A)]^{W(F)}$.  The norm homomorphism $N: \mathbb C[X_*(A^E)]^{W(E)} \rightarrow 
\mathbb C[X_*(A)]^{W(F)}$ induces a homomorphism
\begin{equation} \label{N_on_functions}
N: \mathbb C[\widehat{A^E}/W(E)] \rightarrow \mathbb C[\widehat{A}/W(F)].
\end{equation}
Let us describe the corresponding morphism of complex varieties. Let $\Gamma_{E/F}$ denote the Galois group 
${\rm Gal}(E/F) = \langle \theta \rangle$; it acts on the complex torus $\widehat{A^E}$.  Since $A$ is the $F$-split component of $A^E$, it follows that 
$$
\widehat{A} = (\widehat{A^E})_{\Gamma_{E/F}}.
$$
For $t \in \widehat{A^E}$, we define $Nt = t \theta(t) \cdots \theta^{r-1}(t) \in \widehat{A^E}$.  This determines a homomorphism of complex tori
$$
N: \widehat{A} \rightarrow \widehat{A^E}
$$
and thus a morphism (also denoted with the symbol $N$) of complex varieties
\begin{equation} \label{N_on_tori}
N: \widehat{A}/W \rightarrow \widehat{A^E}/W(E).
\end{equation}
The corresponding homomorphism on the level of regular functions is precisely (\ref{N_on_functions}) which is defined to be compatible with the norm homomorphism $N = \sum_{i=0}^{r-1} \theta^i$ defined earlier.

\subsubsection{Definition of $f \mapsto \widehat{f}$}

An element $t \in \widehat{A}$ can be regarded as an unramified character on $T(F)$.   We set $\pi_t := i^G_B(t)$.  To any function $f \in \mathcal H_J(G)$ we associate its {\em Fourier transform} $\widehat{f}$, a regular function on the variety $\widehat{A}/W$ defined by the equation
\begin{equation}
\widehat{f}(t) := \langle {\rm trace}\, \pi_t, f \rangle.
\end{equation}
Set
$$
{\rm ker} = \{ f \in \mathcal H_J(G) ~  | ~ \widehat{f}(t) = 0 , \,\,\,\ \forall t \in \widehat{A}\}.
$$
The map $f \mapsto \widehat{f}$ determines a $\mathbb C$-vector space isomorphism
\begin{equation}
\mathcal H_J(G)/{\rm ker} ~ \widetilde{\rightarrow} ~ \mathbb C[\widehat{A}/W].
\end{equation}
(The homomorphism is surjective since its restriction to $Z(\mathcal H_J(G))$ is obviously surjective, by the Bernstein isomorphism, Theorem \ref{Bernstein_isomorphism}.)

This discussion applies just as well when we replace $F$ with its extension field $E$.  We can then define the $\mathbb C$-vector space homomorphism
$$
\overline{b}: \mathcal H_J(G(E))/{\rm ker} \rightarrow \mathcal H_J(G)/{\rm ker}
$$
to be the unique one making the following diagram commutative:
$$
\xymatrix{
\mathcal H_J(G(E))/{\rm ker} \ar[r]^{f \mapsto \widehat{f}}_{\sim} \ar[d]_{\overline{b}} & \mathbb C[\widehat{A^E}/W(E)] \ar[d]_{N} \\
\mathcal H_J(G)/{\rm ker} \ar[r]^{f \mapsto \widehat{f}}_{\sim} & \mathbb C[\widehat{A}/W].}
$$
In other words, $\overline{b}f$ is characterized by the identity
\begin{equation}
\langle {\rm trace} \, \pi_t, \overline{b}f \rangle = \langle {\rm trace} \, \pi_{Nt}, f \rangle.
\end{equation}

\subsubsection{Compatibility of $\overline{b}$ and $b$}

From the discussion above, the $\mathbb C$-vector space homomorphism $Z(\mathcal H_J(G)) \rightarrow 
\mathcal H_J(G)/{\rm ker}$ is an isomorphism.  

\begin{lemma} \label{b_vs_overline_b}
\begin{enumerate}
\item[(i)] The map $\overline{b}$ is linear with respect to $b$: for any $f \in \mathcal H_J(G(E))/{\rm ker}$ and $z \in Z(\mathcal H_J(G(E)))$, we have $\overline{b}(zf) = b(z)\overline{b}f$.
\item[(ii)] The following diagram commutes:
$$
\xymatrix{
Z(\mathcal H_J(G(E))) \ar[r]^{\sim} \ar[d]_{C\cdot b}  & \mathcal H_J(G(E))/{\rm ker} 
\ar[d]_{\overline{b}} \\
Z(\mathcal H_J(G)) \ar[r]^{\sim} & \mathcal H_J(G)/{\rm ker},}
$$
where the constant $C$ is the cardinality of $B(E)\backslash G(E)/J(E)$ divided by the cardinality of $B(F)\backslash G(F)/J(F)$.
\end{enumerate}
\end{lemma}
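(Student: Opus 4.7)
The plan is to derive both parts from the Fourier-transform characterization of $\overline{b}$ together with the fact that elements of $Z(\mathcal H_J(G))$ act by scalars on unramified principal series (Theorem \ref{Bernstein_isomorphism}). The linchpin is the identity
\[
ch_t(b(z)) = ch_{Nt}(z) \qquad (t \in \widehat{A},\ z \in Z(\mathcal H_J(G(E)))),
\]
which is purely formal: since $b = B \circ N \circ B^{-1}$ and the ring homomorphism $N$ on invariant polynomials (\ref{N_on_functions}) is by construction the pullback under the norm morphism of tori (\ref{N_on_tori}), one has $N(r)(t^{-1}) = r((Nt)^{-1})$ for every $r \in \mathbb C[X_*(A^E)]^{W(E)}$. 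Taking $r = B^{-1}(z)$ and recalling $ch_\chi(z) = \chi^{-1}(B^{-1}z)$ yields the identity.

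For part (i), I would compute $\langle \mathrm{trace}\,\pi_t, \overline{b}(zf)\rangle = \langle \mathrm{trace}\,\pi_{Nt}, zf\rangle = \mathrm{trace}(\pi_{Nt}(z)\pi_{Nt}(f))$. Because $f$ is $J(E)$-bi-invariant, $\pi_{Nt}(f)$ has image contained in $\pi_{Nt}^{J(E)}$; because $z$ is central, $\pi_{Nt}(z)$ acts there as the scalar $ch_{Nt}(z)$. The trace thus factors as $ch_{Nt}(z) \cdot \langle \mathrm{trace}\,\pi_{Nt}, f\rangle = ch_{Nt}(z)\cdot \langle \mathrm{trace}\,\pi_t, \overline{b}f\rangle$. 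Applying the highlighted identity together with the parallel scalar action of $b(z)$ on $\pi_t^{J(F)}$, this equals $\langle \mathrm{trace}\,\pi_t, b(z)\overline{b}f\rangle$. Since a class modulo $\mathrm{ker}$ is determined by its Fourier transform at every $t$, statement (i) follows.

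For part (ii), applying the same trace computation directly to $z$ (regarded now as an element of $\mathcal H_J(G(E))$) gives
\[
\langle \mathrm{trace}\,\pi_t,\overline{b}z\rangle = ch_{Nt}(z)\cdot \dim \pi_{Nt}^{J(E)}, \qquad \langle \mathrm{trace}\,\pi_t, b(z)\rangle = ch_t(b(z))\cdot \dim \pi_t^{J(F)}.
\]
The key identity then reduces (ii) to the dimension formula $\dim i^G_B(\chi)^J = |B(F)\backslash G(F)/J(F)|$ for every unramified character $\chi$. For this I would parametrise $i^G_B(\chi)^J$ by evaluation at a set of Iwasawa representatives of $B\backslash G/J$: at a representative $x$, the value $\phi(x)$ must be fixed by $\delta_B^{1/2}\chi$ on $B\cap xJx^{-1}$, which is automatic once one has the Iwahori-style factorization $B\cap xJx^{-1}=(T\cap xJx^{-1})(U\cap xJx^{-1})$ together with the inclusion $T\cap xJx^{-1}\subseteq T_b$ from boundedness. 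The ratio of the resulting $E$- and $F$-dimensions is then exactly $C$.

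The only real obstacle I anticipate is the dimension-formula input for (ii): the factorization of $B\cap xJx^{-1}$ must hold for all parahoric conjugates $xJx^{-1}$, not just for $J$ itself as in Lemma \ref{J_cap_M_is_parahoric}(b); however this follows from the same arguments since $xJx^{-1}$ is again a parahoric subgroup. Part (i) is purely formal once the central character identity is in hand, and the location of $C$ on the side of $b$ rather than $\overline{b}$ in (ii) reflects precisely the asymmetry between the numbers of eigenvalue contributions on the two sides.
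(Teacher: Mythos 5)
The paper states Lemma \ref{b_vs_overline_b} without proof, so there is no argument to compare against; what you have supplied is the natural one and it is correct. Both parts hinge on the central-character identity $ch_t(b(z)) = ch_{Nt}(z)$, which you derive properly from $b = B\circ N\circ B^{-1}$ together with the fact that the ring map (\ref{N_on_functions}) is, by construction, pullback along the variety morphism (\ref{N_on_tori}). Part (i) then reduces formally to this identity once one observes that $f$ being $J(E)$-bi-invariant forces $\pi_{Nt}(f)$ to have image in $\pi_{Nt}^{J(E)}$, where $z$ acts by a scalar. For part (ii) the only new input is $\dim i^G_B(\chi)^J = |B\backslash G/J|$, and your Iwasawa-representative argument establishes this correctly: the factorization $B\cap J' = (T\cap J')(U\cap J')$ for the conjugated parahorics $J' = xJx^{-1}$ does hold, because the proof of Lemma \ref{J_cap_M_is_parahoric}(b) (via (\ref{2nd_description}) and a faithful $\mathcal O$-representation of $\mathcal G^\circ_{\mathbf a_J}$) is valid for any parahoric subgroup, not only those lying between $I$ and $K$; together with $T\cap J'\subseteq T_b$ by boundedness, the character constraint evaporates and each Iwasawa double coset contributes one free dimension.

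One small caveat worth being aware of: with the paper's conventions (subsection 5.6), $\langle \mathrm{trace}\,\pi, \phi\rangle$ is $\mathrm{tr}\,L(\phi) = \mathrm{tr}\,R(\iota\phi)$, so the scalar by which $\pi_t(z)$ acts on $\pi_t^J$ is strictly speaking $ch_t(\iota z)$ rather than $ch_t(z)$. This does not affect your argument, because the same normalization is applied on the $E$-side and $F$-side and because $B^{-1}\iota B$ commutes with the norm homomorphism $N$ on $R^W$ (it is induced by $\mu\mapsto -\mu$ on the coweight lattices, which visibly commutes with $\sum_i\theta^i$). It is only worth noting so that the equality $\mathrm{sc}_E(Nt,z) = \mathrm{sc}_F(t,b(z))$ is seen to be robust to the $\iota$ convention, not dependent on silently ignoring it.
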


\subsection{The Schwartz-continuity of $b$}
\subsubsection{Preliminaries for extending $\overline{b}$ to Schwartz spaces}

We will define an extension $\widetilde{b}$ of $\overline{b}$ to appropriate spaces of Schwartz functions.

For $t \in \widehat{A}_u$, the representation $\pi_t$ is tempered and we may define
$$
\widehat{f}(t) := \langle {\rm trace} \, \pi_t, f \rangle,
$$
for $f$ belonging to the Schwartz space $\mathcal C(G(F))$.  We will consider this for $f \in \mathcal C_J(G)$, the space of $J$-bi-invariant Schwartz functions on $G(F)$.  Set 
$$
{\rm ker} = \{ f \in C_J(G)  ~ | ~ \widehat{f}(t) = 0, \,\,\,\, \forall t \in \widehat{A}_u \}.
$$

Since $\widehat{A}_u$ is a Zariski-dense subset of $\widehat{A}$, we have inclusions
\begin{align*}
\mathbb C[\widehat{A}]^W &\hookrightarrow C^\infty[\widehat{A}_u]^W \\
\mathcal H_J(G)/{\rm ker} &\hookrightarrow \mathcal C_J(G)/{\rm ker}.
\end{align*}

We will endow $\mathcal C_J(G)$ and its quotient $\mathcal C_J(G)/{\rm ker}$ with the Schwartz topology (see \cite{Sil79}, Chap. 4).   We will endow $C^\infty[\widehat{A}_u]$ with the $C^\infty$-topology.  Note that $\mathcal H_J(G)/{\rm ker}$ resp. $\mathbb C[\widehat{A}]^W$ is a dense subspace of $\mathcal C_J(G)/{\rm ker}$ resp. $C^\infty[\widehat{A}_u]^W$.

\subsubsection{The trace Paley-Wiener theorem for Schwartz functions}

The following is a consequence of a much more general result of Arthur \cite{Ar94}.

\begin{prop} \label{arthur_prop} \mbox{\rm (Arthur)} The map $f \mapsto \widehat{f}$ determines an open surjective homomorphism of topological vector spaces
$$
\mathcal C_J(G) \rightarrow C^\infty[\widehat{A}_u]^W.
$$
Hence $\mathcal C_J(G)/{\rm ker} ~ \cong ~ C^\infty[\widehat{A}_u]^W$ as topological vector spaces.  
\end{prop}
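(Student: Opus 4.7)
The plan is to deduce this from Arthur's general trace Paley-Wiener theorem for Schwartz functions \cite{Ar94} by restricting to $J$-bi-invariants on the domain and projecting onto the component of $\hat{G}_{\rm temp}$ corresponding to the unramified principal series.

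First I would record what Arthur's theorem provides. The operator-valued Fourier transform $f \mapsto (\pi \mapsto \pi(f))$ is a continuous, surjective, open homomorphism of Fr\'{e}chet spaces from the Harish-Chandra Schwartz space $\mathcal{C}(G)$ onto a target space of smooth, rapidly decreasing sections over the tempered dual $\hat{G}_{\rm temp}$. Restricting to the $J$-bi-invariant subspace $\mathcal{C}_J(G) \subset \mathcal{C}(G)$, the image consists of those sections supported on the components of $\hat{G}_{\rm temp}$ whose representations admit a nonzero $J$-fixed vector. The restricted transform remains continuous, surjective, and open, since cutting down by the projector $e_J$ on each side is a closed-summand operation compatible with the Schwartz topology.

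Next I would isolate the relevant component. The representations $\pi_t = i^G_B(t)$ with $t \in \widehat{A}_u$ are unitary tempered, they carry a nonzero $J$-fixed vector, and they constitute a connected component of $\hat{G}_{\rm temp}$ naturally parametrized by the compact orbifold $\widehat{A}_u/W$: indeed $\pi_t$ and $\pi_{wt}$ have equal distribution characters for every $w \in W$, so the trace depends only on the $W$-orbit of $t$. The map $f \mapsto \widehat{f}$ of the proposition is, by definition, the composition of Arthur's transform with the projection onto this principal unramified tempered component followed by taking the trace. Projection onto a component of Arthur's target is continuous and open, and its image on this particular component is exactly $C^\infty[\widehat{A}_u]^W$ with its $C^\infty$-topology.

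Composing two continuous surjective open maps gives a continuous, surjective, open homomorphism $\mathcal{C}_J(G) \twoheadrightarrow C^\infty[\widehat{A}_u]^W$; factoring through the (closed) kernel and invoking the open mapping theorem for Fr\'{e}chet spaces then produces the topological isomorphism $\mathcal{C}_J(G)/\mathrm{ker} \cong C^\infty[\widehat{A}_u]^W$. The main obstacle is the precise identification of the principal unramified tempered component inside Arthur's target space as a direct topological summand isomorphic to $C^\infty[\widehat{A}_u]^W$; this rests on the Bernstein decomposition of the tempered dual and the unitary parametrization of the unramified principal series by $\widehat{A}_u/W$, after which the proposition is a routine unpacking of Arthur's theorem restricted to this component.
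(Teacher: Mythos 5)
The paper itself gives no proof of this proposition, stating only that it is a consequence of a more general result of Arthur [Ar94], so there is no argument to compare yours against; your proposal is an attempt to supply the missing deduction, and the general strategy (restrict to $J$-bi-invariants, isolate the unramified tempered principal series component) is the right one. There is, however, a genuine gap in the surjectivity step.

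You write that "cutting down by the projector $e_J$ on each side is a closed-summand operation compatible with the Schwartz topology," and you conclude that the restricted transform remains surjective. If the transform in question is the scalar-valued trace map of Arthur's [Ar94] — which is what that reference actually proves, not an operator-valued statement — this is false: for $f \in \mathcal{C}(G)$ one has $\mathrm{tr}(\tau(e_J f e_J)) = \mathrm{tr}(\tau(f)\tau(e_J))$, which differs from $\mathrm{tr}(\tau(f))$ by the in-general nonzero quantity $\mathrm{tr}(\tau(f)(1 - \tau(e_J)))$. In particular, one cannot manufacture $f \in \mathcal{C}_J(G)$ with prescribed $\widehat{f}$ by taking $f_0 \in \mathcal{C}(G)$ with that prescribed trace transform and setting $f = e_J f_0 e_J$; the traces on $\pi_t$ change. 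Your phrase "operator-valued Fourier transform $f \mapsto (\pi \mapsto \pi(f))$" suggests you actually have in mind the Plancherel theorem (Harish-Chandra; for $p$-adic groups, Waldspurger), under which $e_J(\cdot)e_J$ does act fiberwise as $\mathrm{End}(V_\tau) \to \mathrm{End}(V_\tau^J)$; projecting onto the $T$-component and composing with the matrix trace $\mathrm{End}(\mathbb{C}^n) \to \mathbb{C}$ (surjective since $\dim \pi_t^J = n \geq 1$) then gives the surjectivity and openness you want. But that is the Plancherel isomorphism, not Arthur's trace Paley--Wiener theorem. To make your argument correct, either replace [Ar94] by a reference for the Schwartz-space Plancherel theorem and state the operator-valued mechanism explicitly, or observe — which is closer to the intent of the citation — that Arthur's construction is carried out level-by-level (fixing an open compact subgroup throughout), so that the $J$-bi-invariant trace Paley--Wiener surjectivity is present in the proof itself rather than something one extracts from the full statement by restriction.
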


\subsubsection{The definition and Schwartz-continuity of $\widetilde{b}$}

We define $\widetilde{b}: \mathcal C_J(G(E))/{\rm ker} \rightarrow \mathcal C_J(G)/{\rm ker}$ to be the unique map making the following diagram commute:
$$
\xymatrix{
\mathcal C_J(G(E))/{\rm ker} \ar[r]^{f \mapsto \widehat{f}}_{\sim} \ar[d]_{\widetilde{b}} & 
C^\infty[\widehat{A^E}_u]^{W(E)} \ar[d]_{N} \\
\mathcal C_J(G)/{\rm ker} \ar[r]^{f \mapsto \widehat{f}}_{\sim} & C^\infty[\widehat{A}_u]^W.}
$$

It is clear that $\widetilde{b}$ is an extension of $\overline{b}$, in an obvious sense.  The following is immediate in light of Proposition \ref{arthur_prop}.

\begin{lemma} \label{widetilde_b_is_continuous}
The map $\widetilde{b}: \mathcal C_J(G(E))/{\rm ker} \rightarrow \mathcal C_J(G)/{\rm ker}$ is continuous with respect to the Schwartz topologies.
\end{lemma}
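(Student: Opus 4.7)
The plan is to reduce the statement directly to Arthur's result (Proposition \ref{arthur_prop}) together with the elementary fact that pullback along a smooth map of compact manifolds is continuous in the $C^\infty$-topology. By the defining commutative square of $\widetilde{b}$, it suffices to show that the right vertical arrow
$$
N: C^\infty[\widehat{A^E}_u]^{W(E)} \longrightarrow C^\infty[\widehat{A}_u]^{W}
$$
is continuous, since the horizontal arrows $f \mapsto \widehat f$ are topological isomorphisms by Proposition \ref{arthur_prop}.

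First I would observe that the norm homomorphism of complex tori $N: \widehat A \to \widehat{A^E}$, $t \mapsto t\,\theta(t)\cdots\theta^{r-1}(t)$, restricts to a real-analytic (in fact algebraic) homomorphism of the compact tori $N: \widehat A_u \to \widehat{A^E}_u$, and descends (after composition with the quotient maps) to a continuous, real-analytic map of compact smooth manifolds $\bar N: \widehat A_u/W \to \widehat{A^E}_u/W(E)$. The map labeled $N$ on functions in the diagram above is precisely the pullback $\bar N^*$ along this map.

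Next I would invoke the standard fact that for any smooth map $\varphi: X \to Y$ of compact smooth manifolds, the pullback $\varphi^*: C^\infty(Y) \to C^\infty(X)$ is continuous with respect to the $C^\infty$-topologies: each semi-norm on the left is controlled by a finite sum of semi-norms on the right, using the chain rule and the boundedness of derivatives of $\varphi$ on the compact manifold $X$. Restricting to Weyl-invariants is harmless, since the subspaces $C^\infty[\widehat A_u]^W$ and $C^\infty[\widehat{A^E}_u]^{W(E)}$ inherit the subspace topology and $\bar N^*$ preserves invariance.

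Combining these two steps, $\bar N^*$ is continuous, hence the composition of topological isomorphisms and $\bar N^*$ defining $\widetilde b$ is continuous. The only genuine substance here is Proposition \ref{arthur_prop}; the rest is formal. If one wanted to avoid any appeal to smoothness facts, one could equivalently verify continuity seminorm-by-seminorm using a coordinate description of $N$ on $\widehat A^E$, but the manifold-theoretic argument above is quicker and is the natural companion to Arthur's theorem.
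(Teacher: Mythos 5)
Your proof is correct and takes the same approach the paper intends: the paper's own text simply says the result is ``immediate in light of Proposition \ref{arthur_prop},'' and what you have supplied is precisely the short verification that the remaining vertical arrow $N$ in the defining square is continuous, namely that pullback along a smooth map of compact tori is continuous for the $C^\infty$-topology and preserves Weyl-invariance. One small imprecision: the quotients $\widehat{A}_u/W$ and $\widehat{A^E}_u/W(E)$ are orbifolds rather than smooth manifolds in general, but this is harmless because (as you note in the next sentence) one can argue directly with the $W$- and $W(E)$-invariant subspaces of $C^\infty(\widehat{A}_u)$ and $C^\infty(\widehat{A^E}_u)$, avoiding any need to smooth-structure the quotients.
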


Using Lemma \ref{b_vs_overline_b} we deduce the following result, which was the goal of this subsection.

\begin{cor} \label{b_is_Schwartz_continuous}
The homomorphism $b: Z(\mathcal H_J(G(E))) \rightarrow Z(\mathcal H_J(G))$ is continuous, when each algebra $Z(\mathcal H_J)$ is given the Schwartz topology (the topology it inherits as a subspace of the appropriate $\mathcal C_J$).
\end{cor}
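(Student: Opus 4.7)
The plan is to deduce the corollary directly from Lemma \ref{widetilde_b_is_continuous} via the relation between $b$ and $\widetilde{b}$ recorded in Lemma \ref{b_vs_overline_b}(ii). Applying that Lemma after composing with the inclusions $\mathcal H_J/{\rm ker} \hookrightarrow \mathcal C_J/{\rm ker}$ produces the commutative square
$$
\xymatrix{
Z(\mathcal H_J(G(E))) \ar[r]^{\iota_E} \ar[d]_{C \cdot b} & \mathcal C_J(G(E))/{\rm ker} \ar[d]^{\widetilde{b}} \\
Z(\mathcal H_J(G)) \ar[r]^{\iota_F} & \mathcal C_J(G)/{\rm ker},
}
$$
where $\iota_E,\iota_F$ are the compositions $Z(\mathcal H_J) \hookrightarrow \mathcal C_J \twoheadrightarrow \mathcal C_J/{\rm ker}$ of the tautological subspace inclusion into Schwartz space with the canonical quotient projection. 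Each $\iota$ is continuous when $Z(\mathcal H_J)$ carries the Schwartz topology, and $\widetilde{b}$ is continuous by Lemma \ref{widetilde_b_is_continuous}. Consequently the composite $\widetilde{b}\circ \iota_E = \iota_F \circ (C \cdot b)$ is Schwartz-continuous.

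The remaining task is to descend along $\iota_F$, i.e.\ to verify that $\iota_F$ is a topological embedding so that continuity of $\iota_F \circ b$ forces continuity of $b$. Under the Fourier isomorphism $\mathcal C_J(G)/{\rm ker} \cong C^\infty[\widehat{A}_u]^W$ of Proposition \ref{arthur_prop} together with the Bernstein identification $Z(\mathcal H_J(G)) \cong \mathbb C[\widehat{A}]^W$ of Theorem \ref{Bernstein_isomorphism}, the map $\iota_F$ corresponds to the inclusion of $W$-invariant polynomial functions on $\widehat{A}$ into $W$-invariant smooth functions on the compact torus $\widehat{A}_u$. One direction of topological comparison is automatic: the Fourier transform $\mathcal C_J(G) \to C^\infty[\widehat{A}_u]^W$ is continuous, so $\iota_F$ is continuous. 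For the reverse direction I would exploit the openness part of Proposition \ref{arthur_prop}: a $C^\infty$-convergent sequence of polynomials in $\mathbb C[\widehat{A}]^W$ can be lifted to a Schwartz-convergent sequence in $\mathcal C_J(G)$; using $Z \cap {\rm ker} = 0$ and the Bernstein-Fourier parametrization of $Z$ by polynomials, one arranges the lift to lie inside $Z(\mathcal H_J(G))$ itself by subtracting a uniquely determined element of ${\rm ker}$.

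The principal obstacle is this last step — verifying that the lifts can actually be taken inside $Z(\mathcal H_J(G))$ in a Schwartz-continuous manner, rather than in arbitrary $\mathcal C_J(G)$. Concretely, this amounts to showing that the Schwartz subspace topology on $Z(\mathcal H_J(G)) \subset \mathcal C_J(G)$ agrees with the pullback along $\iota_F$ of the quotient Schwartz topology on $\mathcal C_J(G)/{\rm ker}$, which in turn reduces to controlling the interaction between $Z$ and ${\rm ker}$ inside $\mathcal C_J$. Once this topological embedding property of $\iota_F$ is in hand, cancelling $\iota_F$ from the commutative square above yields Schwartz-continuity of $b$ immediately.
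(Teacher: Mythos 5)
Your reduction to the continuity of $\widetilde{b}$ (Lemma \ref{widetilde_b_is_continuous}) via the square from Lemma \ref{b_vs_overline_b}(ii) is exactly the argument the paper (tersely) indicates, and you correctly isolate the one nontrivial point: continuity of $\iota_F \circ b$ forces continuity of $b$ only once one knows that $\iota_F : Z(\mathcal H_J(G)) \to \mathcal C_J(G)/{\rm ker}$ is a topological embedding, i.e., that the subspace topology $Z(\mathcal H_J(G))$ inherits from $\mathcal C_J(G)$ coincides with the \emph{a priori} coarser topology pulled back from $\mathcal C_J(G)/{\rm ker}$. This equivalence is precisely what the paper later asserts in passing, in the parenthetical ``(or equivalently, from that on $\mathcal C_J(G(E))$)'' in the statement of Lemma \ref{temperedness_lemma}.

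However, the argument you sketch for the embedding claim does not go through as written. Using the openness in Proposition \ref{arthur_prop}, you propose to lift a $C^\infty$-convergent sequence $p_n \to p$ in $\mathbb C[\widehat{A}]^W$ to a Schwartz-convergent sequence $f_n \to f$ in $\mathcal C_J(G)$, and then to ``arrange the lift to lie inside $Z(\mathcal H_J(G))$ by subtracting a uniquely determined element of ${\rm ker}$.'' That subtraction is exactly what you cannot control: the unique $\phi_{p_n} \in Z(\mathcal H_J(G))$ with $\widehat{\phi_{p_n}} = p_n$ satisfies $\phi_{p_n} = f_n - k_n$ for some $k_n \in {\rm ker} \cap \mathcal H_J(G)$, and nothing in the open-mapping statement of Proposition \ref{arthur_prop} bounds the Schwartz seminorms of $k_n$, so you cannot conclude $\phi_{p_n} \to \phi_p$ in $\mathcal C_J(G)$. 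What is actually needed is a quantitative estimate expressing Schwartz seminorms of a central element $\phi_p$ directly in terms of $C^\infty$-seminorms of $p$ on $\widehat{A}_u$ --- for instance, via the Harish-Chandra Plancherel formula restricted to the Iwahori component, which writes $\phi_p(g)$ as an integral of $p(t)$ against an explicit kernel over $\widehat{A}_u$. As written, your proposal has a genuine gap at this final step, though it is a gap the paper itself leaves implicit rather than proving.
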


\section{Descent of twisted orbital integrals} \label{descent}

\subsection{On Galois cohomology of $\mathfrak k$-Levi subgroups}

Here we let $\mathfrak k$ denote any field, and $G$ any connected reductive group over $\mathfrak k$.  Recall that a $\mathfrak k$-Levi subgroup $M \subseteq G$ is a Levi factor of a $\mathfrak k$-rational parabolic subgroup of $G$.  It is easy to see that
\begin{equation} \label{H(M)->H(G)}
{\rm ker}[H^1(\mathfrak k, M) \rightarrow H^1(\mathfrak k, G)] = \{ 1 \}
\end{equation}
if $M$ is a $\mathfrak k$-Levi subgroup. 
In fact a twisting argument (cf. \cite{Ko86a}, 1.3) shows further that in that case,
\begin{equation*}
H^1(\mathfrak k, M) \rightarrow H^1(\mathfrak k,G)
\end{equation*}
is injective.

\subsection{Elements with non-elliptic norm}

Now we return to the notation of section \ref{preliminaries_section}.  Further, let $E/F$ be an unramified extension of degree $r$, and use the symbol $\sigma$ to denote both the Frobenius automorphism in ${\rm Aut}(L/F)$ and its restriction to ${\rm Gal}(E/F)$.  

The following result is needed for our reduction, via descent, of the fundamental lemma to semi-simple elements $\gamma \in G(F)$ which are elliptic.  It seems not to appear elsewhere, although in some sense it must be implicit in the work of Clozel \cite{Cl90} and Labesse \cite{Lab90}.  A proof is given here because no reference has come to light. \footnote{Clozel's related Lemma 2.12 in \cite{Cl90} concerns the special case where $\delta$ is $\sigma$-regular, but this is not enough for our purposes.  We also draw the reader's attention to the fact that Lemma \ref{non-elliptic_norms} is used implicitly in the descent step in the proof of \cite{Cl90}, Prop. 7.2.}

\begin{lemma} \label{non-elliptic_norms}
Suppose $\gamma \in G(F)$ is semi-simple and $M \subseteq G$ is an $F$-Levi subgroup with $G^\circ_\gamma \subseteq M$ (hence $G_\gamma^\circ = M^\circ_\gamma$ and $\gamma \in M(F)$).  Suppose $\delta \in G(E)$ has $\mathcal N\delta = \gamma$.  Then there exists an element $\delta_1 \in M(E)$ such that $\mathcal N_M(\delta_1) = \gamma$ and such that $\delta$ and $\delta_1$ are $\sigma$-conjugate under $G(E)$.

\end{lemma}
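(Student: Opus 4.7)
My plan is a Galois descent argument exploiting the cohomological injectivity (\ref{H(M)->H(G)}) for $F$-Levi subgroups, applied in the setting of the Weil restriction.

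First I would reformulate. Put $\tilde G := \mathrm{Res}_{E/F} G_E$ and $x_0 := \delta\theta \in \tilde G(F) \rtimes \langle\theta\rangle$; then $\theta$-conjugacy in $\tilde G(F) = G(E)$ is ordinary conjugacy of $x_0$ in the semidirect product, and the claim becomes that the $\tilde G(F)$-conjugacy class of $x_0$ meets $\tilde M(F)\theta = M(E)\theta$. The centralizer of $x_0$ in $\tilde G$ is the twisted centralizer $G_{\delta\theta}$, which is an inner $F$-form of $G_\gamma^\circ = M_\gamma^\circ$.

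Next I would produce an $\overline F$-rational conjugator. Using $\mathcal N\delta = \gamma$, pick $h \in G(\overline F)$ with $h^{-1}(N\delta)h = \gamma$; viewed diagonally in $\tilde G(\overline F)$, a short computation (using the identity $\delta\cdot\sigma(N\delta) = (N\delta)\cdot\delta$ together with $\tilde\sigma(\gamma) = \gamma$ for a chosen Frobenius lift $\tilde\sigma$) shows that $\delta_0 := h^{-1}\delta\tilde\sigma(h)$ commutes with $\gamma$, so $\delta_0 \in G_\gamma(\overline F)$. When $G_\gamma = G_\gamma^\circ$ (e.g.\ if $G_{\mathrm{der}}$ is simply connected), this places $\delta_0$ in $M(\overline F)$; in the general case one further modifies $h$ by an element of $G_\gamma(\overline F)$ (using the finite component group $G_\gamma/G_\gamma^\circ$) to push $\delta_0$ into the identity component, and hence into $M(\overline F)$.

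The heart of the argument is the descent from $\overline F$ down to $F$. The cocycle $\tau \mapsto h^{-1}\tau(h)$ takes values in $G_\gamma(\overline F)$, and after translation by the inner twist becomes an obstruction cocycle in $G_{\delta_0\theta}(\overline F)$; the latter is an inner $F$-form of $G_\gamma^\circ$ sitting naturally inside the corresponding inner form of $M$. Via Shapiro's lemma $H^1(F,\tilde M) = H^1(E,M)$ and $H^1(F,\tilde G) = H^1(E,G)$, so the injectivity (\ref{H(M)->H(G)}) applied over $E$ forces this obstruction to vanish, producing $g \in G(E)$ with $\delta_1 := g^{-1}\delta\sigma(g) \in M(E)$. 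Finally, one checks that $\mathcal N_M(\delta_1) = \gamma$: the element $N\delta_1 = g^{-1}(N\delta)g$ lies in $M(E)$ and is $G(\overline F)$-conjugate to $\gamma$, and the descent can be arranged so that the $\overline F$-conjugator actually lies in $M(\overline F)$, yielding stable $M$-conjugacy. The principal obstacle is the cohomological descent, in which one must correctly identify the Levi structure of the twisted centralizer and ensure that the produced conjugator respects the $M$-stable class of $\gamma$; the footnote flags that $\sigma$-regularity is \emph{not} assumed, so one cannot reduce to abelian centralizers and the component-group step requires care.
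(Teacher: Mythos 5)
Your argument for the case $G_{\rm der} = G_{\rm sc}$ is, in substance, the paper's: both pick $h \in G(\overline F)$ conjugating $N\delta$ to $\gamma$, observe that the ${\rm Gal}(\overline F/E)$-cocycle $\tau \mapsto hh^{-\tau}$ (together with $h\delta h^{-\sigma}$) lands in $G_\gamma \subseteq M$, and then kill the cocycle using the vanishing of $\ker[H^1(E,M)\to H^1(E,G)]$ from (\ref{H(M)->H(G)}). Your detour through $\widetilde G = {\rm Res}_{E/F}G_E$ and Shapiro's lemma is unnecessary overhead: since $M$ is an $F$-Levi it is automatically an $E$-Levi, so (\ref{H(M)->H(G)}) applies directly with $\mathfrak k = E$, no Weil restriction needed.

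The genuine gap is in the general case. When $G_{\rm der} \neq G_{\rm sc}$ two things break simultaneously, and your sketch addresses neither. First, $G_\gamma$ may be disconnected, so the cocycle $\tau \mapsto hh^{-\tau}$ takes values in $G_\gamma(\overline F)$, which is \emph{not} contained in $M(\overline F)$ (only $G_\gamma^\circ \subseteq M$); modifying $h$ by an element of $G_\gamma(\overline F)$ changes the cocycle by a $G_\gamma$-valued coboundary, and it is not clear this lands it in $G_\gamma^\circ$, nor do you explain how to argue with the component group. Second, and more structurally, once $G_{\rm der} \neq G_{\rm sc}$ the equivalence ``stable ($\sigma$-)conjugacy $=$ $\overline F$-($\sigma$-)conjugacy'' used throughout your argument fails, so even if the conjugator were produced, identifying $\mathcal N\delta = \gamma$ and $\mathcal N_M\delta_1 = \gamma$ with $\overline F$-conjugacy statements is illegitimate. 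The paper avoids both issues at once by passing to a $z$-extension $\alpha\colon G'\to G$ adapted to $E$ (so $G'_{\rm der}$ is simply connected, centralizers are connected, and stable conjugacy reduces to $\overline F$-conjugacy), applying the first case to $G'$ and its Levi $M'=\alpha^{-1}(M)$, then pushing the result down via $\alpha$ and \cite{Ko82}, Lemma 5.6. That $z$-extension step is the missing ingredient in your proposal.
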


Here $\mathcal N_M$ denotes the norm function relative to the group $M$.

\begin{proof}
We will use the following notation: letting $\sigma$ denote an extension of $\sigma \in {\rm Gal}(E/F)$ to ${\rm Gal}(\overline{F}/F)$, we set $g^{\sigma} := \sigma(g)$ and $g^{-\sigma} := \sigma(g^{-1})$, for $g \in G(\overline{F})$.  

First we consider the case where $G_{\rm der} = G_{\rm sc}$.  In that case $G^\circ_\gamma = G_\gamma$ and stable $\sigma$-conjugacy (resp. stable conjugacy) is $\overline{F}$-$\sigma$-conjugacy (resp. $\overline{F}$-conjugacy), cf. \cite{Ko82}.  Choose any $h \in G(\overline{F})$ with $h (N\delta)h^{-1} = \gamma$.  Applying $\sigma$ to the equation $N\delta = h^{-1} \gamma h$ and using $(N\delta)^\sigma = \delta^{-1} (N\delta) \delta$, we see that $h \delta h^{-\sigma} \in G_\gamma \subseteq M$.

A similar calculation shows that $h h^{-\tau} \in G_\gamma \subseteq M$ for any $\tau \in {\rm Gal}(\overline{F}/E)$.  The map $\tau \mapsto h h^{-\tau}$ determines an element of ${\rm ker}(H^1(E,M) \rightarrow H^1(E,G))$, which is trivial by (\ref{H(M)->H(G)}) for $\mathfrak k = E$.  Writing $h h^{-\tau} = m^{-1}m^\tau$ for some $m \in M(\overline{F})$ and every $\tau \in {\rm Gal}(\overline{F}/E)$, we see that $mh \in G(E)$.  

Now $\delta$ is clearly $\sigma$-conjugate under $G(E)$ to $\delta_1 := (mh) \delta (mh)^{-\sigma} \in M(E)$.  Since $N\delta_1 = m\gamma m^{-1}$, we have $\mathcal N_M\delta_1 = \gamma$.

Let us now derive the general case of the lemma from the special case just considered.  Choose a $z$-extension $\alpha:G' \rightarrow G$ which is adapted to $E$ in the sense of \cite{Ko82}.  Let $Z = {\rm ker}(\alpha)$.  
Let $M' = \alpha^{-1}(M)$, an $F$-Levi subgroup of $G'$.  Choose elements $\gamma' \in G'(F)$ and  $\delta' \in G'(E)$ with $\alpha(\gamma') = \gamma$ and $\alpha(\delta') = \delta$.  We have $\gamma' \in M'(F)$, and $G'_{\gamma'} \subseteq M'$.   Further, $\mathcal N \delta = \gamma$ implies that $\mathcal N\delta' = \gamma' z$ for some $z \in Z(F)$; replacing $\gamma'$ with $\gamma'z$ we may assume $\mathcal N \delta' 
= \gamma'$.  
Since $G'_{\rm der}$ is simply-connected, we already know that $\delta'$ is $\sigma$-conjugate under $G'(E)$ to an element $\delta'_1 \in M'(E)$ for which $\mathcal N_{M'}(\delta'_1) = \gamma'$.  It follows that $\delta$ is 
$\sigma$-conjugate under $G(E)$ to  
$\delta_1 := \alpha(\delta'_1) \in M(E)$, and that $\mathcal N_M(\delta_1) = \gamma$ (use loc.~cit.~Lemma 5.6).
\end{proof}

\begin{Remark}
The lemma is stated under our standing hypothesis that $E/F$ is an unramified extension, with $\sigma$ denoting the Frobenius generator.  However, the same statement holds (with the same proof) when $E$ is any cyclic extension of our field $F$ and $\sigma$ is any generator of ${\rm Gal}(E/F)$. 
\end{Remark}

How will this result be used?  Fix a semi-simple element $\gamma \in G(F)$, and let $S$ denote the $F$-split component of the center of $G^\circ_\gamma$.  Let $M := {\rm Cent}_G(S)$, an $F$-Levi subgroup of $G$.  It is clear that $\gamma$ is an elliptic element in $M(F)$, and that $\gamma$ is elliptic in $G(F)$ if and only if $M =G$.

Now suppose that $\gamma$ is {\rm not} elliptic in $G$, so that $M \subsetneq G$.  The fundamental lemma is proved by induction on semi-simple rank (it being obvious for tori), and so we may assume it is already known for $M$.  Suppose we want to show $\phi$ and $b\phi$ are associated at $\gamma$.  If $\gamma$ is not a norm from $G(E)$, it is not a norm from $M(E)$, so that the vanishing of ${\rm O}^G_\gamma(b\phi)$ will follow by induction using the descent formula (\ref{eq:descent_over_F}) below.  Now suppose $\gamma$ is a norm from $G(E)$.  Then Lemma \ref{non-elliptic_norms} applies to show that $\gamma$ is a norm from $M(E)$.  The desired matching of stable (twisted) orbital integrals is then proved by descending to $M$, i.e. by using the equations (\ref{eq:descent_over_E}) and (\ref{eq:descent_over_F}) (and taking into account the crucial Lemma \ref{lemma_B} which allows us to compare these equations).

\subsection{Descent preliminaries} \label{descent_prelim_subsection}

We continue with the notation of the previous subsection.  Our standard Borel subgroup $B = TU$ determines a set of simple positive (relative) roots $\Delta_0$.  

As mentioned above, in descent we are given a proper $F$-Levi subgroup $M$ which is a Levi factor in an $F$-parabolic $P = MN$.  Since it is harmless to conjugate by elements of $G(F)$, we may assume $M$ and $P$ are {\em standard}, i.e. $M \supseteq T$ and $P \supseteq B$, i.e., $N \subseteq U$.

Let $U_M := U \cap M$ and $B_M = B \cap M$, so that $B_M$ is a Borel subgroup of $M$ with Levi decomposition $B_M = U_M T$.  The Levi $M$ corresponds to a subset $\Delta_M \subset \Delta_0$.  The relative Weyl group $_FW_M := N_M(T)(F)/T(F)$ is a Coxeter subgroup of $(_FW, \{ s_\alpha, \, \alpha \in \Delta_0 \})$ with generating set $\{ s_\alpha, \, \alpha \in \Delta_M \}$.  

For each element $w \in \, _FW$, we shall choose once and for all a representative in $K$, denoted by the same symbol.

Let $E/F$ be an unramified extension of degree $r$ contained in $L$, and fix a generator $\theta \in {\rm Gal}(E/F)$.

For $m \in M(F)$, we define following Harish-Chandra the functions $D_{G(F)/M(F)}$ and $\Delta_P$\footnote{We retain this standard notation for this function despite our very similar notation for sets of simple roots!} by
\begin{align*}
D_{G(F)/M(F)}(m) &= {\rm det}(1 - {\rm Ad}(m^{-1}); \, {\rm Lie}(G(F))/{\rm Lie}(M(F))) \\
\Delta_P(m) &= {\rm det}(1 - {\rm Ad}(m^{-1}); \, {\rm Lie}(N(F))).
\end{align*}

Let $\overline{N}$ denote the unipotent radical of the parabolic subgroup $\overline{P} \supset M$ which is opposite to $P$.  Using the decomposition ${\rm Lie}(G(F)) = {\rm Lie}(\overline{N}(F)) \oplus {\rm Lie}(M(F)) \oplus {\rm Lie}(N(F))$, one can prove the identity $|D_{G(F)/M(F)}|_F = |\Delta_P|_F^2 \delta_P$.

We have similar definitions for $D_{G(E)/M(E)}$, $\delta_{P(E)}$, etc. for $E$ replacing $F$.  For the time being, we will work over $E$, and use the symbols $G$, $P$, $J$, $W$, etc. in place of $G(E)$, $P(E)$, $J(E)$, $_EW$, etc.

The refined Iwasawa decomposition states that 
$$
G = \coprod_{w \in W^P} PwI,
$$
where $W^P$ denotes the set of minimal coset representatives for the elements of $W_M\backslash W$, with respect to the Bruhat order defined by $B$.  Since $I \subset J$, each $P,J$-double coset is a union of $P,I$-double cosets.  So each $P,J$-double coset is of the form $PwJ$ for an element $w \in W^P$ (which is not unique in general).  
Choose a set of such representatives, and denote it by $_EW(P,J)$.  Thus 
$$G = \coprod_{w \in \, _EW(P,J)} PwJ.$$  For technical purposes that will become clear later, we choose a lift for each $w$ in the hyperspecial compact subgroup $K(E) \supset I(E)$, and continue to denote that representative by the symbol $w$.

Fix once and for all Haar measures $dg$,$dj$ on $G,J$ respectively, such that ${\rm vol}_{dg}(J) = {\rm vol}_{dj}(J) = 1$.  On $P = MN$ we fix a (left) Haar measure $dp$ such that ${\rm vol}_{dp}(P \cap J) = 1$.  For any integrable smooth function $\phi$ on $G$ which is supported on $PwJ$, we have the following integration formula
\begin{equation} \label{eq:integration_on_PwJ}
\int_G \phi(g) \, dg = q^{-1}_{E,w} \int_{P} \int_J \phi(pwj) \, dj \, dp,
\end{equation}
where $q_{E,w} := {\rm vol}_{dp}(P \cap \, ^wJ)$, and $^wJ := wJw^{-1}$.  The two integrals are proportional because as distributions on $P \times J$, both are left $P$-invariant and right $J$-invariant.  
The proportionality can be computed by using a test function, for instance the characteristic function of $wJ$.

In the same manner, we have for any integrable smooth function $\phi$ on $G$, the formula
\begin{equation} \label{eq:integration_on_G}
\int_G \phi(g) \, dg = \sum_{w \in \, _EW(P,J)} q_{E,w}^{-1} \int_P \int_J \phi(pwj) \, dj \, dp.
\end{equation}

The following variant will be useful.  Let $dp_w$ (resp. $dm_w$, $dn_w$) denote the (left) Haar measure on $P$ (resp. $M$,$N$) such that ${\rm vol}_{dp_w}(P \cap \, ^wJ)$ (resp. ${\rm vol}_{dm_w}(M \cap \, ^wJ)$, ${\rm vol}_{dn_w}(N \cap \, ^wJ)$) has the value 1.  Then we have 
\begin{equation} \label{eq:normalized_integration_on_G}
\int_G \phi(g) \, dg = \sum_{w \in \, _EW(P,J)} \int_P \int_J \phi(pwj) \, dj \, dp_w.
\end{equation}

\subsection{Descent formulas} \label{descent_formulas_subsection}

Let us recall the set-up: $\gamma \in G(F)$ is a semi-simple element, $S$ denotes the $F$-split component of the center of $G_\gamma^\circ$, and $M = {\rm Cent}_G(S)$, an $F$-Levi subgroup.  We suppose $\gamma$ is not elliptic in $G(F)$, so that $M$ is a proper Levi subgroup of $G$.  Choose an $F$-parabolic $P = MN$ with $M$ as Levi factor.  As above, we may assume $P$ and $M$ are standard.

We have $\gamma \in M(F)$ and $G^\circ_\gamma = M^\circ_\gamma$.  We assume $\gamma = \mathcal N \delta$ for an element $\delta \in M(E)$ (see Lemma \ref{non-elliptic_norms}).

The twisted centralizer $G_{\delta \theta}$ of $\delta\theta$ is an inner form of $G_\gamma$ whose group of $F$-points is 
$$
G_{\delta\theta}(F) := \{ g \in G(E) ~ | ~ g^{-1}\delta\theta(g) = \delta \}.
$$
The inclusion $M^\circ_{\delta \theta} \subseteq G^\circ_{\delta \theta}$ is an equality for dimension reasons (they are inner forms of $M^\circ_\gamma = G^\circ_\gamma$). 

Since $G_{\delta\theta}$ and $G_\gamma$ are inner forms, 
we may choose compatible measures $dg_{\delta \theta}$ on $G_{\delta \theta}(F)$ and $dg_\gamma$ on $G_\gamma(F)$, and use the former to define 
\begin{equation} \label{TO_defn}
{\rm TO}_{\delta \theta}^G(\phi) := \int_{G^\circ_{\delta \theta}\backslash G(E)} \phi(g^{-1}\delta \theta g) d\bar{g}.
\end{equation}
Here, as in the sequel, we abbreviate by writing $G^\circ_{\delta \theta}$ in place of $G^\circ_{\delta \theta}(F)$.  Since $G^\circ_{\delta \theta} = M^\circ_{\delta \theta}$, we may choose the same measure $dg_{\delta \theta} = dm_{\delta \theta}$ in defining ${\rm TO}^M_{\delta \theta}(\psi)$ for an integrable smooth function $\psi$ on $M(E)$.

Using (\ref{eq:normalized_integration_on_G}) and the substitution $g = mnwj$ in each summand, we get for $\phi \in \mathcal H_J(G(E))$ the formula
\begin{align*}
{\rm TO}_{\delta \theta}(\phi) &= \int_{G^\circ_{\delta \theta}\backslash G} \phi(g^{-1}\delta \theta g) \, d\bar{g} \\
&= \sum_{w \in \, _EW(P,J)} \int_{M^\circ_{\delta \theta} \backslash M} \int_{N} \int_J \phi(j^{-1}w^{-1}n^{-1}m^{-1}\delta \theta m \theta n \theta(w) \theta j) \, dj \, dn_w \, d\bar{m}_w.
\end{align*}

Since $\phi$ is $J$-bi-invariant, we may suppress the integral over $J$.  For each $m$ we write $m_0 := m^{-1} 
\delta \theta m \in M(E)$.  Also, define the smooth function $^{w,\theta}\phi$ by the equality $^{w,\theta}\phi(g) = \phi(w^{-1}g\theta(w))$.  We easily see 
\begin{equation}\label{eq:m_0_before_Jacobian}
{\rm TO}_{\delta \theta}(\phi) = \sum_{w \in \, _EW(P,J)} \int_{M^\circ_{\delta \theta} \backslash M} \int_{N} \, ^{w,\theta}\phi(m_0(m_0^{-1}n^{-1}m_0 \theta n)) \, dn_w \, d\bar{m}_w.
\end{equation}

Consider the map of $F$-manifolds $N(E) \rightarrow N(E)$ given by $n \mapsto m_0^{-1}n^{-1}m_0 \theta n$.  The absolute value of the Jacobian of this transformation is a constant (independent of $m$, but depending on $\delta$)
\begin{align*}
|{\rm det}(\theta - {\rm Ad}(m_0^{-1}); \, {\rm Lie} \, N(E))|_F & = |{\rm det}(1 - {\rm Ad}(\mathcal Nm_0^{-1}); \, {\rm Lie} \,N(F))|_F \\
&= |D_{G(F)/M(F)}(\mathcal Nm_0)|^{1/2} \, \delta_{P(F)}(\mathcal Nm_0)^{-1/2} \\
&= |D_{G(F)/M(F)}(\mathcal N\delta)|^{1/2} \, \delta_{P(E)}(m_0)^{-1/2}.
\end{align*}
Here we regard $\theta - {\rm Ad}(m_0^{-1})$ as an $F$-linear endomorphism of ${\rm Lie} \,N(E)$.
These equalities follow from standard calculations, see e.g. \cite{Ko80}, $\S 8$. Thus by the change of variables formula for $F$-manifolds, we may write (\ref{eq:m_0_before_Jacobian}) as
\begin{equation} \label{eq:m_0_after_Jacobian}
{\rm TO}_{\delta \theta}(\phi) = |D_{G(F)/M(F)}(\mathcal N\delta)|^{-1/2}_F \, \sum_{w \in \, _EW(P,J)} \int_{M^\circ_{\delta \theta} \backslash M} \delta_{P(E)}^{1/2}(m_0)\int_N \, ^{w,\theta}\phi(m_0 n) \, dn_w \, d\bar{m}_w.
\end{equation}

Thus we have 

\begin{equation} \label{eq:descent_over_E}
{\rm TO}^{G(E)}_{\delta \theta}(\phi) = |D_{G(F)/M(F)}(\mathcal N\delta)|_F^{-1/2} \sum_{w \in \, _EW(P,J)} 
\, {\rm TO}^{M(E)}_{\delta \theta} ( (\, ^{w,\theta}\phi)^{(P(E))}),
\end{equation}
where it is understood that ${\rm TO}^{M(E)}$ is formed using $dm_w$ and $(\cdot)^{(P(E))}$ is formed using $dn_w$.

If we consider the special case $E = F$ ($\theta = 1$), and use the equality $\gamma = \mathcal N\delta = \delta$, we have a corresponding descent formula for functions in $\mathcal H_J(G(F))$.  We write it out in a special case: assume $\phi \in Z(\mathcal H_J(G(E)))$ and consider the descent formula for the function $b\phi \in Z(\mathcal H_J(G(F)))$:
\begin{equation} \label{eq:1st_descent_over_F}
{\rm O}^{G(F)}_\gamma (b\phi) = |D_{G(F)/M(F)}(\gamma)|_F^{-1/2} \sum_{w \in \, _FW(P,J)} \,
{\rm O}^{M(F)}_\gamma [(\,^wb\phi)^{(P(F))}],
\end{equation}
where for a function $\psi$ we define $^w\psi(x) := \psi(w^{-1}xw)$.  Using the compatibility of $b$ with conjugation by $w$ and constant term (Corollary \ref{b_constant_term_compatibility} and Lemma \ref{w_conjugation_compatibility}), this is 
\begin{equation} \label{eq:descent_over_F}
{\rm O}^{G(F)}_\gamma (b\phi) = |D_{G(F)/M(F)}(\gamma)|_F^{-1/2} \sum_{w \in \, _FW(P,J)} \,
{\rm O}^{M(F)}_\gamma b[(\,^w\phi)^{(P(E))}],
\end{equation}

\subsection{Lemmas needed to compare descent formulas}\label{lemmas_needed_to_compare_subsection}

Now, to compare (\ref{eq:descent_over_E}) with (\ref{eq:descent_over_F}), we need several lemmas.  Our first goal is to prove we may choose the sets of representatives $_FW(P,J)$ and $_EW(P,J)$ in such as way that $_FW(P,J) = \, _EW(P,J)^\theta$.

Recall that we are temporarily writing $W$ in place of the relative Weyl group $_EW$ over $E$.  We also sometimes denote the latter by $W(E)$, when we think of it as the ${\rm Aut}(L/E)$-invariants in the (absolute) Weyl group $_LW$.

\begin{defn} \label{W_M_W_min}
We define the subset $(W_M\backslash W)_{\rm min} \subset W$ to consist of the elements $w \in W$ which have minimal length in their cosets $W_Mw$.  Here, we use the Borel subgroup $B = B_0$ to define the Coxeter structure (and thus the notion of length) on $W$.
\end{defn}

\begin{lemma} \label{lemma_A}
\begin{itemize}
\item[(a)] If $w \in (W_M\backslash W)_{\rm min}$, then $^wI \cap M = I \cap M$.
\item [(b)] Let $p$ denote the projection of $\widetilde{W} = X_*(A) \rtimes W$ onto $W$.  Let $\overline{W}_J = p(\widetilde{W}_J)$.  
Then the canonical map $W \rightarrow P\backslash G/J$ induces a bijection
$$
W_M \backslash W/\overline{W}_J ~ \widetilde{\rightarrow} ~ P \backslash G/J.
$$
\item[(c)] $P(F)\backslash G(F)/J(F) = [P(E) \backslash G(E)/J(E)]^\theta$, and hence 
$$W_M(F)\backslash W(F)/\overline{W}_{J(F)} = [W_M(E) \backslash W(E) / \overline{W}_{J(E)}]^\theta.$$  
Thus, we may choose the sets $_FW(P,J)$ and $_EW(P,J)$ in such a way that 
$$_FW(P,J) = \, _EW(P,J)^\theta$$ 
and each set consists of elements $w$ which are minimal in their cosets $W_Mw$. 
\end{itemize}
\end{lemma}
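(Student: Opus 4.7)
The proof has three parts, handled in sequence. Parts (a) and (b) are root-theoretic and combinatorial in nature, while part (c) is a Galois-descent argument.

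For (a), I would use the affine root-subgroup decomposition: $I$ is generated by $T_b$ together with the affine root subgroups $U_{\alpha,k}$ with $\alpha + k \geq 0$ on $\mathbf a$. Then $I \cap M$ consists of $T_b$ together with the $U_{\beta,k}$ for $\beta \in \Phi_M$ and $\beta + k \geq 0$ on $\mathbf a$, while $wIw^{-1}\cap M$ admits an analogous description with the condition replaced by $w^{-1}\beta + k \geq 0$ on $\mathbf a$. Since $\mathbf a$ lies in the dominant chamber, the relevant inequality on $\beta \in \Phi_M$ depends only on the sign of $\beta$ (respectively of $w^{-1}\beta$). The classical fact that $w \in (W_M \backslash W)_{\min}$ satisfies $w^{-1}(\Phi_M^\pm) \subseteq \Phi^\pm$ forces the two conditions to coincide, yielding $wIw^{-1}\cap M = I \cap M$.

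For (b), I would combine the refined Iwasawa decomposition $G = \coprod_{w \in W^P} PwI$ with the parahoric Bruhat decomposition $J = \coprod_{\widetilde v \in \widetilde W_J} I \widetilde v I$. The key observation is that for $\widetilde v = t_\mu \overline v$ under the splitting $\widetilde W = X_*(A) \rtimes W$, the translation part $t_\mu$ lies in $A(F) \subseteq M \subseteq P$. Hence
\[
Pw \widetilde v \, I \;=\; P \cdot (w t_\mu w^{-1}) \cdot (w \overline v) \, I \;=\; P \cdot w \overline v \, I,
\]
so $PwJ = \bigcup_{\overline v \in \overline W_J} P w \overline v \, I$. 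Thus the map $W \to P\backslash G/J$ factors through $W_M \backslash W / \overline W_J$, and is surjective by the refined Iwasawa. Injectivity follows because $W \to P\backslash G/I$ identifies precisely the left $W_M$-cosets (refined Iwasawa), and the preceding calculation shows that passing from $I$ to $J$ identifies precisely the right $\overline W_J$-orbits.

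For (c), the heart is the bijectivity of $P(F)\backslash G(F)/J(F) \to [P(E)\backslash G(E)/J(E)]^\theta$. For surjectivity, I would first use $H^1(\langle\theta\rangle, J(E)) = 1$, which follows from $J(E)$ being the $\mathcal O_E$-points of a smooth $\mathcal O_F$-group scheme with connected geometric fibres (Greenberg's theorem, as in Lemma \ref{J_cap_M_is_parahoric}(c)). This gives $(G(E)/J(E))^\theta = G(F)/J(F)$; for a $\theta$-fixed double coset $P(E)gJ(E)$, its intersection with $G(F)/J(F)$ is non-empty provided $H^1(\langle\theta\rangle, P(E) \cap gJ(E)g^{-1}) = 1$, which follows from the same Bruhat-Tits argument (decomposing via $P = MN$ and the Levi/unipotent vanishing from Lemma \ref{J_cap_M_is_parahoric}(c)). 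Injectivity is similar. The Weyl-group statement then follows by applying (b) separately over $E$ and $F$. For the matching set of representatives, I would select $_EW(P,J) \subseteq W^P$ by choosing a $\theta$-fixed representative in each $\theta$-fixed double coset (guaranteed by the bijection just established, using that $W(F) \subset W^P$ meets each such coset) and transporting arbitrary choices by powers of $\theta$ in non-fixed orbits; then $_FW(P,J) := {}_EW(P,J)^\theta$. The main obstacle will be the $H^1$-vanishing in (c) for the $g$-twisted parahoric intersections, which requires a careful parallel to Lemma \ref{J_cap_M_is_parahoric}(c); parts (a) and (b) are essentially routine manipulations with root subgroups and the combinatorics of $\widetilde W$.
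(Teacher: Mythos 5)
Your approaches to (a) and (c) differ in route from the paper's but are essentially sound. For (a) the paper instead observes that both $I\cap M$ and $^wI\cap M$ are Iwahori subgroups of $M$ (by Lemma \ref{J_cap_M_is_parahoric}) and proves $I\cap M\subset\,^wI$ by a chamber--wall argument (if $w\mathbf a\not\subset\mathbf a_M$ then some wall $\alpha=0$, $\alpha\in\Delta_M$, separates $\mathbf a$ from $w\mathbf a$, contradicting $s_\alpha w>w$); your root-filtration argument reaches the same conclusion, relying implicitly on the fact that the filtration jump of $U_\alpha\cap I$ depends only on the sign of $\alpha$. For (c) the paper works with $X=P\backslash G$ over the completion $L$ of the maximal unramified extension, while you work over $E$ directly; both rest on Greenberg-type $H^1$-vanishing for $J$ and for the stabilizers $J\cap\,^{w_1^{-1}}P$, using Lemma \ref{J_cap_M_is_parahoric}.

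There is, however, a genuine gap in your proof of (b). You conclude $PwJ=\bigcup_{\overline v\in\overline W_J}Pw\overline v\,I$ from the calculation $Pw\widetilde v\,I=Pw\overline v\,I$. This conclusion would require $J=\bigcup_{\widetilde v\in\widetilde W_J}\widetilde v\,I$, but the parahoric Bruhat decomposition is $J=\coprod_{\widetilde v}I\widetilde v\,I$, and one cannot drop the middle $I$: the set $Pw\,I\widetilde v\,I$ is typically strictly larger than $Pw\widetilde v\,I$ (the part of $I$ that lands in $^wI\cap\overline N$ after moving across $w$ does not fall into $P$). Without this, the surjectivity of $W_M\backslash W/\overline W_J\to P\backslash G/J$ is fine, but the injectivity --- the claim that $Pw_1J=Pw_2J$ forces $w_2\in W_Mw_1\overline W_J$ --- is unproved, and this is where the real content of part (b) lies. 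The paper handles it by writing $w_1j=pw_2$, applying the Iwasawa decomposition to $p$ to get $p=nu_M\varpi^\nu w_Mi_M$, using part (a) to push $i_M$ past $w_2$ into $I$, then conjugating the whole identity by a sufficiently dominant $\varpi^\lambda$ so that the unipotent part $nu_M$ is absorbed into $I$, and finally invoking the BN-pair relations for $J=I\widetilde W_JI$ together with the Bruhat--Tits decomposition $G=\coprod_{x\in\widetilde W}IxI$ to extract the relation $w_1p(w_J)=w_Mw_2$. You should supply this (or an equivalent) argument to close the gap.
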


Our second goal is to prove that the only summands in (\ref{eq:descent_over_E}) which are nonzero are those indexed by $w \in \, _FW(P,J)$. 

\begin{lemma} \label{lemma_B} The summands in (\ref{eq:descent_over_E}) indexed by $w \in \, _EW(P,J)$ with $\theta(w) \neq w$ are zero.
\end{lemma}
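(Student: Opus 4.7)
\emph{Proof plan.} The plan is to show that when $\theta(w)\neq w$, the function $\psi := ({}^{w,\theta}\phi)^{(P(E))}$ vanishes identically on $M(E)$, from which ${\rm TO}^{M(E)}_{\delta\theta}(\psi)=0$ follows immediately since the twisted orbital integral is obtained by integrating $\psi$ along a twisted orbit in $M(E)$. Unwinding the constant term, for $m\in M(E)$ we have
\[
\psi(m)=\delta_{P(E)}^{1/2}(m)\int_{N(E)}\phi(w^{-1}mn\theta(w))\,dn,
\]
so nonvanishing of $\psi(m)$ requires $w^{-1}mn\theta(w)\in\operatorname{supp}(\phi)$ for some $n\in N(E)$. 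Since $mn\in P(E)$, this is equivalent to asking that $wJ(E)\,v\,J(E)\cap P(E)\theta(w)J(E)\neq\emptyset$ for some representative $v$ of a $J(E)$-double coset contained in $\operatorname{supp}(\phi)$.

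I would then exploit the centrality of $\phi$ together with the combinatorics of $P(E)$-$J(E)$-double cosets to rule out this intersection. By the Bernstein isomorphism (Theorem \ref{Bernstein_isomorphism}), $\operatorname{supp}(\phi)$ is a union of $J(E)$-double cosets $J(E)\varpi^\lambda J(E)$ with $\lambda$ in a $W(E)$-invariant finite subset of $X_*(A^E)$, and each representative $\varpi^\lambda\in T(E)\subset P(E)$ lies in the trivial $P(E)$-$J(E)$-double coset. Using Lemma \ref{lemma_A}(b), which identifies $P(E)\backslash G(E)/J(E)$ with $W_M\backslash W(E)/\overline{W}_{J(E)}$ so that $_EW(P,J)$ serves as a set of representatives, the nonvanishing condition above translates into a combinatorial constraint that forces the class of $w^{-1}\theta(w)$ in $W_M\backslash W(E)/\overline{W}_{J(E)}$ to be trivial. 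This in turn forces $w$ and $\theta(w)$ to represent the same element of $_EW(P,J)$, contradicting $\theta(w)\neq w$ in view of the choice of representatives from Lemma \ref{lemma_A}(c).

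The hardest step will be the combinatorial bookkeeping: the set $wJ(E)\,v\,J(E)\theta(w)^{-1}$ is not a double coset for any naturally occurring group, so pinning down precisely when it meets $P(E)$ requires the Iwahori factorization of $J(E)$ adapted to $(P,M,N)$ together with the minimality property ${}^wI(E)\cap M(E)=I(E)\cap M(E)$ supplied by Lemma \ref{lemma_A}(a). All of the necessary ingredients — the Bernstein description of the center, the parametrization of double cosets, and the choice of $_EW(P,J)$ compatible with the $\theta$-action — are already in place, but assembling them into the final contradiction is where the real work lies.
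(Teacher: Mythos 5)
Your approach diverges fundamentally from the paper's and rests on two claims that fail in the parahoric setting. The paper does \emph{not} prove that $\psi := (\,^{w,\theta}\phi)^{(P(E))}$ vanishes pointwise. It proves (Lemma \ref{lemma_C}) that every twisted trace $\langle{\rm trace}\,\sigma I_\theta,\psi\rangle$ on a $\theta$-stable admissible $\sigma$ of $M(E)$ vanishes, and then invokes the twisted Kazhdan density theorem of \cite{KoRo} to conclude that the twisted orbital integrals of $\psi$ vanish. Vanishing of all twisted traces is strictly weaker than $\psi\equiv 0$: a nonzero function can lie in the common kernel of all twisted characters. The mechanism is the operator identity of Lemma \ref{key_calc}: the operator $R(\delta_P^{1/2}\psi)\circ I^M_{\theta^{-1}}$ on $\sigma^{J'_M}$ is shown to equal a scalar times $\mathcal P\circ R(\theta(w)\,w^{-1})\circ I^G_{\theta^{-1}}\circ\mathfrak s$, and the support of $R(\theta(w)\,w^{-1}) I^G_{\theta^{-1}}\mathfrak s(v)$ fails to meet $P(E)J'$ exactly when $P\theta(w)J\neq PwJ$, i.e.\ when $\theta(w)\neq w$. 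This is a vanishing of operators on $\theta$-stable representations, not a pointwise vanishing of the function $\psi$, and there is no reason to expect the latter.

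Independently, your support claim is false for a parahoric $J$. The Bernstein isomorphism does \emph{not} imply that central elements of $\mathcal H_J(G(E))$ are supported on $\bigcup_\lambda J(E)\varpi^\lambda J(E)$. The Bernstein basis elements $\Theta_\lambda = T_{t_{\lambda_1}}T_{t_{\lambda_2}}^{-1}$ involve the inverse Iwahori--Matsumoto generator $T_{t_{\lambda_2}}^{-1}$, which brings in double cosets $J v J$ for non-translation $v$ in the Iwahori--Weyl group; for instance, $z_\mu$ with $\mu$ minuscule is supported on the entire admissible set, not just the translation cosets. And even a single coset $J\varpi^\lambda J$ is not contained in the trivial $P$-$J$-double coset: for $j_1\in J(E)\setminus P(E)$, the class of $P j_1\varpi^\lambda J$ in $P(E)\backslash G(E)/J(E)$ can be nontrivial. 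Your heuristic would apply to the spherical case $J = K$, where $K\varpi^\lambda K\subset P(E)K(E)$ by the Iwasawa decomposition; but there $_EW(P,K)$ is a singleton and Lemma \ref{lemma_B} is vacuous. For $J\subsetneq K$, the combinatorics you describe fail at the outset, and the representation-theoretic route through Lemma \ref{lemma_C}, Lemma \ref{key_calc}, and the twisted Kazhdan density theorem is essential.
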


This lemma follows immediately using the twisted version of the Kazhdan density theorem (see \cite{KoRo}) and the following lemma.  To state this, recall that we say a (left) representation $\Pi$ of $G(E)$ is $\theta$-{\em stable} provided it extends to a (left) representation of the group $G^*(E) := G(E) \rtimes \langle \theta \rangle$.  Equivalently, there is an isomorphism of $G(E)$-modules $I_\theta : \Pi ~ \widetilde{\rightarrow}  ~ \Pi^\theta$, where $\Pi^{\theta}$ is the representation on the space of $\Pi$ where the $G(E)$-action is given by $\Pi^\theta(g) = 
\Pi(\theta(g))$.   If $\Pi$ is irreducible, the intertwiner $I_\theta$ is uniquely determined up to a non-zero scalar (Schur's lemma).  When in addition $[E:F] = r$, we normalize 
$I_\theta$ so that $I_\theta^r = {\rm id}$;  then $I_\theta$ is uniquely determined up to an $r$th root of unity.

\begin{lemma} \label{lemma_C} Let $\phi \in Z(\mathcal H_J(G(E)))$.  If $w \in (W_M(E)\backslash W(E))_{\rm min}$ and $\theta(w) \neq w$, then for every $\theta$-stable admissible representation $\sigma$ of $M(E)$, we have
$$\langle {\rm trace} ~ \sigma I_\theta, (\, ^{w,\theta}\phi)^{(P)} \rangle = 0.$$
\end{lemma}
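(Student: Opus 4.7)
My plan is to prove the stronger statement that the operator $\sigma\bigl((\,^{w,\theta}\phi)^{(P)}\bigr)$ on $V_\sigma^{J(E) \cap M(E)}$ itself vanishes, from which the trace identity follows at once. The argument combines a parahoric Mackey-type block formula for the induced representation $\pi := i_{P(E)}^{G(E)}(\sigma)$ with Schur's lemma in the irreducible case, and extends to general admissible $\sigma$ by analytic continuation in an unramified character twist.

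First I would set up a block formula. Using the refined Iwasawa decomposition $G(E) = \bigsqcup_{w \in \, _EW(P,J)} P(E)\, w\, J(E)$ together with Lemma~\ref{lemma_A}(a) (which gives $\,^w J(E) \cap M(E) = J(E) \cap M(E)$ for $w \in (W_M(E) \backslash W(E))_{\min}$), the $J(E)$-invariants of $\pi$ admit the Mackey decomposition
$$\pi^{J(E)} \;=\; \bigoplus_{w \in \, _EW(P,J)} V_\sigma^{J(E) \cap M(E)}.$$
A direct computation using the $J(E)$-bi-invariance of $\phi$ and the integration formula~\eqref{eq:integration_on_PwJ} shows that, up to a nonzero constant depending only on Haar-measure normalizations, the $(w_1,w_2)$-block of $\pi(\phi)$ in this decomposition equals the operator $\sigma\bigl((\,^{w_1,\widetilde{w_2}}\phi)^{(P)}\bigr)$ on $V_\sigma^{J(E) \cap M(E)}$, where $(\,^{w_1,\widetilde{w_2}}\phi)(g) := \phi(w_1^{-1}\, g\, w_2)$. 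Taking $w_1 = w$ and $w_2 = \theta(w)$, the $(w,\theta(w))$-block is therefore proportional to $\sigma\bigl((\,^{w,\theta}\phi)^{(P)}\bigr)$.

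Next I would treat the case that $\pi$ is irreducible. By a theorem of Bernstein (generalizing Borel's for the Iwahori), $\pi^{J(E)}$ is then a simple $\mathcal H_{J(E)}(G(E))$-module, so by Schur's lemma its commutant is $\mathbb C \cdot \mathrm{id}$. Centrality of $\phi$ therefore forces $\pi(\phi) = c \cdot \mathrm{id}$ on $\pi^{J(E)}$ for some scalar $c$, and all off-diagonal blocks of $c \cdot \mathrm{id}$ in the Mackey decomposition vanish. The block formula above then yields
$$\sigma\bigl((\,^{w,\theta}\phi)^{(P)}\bigr) \;=\; 0$$
whenever $\theta(w) \neq w$.

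Finally, to treat arbitrary admissible $\sigma$, I would deform by an unramified character twist $\sigma_\chi := \sigma \otimes \chi$, where $\chi$ ranges over the complex algebraic torus of unramified characters of $M(E)$. The induced representation $i_{P(E)}^{G(E)}(\sigma_\chi)$ is irreducible on a nonempty Zariski-open subset of this torus, so the previous step gives $\sigma_\chi\bigl((\,^{w,\theta}\phi)^{(P)}\bigr) = 0$ there. The entries of this operator on the finite-dimensional space $V_\sigma^{J(E) \cap M(E)}$ are regular functions of $\chi$, so vanishing on a Zariski-dense open subset forces vanishing identically; specializing to $\chi = \mathbf{1}$ yields $\sigma\bigl((\,^{w,\theta}\phi)^{(P)}\bigr) = 0$, from which the asserted trace vanishes a fortiori. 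The main obstacle is carrying out the block computation in the first step with the correct Haar-measure bookkeeping, given the several competing normalizations appearing in \eqref{eq:integration_on_PwJ}--\eqref{eq:normalized_integration_on_G}; the remaining inputs (Bernstein's simplicity theorem for $\pi^{J(E)}$ and generic irreducibility of unramified twists of parabolic inductions) are standard facts of Bernstein theory.
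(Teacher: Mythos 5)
Your proposal takes a genuinely different route from the paper's. The paper first reduces to irreducible $\sigma$ via Rogawski's Lemma 2.1, passes to right actions via the anti-involution $\iota$, and then proves an explicit identity (Lemma \ref{key_calc}) expressing $R(\delta_P^{1/2}(\,^{w,\theta}\phi)^{(P)})\circ I^M_{\theta^{-1}}$ on $\sigma^{J'_M}$ as a known scalar times $\mathcal P\circ R(\theta(w)w^{-1})\circ I^G_{\theta^{-1}}\circ\mathfrak s$; the vanishing then comes from a support comparison on $P\backslash G$, which forces $P\theta(w)J = PwJ$. You instead assemble the full Mackey block decomposition of $i_{P(E)}^{G(E)}(\sigma)^{J(E)}$, apply Borel--Bernstein simplicity and Schur's lemma to kill the off-diagonal blocks when the induced representation is irreducible, and deform by unramified twists to cover the rest. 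Both arguments are at bottom exploiting centrality of $\phi$ through the refined Iwasawa decomposition; yours gives the cleaner ``off-diagonal entry of a scalar operator'' reason for vanishing, while the paper's identity (Lemma \ref{key_calc}) has the advantage of being reused, in a variant form, in the compact-trace computation of subsection \ref{simplification_subsection}.

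Two points need repair. First, you invoke Lemma \ref{lemma_A}(a) as if it gave $\,^w J(E)\cap M(E) = J(E)\cap M(E)$ for $w$ minimal; that lemma is only about the Iwahori $I$, and Remark \ref{w(J_cap_M)_vs_(J_cap_M)} gives an explicit counterexample in ${\rm Sp}(4)$ showing the analogue fails for a general parahoric $J$. The Mackey blocks are really $V_\sigma^{\,^wJ(E)\cap M(E)}$, which vary with $w$; the Schur argument still forces the off-diagonal blocks between these (possibly different) spaces to vanish, so the slip is repairable, but the identification you wrote is false and makes the block formula ill-stated. Second, and more substantively, the closing deformation step is asserted for arbitrary admissible $\sigma$, but generic irreducibility of $i_{P(E)}^{G(E)}(\sigma\otimes\chi)$ requires $\sigma$ itself to be irreducible; if $\sigma$ is reducible there is no $\chi$ making the induction irreducible, so the Zariski-density argument gives nothing. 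You therefore need the preliminary reduction to irreducible $\sigma$ (as the paper does via Rogawski); this matters because for reducible $\sigma$ the operator $\sigma\bigl((\,^{w,\theta}\phi)^{(P)}\bigr)$ need not vanish even when its restriction to each irreducible subquotient does (it can act as a nonzero nilpotent), and a nilpotent composed with an $I_\theta$ that permutes constituents can easily have nonzero trace.
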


The upshot of these lemmas is that we may effectively compare (\ref{eq:descent_over_E}) with (\ref{eq:descent_over_F}), and thereby by induction on semi-simple ranks, reduce the fundamental lemma to the case where $\gamma$ is an elliptic element.
\begin{Remark} \label{I_M_fixed_remark}
Suppose $\sigma$ is an admissible representation of $M$ such that $i^G_P(\sigma)$ has $I$-fixed vectors.  It follows using Lemma \ref{lemma_A}, part (a), that $\sigma$ has $I \cap M$-fixed vectors.  Indeed, suppose $0 \neq \Phi \in i^G_P(\sigma)^I$.  Then there exists $w \in (W_M\backslash W)_{\rm min}$ such that $\Phi(w) \neq 0$.  But $\Phi(w)$ is fixed by every operator $\sigma(i_M)$ for $i_M \in I \cap M$, since $\sigma(i_M)\Phi(w) = \Phi(w \cdot \, ^{w^{-1}}i_M)$ and $^{w^{-1}}i_M \in I$ by Lemma \ref{lemma_A}, part (a).
\end{Remark}

\begin{Remark} \label{w(J_cap_M)_vs_(J_cap_M)}
For our general parahoric $J$, and for $w$ a minimal element of its coset $W_Mw$, one might hope that the analogue of Lemma \ref{lemma_A} (a) holds true: $^wJ \cap M = J \cap M$.  However, this usually fails when $J \neq I$.  For instance, the inclusion $J \cap M \subset \, ^wJ$ fails for the group $G={\rm Sp}(4)$, where (numbering simple roots $\alpha_i$ and fundamental coweights $\omega^\vee_i$ as in \cite{Bou}), $M$ is the Levi corresponding to the simple root $\alpha_2$, $w = s_{\alpha_1}$, and $J$ is the parahoric subgroup fixing the vertex $\frac{1}{2}\omega^\vee_1$ in the base alcove ${\bf a}$).  In fact, $s_{\alpha_2} \in J \cap M$ but $s_{\alpha_2} \notin \, ^{s_{\alpha_1}}J$ since $s_{\alpha_2}$ does not fix the vertex $s_{\alpha_1}(\frac{1}{2}\omega^\vee_1)$.  
\end{Remark} 

\subsection{Proof of Lemma \ref{lemma_A}}

Part (a):  Since $I \cap M$ and $^wI \cap M$ are Iwahori subgroups of $M$ (Lemma \ref{J_cap_M_is_parahoric}), it is enough to prove $I \cap M \subset \, ^{w}I$.  Indeed, two Iwahori subgroups which both fix a given alcove must coincide (\cite{BT2}, 4.6.29).  For this proof, let ${\bf a}$ resp. ${\bf a}_M$ denote the alcove of the building for $G$ resp. $M$ corresponding to the Iwahori subgroup $I$ resp. $I \cap M$.  
Note that ${\bf a}_M$ is the union of ${\bf a}$ along with some other alcoves ${\bf a}'$.  Given $y \in I \cap M$, we want to show $y \in \, ^wI$, i.e., $y$ fixes $w{\bf a}$.  If $w{\bf a} \subset {\bf a}_M$, we are done.  If $w{\bf a} \nsubseteq {\bf a}_M$, there is a simple root $\alpha \in \Delta_M$ such that $w{\bf a}$ and ${\bf a}$ are separated by the hyperplane given by $\alpha  = 0$.  But then $s_\alpha w < w$ in the Bruhat order on $W$, a contradiction of $w \in (W_M\backslash W)_{\rm min}$.  

\smallskip

{\em Note: Let $B$ denote $B_0$ or $\overline{B}_0$.  Then the same proof shows $^wB \cap M = B \cap M$; this fact is used in subsection \ref{simplification_subsection}.}

\medskip

\noindent Part (b):  It is clear that the map is well-defined, and surjective, by the refined Iwasawa decomposition $G = \coprod_{w \in W^P}PwI$.  To show it is injective, we use our assumption that $P,M$ are standard, that is, $P \supseteq B$ and $M \supseteq A$.  Suppose $w_1, w_2 \in W$ represent two double cosets in $W_M\backslash W/\overline{W}_J$ such that $Pw_1 J = Pw_2 J$.  We may assume $w_1, w_2 \in W^P$.  Write $w_1j = pw_2$ with $j \in J$ and $p = nm = nu_M \varpi^\nu w_M i_M$ with $u_M \in U \cap M$, $\,\,w_M \in W_M$, $\,\, i_M \in I \cap M$, and $\nu \in X_*(A)$ (using the usual Iwasawa decomposition for $m \in M$).  By part (a), we have
$$
w_1j = nu_M \varpi^\nu w_M w_2 i
$$
for some $i \in I$.  

There is a sufficiently $B$-dominant cocharacter $\lambda \in X_*(A)$ such that $\varpi^\lambda (nu_M) \varpi^{-\lambda} \in I$.  Thus we have the identity
$$
\varpi^\lambda w_1 j = i' \varpi^{\lambda + \nu} w_M w_2 i,
$$ 
for some $i' \in I$.  It is easy to show that $\widetilde{W}_J \subset W_{\rm aff}$ (see the Claim in the proof of Lemma \ref{J_cap_M_is_parahoric}) and that $\widetilde{W}_J$ is generated by simple affine reflections which fix ${\mathbf a}_J$.  Now using $J = I\widetilde{W}_JI $ and the BN-pair relations, we obtain an element $w_J \in \widetilde{W}_J$ such that
$$
I \varpi^\lambda w_1 w_J I = I \varpi^{\lambda + \nu} w_M w_2 I,
$$
and hence by the uniqueness in the Bruhat-Tits decomposition $G = \coprod_{w \in \widetilde{W}} IwI$, we conclude that $\varpi^\lambda w_1 w_J = \varpi^{\lambda + \nu} w_M w_2$ and thus $w_1 p(w_J) = w_M w_2$.  

\medskip

\noindent Part (c): Let $X = P \backslash G$, an $F$-variety carrying the action of $J$ by right multiplications.  
We must show the canonical map $i: X(F) / J(F) \rightarrow [X(E)/J(E)]^\theta$ is bijective.  Extending $\theta$ to an automorphism $\theta \in {\rm Aut}(L/F)$ with fixed field $F$, it is enough to establish a canonical bijection
\begin{equation}
X(F)/J(F) ~ \widetilde{\rightarrow} ~ [X(L)/J(L)]^\theta
\end{equation}
(along with the analogue of this for $E$ replacing $F$).  The surjectivity follows from the fact that $H^1(\langle \theta \rangle, J(L)) = 1$, a consequence of Greenberg's theorem \cite{Gr} since $J(L)$ coincides with the $\mathcal O_L$-points of an $\mathcal O_F$-group scheme having connected geometric fibers.  To prove the injectivity, suppose $x_1,x_2 \in X(F)$ satisfy $x_1 j = x_2$ for $j \in J(L)$.  Thus $j\theta(j^{-1}) \in J_{x_1}(L)$, where $J_{x_1}$ denotes the stabilizer of $x_1$.  Representing $x_1$ by $w_1 \in \, _FW$, we see that $J_{x_1} = J \cap \, ^{w_1^{-1}}P = (J \cap \, ^{w_1^{-1}}M)(J \cap \, ^{w_1^{-1}}N)$.  Once again Greenberg's theorem implies that $H^1(\langle \theta \rangle, J_{x_1}(L)) = 1$ (see Lemma \ref{J_cap_M_is_parahoric}).  Hence $j \theta(j^{-1}) = j_1^{-1} \theta(j_1)$ for some $j_1 \in J_{x_1}(L)$.  But then $x_2 = x_1(j_1j)$ and so $x_1 = x_2$ modulo $J(F)$, proving the injectivity. 

This completes the proof of Lemma \ref{lemma_A}. \qed

\subsection{Proof of Lemma \ref{lemma_C}}

\subsubsection{Beginning of proof} \label{beginning_of_proof_lemma_C} 
 
Suppose $\sigma$ is a $\theta$-stable admissible representation of $M(E)$.  Write $I^M_\theta : \sigma ~ \widetilde{\rightarrow} ~ \sigma^\theta$ for the given intertwiner.  By \cite{Rog}, Lemma 2.1, it is enough to prove the lemma in the case where $\sigma$ is irreducible as an $M(E)$-representation, which we henceforth assume.  Without loss of generality, we may assume $\sigma$ has $I \cap M$-invariant vectors.  Then we may choose an unramified character $\xi$ of $T(E)$ such that $\sigma$ is a subrepresentation of $i^M_{B_M}(\xi)$.     

Given an element $\psi \in \mathcal H(M(E))$, we need to distinguish between its {\em left action} $L(\psi)$ and its {\em right action} $R(\psi)$ on the space $i^M_{B_M}(\xi)$ (and on $\sigma$).  
The right action $R(\psi)$ is given simply by $\Phi \mapsto \Phi * \psi$ (where $*$ is defined using a choice of Haar measure to be specified below).  The left action is defined by the operator identity $L(\psi) = R(\iota \psi)$, where $\iota$ is the anti-involution of the Hecke algebra defined by $\iota \psi(m) = \psi(m^{-1})$.  
We need to prove 
$${\rm trace}(L(\psi) \circ I^M_\theta; \, \sigma) = 0
$$ 
for all functions $\psi$ of the form $(\, ^{w,\theta}\phi)^{(P)}$, where $\phi$ is central and $w \neq \theta(w)$.  The notation on the left stands for ``the trace of the operator $L(\psi) \circ I^M_\theta$ on the space of $\sigma$'' (similar notation is used below in (\ref{right_action_trace_vanishes}) and (\ref{modified_right_action_trace_vanishes})).

We have the following relations
\begin{align} \label{relations}
\iota(\psi^{(P)}) &= (\iota \psi)^{(P)} \\
\iota(\, ^{w,\theta}\phi) &= \, ^{\theta(w), \theta^{-1}}\iota \phi, \notag
\end{align}
where $^{\theta(w), \theta^{-1}}\phi(g) := \phi(\theta(w)^{-1} \, g \, w)$.  
Since $\iota$ preserves centers of Hecke algebras, it will be enough to establish the corresponding identity with respect to the {\em right actions}, which is slightly more convenient.  That is, we need to show that
\begin{equation} \label{right_action_trace_vanishes}
{\rm trace}(R((\, ^{w,\theta}\phi)^{(P)}) \circ I^M_{\theta^{-1}}; \,\sigma)   = 0.
\end{equation}
By the identity 
\begin{equation} \label{F*deltaG}
F * (\delta H)(m) = \delta(m) \cdot (\delta^{-1}F * H)(m)
\end{equation}
for a character $\delta$ and functions $F,H$ on a group containing $m$, it is enough to prove that for all $\sigma$ we have
\begin{equation} \label{modified_right_action_trace_vanishes}
{\rm trace} (R(\delta_P^{1/2} (\, ^{w,\theta}\phi)^{(P)}) \circ I^M_{\theta^{-1}}; \, \sigma)  = 0.
\end{equation}
Note that we essentially replaced $\sigma$ with $\delta_P^{1/2} \sigma$ here.  This is legitimate since the latter is still $\theta$-stable, as $\delta_P \circ \theta = \delta_P$.

To prove (\ref{modified_right_action_trace_vanishes}), we need to relate the operator 
$R(\delta_P^{1/2}(\, ^{w,\theta}\phi)^{(P)}) \circ I^M_{\theta^{-1}}$ on $\sigma$ to a simpler operator $R(\theta(w) \, w^{-1}) \circ I^G_{\theta^{-1}}$ on the space $i^G_P(\delta_{P}^{-1/2}\sigma)$.  Let us define this operator.

We define an intertwiner
\begin{equation} \label{Ginterwiner_def}
I^G_{\theta} : i^G_P(\delta_P^{-1/2}\sigma) ~ \widetilde{\rightarrow} ~ i^G_P(\delta^{-1/2}_P \sigma)^\theta
\end{equation}
by setting $(I^G_\theta \widetilde{\Phi})(g) = I^M_{\theta}(\widetilde{\Phi}(\theta^{-1}g))$, for $\widetilde{\Phi} \in i^G_P(\delta_P^{-1/2}\sigma)$ and $g \in G(E)$.  We also define a vector-space isomorphism
$$
R(\theta(w) \, w^{-1}) : i^G_P(\delta_P^{-1/2} \sigma) ~ \widetilde{\rightarrow} ~ i^G_P(\delta_P^{-1/2}\sigma)
$$
by setting $(R(\theta(w) \, w^{-1})\widetilde{\Phi})(g) = \widetilde{\Phi}(g \theta(w) \, w^{-1})$, for 
$\widetilde{\Phi} \in i^G_P(\delta_P^{-1/2}\sigma)$ and $g \in G(E)$.

Next we define parahoric subgroups $J' := \, ^{\theta(w)}J$ in $G(E)$ and $J'_M := J' \cap M$ in $M(E)$, 
respectively.  Note that $I^G_{\theta^{-1}}$ takes $J'$-invariants into ${^wJ}$-invariants, and 
$R(\theta(w) \, w^{-1})$ sends the latter back into the former (similar statements hold for $I^M_{\theta^{-1}}$ resp. 
$R(\delta^{1/2}_P(\,^{w,\theta}\phi)^{(P)})$).  
Note also that $R(\delta_P^{1/2}(\, ^{w,\theta}\phi)^{(P)}) \circ I^M_{\theta^{-1}}$ takes $\sigma^{J'_M}$ into itself and vanishes identically on the natural complement of $\sigma^{J'_M}$ in $\sigma$ consisting of those vectors whose integral over $J'_M$ vanishes. 

Consider the natural surjective map
\begin{align} 
\mathcal P_{J'} ~: ~& i^G_P(\delta_P^{-1/2}\sigma)^{J'} \twoheadrightarrow \sigma^{J'_M} \label{first_projection} \\
& \hspace{.33 in} \widetilde{\Phi} \hspace{.2in} \mapsto \hspace{.2in} \widetilde{\Phi}(1). \notag
\end{align}
There is a canonical section of $\mathcal P_{J'}$
\begin{align*} \mathfrak s_{J'} ~ : ~& \sigma^{J'_M} \hookrightarrow i^G_P(\delta_P^{-1/2}\sigma)^{J'} \\
     & \hspace{.09in} v \hspace{.2in} \mapsto \hspace{.2in} \widetilde{\Phi}^{(v)}_1,
\end{align*}
where $\widetilde{\Phi}^{(v)}_1$ is the unique element supported on $PJ'$ such that 
$\widetilde{\Phi}^{(v)}_1(1) = v$. We will suppress the subscript $J'$ and write $\mathcal P$ resp. $\mathfrak s$ for these operators (even though $J'$ will vary in the sequel).  Note that 
\begin{align} \label{mathcalP_is_theta_equivariant}
\mathcal P \circ I^G_{\theta^{-1}} &= I^M_{\theta^{-1}} \circ \mathcal P \\
         \mathfrak s \circ I^M_{\theta^{-1}} &= I^G_{\theta^{-1}} \circ \mathfrak s. \notag
\end{align}

Let $\xi_1 := \delta_P^{-1/2}\xi$, an auxiliary character on $T(E)$ attached to $\xi$.  The key point in proving (\ref{modified_right_action_trace_vanishes}) turns out to be the following lemma.  

\begin{lemma} \label{key_calc}  Let the right action $R(\psi) = -*\psi$ on $i^M_{B_M}(\xi)$ (and $\sigma$) be defined using the Haar measure on $M(E)$ which gives $^wJ \cap M$ volume 1.  Then 
we have the following equality of linear functions $\sigma^{J'_M} \rightarrow \sigma^{J'_M}$
\begin{equation} \label{eq:key_calc}
ch_{\xi_1}(\, ^w\phi, \, ^wJ) \cdot \mathcal P \circ R(\theta(w) \, w^{-1}) \circ I^G_{\theta^{-1}} \circ \mathfrak s = 
R(\delta_P^{1/2}(\, ^{w,\theta}\phi)^{(P)}) \circ I^M_{\theta^{-1}}.
\end{equation}
\end{lemma}

Before proving this, let us use it to complete the proof of Lemma \ref{lemma_C}.  Suppose that (\ref{modified_right_action_trace_vanishes}) does not hold.  Fix a basis $v_1, \dots, v_n$ for $\sigma^{J'_M}$.  Then for some index $i$, the element 
$$
R(\delta_P^{1/2}(\,^{w,\theta}\phi)^{(P)}) \circ I^M_{\theta^{-1}}(v_i)
$$
has a non-zero $v_i$-component.  By Lemma \ref{key_calc}
\begin{equation} \label{R()Phi}
R(\theta(w) \, w^{-1}) \circ I^G_{\theta^{-1}}(\widetilde{\Phi}^{(v_i)}_1)
\end{equation}
has a non-zero $\widetilde{\Phi}^{(v_i)}_1$-component.  But this implies that the support of (\ref{R()Phi}) meets $PJ'$. (Use the fact that ${\rm ker}(\mathcal P)$ is spanned by the functions 
$\widetilde{\Phi}_\tau \in i^G_P(\delta_P^{-1/2}\sigma)^{J'}$ supported on sets $P\tau J'$, where $\tau \in \,_EW(P,J')$ and $\tau \neq 1$.)  This in turn means that the support of 
$I^G_{\theta^{-1}}\widetilde{\Phi}^{(v_i)}_1$ meets $P\theta(w) J w^{-1}$.  On the other hand, it is clear that
$$
{\rm supp}(I^G_{\theta^{-1}} \Phi^{(v_i)}_1) = P w J w^{-1}.
$$
Thus $P \theta(w) J = P w J$.  Since $w$ and $\theta(w)$ are each in $_EW(P,J)$, we must have $\theta(w) = w$, a contradiction.  This completes the proof of Lemma \ref{lemma_C}, modulo Lemma \ref{key_calc}.

\subsubsection{Proof of Lemma \ref{key_calc}}  In light of (\ref{mathcalP_is_theta_equivariant}), it is enough to prove the equality
\begin{equation} \label{modified_key_calc}
ch_{\xi_1}(\, ^w\phi, \, ^wJ) \cdot \mathcal P \circ R(\theta(w) \, w^{-1}) \circ \mathfrak s =  
R(\delta_P^{1/2}(\, ^{w,\theta}\phi)^{(P)}) 
\end{equation}
of linear functions $\sigma^{^wJ \cap M} \rightarrow \sigma^{J'_M}$.

Now the inclusion $\sigma \hookrightarrow i^M_{B_M}(\xi)$ induces an inclusion $i^G_P(\delta_P^{-1/2}\sigma) \hookrightarrow  i^G_P(\delta_P^{-1/2}i^M_{B_M}(\xi)) \cong i^G_B(\delta^{-1/2}_P \xi)$.  The following diagram commutes
\begin{equation} \label{P_s_diagram}
\xymatrix{
i^G_P(\delta_P^{-1/2}\sigma) \ar[d]_{\mathcal P} \ar@{^{(}->}[r] & i^G_B(\xi_1) \ar[d]_{\mathcal P'} \\
\sigma \ar@/_/[u]_{\mathfrak s} \ar@{^{(}->}[r] & i^M_{B_M}(\xi) \ar@/_/[u]_{\mathfrak s'},}
\end{equation}
where $\mathcal P'$ is the operator $\widetilde{\Phi} \mapsto \widetilde{\Phi}|_{M(E)}$.  Let us make explicit $\mathfrak s' = \mathfrak s'_{^wJ}$: given $\Phi \in i^M_{B_M}(\xi)^{^wJ \cap M}$, ${\mathfrak s}' \Phi =: \widetilde{\Phi}$ is the unique element in $i^G_B(\xi_1)^{^wJ}$ which is supported on $P\, ^wJ$ and which satisfies $\widetilde{\Phi}|_{M(E)} = \Phi$.

It is therefore enough to prove (\ref{modified_key_calc}) for $\mathcal P'$ resp. $\mathfrak s'$ in place of $\mathcal P$ resp. $\mathfrak s$.  Thus, Lemma \ref{key_calc} will follow from the next lemma.

\begin{lemma} \label{main_calc} Define the right action $*$ on $i^M_{B_M}(\xi)$ using the measure which gives $^wJ \cap M$ volume 1.  Let $\Phi \in i^M_{B_M}(\xi)^{^w J \cap M}$.  Let $\widetilde{\Phi} = \mathfrak s' \Phi \in i^G_B(\xi_1)^{^wJ}$, as defined above.  Then for any $y \in M(E)$, we have the identity
\begin{equation} \label{eq:main_calc}
ch_{\xi_1}(\, ^w\phi, \, ^wJ) \cdot \, \widetilde{\Phi} (y \theta(w) w^{-1}) = \Phi * [\delta_P^{1/2} (\, ^{w,\theta}\phi)^{(P)}](y).
\end{equation}
\end{lemma}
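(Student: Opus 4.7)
The plan is to unfold both sides of \eqref{eq:main_calc} as integrals over $P(E)$ against the same normalized left Haar measure $d_l p_w$ (giving $P(E) \cap \,^wJ$ volume $1$), then match them by a single change of variable. The crucial structural input is centrality: since $\phi \in Z(\mathcal H_J(G(E)))$, Lemma \ref{w_conjugation_compatibility} gives $\,^w\phi \in Z(\mathcal H_{\,^wJ}(G(E)))$, and Theorem \ref{Bernstein_isomorphism} then implies $\,^w\phi$ acts on $i^G_B(\xi_1)^{\,^wJ}$ by right convolution as the scalar $ch_{\xi_1}(\,^w\phi, \,^wJ)$; thus the LHS of \eqref{eq:main_calc} equals $(\widetilde{\Phi}*\,^w\phi)(y\theta(w)w^{-1})$.

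To unfold this, I first record the section formula $\widetilde{\Phi}(p) = \Phi(p_M)$ for every $p \in P(E)$ (where $p_M$ is the Levi component of $p$), which follows from the identity $\delta_B^{1/2}\xi_1 = \delta_{B_M}^{1/2}\xi$, the defining properties of $\mathfrak s'\Phi$, and the inclusion $N(E) \subset U(E) \subset B(E)$ (forcing $\widetilde{\Phi}(np) = \widetilde{\Phi}(p)$). Expanding the convolution, restricting to the support $P\,^wJ$, applying the integration formula \eqref{eq:integration_on_PwJ} specialized to this single $(P,\,^wJ)$-double coset (for which $\mathrm{vol}(P(E) \cap \,^wJ) = 1$), and using the left $J$-invariance of $\phi$ together with the identity $w^{-1}(\,^wj)^{-1} = j^{-1}w^{-1}$ to collapse the $\,^wJ$-integration to unit volume, I obtain
\[
\mathrm{LHS} = \int_{P(E)} \Phi(p_M)\,\phi(w^{-1}p^{-1}y\theta(w))\,d_l p_w.
\]
Unwinding the constant term (using the measure $dn_w$) and the right convolution (using $dm_w$), together with the product formula $d_l p_w = dm_w\,dn_w$ under the parametrization $p = mn$ (justified by Lemma \ref{J_cap_M_is_parahoric}(b)), I similarly get
\[
\mathrm{RHS} = \int_{P(E)} \Phi(yp_M^{-1})\,\delta_P(p_M)\,\phi(w^{-1}p\theta(w))\,d_l p_w.
\]

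The two integrals will be matched by the substitution $p = yq^{-1}$ on $P(E)$: the relation $y \in M(E)$ gives $(yq^{-1})_M = y q_M^{-1}$; the argument of $\phi$ simplifies via $w^{-1}(yq^{-1})^{-1}y\theta(w) = w^{-1}q\theta(w)$; and the Jacobian factor is $\delta_P(q_M)$, coming from the left invariance of $d_l$ under left translation by $y$ combined with the inversion formula $\int_P f(p^{-1})\,d_l p = \int_P f(p)\,\delta_P(p_M)\,d_l p$ (itself a direct computation from $d_l p = dm\,dn$ and the fact that $\mathrm{Ad}(m)$ has modulus $\delta_P(m)$ on $\mathrm{Lie}(N(E))$). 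This Jacobian is exactly the $\delta_P(p_M)$ appearing in the RHS integrand, so the substitution converts the LHS integral into the RHS integral.

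The hardest step is the careful bookkeeping of Haar-measure normalizations: ensuring that the $\delta_P^{1/2}$-factors from the constant term definition, the normalizations hidden in the right convolution, and the inversion Jacobian $\delta_P(p_M)$ on the non-unimodular group $P(E)$ combine to give exactly the required $\delta_P(p_M)$ in the RHS integrand with no residual power of $\delta_P$ left over. Once the section formula $\widetilde{\Phi}(p) = \Phi(p_M)$ and the inversion Jacobian are established, however, the identity follows essentially automatically from the single substitution above; note that no case split based on whether $\theta(w) = w$ is required, since when $\theta(w) \neq w$ the point $y\theta(w)w^{-1}$ lies outside $P\,^wJ$ and both sides vanish.
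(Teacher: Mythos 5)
Your proof is correct, and the strategy is essentially the paper's: both hinge on the centrality of $^w\phi$ in $\mathcal{H}_{\,^wJ}(G(E))$ to identify the scalar multiple $ch_{\xi_1}(\,^w\phi,\,^wJ)\cdot\widetilde{\Phi}$ with the convolution $\widetilde{\Phi}*\,^w\phi$, both use the section formula $\widetilde{\Phi}(mn)=\Phi(m)$ (your derivation of it is correct), and both finish by a measure-theoretic change of variables. The organizational difference is that the paper inserts a vacuous $J$-integral and applies the integration formula \eqref{eq:integration_on_PwJ} so as to repackage the unfolded right-hand side of \eqref{eq:main_calc} directly as the $G(E)$-convolution $\widetilde{\Phi}*\,^w\phi(y\theta(w)w^{-1})$, keeping all Haar measures on the unimodular group $G(E)$; you instead unfold both sides into integrals over $P(E)$ and match them via $p=yq^{-1}$, which obliges you to compute and carry the inversion Jacobian $\delta_P(p_M)$ on the non-unimodular $P(E)$. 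That modular bookkeeping — which you flag as the hardest step — is precisely what the paper's $G(E)$-level repackaging is engineered to bypass. You could also avoid it inside your own framework: unfolding the left-hand side with $p=mn$ gives $\int_M\int_N \Phi(m)\,\phi(w^{-1}n^{-1}m^{-1}y\theta(w))\,dn_w\,dm_w$, while the constant-term expansion of the right-hand side gives $\int_M\int_N \Phi(m)\,\phi(w^{-1}n\,m^{-1}y\theta(w))\,dn_w\,dm_w$, and these differ only by $n\mapsto n^{-1}$ on the unimodular group $N(E)$. The additional pre-translation $m\mapsto ym^{-1}$ and conjugation $n\mapsto mnm^{-1}$ in your derivation of the right-hand side (and the resulting $\delta_P(p_M)$ factor, which then has to reappear as the inversion Jacobian) are therefore unnecessary.
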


\begin{proof}
We have 
\begin{align*}
\Phi * [\delta^{1/2}_P (\, ^{w,\theta}\phi)^{(P)}] (y) &= \int_{M(E)} \Phi(m) 
\delta_P^{1/2}(m^{-1}y)(\, ^{w,\theta}\phi)^{(P)}(m^{-1}y) dm_w \\
&= \int_{M(E)} \int_{N(E)} \widetilde{\Phi}(m n) \, ^{w,\theta}\phi((mn)^{-1}y) \, dn_w \, dm_w \\
&= \int_{M(E)} \int_{N(E)} \int_{J} \widetilde{\Phi}(mn wjw^{-1}) \, ^{w,\theta}\phi((mn wjw^{-1})^{-1}y) \, dj \, dn_w \, dm_w,
\end{align*}
 where ${\rm vol}_{dj}(J) = 1$.  
On the other hand, since ${\rm supp}(\widetilde{\Phi}) \subseteq MN \, ^{w}J$, we can write this via the substitution $x = mnwjw^{-1}$ as
\begin{equation*}
\int_{G(E)} \widetilde{\Phi}(x) \, ^{w,\theta}\phi(x^{-1}y) \, dx = \widetilde{\Phi} * \, (^w\phi) (y \theta(w) w^{-1}).
\end{equation*}
 The result now follows from the fact that $^w\phi \in Z(\mathcal H_{\, ^wJ}(G(E)))$.  
We have proved Lemma \ref{main_calc}, and thus also Lemma \ref{key_calc}.  Thus, we have proved Lemma \ref{lemma_C}.
\end{proof}

\subsection{Proof of Proposition \ref{B_and_constant_term_compatibility}} \label{proof_of_B_constant_term_compatibility}
Consider (\ref{eq:main_calc}) in the special case $w = 1$ and $E = F$.  Assume $\phi \in Z(\mathcal H_J(G(F)))$.  In light of (\ref{F*deltaG}), equation (\ref{eq:main_calc}) asserts that the function $\phi^{(P)} := \phi^{(P(F))}$ acts 
on the right on $i^M_{B_M}(\xi_1)^{J_M}$ by the scalar $ch_{\xi_1}(\phi,J) := \xi_1^{-1}(B^{-1}\phi)$.  Since this holds for every unramified character $\xi_1$, the function $\phi^{(P)}$ necessarily belongs to the center $Z(\mathcal H_{J \cap M}(M(F)))$.  Furthermore, we have
$$
B^{-1}(\phi^{(P)}) = B^{-1}(\phi)
$$
as elements of $R^{W_M(F)}$.  In other words, the diagram in Proposition \ref{B_and_constant_term_compatibility} commutes.  \qed

\section{Various reductions} \label{reductions_section}

In this section we reduce the fundamental lemma to the special case where $G = G_{\rm ad}$ and $\gamma$ is a strongly regular elliptic element, which is a norm.  (Recall that we call a regular semi-simple element $\gamma \in G$ {\em strongly regular} if $G_\gamma$ is a torus.)   Further, we show that it is enough to prove the required matching for a dense subset of such elements $\gamma$.  We proceed using a series of steps, most of which are very similar to the corresponding reductions in the case of spherical Hecke algebras.  We will summarize these steps, explaining in some detail those which are substantially different from the spherical case.  No claim of originality is made in this section.

\subsection{Definition of stable twisted orbital integral (Review)}  As usual $G$ denotes an unramified connected reductive $F$-group.  Let $\delta \in G(E)$ be such that $\mathcal N \delta \in G(F)$ is semi-simple.  Recall that $G^\circ_{\delta \theta}$ denotes the identity component of the $F$-group $G_{\delta \theta}$ (the latter is denoted by $I_{s\delta}$ in \cite{Ko82}).  Let $e(\delta) := e(G^\circ_{\delta \theta})$ denote the sign attached by Kottwitz \cite{Ko83} to the connected reductive $F$-group $G^\circ_{\delta \theta}$.  Further, define $a(\delta)$ to be the cardinality of the set
\begin{equation} \label{a(delta)}
{\rm ker}[H^1(F, G^\circ_{\delta \theta}) \rightarrow H^1(F,G_{\delta \theta})].
\end{equation}
For any function $\phi \in C^\infty_c(G(E))$, we define its stable twisted orbital integral by
\begin{equation} \label{STO_defn}
{\rm SO}_{\delta \theta}(\phi) = \sum_{\delta'} e(\delta') \, a(\delta') \, {\rm TO}_{\delta' \theta}(\phi),
\end{equation}
where ${\rm TO}_{\delta'\theta}(\phi)$ is defined as in (\ref{TO_defn}).  
Here $\delta'$ ranges over $\theta$-conjugacy classes in $G(E)$ which are stably $\theta$-conjugate to $\delta$ (cf. \cite{Ko82}).  The definition of 
$
{\rm SO}_\gamma (f)
$
for a semi-simple element $\gamma \in G(F)$ and $f \in C^\infty_c(G(F))$ can be recovered from the special case $E=F$ (and so $\theta = {\rm id}$).

\subsection{The case where $\gamma$ is not a norm}

As is now well-known, the vanishing statement when $\gamma$ is not a norm was justified incorrectly in \cite{Cl90}, using a global argument.  Labesse later gave a correct, purely local, argument in \cite{Lab99}, Prop. 3.7.2.  Here, we simply adapt Labesse's reasoning to our setting of parahoric Hecke algebras.  

Fix $\phi \in Z(\mathcal H_J(G(E)))$.  If $\gamma$ is not a norm, then we must show that ${\rm SO}_\gamma(b\phi) = 0$.  First, assume $\gamma$ is elliptic.  Then Labesse's arguments from loc.~cit. Prop. 2.5.3, Lemme 3.7.1, and Prop. 3.7.2 apply to our situation (replacing $\mathcal H_K(G(E))$ with $Z(\mathcal H_J(G(E)))$ throughout) to show the following stronger vanishing statement: if $\gamma' \in G(F)$ is stably conjugate to $\gamma$, then $b\phi(g^{-1}\gamma'g) = 0$ for every $g \in G(F)$; thus obviously ${\rm O}_{\gamma'}(b\phi) = 0$.  

Now if $\gamma$ is non-elliptic, we can use our descent formula (\ref{eq:descent_over_F}) to deduce that ${\rm O}_{\gamma'}(b\phi)$ still vanishes for any $\gamma' \in G(F)$ which is stably conjugate to $\gamma$. 

Thus, in the reductions to follow, we may assume whenever convenient that $\gamma$ is a norm.

\subsection{Lemmas needed to pass between $G$ and certain $z$-extensions of $G$}

Here we follow closely the ideas in \cite{Cl90}, $\S 6.1$ and \cite{Ko86b}, $\S 4$, but we arrange things so that it is clear everything works in our setting.

Choose a finite unramified extension $F' \supset F$ which contains $E$ and splits $G$.  Consider a $z$-extension of $F$-groups
$$
\xymatrix{
1 \ar[r] & Z \ar[r] & H \ar[r]^p & G \ar[r] & 1,}
$$
where $Z$ is a finite product of copies of $R_{F'/F}{\mathbb G}_m$.   As usual, we are assuming $H_{\rm der} = H_{\rm sc}$.  Recall that $p$ is surjective on $E$- and $F$-points.  Choose an extension of $\theta$ to an element, still denoted $\theta$, in ${\rm Gal}(F'/F)$.

Let $Z(E)_1 := Z(E) \cap Z(L)_1$, the maximal compact subgroup of $Z(E)$.   Endow $Z(E)$ (resp. $Z(F)$) with the Haar measure giving $Z(E)_1$ (resp. $Z(F)_1$) volume 1.  The norm homomorphism $N: Z(E) \rightarrow Z(F)$ is surjective and determines a measure-preserving isomorphism
$$
N: \overline{Z(E)} \rightarrow Z(F)
$$
where $\overline{Z(E)} := Z(E)/(1-\theta)(Z(E))$.  The surjectivity of $N: Z(E) \rightarrow Z(F)$ follows from the proof of \cite{Ko82}, Lemma 5.6 (our $z$-extension is {\em adapted to $E$} in the terminology of loc.~cit.).  Also, here we give the compact subgroup $(1-\theta)(Z(E)) = (1-\theta)(Z(E)_1)$ the measure with total volume 1, and in the sequel we use this and our chosen measure on $Z(E)$ to determine the quotient measure on $\overline{Z(E)}$.   Of course $N$ sends the maximal compact subgroup $Z(E)_1/(1-\theta)(Z(E))$ of $\overline{Z(E)}$ isomorphically onto the maximal compact subgroup $Z(F)_1$ of $Z(F)$, and hence $N : Z(E)_1 \twoheadrightarrow Z(F)_1$ is surjective.   

Let $\chi: Z(F) \rightarrow \mathbb C^\times$ denote an arbitrary smooth character.  For $f \in C^\infty_c(H(F))$ define the locally constant function $f_\chi$ on $h \in H(F)$ with compact support modulo $Z(F)$ by
$$
f_\chi(h) = \int_{Z(F)} f(hz) \, \chi^{-1}(z) \, dz.
$$
For $\phi \in C^\infty_c(H(E))$ define $\phi_{\chi N}$ analogously (here $\chi N = \chi \circ N$ is a smooth character on $Z(E)$).  The (twisted) orbital integrals of $f_\chi$ (resp. $\phi_{\chi N}$) exist at ($\theta$)-semi-simple elements.

In the following lemma, write $\chi = 1$ for the trivial character , and let $\overline{\phi}$ resp. $\overline{f}$ denote the function $\phi_1$ resp. $f_1$ when it is viewed as an element in $C^\infty_c(G(E))$ resp. $C^\infty_c(G(F))$. 

\begin{lemma} \label{phi_chi} Let $\phi \in C^\infty_c(H(E))$ and $f \in C^\infty_c(H(F))$.  Then:
\begin{enumerate}
\item[(i)] $(\phi,f)$ are associated if and only if $(\phi_{\chi N},f_\chi)$ are associated for every $\chi$;
\item[(ii)] suppose $\phi$ resp. $f$ are bi-invariant under compact open subgroups $\widetilde{K}$ resp. $K$ which satisfy $N(\widetilde{K} \cap Z(E)) = K \cap Z(F)$ and $\widetilde{K} \supset (1-\theta)(Z(E))$; then in (i) we need consider only characters $\chi$ which are trivial on $K \cap Z(F)$; 
\item[(iii)] let $J$ denote a parahoric subgroup of $H(F)$; if $\phi \in \mathcal H_J(H(E))$ resp. $f \in \mathcal H_J(H)$, then in (i) we need consider only unramified characters $\chi$ on $Z(F)$; 
\item[(iv)] $(\phi_1,f_1)$ are associated if and only if $(\overline{\phi}, \overline{f})$ are 
associated.

\end{enumerate}
\end{lemma}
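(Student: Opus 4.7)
The overall strategy is Fourier inversion on the compactly generated abelian group $Z(F)$, combined with the measure-preserving isomorphism $N:\overline{Z(E)}\xrightarrow{\sim}Z(F)$ recorded above. I begin by establishing two master identities. Fix a semi-simple $h_0\in H(F)$ and, when $h_0$ is a norm, an element $\delta\in H(E)$ with $\mathcal N\delta=h_0$. Using Fubini and the centrality of $Z$, one computes directly that
$$
{\rm SO}^H_{h_0}(f_\chi)=\int_{Z(F)}\chi^{-1}(z)\,{\rm SO}^H_{h_0 z}(f)\,dz.
$$
On the twisted side, centrality of $Z$ gives $g^{-1}\delta\theta(g)\zeta=g^{-1}(\delta\zeta)\theta(g)$, and a short check shows that $\delta\zeta$ and $\delta\zeta'$ are $\theta$-conjugate in $H(E)$ whenever $\zeta'\zeta^{-1}\in(1-\theta)Z(E)$ (since $\xi^{-1}(\delta\zeta)\theta(\xi)=\delta\zeta\cdot\xi^{-1}\theta(\xi)$ for central $\xi$). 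Consequently the analogue descends along $N$ to yield
$$
{\rm SO}^H_{\delta\theta}(\phi_{\chi N})=\int_{Z(F)}\chi^{-1}(z)\,{\rm SO}^H_{(\delta\zeta_z)\theta}(\phi)\,dz,
$$
for any choice of lift $\zeta_z\in Z(E)$ of $z$. Since $\mathcal N(\delta\zeta_z)=h_0z$ and since $N:Z(E)\to Z(F)$ is surjective, $h_0z$ is a norm in $H$ iff $h_0$ is.

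With these identities in hand, part (i) is nearly formal. For the ``$\Rightarrow$'' direction, if $(\phi,f)$ are associated then for every $z\in Z(F)$ the two integrands coincide (both vanishing if $h_0z$ is not a norm), whence the integrals agree and $(\phi_{\chi N},f_\chi)$ are associated. For ``$\Leftarrow$'', note that the functions $z\mapsto {\rm SO}^H_{h_0z}(f)$ and $z\mapsto {\rm SO}^H_{(\delta\zeta_z)\theta}(\phi)$ are compactly supported on $Z(F)$, so Fourier inversion on $Z(F)$ recovers them from their Fourier transforms; equality of the transforms across all characters $\chi$ forces pointwise equality of the functions, and evaluation at $z=1$ gives associatedness of $(\phi,f)$. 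The same argument handles the ``not a norm'' case.

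For (ii), the substitution $z\mapsto zz_0$ in the defining integral yields $f_\chi=\chi^{-1}(z_0)f_\chi$ for every $z_0\in K\cap Z(F)$, forcing $f_\chi\equiv 0$ unless $\chi|_{K\cap Z(F)}=1$; an identical argument for $\phi$, combined with $N(\widetilde K\cap Z(E))=K\cap Z(F)$ and $\widetilde K\supset(1-\theta)Z(E)$ (which absorbs ${\rm ker}(N)$), gives the same conclusion for $\phi_{\chi N}$. For (iii), apply (ii) with $K=\widetilde K=J$: the intersection $J\cap Z(F)$ is the unique parahoric $Z(F)_1=Z(F)\cap Z(L)_1$ of $Z(F)$ (using that $Z$ is an induced torus which splits over $L$), so characters trivial on $K\cap Z(F)$ are precisely the unramified ones. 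For (iv), the function $f_1$ is $Z(F)$-invariant with compact support modulo $Z(F)$ and therefore descends along $p$ to $\overline f\in C^\infty_c(G(F))$; the short exact sequence $1\to Z\to H^\circ_h\to G^\circ_{p(h)}\to 1$, together with the triviality of $H^1(F,Z)$ built into the $z$-extension, identifies ${\rm SO}^H_h(f_1)$ with ${\rm SO}^G_{p(h)}(\overline f)$ and similarly on the twisted side, so the two associatedness statements coincide.

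The main technical obstacle is the bookkeeping in the master identities: one must check that replacing $\delta'$ by $\delta'\zeta_z$ reindexes the sum over stable $\theta$-conjugacy classes in a way that preserves the weights $e(\delta')a(\delta')$, and that the quotient measure on $\overline{Z(E)}$ arising from the chosen Haar normalizations transports correctly under $N$. Since $\zeta_z$ is central, the centralizers $H^\circ_{\delta'\theta}$ and $H^\circ_{(\delta'\zeta_z)\theta}$ agree, so $e$ and $a$ are unaffected, and the measure compatibility is built into the normalizations fixed before the lemma; nevertheless these verifications, together with the interplay between ``not a norm'' on $H$ and on $G$, account for most of the actual work.
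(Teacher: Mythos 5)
Your proof is correct and follows essentially the same approach as the paper: you establish the same two integral identities (the paper's (\ref{integrate_TO}) and (\ref{integrate_O})) and then argue via Fourier inversion on $Z(F)$, reduce (ii) by observing that $f_\chi$ and $\phi_{\chi N}$ vanish unless $\chi$ is trivial on $K\cap Z(F)$, and deduce (iv) from the surjectivity of $H_\gamma(F)\to G^\circ_{\overline\gamma}(F)$ and its twisted analogue. The only cosmetic difference is that the paper phrases (ii) in terms of $N$ inducing an isomorphism between quotients of $\overline{Z(E)}$ and $Z(F)$, whereas your vanishing observation accomplishes the same reduction a bit more directly.
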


\begin{proof}

 We change notation and suppose $\delta \in H(E)$ (resp. $\gamma \in H(F)$) and write $\overline{\delta} := p(\delta)$ (resp. $\overline{\gamma} := p(\gamma)$).   We have the formulas holding for all $\chi$:
\begin{align} 
\int_{\overline{Z(E)}} \chi^{-1}(Nv) ~ {\rm SO}^H_{v\delta \theta}(\phi) \, dv &= {\rm SO}^H_{\delta\theta}(\phi_{\chi N})  \label{integrate_TO} \\
\int_{Z(F)} \chi^{-1}(z) ~ {\rm SO}^H_{z \gamma}(f) \, dz &= {\rm SO}^H_{\gamma}(f_{\chi}) \label{integrate_O}.
\end{align}
These imply (i) and (ii).  For (ii), the point of the hypotheses on $\widetilde{K}$ and $K$ is to ensure that $N$ induces an isomorphism from $\overline{Z(E)}$ modulo $\widetilde{K} \cap Z(E)/ (1-\theta)(Z(E))$ onto $Z(F)$ modulo $K \cap Z(F)$.

Part (iii) follows from  (ii), by taking $\widetilde{K} = J(E)$ and $K = J$ and noting that $J(E) \cap Z(E)$ resp. $J \cap Z(F)$ is the maximal compact subgroup of $Z(E)$ resp. $Z(F)$.

For $\chi = 1$ we have for $\gamma = \mathcal N \delta$
\begin{align} 
{\rm SO}^H_{\delta \theta}(\phi_1) & = \, {\rm SO}^G_{\overline{\delta}\theta}(\overline{\phi}) \label{integrate_SO_chi=1} \\
{\rm SO}^H_{\gamma}(f_1) &= \, {\rm SO}^G_{\overline{\gamma}}(\overline{f}) \label{TO_chi=1}. 
\end{align}
Here we have used $H_{\gamma}(F) = p^{-1}(G^\circ_{\overline{\gamma}}(F))$ and $Z(E)H_{\delta\theta}(F) = p^{-1}(G^\circ_{\overline{\delta}\theta}(F))$.  These follow from the surjectivity of $H_\gamma(F) \rightarrow G^\circ_{\overline{\gamma}}(F)$ and $H_{\delta \theta}(F) \rightarrow G^\circ_{\overline{\delta} \theta}(F)$.  Part (iv) follows.
\end{proof}


\begin{remark}
Recall that the factors $a(\delta')$ resp. $a(\gamma')$ in (\ref{STO_defn}) are needed precisely to make 
the relation (\ref{integrate_SO_chi=1}) resp. (\ref{TO_chi=1}) hold. Indeed, $p$ induces a surjective map from the set
$$
\{ \mbox{$\theta$-conjugacy classes $\delta' \in H(E)$ stably $\theta$-conjugate to $\delta$} \}
$$
onto the set
$$
\{ \mbox{$\theta$-conjugacy classes $\overline{\delta}' \in G(E)$ stably $\theta$-conjugate to $\overline{\delta}$} \},
$$ 
and the fiber over the class of $\overline{\delta}'$ can be identified with the set
$$
{\rm ker}[H^1(F,G^\circ_{\overline{\delta}'\theta}) \rightarrow H^1(F,G_{\overline{\delta}'\theta})].$$  
Denoting $\widetilde{G} := {\rm Res}_{E/F}(G_E)$, the key point here is that $p$ induces a bijection
$${\rm ker}[H^1(F,H_{\delta\theta}) \rightarrow H^1(F,\widetilde{H})] ~\widetilde{\longrightarrow} ~ {\rm ker}[H^1(F,G^\circ_{\overline{\delta}\theta}) \rightarrow H^1(F,\widetilde{G})],$$ 
and moreover the set of $\theta$-conjugacy classes which are stably $\theta$-conjugate to $\overline{\delta}$ corresponds to the image of 
$${\rm ker}[H^1(F,G^\circ_{\overline{\delta}\theta}) \rightarrow H^1(F,\widetilde{G})]$$
in 
$$
{\rm ker}[H^1(F,G_{\overline{\delta}\theta}) \rightarrow H^1(F,\widetilde{G})].
$$
   See \cite{Ko82}, especially p.~804.
\end{remark}

\begin{lemma} \label{P_Z} Let $J$ denote the parahoric subgroups in $H$ and $G$ corresponding to the facet ${\bf a}_J$. 
The map $\phi \mapsto \overline{\phi}$ determines a surjective homomorphism $Z(\mathcal H_J(H(E))) \rightarrow Z(\mathcal H_J(G(E)))$.  This is compatible with the base change homomorphisms, in the sense that
\begin{equation} \label{b_vs_P_Z}
b(\overline{\phi}) = \overline{b\phi}.
\end{equation}
\end{lemma}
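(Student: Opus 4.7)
The plan is to identify $\overline{\cdot}$ under the Bernstein isomorphism with the natural surjection of polynomial rings induced by $p\colon H\twoheadrightarrow G$, and then deduce the compatibility with $b$ via a diagram chase. First I would verify that $\phi\mapsto\overline{\phi}$ is a well-defined surjective ring homomorphism $\mathcal{H}_J(H(E))\to\mathcal{H}_J(G(E))$. Since $Z$ is an induced torus, $H^1(\mathcal{O}_E,\mathcal{Z})=1$ by Shapiro's lemma and Hilbert~90, so the morphism of parahoric group schemes induced by $p$ is surjective on $\mathcal{O}_E$-points; hence $p(J^H(E))=J^G(E)$. This makes $\overline{\phi}$ both compactly supported and $J^G$-bi-invariant on $G(E)$. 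The convolution identity $\overline{\phi_1 * \phi_2}=\overline{\phi_1}*\overline{\phi_2}$ is Fubini, using compatible Haar measures $dh=dg\,dz$ with $Z(E)_1$ of volume $1$; surjectivity follows by lifting $J^G$-double coset representatives to $H(E)$ and pushing forward the corresponding characteristic functions. As any surjective ring homomorphism sends the center to the center, $\overline{\cdot}$ restricts to $Z(\mathcal{H}_J(H(E)))\to Z(\mathcal{H}_J(G(E)))$.

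Second I would identify this restriction via Theorem~\ref{Bernstein_isomorphism}. Choose maximal $E$-split tori with $A^E_Z\subset A^E_H$ and $p(A^E_H)=A^E_G$, giving a short exact sequence $0\to X_*(A^E_Z)\to X_*(A^E_H)\to X_*(A^E_G)\to 0$; since $Z$ is central, the relative Weyl groups coincide, $W_H(E)=W_G(E)=:W$. For an unramified character $\xi_G$ of $T_G(E)$, pullback along $p$ identifies $i^G_{B_G}(\xi_G)$ with $i^H_{B_H}(\xi_G\circ p)$ as $H(E)$-modules (using $\delta_{B_H}=\delta_{B_G}\circ p$, since $U_H\cong U_G$), and sends $J^G$-invariants to $J^H$-invariants. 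A Fubini computation then yields
$$
\langle\mathrm{tr}\,i^G_{B_G}(\xi_G),\overline{\phi}\rangle=\langle\mathrm{tr}\,i^H_{B_H}(\xi_G\circ p),\phi\rangle,
$$
so the central element $\overline{\phi}$ acts on $i^G_{B_G}(\xi_G)^{J^G}$ by the scalar $(\xi_G\circ p)^{-1}(B_H^{-1}\phi)=\xi_G^{-1}(\overline{\pi}(B_H^{-1}\phi))$, where $\overline{\pi}\colon\mathbb{C}[X_*(A^E_H)]^W\twoheadrightarrow\mathbb{C}[X_*(A^E_G)]^W$ is the surjection induced by $p$. By the scalar-action characterization in Theorem~\ref{Bernstein_isomorphism}, this forces $\overline{\phi}=B_G(\overline{\pi}(B_H^{-1}\phi))$, so the square
$$
\xymatrix{
\mathbb{C}[X_*(A^E_H)]^W \ar[r]^{B_H}_{\sim} \ar[d]_{\overline{\pi}} & Z(\mathcal{H}_J(H(E))) \ar[d]^{\overline{\cdot}} \\
\mathbb{C}[X_*(A^E_G)]^W \ar[r]^{B_G}_{\sim} & Z(\mathcal{H}_J(G(E)))
}
$$
commutes, and surjectivity of $\overline{\cdot}$ on centers follows from surjectivity of $\overline{\pi}$.

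Finally, the compatibility $b(\overline{\phi})=\overline{b\phi}$ reduces, via the previous diagram and the definition of $b$, to commutativity of
$$
\xymatrix{
\mathbb{C}[X_*(A^E_H)]^W \ar[r]^{N_H} \ar[d]_{\overline{\pi}_E} & \mathbb{C}[X_*(A_H)]^W \ar[d]^{\overline{\pi}_F} \\
\mathbb{C}[X_*(A^E_G)]^W \ar[r]^{N_G} & \mathbb{C}[X_*(A_G)]^W,
}
$$
which is immediate since $p$ commutes with $\theta$, hence with $N=\sum_{i=0}^{r-1}\theta^i$. The main obstacle will be the trace identity in the second paragraph: it requires fixing Haar measures on $H(E),Z(E),G(E)$ compatibly and carefully checking that the pullback of the $J^G$-fixed part of a principal series is exactly the $J^H$-fixed part of the pulled-back series, which rests on the surjectivity $p(J^H(E))=J^G(E)$ established in the first step.
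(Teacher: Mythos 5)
Your proof is correct and takes essentially the same route as the paper, which also defines $\mathcal P_Z\phi := \overline{\phi}$, shows $B^{-1}(\mathcal P_Z\phi) = p(B^{-1}\phi)$ by considering actions on unramified principal series, and deduces both surjectivity and the compatibility with $b$ from this commutative Bernstein square; the paper leaves all the verifications to the reader, and you have simply filled them in. A couple of remarks on details: the identity $p(J^H(E))=J^G(E)$ is most directly justified via the description $J^H(E)=T_H(E)_1\cdot\mathfrak U_{{\bf a}_J}(\mathcal O_E)$ from (\ref{2nd_description}), the surjectivity of $T_H(E)_1\to T_G(E)_1$ (which uses $H^1(\langle\sigma\rangle, Z(L)_1)=0$, i.e.\ (\ref{coh.vanish})), and the fact that $\mathfrak U_{{\bf a}_J}$ is unchanged by $p$; your appeal to cohomology of the N\'eron model of $Z$ gives the same conclusion. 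Also note that once you have the commutative square with the induced surjection $\overline{\pi}$ on $W$-invariant group algebras (surjective since $W$ is finite and we are in characteristic $0$, so one can average lifts), the surjectivity of $\overline{\cdot}$ on centers follows immediately, so establishing that $\overline{\cdot}$ is a ring homomorphism on the full Hecke algebras is not strictly necessary for the lemma, though it is a clean way to see that it preserves centrality.
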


\begin{proof}
For $g \in G(E)$ we write $\overline{\phi}(g) = \mathcal P_{Z}\phi(g): = \int_{Z(E)} \phi(hz) \, dz$, where $h \in H(E)$ is any element with $p(h) = g$.  By considering the actions on unramified principal series, one checks that if $\phi \in Z(\mathcal H_J(H(E)))$, then $\mathcal P_{Z}\phi \in Z( \mathcal H_J(G(E)))$ and the following formula holds (relative to either $E$ or $F$):
$$
B^{-1}(\mathcal P_{Z}\phi) = p(B^{-1}\phi).
$$ 
On the right hand side, $p$ denotes the homomorphism 
$\mathbb C[X_*(A^{E}_H)] \rightarrow \mathbb C[X_*(A^E)]$ induced by $p:H \rightarrow G$, where $A^{E}_H$ denotes a maximal $E$-split torus in $H$ whose image under $p$ is $A^E$. (There is an obvious variant of all this for $E = F$.)  This easily implies both the surjectivity of $\phi \mapsto \mathcal P_Z \phi$ and the formula (\ref{b_vs_P_Z}).
\end{proof}

\begin{lemma}\label{adjoint_reduction}  Assume $Z = Z(H)$.  Suppose that $(\overline{\phi}, b\overline{\phi})$ are associated for every $\phi \in Z(\mathcal H_J(H(E)))$.  Then $(\phi, b\phi)$ are associated for 
every $\phi \in Z(\mathcal H_J(H(E)))$.
\end{lemma}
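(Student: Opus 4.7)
The plan is to reduce the assertion to the hypothesis on $G = H/Z(H)$ by means of a central character twist; the main ingredient is a compatibility between $b$ and multiplication by unramified characters.

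First, I would invoke Lemma \ref{phi_chi}(iii) applied to the pair $(\phi, b\phi)$: it suffices to check, for every unramified character $\chi$ of $Z(F)$, that $(\phi_{\chi N}, (b\phi)_\chi)$ are associated. Since $Z = Z(H)$ and $Z(H)\cap H_{\rm der} = Z(H_{\rm sc})$ is finite, the canonical map $X_*(A_Z)\to X_*(A_D)$ for $D := H/H_{\rm der}$ is injective, so every unramified character $\chi$ of $Z(F)$ extends to an unramified character $\tilde\chi$ of $H(F)$. Its base-change $\tilde\chi_E := \tilde\chi\circ N_{E/F}$ is a $\theta$-invariant unramified character of $H(E)$ whose restriction to $Z(E)$ equals $\chi\circ N_{E/F}$. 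Set $\phi' := \phi\cdot\tilde\chi_E^{-1}$, which lies in $Z(\mathcal H_J(H(E)))$ because multiplication by an unramified character is an algebra automorphism of $\mathcal H_J(H(E))$ that preserves the center.

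The key compatibility to verify is
\[
b(\phi\cdot\tilde\chi_E^{-1}) \;=\; b(\phi)\cdot\tilde\chi^{-1},
\]
which I would derive from the characterization of $b$ on parahoric centers via trace identities on unramified principal series (Theorem \ref{Bernstein_isomorphism} plus the definition of $b$): the twist by $\tilde\chi_E^{-1}$ shifts the Bernstein parameter of $\pi_{Nt}$ by the parameter of $\tilde\chi_E^{-1}$, and this shift corresponds, under the norm $N$, to the shift by $\tilde\chi^{-1}$ on the $F$-side. Granted this, the identity $\tilde\chi_E|_{Z(E)} = \chi\circ N$ forces $\phi'$ to have trivial central character on $Z(E)$, and a direct computation yields $\phi'_1 = \tilde\chi_E^{-1}\cdot\phi_{\chi N}$ and $(b\phi')_1 = \tilde\chi^{-1}\cdot(b\phi)_\chi$, both of which descend to functions $\overline{\phi'}\in\mathcal H_J(G(E))$ and $\overline{b\phi'}\in\mathcal H_J(G(F))$. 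Lemma \ref{P_Z} identifies $\overline{b\phi'}$ with $b\overline{\phi'}$, so the hypothesis applied to $\phi'$ yields that $(\overline{\phi'},\overline{b\phi'})$ are associated on $G$, and Lemma \ref{phi_chi}(iv) promotes this to associatedness of $(\phi'_1,(b\phi')_1)$ on $H$.

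Finally, I would unwind the twist using the character identities $\tilde\chi^{-1}(h^{-1}\gamma h) = \tilde\chi^{-1}(\gamma)$ and, by the $\theta$-invariance of $\tilde\chi_E$, $\tilde\chi_E^{-1}(h^{-1}\delta\theta(h)) = \tilde\chi_E^{-1}(\delta)$, together with $\tilde\chi_E(\delta) = \tilde\chi(N\delta) = \tilde\chi(\gamma)$ whenever $\gamma$ is stably conjugate to $\mathcal N\delta$ (since characters are constant on stable conjugacy classes). Hence the stable (twisted) orbital integrals of the primed and unprimed pairs differ by the common scalar $\tilde\chi^{-1}(\gamma)$, so the associatedness of $(\phi'_1,(b\phi')_1)$ transfers to that of $(\phi_{\chi N},(b\phi)_\chi)$; the vanishing clause when $\gamma$ is not a norm is handled identically. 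The main obstacle is the twist-compatibility of $b$ displayed above: while natural in the Bernstein/Satake picture, it requires carefully translating between the multiplicative language of unramified character twists on $\mathcal H_J$ and the additive description of $b$ via the norm $N:\mathbb C[X_*(A^E)]^{W(E)}\to\mathbb C[X_*(A)]^{W(F)}$.
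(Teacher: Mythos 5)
Your proof is correct and follows essentially the same strategy as the paper: both reduce via Lemma \ref{phi_chi}(iii) to the twisted-character case, use the isogeny $Z(F)\to (H/H_{\rm der})(F)$ to extend an unramified character of $Z(F)$ to one of $H(F)$, and rely on the key compatibility $b(\phi\cdot\tilde\chi_E^{-1}) = b(\phi)\cdot\tilde\chi^{-1}$ (the paper's identity (5.3.10)), proved via the characterization of $b$ on unramified principal series. The only cosmetic difference is that the paper phrases the twist through the cocenter $D$ and applies the hypothesis to the whole family $\phi\otimes\eta N$ for varying $\eta$, whereas you fix $\chi$ and construct a single $\phi'$; this is an immaterial reorganization.
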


\begin{proof}
We follow the argument of \cite{Cl90}, bottom of p.~284.  Let $\pi: H \rightarrow H/H_{\rm der} =: D$ denote the projection of $H$ onto its cocenter.  The map $\pi: Z(F) \rightarrow D(F)$ is an isogeny, and thus induces a surjection $\eta \mapsto \eta\pi$ from unramified characters of $D(F)$ to those of $Z(F)$.  Note that this surjection is not necessarily a bijection. 

Now suppose $\chi$ is an unramified character on $Z(F)$.  Let $\mathcal H_{J,\chi N}$ denote the algebra of $J(E)$-bi-invariant functions on $H(E)$ which are compactly supported modulo $Z(E)$ and which transform by $\chi N$ under $Z(E)$.  The product of $\phi_1, \phi_2 \in \mathcal H_{J,\chi N}$ is a function whose value at $h \in H(E)$ is given by
$$
\phi_1 \ast \phi_2 (h) = \int_{H(E)/Z(E)} \phi_1(y) \, \phi_2(y^{-1} h) ~ d\overline{y}
$$
where $d\overline{y}$ is the Haar measure on $G(E) = H(E)/Z(E)$ compatible with the usual one on $H(E)$ determined by $J(E)$ and that on $Z(E)$ fixed above.  

Write $\chi = \eta \pi$ for some unramified character $\eta$ on $D(F)$.  
For $\phi \in C^\infty(H(E))$, define a function $\phi \otimes \eta N$ by 
$$
(\phi \otimes \eta N) (h) = \phi(h) \, \eta(N\pi(h)), \,\, h \in H(E).
$$
Then the map $\phi \mapsto \phi \otimes \eta N $ determines an algebra isomorphism $\mathcal H_{J,1N} ~ \widetilde{\rightarrow} ~ \mathcal H_{J,\chi N}$.  Of course over $F$ we have similar definitions and statements, but we can drop the $N$ from the notation.

We have the following three equalities:
\begin{align}
{\rm SO}^H_{\delta \theta}(\psi \otimes \eta N) &= \eta\pi(N\delta)~ {\rm SO}^H_{\delta \theta}(\psi), \,~ ~  \,\,\,\,\, \psi \in  C^\infty(H(E)) \label{SO_phi_eta} \\
(\phi \otimes \eta N)_1 & = \, \phi_{\chi^{-1}N} \otimes \eta N, \,\, ~ \,\,\,\,\,\, \,\,\,\,\, \,\, \phi \in \mathcal H_J(H(E)) \label{phi_eta_1} \\
b(\phi \otimes \eta N) & = b\phi \otimes \eta, \,\, ~ \,\,\,\,\,\,\,\,\,\,\,\, \,\,\,\,\,\,\,\,\, \,\,\,\,\,\,\,\, \phi \in Z(\mathcal H_J(H(E))). \label{b(phi_eta)}
\end{align}

The first two equalities are immediate (for the first, we are assuming $\psi$ is such that the twisted orbital integrals of $\psi$ exist!).  For the third, assume $\phi \in Z(\mathcal H_J(H(E)))$.  
Using (\ref{F*deltaG}) we can compute the action of $\phi \otimes \eta N$ on unramified principal series $i^{H(E)}_{B(E)}(\xi)^{J(E)}$.  We find it acts by the scalar by which $\phi$ acts on $i^{H(E)}_{B(E)}(\xi \cdot \eta^{-1}N\pi)^{J(E)}$.  Thus, $\phi \otimes \eta N$ belongs to $Z(\mathcal H_J(H(E)))$.  Now suppose $\xi$ is an unramified character of $T(F) \subset H(F)$.  Then $b(\phi \otimes \eta N)$ acts on $i^{H(F)}_{B(F)}(\xi)^{J}$ by the scalar by which $\phi \otimes \eta N$ acts on $i^{H(E)}_{B(E)}(\xi N)^{J(E)}$, in other words the scalar by which $\phi$ acts on $i^{H(E)}_{B(E)}(\xi N \cdot \eta^{-1}N \pi)^{J(E)}$.  On the other hand, that is the scalar by which $b\phi \otimes \eta$ acts on $i^{H(F)}_{B(F)}(\xi)^{J}$.  This proves the third equality.

\smallskip

Now we can prove the lemma.  By Lemmas \ref{P_Z} and \ref{phi_chi} (iv), our hypothesis is that $(\phi_1, (b\phi)_1)$ are associated for all $\phi \in Z(\mathcal H_J(H(E)))$.  Now fix such a $\phi$.  Then for all $\eta$, $(\phi \otimes \eta N)_1$ and $b(\phi \otimes \eta N)_1 =  (b\phi \otimes \eta)_1$ are associated (cf. (\ref{b(phi_eta)})).  Then using (\ref{phi_eta_1}) and (\ref{SO_phi_eta}) we deduce that $(\phi_{\eta^{-1}\pi N}, (b\phi)_{\eta^{-1} \pi})$ are associated.  Since this holds for all $\eta$ and any unramified character of $Z(F)$ is of the form $\eta \pi$, the lemma follows from Lemma \ref{phi_chi} (iii).
\end{proof}

\subsection{Summary of reduction steps}  As noted above we may assume $\gamma$ is a norm; write $\gamma = \mathcal N \delta$.  

\begin{enumerate}

\item[(1)] {\em We may assume $G_{\rm der} = G_{\rm sc}$.}  Indeed, for given $G$ take $H$ to be a $z$-extension as above.  Then using Lemmas \ref{phi_chi} and \ref{P_Z} we see that the fundamental lemma for $H$ implies the fundamental lemma for $G$.

\smallskip

\item[(2)] {\em We may assume $\gamma$ is elliptic.}  We work under assumption (1).  If $\gamma$ is non-elliptic, we use Lemma \ref{non-elliptic_norms} and the discussion after it to associate to $\gamma$ a proper $F$-Levi subgroup $M$ containing $G_\gamma$.  By induction on semi-simple ranks, the fundamental lemma is known for $M$.   We then use our descent formulas (\ref{eq:descent_over_E}) and (\ref{eq:descent_over_F}), which can be compared (also in their stable incarnations) because of the lemmas of subsection \ref{lemmas_needed_to_compare_subsection}.

\smallskip

\item[(3)]  {\em We may assume $\gamma$ is regular.}  This is Clozel's Proposition 7.2 in \cite{Cl90}.  Note that it is explained there under assumptions (1) and (2).  

\smallskip

\item[(4)] {\em We may assume $G$ is such that $G_{\rm der} = G_{\rm sc}$ and $Z(G)$ is an induced torus (of the form considered above).}   We work under assumptions (1) and (3).  Clozel proves this reduction in \cite{Cl90}, $\S 6.1 (b)$, in the case of spherical Hecke algebras.  Exactly the same reasoning applies here, with the following minor modifications: where Clozel uses the Satake isomorphism $S$, we use the inverse $B^{-1}$ of the Bernstein isomorphism.  Where Clozel uses the characteristic functions of the sets $G(\mathcal O_F) \varpi^\lambda G(\mathcal O_F)$, we use the Bernstein functions $z_\lambda$.  Further, where Clozel invokes the fundamental lemma for the unit elements in spherical Hecke algebras, we invoke the analogous result for the unit elements in parahoric Hecke algebras (also proved in \cite{Ko86b}).  Finally, where Clozel uses a descent argument, we need to use our formula (\ref{eq:1st_descent_over_F}) with $b\phi$ replaced by appropriate unit elements in parahoric Hecke algebras for $G(F)$.  

\smallskip

\item[(5)] {\em We may assume $G$ is adjoint.}  We consider again our $z$-extension $p:H \rightarrow G$, but now we assume $Z = Z(H)$, so that $G = H_{\rm ad}$.  By (4), it is now enough to verify the fundamental lemma for groups having the properties enjoyed by $H$.  But Lemma \ref{adjoint_reduction} shows that the fundamental lemma for $G$ implies the fundamental lemma for $H$.  Hence it is enough to verify the fundamental lemma for adjoint groups.

\smallskip

\item[(6)] {\em We may assume $\gamma$ is strongly regular elliptic.}  Indeed, as explained in \cite{Cl90}, p. 292, it is now enough (by a continuity argument) to verify the matching of (twisted) orbital integrals for any dense set of $\theta$-regular $\theta$-elliptic elements $\delta$.  Note: when approximating a regular element in an adjoint group by strongly regular elements, the presence of the factors $a(\delta')$ in (\ref{STO_defn}) complicates the reduction, since those factors are trivial for strongly ($\theta$-)regular elements.  To get around this we may carry out that approximation in an appropriate $z$-extension and then use formulas  (\ref{integrate_SO_chi=1}) and (\ref{TO_chi=1}).

\end{enumerate}

\medskip

\noindent {\bf Conclusion:}  We may assume $G = G_{\rm ad}$, and $\gamma$ is a strongly regular elliptic element, which is a norm.  Moreover, it is enough to prove the required matching for any dense subset of such elements $\gamma$.

\section{A compact trace identity and its inversion} \label{compact_trace_identity_section}

A very useful theory of compact traces was worked out by Clozel in \cite{Cl89}.  In loc.~cit.~Prop.~1 Clozel gives an expression for the trace of a function $f \in C^\infty_c(G(F))$ in terms of the compact traces on Jacquet modules of constant terms associated to $\overline{f}$, the conjugation-average of $f$ over a good maximal compact open subgroup of $G(F)$.  He also establishes an inversion of this formula which expresses compact traces in terms of ordinary traces on Jacquet modules.

In our setting, we need a version of this which applies to central elements in parahoric Hecke algebras $\mathcal H_J(G)$.  Since the operation $f \mapsto \overline{f}$ does not preserve $J$-bi-invariance, it is necessary to establish a version of Clozel's formula (more importantly its inversion) which {\em does not involve} the average $\overline{f}$.  Our version of Clozel's formula (resp. its inversion) is given in Prop. \ref{compact_trace_identity_lemma} (resp.~Prop.~\ref{compact_trace_inversion}), and it holds for all smooth compactly-supported functions on $G(E)$.  For functions in $\mathcal H_J(G(E))$, this inversion formula can be simplified (Cor.~ \ref{compact_trace_inversion_cor}); for functions in the center $Z(\mathcal H_J(G(E)))$, further simplification is possible (subsection \ref{simplification_subsection}) and this yields the statement, namely Prop. \ref{simplification_prop}, which is actually used in the final step of the proof in section \ref{putting_it_all_together_section}.

The approach to compact traces which seemed easiest to adapt to our setting is contained not in Clozel \cite{Cl89}, but rather in Kottwitz' (unpublished) notes for his Chicago seminar talks on Clozel's work.  I am grateful to Kottwitz for providing me with this very useful reference.

\subsection{Notation} \label{compact_trace_notation_subsection}

We denote $\Gamma := {\rm Gal}(\overline{F}/F)$ throughout this subsection.  For the moment $G$ denotes an arbitrary connected reductive $F$-group, but later when the norm map $\mathcal N$ appears, we will simplify things by assuming $G$ is quasi-split over $F$.

For $M$ an $F$-Levi subgroup of $G$, let $A_M$ denote the split component of the center of $M$.  Let $\mathfrak a_M = X_*(A_M)_\mathbb R$.  If $P = MN$ is an $F$-parabolic subgroup of $G$ with Levi factor $M$, let $\Delta_P$ denote the set of simple roots for the action of $A_M$ on ${\rm Lie}(N)$.   

Suppose we have fixed a minimal $F$-parabolic $P_0 = M_0N_0$ \footnote{In case $G$ is unramified, assume $P_0 = B$, the Borel subgroup specified in section \ref{preliminaries_section}.}, and consider only standard $F$-parabolics $P = MN$, so that $M \supseteq M_0$ and $N \subseteq N_0$.  Set $\Delta_0 := \Delta_{P_0}$ and $\mathfrak a_0 = \mathfrak a_{M_0}$.  We have 
\begin{eqnarray*}
\mathfrak a_M \subseteq \mathfrak a_0 \hspace{.5in} \mbox{and} \hspace{.5in} \mathfrak a_0 \rightarrow \mathfrak a_M,
\end{eqnarray*}
where the map $\mathfrak a_0 \rightarrow \mathfrak a_M$ is given by averaging over the Weyl group $W_M$ of 
$M$.   By duality the resulting map $\mathfrak a_0/\mathfrak a_G \rightarrow \mathfrak a_M/\mathfrak a_G$ determines an inclusion of bases $\Delta_P \subset \Delta_0$.

We can then define $\tau^G_P$ to be the characteristic function of the ``acute cone'' in $\mathfrak a_0$
$$
\{ H \in \mathfrak a_0 ~ | ~ \alpha(H) > 0, \,\,\,\, \mbox{for all $\alpha \in \Delta_P$} \}.
$$
Similarly, we define the characteristic function $\widehat{\tau}^G_P$ for the ``obtuse cone''
$$
\{ H \in \mathfrak a_0 ~ | ~ c_\alpha(H) > 0, \,\,\,\, \mbox{for $\alpha \in \Delta_P$} \},
$$
where $H = \sum_{\alpha \in \Delta_0} c_\alpha(H) \alpha^\vee$ modulo $\mathfrak a_G$.

\medskip

We also have the Harish-Chandra functions $H_M: M(F) \rightarrow \mathfrak a_M$.  Since $\mathfrak a_M$ is the $\mathbb R$-vector space dual of $X^*(M)^\Gamma \otimes \mathbb R$, to define $H_M$ it is enough to specify the pairing $$
M(F) \times X^*(M)^\Gamma \rightarrow \mathbb R
$$
which is done by mapping $(m,\lambda)$ to ${\rm log}_q|\lambda(m)|_F$, where $q$ denotes the cardinality of the residue field of $F$.  

Now (for simplicity) assume $G$ (hence also $M$) is quasi-split over $F$, so that the norm map $\mathcal N$ on $M(E)$ takes values in $M(F)$.  Then we also define functions on $M(E)$ by
\begin{align*}
\chi_N(m) &:= \tau^G_P \circ H_M (\mathcal N m) \\
\widehat{\chi}_N(m) &:= \widehat{\tau}^G_P \circ H_M (\mathcal N m).
\end{align*}
Note that $H_M(\mathcal Nm)$ is a well-defined and continuous function of $m \in M(E)$. To see this we pass to a $z$-extension $M'$ of $M$, and use the fact that $H_M$ extends to a homomorphism $M(\overline{F}) \rightarrow \mathfrak a_M$. 

\subsection{The compact trace identity}

In this subsection we assume that $G$ is an unramified $F$-group.  Recall that $g \in G(F)$ belongs to the subset $G(F)_c$ of {\em compact elements} if and only if the eigenvalues $\chi$ of 
${\rm Ad}(g)$ 
acting on ${\rm Lie}(G)$ satisfy $|\chi|_F \leq 1$.   If $M$ is an $F$-Levi factor of an $F$-parabolic subgroup $P = MN$ of $G$, then we define 
\begin{equation*}
M(F)_{c,+} = \{ m \in M(F)_c ~ | ~ \mbox{the eigenvalues ${\chi}$ for ${\rm Ad}(m)$ acting on ${\rm Lie}(N)$ satisfy $|\chi|_F < 1$} \}.
\end{equation*}
Recall also that $G(E)_{\theta-c}$ consists of the $\theta$-semi-simple elements $g \in G(E)$ such that $\mathcal N g \in G(F)_c$ (in case $G_{\rm der} = G_{\rm sc}$, this is equivalent to requiring that the concrete norm $Ng$ belongs to $G(E)_c$).  Moreover, we define
\begin{equation}
M_{\theta-c,+} = M(E)_{\theta-c,+} := \{ m \in M(E) ~ | ~ \mathcal N m \in M(F)_{c,+} \}.
\end{equation}

Fix an $F$-Levi $M$ in $G$.  Consider the open subset of $G(F)$
$$G(F)_M := \{ g^{-1}mg ~ | ~ g \in G(F), \,\, m \in M(F)_{c,+} \}.
$$
The following basic identity holds for $f \in L^1(G(F), dg)$:
\begin{equation}
\int_{G(F)_M} f(g) \, dg = \int_{M(F)\backslash G(F)} \int_{M_{c,+}} \delta_{P(F)}^{-1}(m) \, f(g^{-1}mg) \, dm \, \frac{dg}{dm}.
\end{equation}
The same argument which leads to the above formula also proves the following twisted version.  Consider the open subset of $G(E)$
$$
G(E)(M_{\theta-c,+}) = \{ g^{-1}m\theta g ~ | ~ g \in G(E), \,\, m \in M(E)_{\theta-c,+} \}.
$$
Then for any $f \in L^1(G(E),dg)$, we have
\begin{equation} \label{one_Levi_contribution}
\int_{G(E)(M_{\theta-c,+})} f(g) \, dg = \int_{M(E)\backslash G(E)} \int_{M_{\theta-c,+}} \delta_{P(F)}^{-1}(\mathcal Nm) \, f(g^{-1}m\theta g) \, dm \, \frac{dg}{dm}.
\end{equation}

Now let $P_0 = M_0N_0 = TU$ be our fixed minimal $F$-parabolic, and consider only standard $F$-parabolics $P = MN$.  Summing (\ref{one_Levi_contribution}) over all standard $F$-parabolics yields
\begin{lemma} \label{proto_compact_trace_identity} For all $f \in L^1(G(E),dg)$, we have
\begin{equation} \label{eq:proto_compact_trace_identity}
\int_{G(E)} f(g) \, dg = \sum_{P = MN} \int_{M(E)\backslash G(E)} \int_{M_{\theta-c,+}} \delta_{P(E)}^{-1}(m) \, 
f(g^{-1}m\theta g) \, dm \, \frac{dg}{dm}.
\end{equation}
\end{lemma}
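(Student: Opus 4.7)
The plan is to sum the identity (\ref{one_Levi_contribution}) over all standard $F$-parabolics $P = MN$ and to recognize that the collection of open subsets $G(E)(M_{\theta-c,+})$ partitions $G(E)$ up to a null set. Once this decomposition is established, Lemma \ref{proto_compact_trace_identity} follows immediately by applying (\ref{one_Levi_contribution}) to each piece and interchanging the finite sum with the integral (legitimate since $f \in L^1$).

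The heart of the proof is therefore the decomposition
$$G(E) = \bigsqcup_{P = MN} G(E)(M_{\theta-c,+})$$
up to a set of measure zero. I would establish this by reducing to the analogous untwisted statement over $F$. Restricting to the full-measure open subset of $g \in G(E)$ for which $\mathcal Ng$ is semi-simple, the untwisted theory assigns to $\mathcal Ng \in G(F)$ a unique standard $F$-parabolic $P = MN$ such that $\mathcal Ng$ is $G(F)$-conjugate to an element $\gamma \in M(F)_{c,+}$: namely, $M$ is the $G(F)$-conjugate into standard position of $\mathrm{Cent}_G(S)$, where $S$ is the $F$-split component of the center of $G^\circ_{\mathcal Ng}$, and the face of $\mathfrak a_0$ to which the logarithms of $|\chi(\mathcal Ng)|_F$ for $\chi \in \Delta_0$ send us determines $P$ uniquely.

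To lift this to $G(E)$, I would invoke Lemma \ref{non-elliptic_norms} applied to the pair $(G, M)$: since $G^\circ_{\mathcal Ng} \subseteq M$ by construction and $\gamma = \mathcal Ng$ is a norm from $G(E)$, the lemma produces an element $\delta_1 \in M(E)$ with $\mathcal N_M(\delta_1) = \gamma$ such that $g$ and $\delta_1$ are $\sigma$-conjugate under $G(E)$. By the very definition of $M_{\theta-c,+}$ as the preimage of $M(F)_{c,+}$ under $\mathcal N_M$, we obtain $\delta_1 \in M_{\theta-c,+}$, hence $g \in G(E)(M_{\theta-c,+})$. Disjointness of the pieces corresponding to distinct standard $P$ follows from the untwisted disjointness applied to $\mathcal Ng$.

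The main obstacle is precisely this twisted decomposition step. The untwisted version is classical (it underlies Clozel's compact-trace formula in \cite{Cl89} and its predecessors in Arthur's work), but transferring it to $\theta$-conjugacy classes in $G(E)$ is exactly the descent issue controlled by Lemma \ref{non-elliptic_norms}: without it, the natural conjugator lies in $G(\overline F)$ rather than $G(E)$, and one cannot conclude that $g$ is $\theta$-conjugate over $G(E)$ to a member of $M(E)$. With that tool in hand, the remaining verifications---openness of each $G(E)(M_{\theta-c,+})$, measurability, and absolute convergence needed to swap the summation and the integral---are routine.
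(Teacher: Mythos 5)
Your plan---reduce (\ref{eq:proto_compact_trace_identity}) to the measure-theoretic partition $G(E) = \bigsqcup_P G(E)(M_{\theta-c,+})$, deduce the twisted partition from the untwisted one via Lemma \ref{non-elliptic_norms}, and observe the rest is routine---is exactly what the paper's terse ``summing (\ref{one_Levi_contribution}) over all standard $F$-parabolics yields...'' implicitly relies on, and your observation that Lemma \ref{non-elliptic_norms} is the right descent tool for the twisted lift is a good one that the paper leaves unsaid.

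However, your explicit description of the Levi $M$ is wrong. You claim $M$ is a $G(F)$-conjugate of ${\rm Cent}_G(S)$, where $S$ is the $F$-split component of the center of $G^\circ_{\mathcal Ng}$. That is the Levi governing the descent to \emph{elliptic} elements in section \ref{descent}, not the one controlling the compact-trace partition, and the two genuinely differ: for $G = \mathrm{GL}_2$ and $\gamma = {\rm diag}(u_1,u_2)$ with $u_1 \neq u_2$ units, ${\rm Cent}_G(S)$ is the diagonal torus $T$, yet $\gamma$ is compact, so its compact-trace Levi is all of $G$ and $\gamma \notin T(F)_{c,+}$ (the root value $u_1/u_2$ is a unit, not of absolute value $<1$). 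The correct $M$ is read off the polar part of $\gamma$: $P = MN$ is the unique $F$-parabolic for which $|\alpha(\gamma)|_F < 1$ for all roots $\alpha$ of ${\rm Lie}(N)$ and $|\alpha(\gamma)|_F = 1$ for all roots $\alpha$ of ${\rm Lie}(M)$; your later remark about the face of $\mathfrak a_0$ points in this direction and is not consistent with the ${\rm Cent}_G(S)$ claim. This matters because you need $\gamma \in M(F)_{c,+}$ to conclude $\delta_1 \in M(E)_{\theta-c,+}$, and that fails for $M = {\rm Cent}_G(S)$. Fortunately the slip is localized: the other property you invoke, $G^\circ_\gamma \subseteq M$, also holds for the correct $M$ (any root with $\alpha(\gamma) = 1$ a fortiori has $|\alpha(\gamma)|_F = 1$), so once the recipe for $M$ is corrected your appeal to Lemma \ref{non-elliptic_norms} and the remaining steps go through.
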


On the other hand, we have the integration formula for $\psi \in L^1(M(E)\backslash G(E), \frac{dg}{dm})$
\begin{equation} \label{G/M}
\int_{M(E)\backslash G(E)} \psi(g) \, \frac{dg}{dm} = \sum_{w \in \, _EW(P,J)} q_{E,w}^{-1} \int_N \int_J \psi(nwj) \, dj\, dn,
\end{equation}
a consequence of (\ref{eq:integration_on_G}); in particular we are using the measures $dj$ and $dn$ from (\ref{eq:integration_on_G}).   Now we define for any $w \in \, _EW(P,J)$, a function $f_{P,J,w}$ on $M(E)$ by the formula
\begin{equation} \label{f_PJw}
f_{P,J,w}(m) := \int_N \int_J f(j^{-1}w^{-1}m n \,\theta(w) \, \theta j) \, dj \, dn_w,
\end{equation}
where $dn_w$ is the measure on $N(E)$ such that ${\rm vol}_{dn_w}(N \cap \, ^wJ) = 1$.  
Consider the automorphism of $N(E)$ given by $n \mapsto n^{-1}m \theta(n) \,m^{-1}$ for $m \in M_{\theta-c,+}$.  The absolute value of the Jacobian of this map is identically 1.  Substituting (\ref{G/M}) and (\ref{f_PJw}) into 
(\ref{eq:proto_compact_trace_identity}), and using this change of variable, we derive the following equation
\begin{equation} \label{second_proto_compact_trace_identity}
\int_{G(E)} f(g) \, dg = \sum_{P=MN} \sum_{w \in \, _EW(P,J)} \int_{M_{\theta-c,+}} f_{P,J,w}(m) dm_w,
\end{equation}
where $dm_w$ is the measure on $M(E)$ such that ${\rm vol}_{dm_w}(M \cap \, ^wJ) = 1$.  
Consider two functions $f_1,f_2$ on $G(E)$ whose product is integrable, and set
\begin{align*}
\langle f_1,f_2 \rangle = \int_{G(E)} f_1(g)f_2(g) \, dg \\
\langle f_1,f_2 \rangle_c = \int_{G(E)_{\theta-c}} f_1(g)f_2(g) \, dg.
\end{align*}
Similar notation will apply to functions on $M(E)$.  Following Harish-Chandra, let $\Theta_{\Pi \theta}$ denote the locally integrable function on $G(E)$ representing the functional $\phi \mapsto \langle {\rm trace} \, \Pi I_\theta, \phi \rangle$, for $\phi \in C_c^\infty(G(E))$.  This means that
\begin{equation} \label{twisted_char_defn}
\langle {\rm trace} \, \Pi I_\theta, \phi \rangle = \langle \Theta_{\Pi \theta}, \phi \rangle,
\end{equation}
where the intertwiner $I_\theta: \Pi  ~ \widetilde{\rightarrow} ~ \Pi^\theta$ is understood to be fixed (and the representing function $\Theta_{\Pi \theta}$ obviously depends on the choice of $I_\theta$).

 We may apply (\ref{second_proto_compact_trace_identity}) to $f(g) := \Theta_{\Pi \theta}(g) \phi(g)$, and in the above notation we get the following compact trace identity.

\begin{prop} \label{compact_trace_identity_lemma} For every $\phi \in C^\infty_c(G(E))$, we have
\begin{equation} \label{compact_trace_identity} 
\langle \Theta_{\Pi \theta}, \phi  \rangle = \sum_{P = MN} \sum_{w \in \, _EW(P,J)} \langle \Theta_{\Pi_N\theta} \,\, , \,\, \phi_{P,J,w} \cdot \chi_N \rangle_c,
\end{equation}
where in the inner summand the Haar measure on $M(E)$ used to form $\langle \Theta_{\Pi_N\theta}\,, \, \phi_{P,J,w} \cdot \chi_N \rangle_c$ is $dm_w$, i.e. the one giving $^wJ \cap M$ volume 1. 
\end{prop}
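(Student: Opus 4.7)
The plan is to substitute the locally integrable function $f := \Theta_{\Pi\theta}\cdot\phi$ into the integration formula (\ref{second_proto_compact_trace_identity}), exploit the $\theta$-conjugation invariance of the twisted character, and then invoke the twisted analog of Casselman's formula to pass from $\Theta_{\Pi\theta}$ to $\Theta_{\Pi_N\theta}$ on the strictly contracting locus.

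With $f = \Theta_{\Pi\theta}\phi$, formula (\ref{second_proto_compact_trace_identity}) expands via (\ref{f_PJw}) to
$$
\langle \Theta_{\Pi\theta},\phi\rangle \,=\, \sum_{P=MN}\sum_{w\in\,_EW(P,J)}\int_{M_{\theta-c,+}} (\Theta_{\Pi\theta}\phi)_{P,J,w}(m)\,dm_w.
$$
For $g=wj$ one has $g^{-1}(mn)\theta(g) = j^{-1}w^{-1}mn\theta(w)\theta(j)$, and the invariance $\Theta_{\Pi\theta}(g^{-1}x\theta(g)) = \Theta_{\Pi\theta}(x)$ therefore simplifies the integrand to
$$
(\Theta_{\Pi\theta}\phi)_{P,J,w}(m) \,=\, \int_{N(E)}\Theta_{\Pi\theta}(mn)\Bigl(\int_{J}\phi(j^{-1}w^{-1}mn\theta(w)\theta(j))\,dj\Bigr)\,dn_w.
$$

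The key claim is that for every $m \in M(E)_{\theta-c,+}$ and every $n \in N(E)$,
$$
\Theta_{\Pi\theta}(mn) \,=\, \Theta_{\Pi_N\theta}(m).
$$
This is established in two steps. First, for such $m$ the endomorphism $\mathrm{Ad}(\mathcal{N}m)$ strictly contracts $\mathrm{Lie}\,N(F)$, so the $F$-differential at $1$ of the map $n_0 \mapsto m^{-1}n_0^{-1}m\theta(n_0)$, namely $\theta - \mathrm{Ad}(m^{-1})$, is invertible; this map is therefore a homeomorphism of $N(E)$, and every $mn$ is $\theta$-conjugate in $G(E)$ to $m$ by an element of $N(E)$. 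Hence $\Theta_{\Pi\theta}(mn) = \Theta_{\Pi\theta}(m)$. Second, the twisted analog of Casselman's formula (applied to the $\theta$-stable admissible representation $\Pi$, with $I_\theta^M$ on the Jacquet module $\Pi_N$ taken to be the intertwiner induced from $I_\theta$ via the canonical surjection onto $N$-coinvariants) identifies $\Theta_{\Pi\theta}(m)$ with $\Theta_{\Pi_N\theta}(m)$ on $M_{\theta-c,+}$, the normalization of $\Pi_N$ being chosen so as to absorb the $\delta_{P(E)}^{-1/2}$ factor that appears in the classical formulation.

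Once this identification is in place, the inner double integral is precisely $\phi_{P,J,w}(m)$, so $(\Theta_{\Pi\theta}\phi)_{P,J,w}(m) = \Theta_{\Pi_N\theta}(m)\,\phi_{P,J,w}(m)$ on $M_{\theta-c,+}$. Because $M_{\theta-c,+} = M(E)_{\theta-c}\cap\{\chi_N = 1\}$, integration against $dm_w$ over $M_{\theta-c,+}$ is the same as $\langle\,\cdot\,, \chi_N\rangle_c$ applied to the product, and summing over $P$ and $w$ produces (\ref{compact_trace_identity}). The principal technical obstacle is the twisted Casselman formula used in the second step: in the untwisted setting this is classical (used for instance in \cite{Cl89} and ultimately resting on Casselman's subrepresentation theorem and Harish-Chandra asymptotics), while its extension to twisted characters is routine but requires careful tracking of the compatibility between $I_\theta^M$ and $I_\theta$ under the $N$-coinvariants functor.
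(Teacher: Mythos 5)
Your proof is correct and takes essentially the same route as the paper: substitute $f = \Theta_{\Pi\theta}\,\phi$ into the integration formula of subsection \ref{compact_trace_identity_section}, use the $\theta$-conjugation invariance of the twisted character together with Rogawski's twisted version of Casselman's theorem to replace $\Theta_{\Pi\theta}$ by $\Theta_{\Pi_N\theta}(m)$ on $M_{\theta-c,+}$, and finish with the observation that $M_{\theta-c,+} = M(E)_{\theta-c}\cap\{\chi_N=1\}$. One slip to fix in the write-up: an invertible $F$-differential at $1$ does not by itself make $n_0 \mapsto m^{-1}n_0^{-1}m\,\theta(n_0)$ a global homeomorphism of $N(E)$ --- one must run the strict-contraction argument along the lower central series of $N$ --- though the conclusion is the standard fact the paper itself tacitly invokes when performing the change of variables that produces (\ref{second_proto_compact_trace_identity}), so your step 1 is really re-deriving something already built into the formula you start from.
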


\begin{proof}
The identity
\begin{equation*}
\int_{G(E)} \Theta_{\Pi\theta}(g) \phi(g) \, dg = \sum_{P = MN} \sum_{w \in \, _EW(P,J)} \int_{M_{\theta-c,+}} \Theta_{\Pi_N\theta}(m) \, \phi_{P,J,w}(m) \, dm_w 
\end{equation*}
follows easily using (\ref{eq:proto_compact_trace_identity}) and (\ref{second_proto_compact_trace_identity}) together with the identities for $g \in G(E)$ and $m \in M_{\theta-c,+}$
\begin{equation}
\Theta_{\Pi \theta}(g^{-1}m\theta g) = \Theta_{\Pi \theta}(m) = \Theta_{\Pi_N\theta}(m).
\end{equation}
The second equality is the twisted version due to Rogawski \cite{Rog} of a theorem of Casselman \cite{Cas77}.

Finally, we use the well-known (and easy) fact that $m \in M(E)_{\theta-c}$ belongs to $M_{\theta-c,+}$ if and only if $\chi_N(m) = 1$.
\end{proof}

\subsection{Inversion of the compact trace identity}

Let $a_P := {\rm dim}(\mathfrak a_M)$.  

\begin{prop}\label{compact_trace_inversion}  For any $\phi \in C^\infty_c(G(E))$, we have the formula
\begin{equation} \label{eq:compact_trace_inversion}
\langle \Theta_{\Pi \theta} \, , \, \phi \rangle_c = \sum_{P=MN} (-1)^{a_P-a_G} 
\sum_{w \in \, _EW(P,J)} \langle \Theta_{\Pi_N\theta} \, , \, \phi_{P,J,w} \cdot \widehat{\chi}_N \rangle,
\end{equation}
where we use $dm_w$ to form $\langle \cdot \, , \, \cdot \rangle$ in the inner summand on the right hand side.
\end{prop}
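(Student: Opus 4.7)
The plan is to invert the compact trace identity (\ref{compact_trace_identity}) of Proposition \ref{compact_trace_identity_lemma} by a Möbius-type argument built from the Langlands combinatorial lemma relating the acute cone function $\tau^G_P$ and the obtuse cone function $\widehat\tau^G_P$ on $\mathfrak a_0$. The key combinatorial input, valid for $H \in \mathfrak a_0$ and every standard parabolic $P$ of $G$, is
\begin{equation*}
\sum_{Q:\, P\subseteq Q\subseteq G} (-1)^{a_Q - a_G}\,\tau^Q_P(H)\,\widehat\tau^G_Q(H) \;=\; \delta_{P,G},
\end{equation*}
together with its consequences for the functions $\chi_N$ and $\widehat\chi_N$ obtained by composition with $H_M\circ\mathcal N$. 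At the level of distributions on $G(E)$, this identity expresses the indicator of $G(E)_{\theta-c}$ as a signed sum, over standard parabolics $Q$, of obtuse-cone cutoffs on the Levi subgroups; this is exactly the mechanism that converts the $\chi_N$-cutoff on the right of (\ref{compact_trace_identity}) into a $\widehat\chi_N$-cutoff with an alternating sign.

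First I would apply Proposition \ref{compact_trace_identity_lemma} in the group $M(E)$ to the admissible $\theta$-stable representation $\Pi_N$ and to the smooth compactly supported function $\phi_{P,J,w}$ on $M(E)$, obtaining an expansion of $\langle \Theta_{\Pi_N\theta},\, \phi_{P,J,w}\cdot\widehat\chi_N\rangle$ as a sum over standard parabolics $R=LV$ of $M$ and representatives $v\in\,_EW_M(R, J\cap M)$. By transitivity of the Jacquet functor one has $(\Pi_N)_V = \Pi_{NV}$, where $NV$ is the unipotent radical of the standard parabolic $LNV$ in $G$; and by the refined Iwasawa decomposition applied in stages (Lemma \ref{lemma_A}(b) for $G$ and for $M$), the composed representatives $wv$ parametrize a subset of $_EW(LNV, J)$. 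Consequently every term that appears in the $M(E)$-expansion already appears, up to a bijective change of parametrization, in the $G(E)$-expansion attached to the parabolic $LNV$.

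Substituting these expansions into the right-hand side of (\ref{eq:compact_trace_inversion}) and swapping the order of summation so that the outer sum runs over the ``deepest'' parabolic $Q=LU$ (with $U=NV$) and the inner sum runs over the intermediate parabolics $P\supseteq Q$, the inner sum factors as a signed sum of products $\tau^Q_{\,P}\,\widehat\tau^G_Q$ applied to $H_L(\mathcal N\ell)$. The combinatorial identity above collapses this inner sum to $\delta_{Q,G}$ times the characteristic function of the compact part, and only the term $Q=G$ survives. The surviving term is precisely $\langle \Theta_{\Pi\theta},\,\phi\rangle_c$, by the twisted Casselman–Rogawski theorem used already in the proof of Proposition \ref{compact_trace_identity_lemma}.

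The main obstacle I foresee is the bookkeeping in the second step: making sure that the sets $_EW(P,J)$ and $_EW_M(R,J\cap M)$ assemble compatibly into $_EW(LU,J)$, and that the Haar measures $dm_w$ on $M(E)$ and $d\ell_{wv}$ on $L(E)$ are matched by the natural decomposition of $^{wv}J\cap L$. This is the same combinatorial matching that underlies transitivity of parabolic induction, but here it has to be carried out in the presence of the double-coset sets $_EW(P,J)$ — sets that, for non-Iwahori $J$, lack the simple description available in the Iwahori or spherical case (cf.\ the introduction's discussion of the subset $_EW(P,J)$). Everything else is routine manipulation of Jacquet modules and the two integration formulas (\ref{eq:integration_on_G}) and (\ref{eq:normalized_integration_on_G}), following the scheme of Kottwitz's unpublished notes and the arguments in \cite{Cl89}.
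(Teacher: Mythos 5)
Your proposal is correct and follows essentially the same route as the paper: apply Proposition \ref{compact_trace_identity_lemma} in each Levi $M(E)$ to the functions $\phi_{P,J,w}\cdot\widehat\chi_N$, reorganize the resulting double sum over nested parabolics, and collapse the inner alternating sum via the Arthur--Langlands combinatorial identity $\sum_{P\supseteq Q}(-1)^{a_P-a_G}\widehat\tau^G_P\tau^P_Q=\delta_{Q,G}$. The bookkeeping obstacle you correctly flag --- the compatibility of the double-coset representatives $_EW(P,J)$, $_EW_M(R,J\cap M)$ and the associated Haar measures --- is precisely the content of the paper's Lemma \ref{transitivity_of_sub_P_J_w}, which establishes the transitivity identity $(\phi_{P,J,w})_{Q\cap M,\,^wJ\cap M,\tau}=\phi_{Q,J,\tau w}$, so your plan is a faithful reconstruction of the argument.
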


\begin{proof}
We will apply Proposition \ref{compact_trace_identity_lemma} to each summand in the right hand side of (\ref{eq:compact_trace_inversion}).  Let $Q = LR$ denote a standard $F$-parabolic contained in $P$, with Levi factor $L$ and unipotent radical $R$.  We have $L \subseteq M$, $R \supseteq N$, and $Q \cap M = L(R \cap M)$ is a Levi decomposition of the $F$-parabolic $Q \cap M \subseteq M$.  

For each $w \in \, _EW(P,J)$, consider the subset $_EW(Q \cap M, \,^wJ \cap M)$ of $W_M(E)$, and let $\tau$ denote an element of that subset.  It is clear that $\tau w \in \, (W_L\backslash W)_{\rm min}$.  Furthermore, we can define the subset $_EW(Q,J) \subset W(E)$ in such a way that every element of $_EW(Q,J)$ can be written in the form $\tau w$ for unique elements $w \in \, _EW(P,J)$ and $\tau \in \, _EW(Q \cap M, \, ^wJ \cap M)$.  
Indeed, in light of Lemma \ref{lemma_A} it suffices to observe that for the natural projection
$$
Q \backslash G/J \rightarrow P \backslash G/J,
$$
the fiber over $w \in P \backslash G/J$ can be identified with $Q \cap M \backslash M/ \,^wJ \cap M$.  (Use Lemma \ref{J_cap_M_is_parahoric}, part (b).)

\begin{lemma} \label{transitivity_of_sub_P_J_w}
With the notation above, for $\phi \in C^\infty_c(G(E))$, we have
\begin{equation}
(\phi_{P,J,w})_{Q \cap M, \, ^wJ \cap M, \tau} = \phi_{Q,J,\tau w}.
\end{equation}
\end{lemma}
 
\begin{proof}
The left hand side is the function on $L(E)$ which assigns to $l \in L(E)$ the value
\begin{equation}
\int_N \int_{R \cap M} \int_J \int_{\, ^wJ \cap M} \phi(j^{-1}w^{-1}j_0^{-1}\tau^{-1} \, lr \, \theta\tau \, \theta j_0 \, n \, \theta(w) \, \theta j) \, dj_0 \, dj \, dr_{w,\tau} \, dn_w,
\end{equation}
where $j_0 \in \, ^wJ \cap M$, $j \in J$, $r \in R \cap M$, and $n \in N$.  Further, $dr_{w,\tau}$ is the measure on $R \cap M$ giving volume 1 to $\, ^\tau(\, ^wJ \cap M) \cap R \cap M = \, ^{\tau w}J \cap R \cap M$.

The integrand can be rewritten as
$$
\phi(j^{-1} \, (\, ^{w^{-1}}j_0) \, (\tau w)^{-1} \, l r (\, ^{\theta(\tau j_0)} n) \, \theta(\tau w) \, \theta(\, ^{w^{-1}}j_0) \theta j).
$$
Since $^{w^{-1}}j_0 \in J$, we may suppress the integral over $^wJ \cap M$.  Also, $n \mapsto \, ^{\theta(\tau j_0)}n$ is a measure-preserving transformation of $N(E)$, so we may suppress the superscript $\theta(\tau j_0)$ above $n$.

Letting $dr_{\tau w}$ denote the measure on $R$ giving $^{\tau w}J \cap R$ volume 1, we have by Fubini's theorem the equality of measures on $R = (R \cap M) \cdot N$ 
\begin{equation*}
dr_{\tau w} = dr_{w,\tau} \cdot dn_w.
\end{equation*}
Indeed, 
\begin{align*}
^{\tau w}J \cap R &= (\, ^{\tau w}J \cap R \cap M) \cdot (\, ^{\tau w}J \cap N) \\
&= (\, ^{\tau w}J \cap R \cap M) \cdot \, ^\tau(\, ^wJ \cap N),
\end{align*}
and conjugation by $\tau$ leaves $dn_w$ invariant.  These remarks are enough to prove the lemma.
\end{proof}
 
Now we continue with the proof of the proposition.  In the expressions below, $w$ resp. $\tau$ will be understood to range over $_EW(P,J)$ resp. $_EW(Q \cap M, \, ^wJ \cap M)$.  Further, we will abbreviate $\phi_{P,J,w}$ by $\phi_w$ and $\psi_{Q \cap M, \, ^wJ \cap M, \tau}$ by $\psi_\tau$.  Finally, we simply write $\tau^G_P$ resp. $\widehat{\tau}^G_P$ in place of $\chi_N = \tau^G_P \circ H_M \circ \mathcal N$ resp. $\widehat{\chi}_N = \widehat{\tau}^G_P \circ H_M \circ \mathcal N$.  Then applying Proposition \ref{compact_trace_identity_lemma} to each summand in (\ref{eq:compact_trace_inversion}), the right hand side becomes
\begin{eqnarray*}
& & \sum_{P = MN} (-1)^{a_P - a_G} \sum_{w} \sum_{\underset{P \supseteq Q}{Q= LR}} \sum_{\tau} \langle \Theta_{\Pi_R\theta} \, , \, (\phi_w \widehat{\tau}^G_P)_\tau \cdot \tau^P_Q \rangle_c  \\
& = & \sum_{Q = LR} \langle \Theta_{\Pi_R\theta} \, , \, \sum_{P \supseteq Q}(-1)^{a_P-a_G}\sum_{w,\tau} \phi_{\tau w} \, \widehat{\tau}^G_P \tau^P_Q  \rangle_c \\
& = & \sum_{Q = LR} \langle \Theta_{\Pi_R \theta} \, , \, [\sum_{P \supseteq Q} (-1)^{a_P-a_G} ~ \widehat{\tau}^G_P\tau^P_Q] \sum_{y \in \, W(Q,J)}\phi_{y} \rangle_c \\
&= & \langle \Theta_{\Pi \theta} \, , \, \phi \rangle_c. 
\end{eqnarray*}
The equality $(\phi_w)_\tau = \phi_{\tau w}$ we used is Lemma \ref{transitivity_of_sub_P_J_w}.  The final equality results from the well-known result of Arthur (\cite{Ar78}, Lemma 6.1) that for fixed standard parabolic $Q = LR$,
$$
\sum_{P \supseteq Q} (-1)^{a_P-a_G} ~ \widehat{\tau}^G_P \tau^P_Q = \begin{cases} 1, \,\,\,\, \mbox{if $Q = G$} 
\\ 0, \,\,\,\, \mbox{if $Q \neq G$}. \end{cases}
$$
We also used implicitly the equality $(\widehat{\tau}^G_P \circ H_M \circ \mathcal N_M)_{\tau} = \widehat{\tau}^G_P \circ H_L \circ \mathcal N_L$ (which was again denoted simply by $\widehat{\tau}^G_P$ above).  This is not hard, the main ingredients being the fact that the following diagram commutes
$$
\xymatrix{
L(E) \ar[r] \ar[d]_{H_L\mathcal N_L} & M(E) \ar[d]^{H_M\mathcal N_M} \\
\mathfrak a_L/\mathfrak a_G \ar[r] & \mathfrak a_M/\mathfrak a_G,}
$$
where the lower horizontal map is the projection given by averaging over the Weyl group $W_M$, and the fact that the dual of that projection induces the inclusion of bases $\Delta_P \hookrightarrow \Delta_Q$.  

This completes the proof of Proposition \ref{compact_trace_inversion}.
\end{proof}

\begin{cor} \label{compact_trace_inversion_cor}
Assume $\phi \in \mathcal H_J(G(E))$.  Then 
\begin{equation} \label{eq:compact_trace_inversion_cor}
\langle \Theta_{\Pi \theta} \, , \, \phi \rangle_c = \sum_{P=MN} (-1)^{a_P-a_G} 
\sum_{w \in \, _EW(P,J)} \langle \delta_{P(E)}^{-1/2} \Theta_{\Pi_N\theta} \, , \, (\, ^{w,\theta}\phi)^{(P)} \cdot \widehat{\chi}_N \rangle,
\end{equation}
where $(\cdot)^{P(E)}$ is formed using the measure $dn_w$ and $\langle \cdot, \cdot \rangle$ is formed using the measure $dm_w$.
\end{cor}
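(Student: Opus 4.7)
The corollary sits one step away from Proposition \ref{compact_trace_inversion}: the only change is that the auxiliary function $\phi_{P,J,w}$ on $M(E)$ has been replaced by a transparently invariant object, namely the normalized constant term $(\,^{w,\theta}\phi)^{(P)}$, at the cost of introducing the modulus factor $\delta_{P(E)}^{-1/2}$ on the character side. So my plan is simply to exhibit, for each standard $P=MN$ and each $w\in \, _EW(P,J)$, the identity
\begin{equation*}
\phi_{P,J,w}(m) \;=\; \delta_{P(E)}^{-1/2}(m)\,(\,^{w,\theta}\phi)^{(P)}(m)
\end{equation*}
for $m\in M(E)$, and then substitute this into \eqref{eq:compact_trace_inversion}, absorbing the $\delta_{P(E)}^{-1/2}$ into the character.

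First I would unwind the definition of $\phi_{P,J,w}$ from \eqref{f_PJw}. Since $\phi$ is left $J(E)$-invariant, the variable $j^{-1}$ on the left may be dropped; and since $J(E)$ is $\theta$-stable (the underlying facet ${\bf a}_J$ was chosen $\sigma$-invariant, so in particular $\theta(J(E))=J(E)$), the variable $\theta j$ on the right may also be dropped. The integral over $J$ then simply contributes ${\rm vol}_{dj}(J)=1$, leaving
\begin{equation*}
\phi_{P,J,w}(m) \;=\; \int_{N(E)} \phi(w^{-1} m n \, \theta(w)) \, dn_w \;=\; \int_{N(E)} (\,^{w,\theta}\phi)(mn)\, dn_w.
\end{equation*}
Comparing with the definition of the constant term (using the same measure $dn_w$, consistently with the descent conventions of Section \ref{descent}),
\begin{equation*}
(\,^{w,\theta}\phi)^{(P)}(m) \;=\; \delta_{P(E)}^{1/2}(m)\int_{N(E)} (\,^{w,\theta}\phi)(mn)\, dn_w,
\end{equation*}
the claimed equality is immediate.

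With this equality in hand, each inner summand in Proposition \ref{compact_trace_inversion} becomes
\begin{equation*}
\langle \Theta_{\Pi_N\theta}\, , \, \phi_{P,J,w}\cdot \widehat{\chi}_N\rangle \;=\; \langle \Theta_{\Pi_N\theta}\, , \, \delta_{P(E)}^{-1/2}(\,^{w,\theta}\phi)^{(P)}\cdot\widehat{\chi}_N\rangle \;=\; \langle \delta_{P(E)}^{-1/2}\Theta_{\Pi_N\theta}\, ,\, (\,^{w,\theta}\phi)^{(P)}\cdot\widehat{\chi}_N\rangle,
\end{equation*}
where the last step is just rewriting the pairing (both $\delta_{P(E)}^{-1/2}$ and $\widehat{\chi}_N$ are scalar-valued functions on $M(E)$, so their placement within the integrand is immaterial). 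Plugging back into \eqref{eq:compact_trace_inversion} yields exactly \eqref{eq:compact_trace_inversion_cor}.

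There is really no obstacle here beyond bookkeeping; the one point where I would slow down is to double-check that the measure $dn_w$ used in the definition of $\phi_{P,J,w}$ (from \eqref{f_PJw}) is the same one implicitly understood in forming $(\,^{w,\theta}\phi)^{(P)}$, since the constant term depends on the choice of Haar measure on $N(E)$. The compatibility is built into the conventions of subsection \ref{descent_prelim_subsection}, but it is worth noting explicitly because the same symbol $(\,^{w,\theta}\phi)^{(P)}$ could in principle refer to different functions under different measure normalizations.
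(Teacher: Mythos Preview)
Your proof is correct and is exactly the argument the paper has in mind: the corollary is stated without proof precisely because the identity $\phi_{P,J,w}(m)=\delta_{P(E)}^{-1/2}(m)\,(\,^{w,\theta}\phi)^{(P)}(m)$ (using $J$-bi-invariance of $\phi$ and $\theta$-stability of $J(E)$ to suppress the $J$-integral in \eqref{f_PJw}) is an immediate unwinding of definitions. Your care about the measure $dn_w$ is well-placed and matches the paper's explicit stipulation in the corollary and in \eqref{eq:descent_over_E}.
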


\subsection{Simplification of the inverted compact trace identity} \label{simplification_subsection}

In this section, we assume $w \in \, _EW(P,J)$ and $\phi \in Z(\mathcal H_J(G(E)))$, and we give a formula for the quantity 
\begin{equation} \label{term_to_be_simplified}
\langle \delta_{P(E)}^{-1/2} \Theta_{\Pi_N\theta}, (\, ^{w,\theta}\phi)^{(P)} \widehat{\chi}_N \rangle,
\end{equation}
especially in the case where $\phi = z_\mu$.   The useful consequence will be the simple description of the compact trace of a central element, given in Proposition \ref{simplification_prop} below.

\subsubsection{A variant of Lemma \ref{key_calc}}
It will be enough for us to analyze (\ref{term_to_be_simplified}) with $\Pi_N$ replaced by a $\theta$-stable admissible representation $\sigma$ of $M(E)$ (which is a subquotient of $\Pi_N$).  Furthermore, by \cite{Rog}, Lemma 2.1, it is enough to consider the case where $\sigma$ is irreducible as an $M(E)$-representation, which we henceforth assume.  

Now given a representation $\Pi$ with Iwahori-fixed vectors, we may choose an unramified character $\xi$ of $T(E)$ such that $\Pi$ is an irreducible subquotient of $i^G_B(\xi)$.  By Casselman's theorem (\cite{Cas}, 6.3.4), the irreducible subquotients of the Jacquet module $\Pi_N$ are subquotients of
$$
\bigoplus_{\eta \in (W_M(E)\backslash W(E))_{\rm min}} i^M_{B_M}( \, ^\eta\xi \cdot \delta_P^{1/2}).
$$
(Here we used implicitly the identity $^\eta B \cap M = B \cap M$; see Lemma \ref{lemma_A} (a).)   Thus, there exists an element $\eta \in W(E)$ such that our given $\sigma$ is a subrepresentation of $i^M_{B_M}(\, ^\eta \xi\delta_P^{1/2})$.  We fix $\eta$ and denote $^\eta \xi \delta_P^{1/2}$ by $\xi'$.   We set $\xi'_1 = \, ^\eta \xi$. 

\medskip

By definition (\ref{term_to_be_simplified}) is related to the {\em left} action of $M(E) \rtimes \langle \theta \rangle$ on $\sigma$.  In order to make explicit computations, we need to pass to {\em right} actions, as we did in the proof of Lemma \ref{lemma_C}.  Recall the anti-involution $\iota$ of the Hecke algebra, defined by $\iota f(x) = f(x^{-1})$.  


We have the following identity for $f \in C^\infty_c(M(E))$ and a $\theta$-stable $\sigma$:
\begin{equation} \label{basic_left_right_action_identity}
{\rm trace}(L(f) \circ I_\theta; \,\sigma) = {\rm trace}( R(\iota f) \circ I_\theta;\, \sigma).
\end{equation}
We apply this to $f = \delta_P^{-1/2}(\, ^{w,\theta}\phi)^{(P)}\widehat{\chi}_N$ and use (\ref{relations}) to get
\begin{equation} \label{in_terms_of_R}
\langle \delta_{P(E)}^{-1/2} \Theta_{\sigma\theta}, (\, ^{w,\theta}\phi)^{(P)} \widehat{\chi}_N \rangle = 
{\rm trace} \,(R(\delta_{P}^{1/2}(\, ^{\theta(w),\theta^{-1}}\iota \phi)^{(P)} \iota\widehat{\chi}_N) \circ I_{\theta}; \, \sigma).
\end{equation}
We will find an explicit formula for the right hand side of (\ref{in_terms_of_R}).


Now for any parahoric $J$ and $\phi \in Z(\mathcal H_J(G(E)))$, any unramified character $\xi$ of $T(E)$, and any standard $F$-parabolic $P = MN$, we define a scalar quantity 
\begin{equation} \label{ch(...,N)}
ch_{\, \xi}(\phi,J,N) :=
\sum_{\nu \in X_*(A^E)} \widehat{\chi}_N(\varpi^\nu) \, 
\xi(\varpi^\nu) \, B^{-1}(\phi)(\varpi^\nu).
\end{equation}

Define the morphisms $I^G_\theta$, $R(w \, \theta(w)^{-1})$, $\mathcal P$, and $\mathfrak s$ as in the proof of Lemma \ref{key_calc}.  The following variant of that lemma is the key ingredient in our simplification.  We postpone its proof to the next subsection.

\begin{lemma} \label{key_calc_variant}   Suppose $\sigma$ and $\xi'$ resp. $\xi'_1$ are as above, so that $\sigma 
\subseteq i^M_{B_M}(\xi')$.  Let $w \in \, _EW(P,J)$.  Define the right action $R(\psi) = -*\psi$ on $i^M_{B_M}(\xi')$ (and $\sigma$) using the Haar measure on $M(E)$ which gives $^wJ_M := \, ^wJ \cap M$ (or equivalently, $J'_M$) volume 1.  Let $\phi \in Z(\mathcal H_J(G(E)))$.  Then 
we have the following equality of linear functions $\sigma^{^wJ_M} \rightarrow \sigma^{^wJ_M}$
\begin{equation} \label{eq:key_calc_variant}
ch_{\xi'_1}(\, ^{\theta(w)}\phi, \, ^{\theta(w)}J, N) \cdot \mathcal P \circ R(w \, \theta(w)^{-1}) \circ I^G_{\theta} \circ \mathfrak s = 
R(\delta_P^{1/2}(\, ^{\theta(w),\theta^{-1}}\iota \phi)^{(P)} \, \iota \widehat{\chi}_N) \circ 
I^M_{\theta}.
\end{equation}
\end{lemma}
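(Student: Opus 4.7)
The strategy is to adapt the proof of Lemma \ref{key_calc} to the present setting; the new ingredient is the cutoff $\iota\widehat{\chi}_N$, which produces the truncated sum appearing in the definition (\ref{ch(...,N)}) of $ch_{\xi'_1}(-, -, N)$. First I would invoke the commutative diagram (\ref{P_s_diagram}), applied here to the pair $(\xi', \xi'_1 = \delta_P^{-1/2}\xi')$, to reduce (\ref{eq:key_calc_variant}) on the subrepresentation $\sigma \subseteq i^M_{B_M}(\xi')$ to an analog of Lemma \ref{main_calc} on the ambient induced representations. The identity to establish is then: for every $y \in M(E)$, every $\Phi \in i^M_{B_M}(\xi')^{{}^wJ \cap M}$, and the canonical lift $\widetilde{\Phi} = \mathfrak{s}'\Phi \in i^G_B(\xi'_1)^{{}^wJ}$ supported on $P\cdot{}^wJ$,
\begin{equation*}
ch_{\xi'_1}({}^{\theta(w)}\phi, \, {}^{\theta(w)}J, \, N) \cdot \widetilde{\Phi}(y \, w\theta(w)^{-1}) = \Phi * \bigl[\delta_P^{1/2}({}^{\theta(w),\theta^{-1}}\iota\phi)^{(P)} \, \iota\widehat{\chi}_N\bigr](y).
\end{equation*}

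Next I would expand the right-hand side exactly as in the proof of Lemma \ref{main_calc}. Unfolding $(\cdot)^{(P)}$ as an integral over $N(E)$ with measure $dn_w$ and using the Iwasawa identity $\widetilde{\Phi}(mn) = \Phi(m)$ gives
\begin{equation*}
\int_{M(E)}\int_{N(E)} \widetilde{\Phi}(mn)\,({}^{\theta(w),\theta^{-1}}\iota\phi)((mn)^{-1}y)\,\iota\widehat{\chi}_N(m^{-1}y)\,dn_w\,dm_w.
\end{equation*}
The crucial observation is that $\iota\widehat{\chi}_N$ depends only on the $M$-part of its argument, so it factors cleanly out of the $N$-integration. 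Inserting a harmless integration over $J(E)$ (using the appropriate one-sided invariances of $\widetilde{\Phi}$ and of ${}^{\theta(w),\theta^{-1}}\iota\phi$) and performing the standard substitution to collapse the triple integral into an integral over $G(E)$, the right-hand side is rewritten as a convolution of $\widetilde{\Phi}$ with ${}^{\theta(w)}\iota\phi$ evaluated at $y\,w\theta(w)^{-1}$, cut off by the factor coming from $\iota\widehat{\chi}_N$.

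Finally I would identify the scalar. Since ${}^{\theta(w)}\iota\phi$ lies in $Z(\mathcal{H}_{{}^{\theta(w)}J}(G(E)))$ and its Bernstein transform is obtained from that of $\phi$ via the involution $t_\nu \mapsto t_{-\nu}$, expanding the convolution through the Bernstein presentation on $i^G_B(\xi'_1)^{{}^{\theta(w)}J}$ produces a sum indexed by $\nu \in X_*(A^E)$. The $T(E)_b$-invariance of $\widehat{\chi}_N$ on $T(E)$ — which follows from the description $\widehat{\chi}_N = \widehat{\tau}^G_P \circ H_M \circ \mathcal{N}$ — ensures that the cutoff $\iota\widehat{\chi}_N$ restricts this sum to those $\nu$ contributing to (\ref{ch(...,N)}), while the principal-series character $\xi'_1$ supplies the factor $\xi'_1(\varpi^\nu)$. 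Matching coefficients yields precisely $ch_{\xi'_1}({}^{\theta(w)}\phi, {}^{\theta(w)}J, N)$.

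The main obstacle is the careful tracking of the $\delta_P^{1/2}$-normalizations, the sign flip coming from $\iota$ (so that $\iota\widehat{\chi}_N(\varpi^\nu) = \widehat{\chi}_N(\varpi^{-\nu})$ correctly matches the defining cutoff in (\ref{ch(...,N)})), and the mismatch between ${}^wJ$ and ${}^{\theta(w)}J$ when $\theta(w) \neq w$ (which required Lemma \ref{lemma_A}(a) in the analogous step for Lemma \ref{key_calc}). Once these conventions are aligned, the argument is a direct twist of the Lemma \ref{key_calc}/Lemma \ref{main_calc} calculation.
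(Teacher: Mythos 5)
Your proposal is essentially the paper's proof: reduce via the $\theta$-equivariance of $\mathcal P$, $\mathfrak s$ and the analogue of diagram (\ref{P_s_diagram}) to a direct computation on the ambient induced principal series (the paper's Lemma \ref{main_calc_variant}), unfold the convolution into a $G(E)$-convolution against a cut-off lift $\widetilde{\chi}_N\widetilde{\Phi}$, and extract the scalar from the Bernstein isomorphism after decomposing $\widetilde{\Phi}$ into basis functions $\widetilde{\Phi}_\tau$. One bookkeeping point worth correcting: the matching with (\ref{ch(...,N)}) does not come from a net sign flip $\iota\widehat{\chi}_N(\varpi^\nu)=\widehat{\chi}_N(\varpi^{-\nu})$; rather the $\iota$ on the cutoff is absorbed (using the extension $\widetilde{\chi}_N$ to $M(E)N(E)\,^{\theta(w)}J$ together with $H_M(y)=0$), while the $\nu\mapsto-\nu$ coming from $\iota\phi$ cancels against the $-\nu$ appearing in the Bernstein evaluation of $v_{\varpi^\nu\tau}*{}^{\theta(w)}\iota\phi$, so the resulting factor is $\widehat{\chi}_N(\varpi^\nu)\,\xi'_1(\varpi^\nu)\,B^{-1}(\,^{\theta(w)}\phi)(\varpi^\nu)$ with no negation of $\nu$ in $\widehat{\chi}_N$.
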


This easily yields via (\ref{in_terms_of_R}) the desired simplification of (\ref{term_to_be_simplified}): it is a linear combination of terms of the form $ch_{^\eta \xi}(\, \phi, \, J, N)$.  Indeed, take the trace of both 
sides of (\ref{eq:key_calc_variant}) and note that the left hand side is a scalar times a matrix which is independent of $\phi$.  An argument like the one in subsection \ref{beginning_of_proof_lemma_C}  (see also Lemma \ref{v_lemma} below) shows that the trace of (\ref{eq:key_calc_variant}) vanishes unless $w = \theta(w)$.  In that case $w \in W(F)$, and the equality $B^{-1}(\, ^w\phi) = B^{-1}(\, \phi)$ from subsection \ref{w_conjugation_subsection} implies 
$ch_{\xi_1'}(\, ^w\phi, \, ^wJ, N) = ch_{\xi'_1}(\phi, J, N)$, as desired.

Together with Corollary \ref{compact_trace_inversion_cor} this yields the following important proposition.  To state it we first introduce some new notation by writing
$$\langle \Theta_{\Pi \theta}, \phi \rangle_c =: \langle {\rm trace} \, \Pi I_\theta, \phi \rangle_c =: \langle {\rm trace}\, \Pi I_\theta , \phi\rangle_{\theta-c}$$ 
in anticipation of the argument in section 9 where various kinds of traces (usual, compact, or $\theta$-compact) need to be distinguished.

\begin{prop} \label{simplification_prop}  Let $\Pi$ be an irreducible subquotient of $i^G_B(\xi)$, where $\xi$ is an unramified character of $T(E)$.  Then the functional on $\phi \in  Z(\mathcal H_J(G(E)))$ given by
$$
\phi \mapsto \langle {\rm trace}\, \Pi I_\theta , \phi \rangle_c
$$
is a finite linear combination of functionals of the form
$$
\phi \mapsto \sum_{\nu \in X_*(A^E)} \widehat{\chi}_N(\varpi^\nu) \, ^\eta\xi(\varpi^\nu) \, B^{-1}(\phi)(\varpi^\nu),$$
where $P =MN$ ranges over standard $F$-parabolic subgroups, and $\eta$ ranges over certain elements of the Weyl group $W(E)$.  Restricting our attention to the Bernstein functions $\phi = z_\mu$, we see that the functional on $\mu \in X_*(A^E))$ given by
$$
 \mu \mapsto \langle {\rm trace}\, \Pi I_\theta , z_\mu \rangle_c$$
 is a finite linear combination of functionals which send $\mu$ to
\begin{equation} \label{compact_trace_sum}
\sum_{\lambda \in W(E) \mu} \widehat{\chi}_N(\varpi^\lambda) \, \, ^\eta\xi(\varpi^\lambda).  
\end{equation}
\end{prop}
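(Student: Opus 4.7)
The plan is to deduce the proposition directly from Corollary \ref{compact_trace_inversion_cor} by analyzing each summand on its right-hand side in the form given by Lemma \ref{key_calc_variant}.

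First I would fix a standard $F$-parabolic $P=MN$ and an element $w\in{}_EW(P,J)$, and study the pairing $\langle \delta_{P(E)}^{-1/2}\Theta_{\Pi_N\theta}\,,\,({}^{w,\theta}\phi)^{(P)}\widehat{\chi}_N\rangle$ that appears there. Take a composition series of the admissible $M(E)$-module $\Pi_N$ and group terms according to their $\theta$-orbits. By Rogawski's Lemma 2.1 in \cite{Rog}, one may compute $\Theta_{\Pi_N\theta}$ as an integer linear combination of twisted characters $\Theta_{\sigma\theta}$ attached to irreducible $\theta$-stable subquotients $\sigma$ of $\Pi_N$ (with chosen intertwiners $I^M_\theta$). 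Since $\Pi\subseteq i^G_B(\xi)$ has Iwahori-fixed vectors, Casselman's subrepresentation theorem (\cite{Cas}, 6.3.4) identifies each such $\sigma$ as a subrepresentation of some $i^M_{B_M}(\xi')$, where $\xi'={}^\eta\xi\cdot\delta_P^{1/2}$ for a suitable $\eta\in(W_M(E)\backslash W(E))_{\rm min}$. Writing $\xi_1':={}^\eta\xi$, so that $\xi'=\xi_1'\delta_P^{1/2}$, reduces the analysis to a fixed irreducible $\sigma\subseteq i^M_{B_M}(\xi')$.

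Next I would convert the left action of the constant term into a right action via (\ref{basic_left_right_action_identity}) and (\ref{relations}), which yields
\[
\langle \delta_{P(E)}^{-1/2}\Theta_{\sigma\theta}\,,\,({}^{w,\theta}\phi)^{(P)}\widehat{\chi}_N\rangle
= {\rm trace}\bigl(R(\delta_P^{1/2}({}^{\theta(w),\theta^{-1}}\iota\phi)^{(P)}\iota\widehat{\chi}_N)\circ I^M_\theta\,;\,\sigma\bigr).
\]
Now Lemma \ref{key_calc_variant} identifies the operator in parentheses, as an endomorphism of $\sigma^{{}^wJ_M}$, with
\[
ch_{\xi_1'}\bigl({}^{\theta(w)}\phi,{}^{\theta(w)}J,N\bigr)\,\cdot\,\mathcal P\circ R(w\,\theta(w)^{-1})\circ I^G_\theta\circ\mathfrak s.
\]
Taking traces factors out the scalar $ch_{\xi_1'}({}^{\theta(w)}\phi,{}^{\theta(w)}J,N)$, and what remains is the trace of an endomorphism of $\sigma^{{}^wJ_M}$ that does not depend on $\phi$.

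The main obstacle is handling the case $w\neq\theta(w)$ and subsequently simplifying the scalar. For the first point, I would argue as in the proof of Lemma \ref{lemma_C}: the trace of $\mathcal P\circ R(w\theta(w)^{-1})\circ I^G_\theta\circ\mathfrak s$ on $\sigma^{{}^wJ_M}$ can be computed on a basis $v_1,\dots,v_n$ via the sections $\widetilde{\Phi}^{(v_i)}_1$; using that ${\rm supp}(I^G_\theta \widetilde{\Phi}^{(v_i)}_1)=Pw J\theta(w)^{-1}$ and the double coset decomposition shows that the $\widetilde{\Phi}^{(v_i)}_1$-component of $R(w\theta(w)^{-1})I^G_\theta \widetilde{\Phi}^{(v_i)}_1$ is zero unless $PwJ=P\theta(w)J$, which in ${}_EW(P,J)$ forces $w=\theta(w)$. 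Once this vanishing is established, only terms with $w\in{}_EW(P,J)^\theta={}_FW(P,J)$ survive; for such $w$ the compatibility of the Bernstein isomorphism with $W(F)$-conjugation (subsection \ref{w_conjugation_subsection}) gives $B^{-1}({}^w\phi)=B^{-1}(\phi)$, hence $ch_{\xi_1'}({}^w\phi,{}^wJ,N)=ch_{\xi_1'}(\phi,J,N)$ by inspection of (\ref{ch(...,N)}).

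Assembling these observations, the right-hand side of (\ref{eq:compact_trace_inversion_cor}) is a finite linear combination (indexed by $P$, by the $\theta$-stable constituents $\sigma$ of $\Pi_N$, and by the associated $\eta$) of scalars of the form
\[
ch_{{}^\eta\xi}(\phi,J,N)\;=\;\sum_{\nu\in X_*(A^E)}\widehat{\chi}_N(\varpi^\nu)\,{}^\eta\xi(\varpi^\nu)\,B^{-1}(\phi)(\varpi^\nu),
\]
which proves the first assertion. For the second, specialize to $\phi=z_\mu$: by the very definition of the Bernstein function, $B^{-1}(z_\mu)=\sum_{\lambda\in W(E)\mu}t_\lambda$, so $B^{-1}(z_\mu)(\varpi^\nu)$ is the characteristic function of the $W(E)$-orbit of $\mu$, which collapses the sum to $\sum_{\lambda\in W(E)\mu}\widehat{\chi}_N(\varpi^\lambda)\,{}^\eta\xi(\varpi^\lambda)$ as in (\ref{compact_trace_sum}).
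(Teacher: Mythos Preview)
Your proposal is correct and follows essentially the same route as the paper: starting from Corollary \ref{compact_trace_inversion_cor}, reducing each summand via Rogawski and Casselman to an irreducible $\sigma\subseteq i^M_{B_M}({}^\eta\xi\,\delta_P^{1/2})$, passing to right actions, applying Lemma \ref{key_calc_variant} to factor out the scalar $ch_{{}^\eta\xi}(\phi,J,N)$, using the support argument from the proof of Lemma \ref{lemma_C} (cf.\ Lemma \ref{v_lemma}) to kill the terms with $w\neq\theta(w)$, and then invoking subsection \ref{w_conjugation_subsection} to drop the $w$-conjugation from the scalar. The only slip is in your support computation for $I^G_\theta\widetilde{\Phi}^{(v_i)}_1$ (it should be $P\theta(w)J\theta(w)^{-1}$ rather than $PwJ\theta(w)^{-1}$), but after applying $R(w\,\theta(w)^{-1})$ you still land on $P\theta(w)Jw^{-1}$ and the conclusion $P\theta(w)J=PwJ$ is unchanged.
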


\subsubsection{Proof of Lemma \ref{key_calc_variant}}

We proceed as in the proof of Lemma \ref{key_calc}.  Since the analogue of (\ref{mathcalP_is_theta_equivariant}) holds, it is enough to prove the equality
\begin{equation} \label{modified_key_calc_variant}
ch_{\xi'_1}(\, ^{\theta(w)}\phi, \, ^{\theta(w)}J, N) \cdot \mathcal P \circ R(w \, \theta(w)^{-1}) \circ \mathfrak s =  
R(\delta_P^{1/2}(\, ^{\theta(w),\theta^{-1}}\iota \phi)^{(P)} \iota\widehat{\chi}_N) 
\end{equation}
of linear functions $\sigma^{J'_M} \rightarrow \sigma^{^wJ_M}$.

Also, we have a commutative diagram as in (\ref{P_s_diagram}), except that $\xi$ resp. $\xi_1$ is replaced with $\xi'$ resp. $\xi'_1$.  As before, it is therefore enough to prove (\ref{modified_key_calc_variant}) for $\mathcal P'$ resp. $\mathfrak s'$ replacing $\mathcal P$ resp. $\mathfrak s$.  Explicitly, for $\Phi \in i^M_{B_M}(\xi')^{J'_M}$, the element $\mathfrak s'\Phi =: \widetilde{\Phi} \in i^G_B(\xi'_1)^{J'}$ is the unique one supported on $PJ'$ and satisfying $\widetilde{\Phi}|_{M(E)} = \Phi$.  

Since elements of $i^M_{B_M}(\xi')^{^wJ_M}$ are determined by their values on $_EW(B_M,\, ^wJ_M)\, ^wJ_M$, it is enough to prove the following analogue of Lemma \ref{main_calc}.

\begin{lemma} \label{main_calc_variant} Define the right action $*$ on $i^M_{B_M}(\xi')$ using the measure which gives $^wJ_M$ (equivalently, $J'_M$) volume 1.  Let $\Phi \in i^M_{B_M}(\xi')^{J'_M}$.  Let $\widetilde{\Phi} = \mathfrak s' \Phi \in i^G_B(\xi'_1)^{J'}$, as defined above.  Then for any $y \in \tau_0 \, ^wJ_M$ ($\tau_0 \in \, _EW(B_M,\, ^wJ_M)$), we have the identity
\begin{equation} \label{eq:main_calc_variant}
ch_{\xi'_1}(\, ^{\theta(w)}\phi, \, ^{\theta(w)}J, N) \cdot \, \widetilde{\Phi} (y w \, \theta(w)^{-1}) = \Phi * [\delta_P^{1/2} (\, ^{\theta(w),\theta^{-1}}\iota \phi)^{(P)} \iota \widehat{\chi}_N](y).
\end{equation}
\end{lemma}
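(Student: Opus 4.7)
The plan is to mirror the proof of Lemma \ref{main_calc}, the new ingredients being the cutoff function $\iota\widehat{\chi}_N$ and the appearance of $\iota\phi$ in place of $\phi$. Unfolding the right-hand side of \eqref{eq:main_calc_variant} using the definitions of convolution (measure $dm_w$) and of the constant term $(\cdot)^{(P)}$ (measure $dn_w$), together with the identity $(\, ^{\theta(w),\theta^{-1}}\iota\phi)(g) = \phi(w^{-1} g^{-1} \theta(w))$, the $\delta_P^{\pm 1/2}$ factors cancel and one arrives at
\[
\int_{M(E)} \int_{N(E)} \Phi(m) \, \phi(w^{-1} y^{-1} m n^{-1} \theta(w)) \, \widehat{\chi}_N(y^{-1} m) \, dn_w \, dm_w.
\]

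Next, using the $J$-bi-invariance of $\phi$, I would insert a normalized integral over $J$ (giving $J$ volume $1$), perform the substitution $n \mapsto n^{-1}$, and then apply $x = mn\,\theta(w)\,j\,\theta(w)^{-1}$. This converts the triple integral into one over $G(E)$ concentrated on the support $PJ' = P \cdot \, ^{\theta(w)} J$ of $\widetilde{\Phi}$. The factor $\Phi(m)$ is recovered as $\widetilde{\Phi}(x)$ via the identity $\widetilde{\Phi}(mn) = \Phi(m)$ for $m \in M$, $n \in N$ (using that $N \subset U \subset B$ is normalized by $M$ and that the defining character $\xi'_1$ is trivial on $U$). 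The cutoff $\widehat{\chi}_N(y^{-1}m)$ extends to a function $\widehat{\chi}_N^{\mathrm{ext}}(y^{-1}x)$ on $PJ'$ via the well-defined Levi projection $x \mapsto m(x)$, which relies on $J' \cap P = (J' \cap M)(J' \cap N)$ from Lemma \ref{J_cap_M_is_parahoric}(b); here one uses $y \in M$ to confirm that $m(y^{-1}x) = y^{-1} m(x)$.

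The main obstacle is that multiplication by $\widehat{\chi}_N$ breaks the centrality of $\, ^{\theta(w)}\phi \in Z(\mathcal H_{^{\theta(w)}J}(G(E)))$, so the desired scalar cannot be obtained directly from a principal-series action of $\, ^{\theta(w)}\phi$. To resolve this, I would reduce to the case $\phi = z_\mu$ via the Bernstein decomposition $\phi = \sum_\mu c_\mu z_\mu$; by linearity it suffices to verify \eqref{eq:main_calc_variant} for $z_\mu$, in which case the identity reduces to an explicit combinatorial statement. Tracking the Iwasawa decomposition $G(E) = \bigsqcup_{\lambda \in X_*(A^E)} U(E) \varpi^\lambda I(E)$ against the support $PJ'$ and evaluating $\widetilde{\Phi}$ on the coset representatives, the right-hand side yields a sum over $\lambda \in W(E) \mu$ of terms $\widehat{\chi}_N(\varpi^\lambda) \, \xi'_1(\varpi^\lambda)$ times $\widetilde{\Phi}(yw\theta(w)^{-1})$. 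This matches the explicit value of $ch_{\xi'_1}(\, ^{\theta(w)}z_\mu, \, ^{\theta(w)}J, N)$ from \eqref{ch(...,N)}, using $B^{-1}(\, ^{\theta(w)}z_\mu) = B^{-1}(z_\mu) = \sum_{\lambda \in W(E)\mu} t_\lambda$ (Lemma \ref{w_conjugation_compatibility}), completing the proof.
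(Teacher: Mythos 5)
Your setup mirrors the paper's, and the first moves (unfolding the convolution, cancelling the $\delta_P$ factors, inserting the $J$-integral, extending the cutoff to a function on $PJ'$ via the Levi projection) all track what the paper does. But the paragraph beginning ``The main obstacle is that multiplication by $\widehat{\chi}_N$ breaks the centrality\dots'' contains a misdiagnosis, and the proposed fix (reduction to $\phi=z_\mu$) leaves the actual work undone.

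On the misdiagnosis: the paper does not attempt to multiply $\,^{\theta(w)}\iota\phi$ by a cutoff. Instead, after the substitution $x=mn\,\theta(w)j\theta(w)^{-1}$, the cutoff gets absorbed into $\widetilde\Phi$, producing the product $\widetilde\chi_N\widetilde\Phi$, which is no longer an element of the principal series but \emph{is} an element of the module ${\bf M}_B=C_c(T_bU\backslash G/J')$. The characterizing identity of the Bernstein isomorphism, \eqref{R_action_char_of_B}, reads $\varphi * z = B^{-1}(z)\cdot\varphi$ for \emph{every} $\varphi$ in that module, not just for those arising from $i^G_B(\chi)^{J'}$. So centrality of $\,^{\theta(w)}\iota\phi$ is fully usable; one just has to apply it to $\widetilde\chi_N\widetilde\Phi_{\tau}$ decomposed in the basis $v_{\varpi^\nu\tau}=1_{U\varpi^\nu\tau J'}$ of ${\bf M}_B$, term by term, and then evaluate via Lemma \ref{v_lemma}. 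There is no obstruction to overcome.

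On the proposed fix: reducing to $\phi=z_\mu$ by linearity is legitimate, but it buys nothing. The assertion that for $z_\mu$ ``the right-hand side yields a sum over $\lambda\in W(E)\mu$ of terms $\widehat\chi_N(\varpi^\lambda)\,\xi_1'(\varpi^\lambda)$ times $\widetilde\Phi(yw\theta(w)^{-1})$'' is precisely what needs to be proved, and it is not a combinatorial triviality: as a function on $G(E)$, $z_\mu$ is \emph{not} the sum of characteristic functions of $I\varpi^\lambda I$ over $\lambda\in W(E)\mu$; it has complicated support and coefficients. The step that produces the sum over $W(E)\mu$ is exactly the Bernstein identity $v_{\varpi^\nu\tau}*\,^{\theta(w)}\iota\phi=B^{-1}(\,^{\theta(w)}\iota\phi)\cdot v_{\varpi^\nu\tau}$ (together with Lemma \ref{v_lemma} to isolate the single surviving term and to force $\theta(w)=w$). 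Your argument either has to invoke that identity explicitly — in which case the reduction to $z_\mu$ is superfluous and you should just run the paper's calculation for general central $\phi$ — or it has to re-derive it, which is not shown. As written, there is a genuine gap at the step where the Iwasawa decomposition is claimed to produce the desired sum. (A minor remark: after unfolding, the correct integrand involves $\phi(w^{-1}y^{-1}mn\,\theta(w))$, not $\phi(w^{-1}y^{-1}mn^{-1}\theta(w))$; this is harmless since $dn$ is inversion-invariant, but it is a sign that the bookkeeping needs care.)
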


\begin{proof}
First, we extend $\widehat{\chi}_N$ to a function on $M(E)N(E) \, ^{\theta(w)}J \subset G(E)$, denoted by $\widetilde{\chi}_N$, by defining
$$
\widetilde{\chi}_N (m\, n \, \theta(w)j \theta(w)^{-1}) = \widehat{\chi}_N(m),
$$
for $j \in J(E)$, $n \in N(E)$, and $m \in M(E)$.  Note that $\widetilde{\chi}_N$ is well-defined, since $H_M$ is trivial on compact subgroups of $M(E)$.

 Now as in the proof of Lemma \ref{main_calc}, we have 
\begin{eqnarray*}
& &\Phi * [\delta_P^{1/2}(\, ^{\theta(w),\theta^{-1}}\iota\phi)^{(P)} \iota \widehat{\chi}_N](y) \\ 
&=& \int_{M(E)} \int_{N(E)} \int_{J'} \widetilde{\Phi}(mn j') \, (\, ^{\theta(w),\theta^{-1}}\iota \phi)((mn j')^{-1}y) \, \iota\widetilde{\chi}_N((mn j')^{-1}y) \, dn_{\theta(w)} \, dm_{\theta(w)} \, dj' \\
&=& \int_{G(E)} (\widetilde{\chi}_N \, \widetilde{\Phi})(x) (\, ^{\theta(w)}\iota \phi)(x^{-1}y\, w \, \theta(w)^{-1}) \, dx \\
&=& (\widetilde{\chi}_N \, \widetilde{\Phi}) * (\, ^{\theta(w)}\iota \phi) (y \, w \, \theta(w)^{-1}).
\end{eqnarray*}
Here we have used $y \in \tau_0 \, ^wJ_M$ (thus $H_M(y) = 0$) to justify $\iota \widetilde{\chi}_N(x^{-1}y) = \iota \widetilde{\chi}_N(x^{-1}) = 
\widetilde{\chi}_N(x)$.  We have made use of the integration formula (\ref{eq:normalized_integration_on_G}) for a function supported on $P(E)\, J'$.

Recall that $\widetilde{\Phi} \in i^G_B(\xi'_1)^{J'}$ is supported on 
$$
PJ' = \bigcup_{\tau \in \, _EW(B_M,J'_M)} B \tau  J'.
$$
We may write
\begin{equation}
\widetilde{\Phi} = \sum_{i \in \mathcal I} c_i \widetilde{\Phi}_{\tau_i},
\end{equation}
for a finite set of non-zero scalars $c_i$ ($i \in \mathcal I$) and basis elements $\widetilde{\Phi}_{\tau_i} \in i^G_B(\xi'_1)^{J'}$ supported on $B\tau_i J'$, where $\tau_i \in \, _EW(B_M, \, J'_M) \subset W_M(E)$.  We normalize so that $\widetilde{\Phi}_{\tau_i}(\tau_i) = 1$, for $i \in \mathcal I$.

We need to prove, finally, that
\begin{equation} \label{final_step}
ch_{\xi'_1}(\, ^{\theta(w)}\phi, \, ^{\theta(w)}J, N) \cdot \, \widetilde{\Phi}_{\tau_i} (y w \, \theta(w)^{-1}) = (\widetilde{\chi}_N \, \widetilde{\Phi}_{\tau_i}) * (\, ^{\theta(w)}\iota \phi) (y \, w \, \theta(w)^{-1}).
\end{equation}

From now on fix $i \in \mathcal I$ and denote $\tau_i$ simply by $\tau$.  Since ${\rm supp}(\widetilde{\Phi}_{\tau}) \subset B \tau \, ^{\theta(w)}J$, we may consider the left Haar measure $db = dtdu$ on $B = TU$ and its variant $db_{\tau ,\theta(w)}$ which is normalized such that the analogue of (\ref{eq:normalized_integration_on_G}) holds for functions supported on $B \tau \, ^{\theta(w)}J$.  Using the substitution $x = tu\tau j'$ for $x \in B\tau \, ^{\theta(w)}J$, we have 
\begin{eqnarray} \label{last_equation_above}
&& \\
& & (\widetilde{\chi}_N \widetilde{\Phi}_{\tau}) * (\, ^{\theta(w)}\iota \phi)(y\, w \, \theta(w)^{-1})  \notag \\
&=& \int_{T} \int_{U} \int_{J'} \widetilde{\chi}_N(tu\tau  j') \, \widetilde{\Phi}_{\tau}(tu\tau j') \, 
\,^{\theta(w)}\iota \phi((tu\tau j')^{-1}y \, w \, \theta(w)^{-1}) \, dj' \, du_{\tau ,\theta(w)} \, dt  \notag \\
& = &  \sum_{\nu \in X_*(A^E)} \widehat{\chi}_N(\varpi^{\nu}) \, (\delta^{1/2}_B\xi'_1)(\varpi^\nu) \, \int_U \int_{J} \widetilde{\Phi}_{\tau}(\tau) \, ^{\theta(w)}\iota \phi((\varpi^\nu u \tau j')^{-1} y \, w \, \theta(w)^{-1}) \, dj' \, 
du_{\tau ,\theta(w)}. \notag 
\end{eqnarray}
In writing $\widetilde{\chi}_N(tu\tau  j') = \widehat{\chi}_N(\varpi^{\nu})$ for $t \in \varpi^\nu (T(E) \cap T(L)_1)$ here we have used the definition of $\widetilde{\chi}_N$ along with the fact that $\tau \in K_M(E)$ (so that $H_M(\tau) = 0$). 

Setting $v_{\varpi^\nu \tau} := 1_{U\varpi^\nu\tau \, ^{\theta(w)}J}$, 
the double integral in the last line of (\ref{last_equation_above}) can be written as
\begin{eqnarray*}
&& \int_U \int_{J} 
v_{\varpi^\nu\tau}(\varpi^\nu u \tau j') \, ^{\theta(w)}\iota \phi((\varpi^\nu u \tau j')^{-1} y \, w \, \theta(w)^{-1}) \, dj' \, du_{\tau ,\theta(w)} \\
&=& (v_{\varpi^\nu \tau} * \, ^{\theta(w)}\iota \phi)(y \, w \, \theta(w)^{-1}).
\end{eqnarray*}
On the other hand, recalling (\ref{norm_left_convolution}-\ref{R_action_char_of_B}), by the very definition of the Bernstein isomorphism 
$$B : \mathbb C[X_*(A^E)]^{W(E)} 
~~ \widetilde{\rightarrow} ~~ Z(\mathcal H_{\,^{\theta(w)}J}(G(E))),$$
we have
$$
v_{\varpi^\nu \tau} * \, ^{\theta(w)}\iota \phi = B^{-1}(\,^{\theta(w)}\iota \phi) \cdot v_{\varpi^\nu \tau},
$$
where the right hand side is defined by a normalized integration over $A^E(E)$, so that
\begin{eqnarray*}
&& (v_{\varpi^\nu_F \tau} * \, ^{\theta(w)}\iota \phi)(y\, w \, \theta(w)^{-1}) \\
&=& \sum_{\lambda \in X_*(A^E)} \delta_B^{1/2}(\varpi^\lambda) \, B^{-1}(\, ^{\theta(w)}\iota \phi)(\varpi^\lambda) \, 
v_{\varpi^\nu \tau}(\varpi^{-\lambda} y \, w \, \theta(w)^{-1}) \\
&=& \delta_B^{1/2}(\varpi^{-\nu}) \, B^{-1}(\, ^{\theta(w)}\iota \phi)(\varpi^{-\nu}) \, v_{\varpi^\nu \tau}(\varpi^\nu y \, w\, \theta(w)^{-1}) \\
&=& \delta_B^{1/2}(\varpi^{-\nu}) \, B^{-1}(\, ^{\theta(w)}\phi)(\varpi^{\nu}) \, \widetilde{\Phi}_\tau (y \, w\, \theta(w)^{-1}).
\end{eqnarray*}
(See the lemma below.) Substituting this into the last line of (\ref{last_equation_above}) above yields the desired result.  Modulo the following lemma, we have verified (\ref{final_step}).  
This completes the proof of Lemma \ref{main_calc_variant} and thus of Lemma \ref{key_calc_variant} as well.  \end{proof}

\begin{lemma} \label{v_lemma}  Write $y = \tau_0 \, ^wj$, for $j \in J$ such that $^wj\in \, ^wJ \cap M$.  Then 
\begin{equation} \label{v_thing}
v_{\varpi^\nu \tau}(\varpi^{-\lambda} y w \, \theta(w)^{-1}) = v_{\varpi^\nu \tau}(\varpi^{-\lambda} \tau_0 w j \, \theta(w)^{-1})
\end{equation}
vanishes unless $\theta(w) = w$, $\tau = \tau_0$, and $\lambda = -\nu$, in which case (\ref{v_thing}) is 1.  Similarly, $\widetilde{\Phi}_\tau(y w \, \theta(w)^{-1})$ vanishes unless $\theta(w) = w$ and $\tau = \tau_0$, in which case we also have $\widetilde{\Phi}_\tau(y w \, \theta(w)^{-1}) = 1$.
\end{lemma}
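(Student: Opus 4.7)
My plan is to translate the two nonvanishing assertions into concrete set-membership conditions and then apply uniqueness in successive Bruhat--Tits/Iwasawa decompositions, first over $G$ and then over $M$. Absorbing $j \in J(E)$ on the right, and using that $\widetilde{\Phi}_\tau$ is supported on $B(E)\tau\,^{\theta(w)}J(E)$, the two assertions become the containments $\varpi^{-\lambda}\tau_0 w \in U(E)\varpi^\nu\tau\theta(w)J(E)$ and $\tau_0 w \in B(E)\tau\theta(w)J(E)$, respectively.

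The first step is to force $\theta(w) = w$. Projecting both containments to the refined Iwasawa decomposition $G(E) = \coprod_{v \in {}_EW(P,J)} P(E)\,v\,J(E)$ of Lemma~\ref{lemma_A}(b), and using that $\varpi^{-\lambda}\tau_0, \varpi^\nu\tau \in M(E) \subseteq P(E)$, the two sides land in $P(E)wJ(E)$ and $P(E)\theta(w)J(E)$ respectively, so uniqueness of representatives forces $w = \theta(w)$.

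The main technical step, and what I expect to be the principal obstacle, comes next: assuming $w = \theta(w)$ and cancelling $w$ on the right, the first assertion becomes $\varpi^{-\lambda}\tau_0 \in U(E)\varpi^\nu\tau\,^wJ(E) \cap M(E)$, and similarly for the second with $B$ in place of $U$. I would prove the descent identity $U(E)\varpi^\nu\tau\,^wJ(E) \cap M(E) = U_M(E)\varpi^\nu\tau(\,^wJ(E) \cap M(E))$ (and its $B$-analogue) by combining the Iwahori factorization $\,^wJ = (\,^wJ \cap \overline{N})(\,^wJ \cap M)(\,^wJ \cap N)$ with the Levi decomposition $U = U_M N$, and invoking the uniqueness of the open Bruhat cell decomposition $G = \overline{N}\,M\,N$ to force the $\overline{N}$- and $N$-components of any element landing in $M$ to be trivial.

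The conclusion is then routine. Projecting the resulting $M$-equation to $B_M(E)\backslash M(E)/\,^wJ_M$, which by the $M$-analogue of Lemma~\ref{lemma_A}(b) is parametrized by ${}_EW(B_M,\,^wJ_M) \subseteq W_M$, forces $\tau = \tau_0$ by the uniqueness of minimal coset representatives (both lie in this set). With $\tau = \tau_0$ in hand, what remains for the first claim is that the pure translation $\varpi^{-\lambda-\nu}$ lies in the stabilizer in $\widetilde{W}_M$ of a facet in the building of $M$; since no nonzero translation fixes a facet pointwise, this forces $\lambda + \nu = 0$. The second claim has no such translation factor and is complete at this stage. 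The forward implication (value $=1$ when all conditions hold) is a direct substitution using $wjw^{-1} \in \,^wJ \cap M$ and the normalization $\widetilde{\Phi}_\tau(\tau) = 1$.
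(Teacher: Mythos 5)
Your proposal is correct and follows essentially the same route as the paper's proof: reduce to the membership $\varpi^{-\lambda}\tau_0 w \in U(E)\varpi^\nu\tau\,\theta(w)J(E)$, project to $P(E)\backslash G(E)/J(E)$ to force $\theta(w)=w$, then use the parametrization of $B(E)\backslash G(E)/J(E)$ by pairs $(\tau,w)$ to force $\tau=\tau_0$, and finally observe that a nontrivial translation cannot lie in the parahoric's Weyl group. The only difference is one of exposition: where the paper simply cites the fiber identification $Q\backslash G/J \to P\backslash G/J$ established at the beginning of the proof of Proposition \ref{compact_trace_inversion} (together with Lemma \ref{lemma_A}(b) and Lemma \ref{J_cap_M_is_parahoric}(b)), you spell out the underlying descent identity $U(E)\varpi^\nu\tau\,^wJ(E)\cap M(E) = U_M(E)\varpi^\nu\tau(\,^wJ(E)\cap M(E))$ directly. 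Note that for that identity you do not actually need the full three-factor Iwahori factorization of $\,^wJ$: once you observe that $j=(\varpi^\nu\tau)^{-1}u^{-1}x$ already lies in $\,^wJ\cap P$, the two-factor statement $\,^wJ\cap P=(\,^wJ\cap M)(\,^wJ\cap N)$ from Lemma \ref{J_cap_M_is_parahoric}(b) together with $M\cap N=\{1\}$ suffices.
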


\begin{proof}
The quantity (\ref{v_thing}) does not vanish if and only if
$$
\varpi^{-\lambda} \tau_0 w j \theta(w)^{-1} \in U \varpi^{\nu} \tau \, ^{\theta(w)}J,
$$
in other words, if and only if
$$
\varpi^{-\lambda} \tau_0 w \in U \varpi^{\nu} \tau \theta(w) J.
$$
In that event, we deduce first that $\theta(w) = w$, and then that $\tau_0 = \tau$ (see the beginning of the proof of Proposition \ref{compact_trace_inversion}).  Finally, these together imply that 
$\varpi^{-\lambda - \nu}$ belongs to $^{\tau_0 w}J$, and hence $\lambda + \nu = 0$.  The rest is easy.
\end{proof}

\subsection{Deformation of the parameter} \label{deformation_subsection}

In this subsection, we assume $G = G_{\rm ad}$.  We assume $\xi \in \widehat{A^E}$ is a parameter such that the full unramified principal series representation $\Pi := \Pi_\xi = i^{G(E)}_{B(E)}(\xi)$ is $\theta$-stable.  We claim that the compact trace $\langle {\rm trace} \, \Pi I_\theta, \phi \rangle_{\theta-c}$ depends only on the unitary part $\xi_u$ of the parameter $\xi$ (and thus this compact trace remains unchanged if we alter $\xi$ by an $\mathbb R_{>0}$-valued character, so that we may assume, as we shall later on, that the parameter $\xi$ is {\em non-unitary}).   

It is enough to give a suitable formula for $\Theta_{\Pi \theta}(g)$, where $g$ belongs to the open dense subset of $G(E)$ consisting of $\theta$-regular $\theta$-semi-simple elements of $G(E)$ (equivalently, $\mathcal Ng$ is regular semi-simple; see \cite{Cl90}, $\S2.2$).  We will then examine this formula in the special case where $g$ is also $\theta$-compact (i.e. $\mathcal N g$ is compact).   By assumption the $W(E)$-conjugacy class of $\xi$ is $\theta$-stable.  By replacing $\xi$ with a suitable $W(E)$-conjugate, we may assume $\theta(\xi) = \xi$ (see \cite{Cl90}, Lemma 4.7; this is where we need our assumption that $G = G_{\rm ad}$).  

Now since $\theta(\xi) = \xi$, we can describe $\Theta_{\Pi\theta}(g)$ using Clozel's twisted Atiyah-Bott formula (\cite{Cl84}, Prop.6).  Following Clozel, we use the canonical intertwining operator 
$I_\theta : \Pi ~ \widetilde{\rightarrow} ~ \Pi^\theta$, which takes $\Phi \in i^G_B(\xi)$ to the function $I_\theta\Phi$ defined by $I_\theta\Phi(g) = \Phi(\theta^{-1}(g))$.  Then Clozel's formula says that for $g \in G(E)$ any $\theta$-regular element we have
\begin{equation} \label{twisted_Atiyah-Bott}
\Theta_{\Pi \theta}(g) = \sum_{x \in X^{g\theta}} \dfrac{(\delta_{B(E)}^{1/2}\xi)(x^{-1}\, g\, \theta(x))}{|{\rm det}(1- {\rm d}(g\theta)_x)|_F}.
\end{equation}
The notation is as in loc.~cit., but let us remark here that $X : = G(E)/B(E)$, and $x$ ranges over the (necessarily finite) set of $g\theta$-fixed points $X^{g\theta}$ in $X$.  Further, $\xi$ is viewed as an unramified character on $T(E)$ which has been inflated to $B(E)$.  Finally, the numerator is a slight abuse of notation whose precise meaning is the following.  If $g\theta(x) = x$ and $y \in G(E)$ represents $x$, then we have $y^{-1}g\theta(y) \in B(E)$.  We write this as $y^{-1}g\theta(y) = tu$, for $t \in T(E)$ and $u \in U(E)$.  Note that $t$ is well-defined up to $\theta$-conjugacy in $T(E)$.  By definition, we have
$$
(\delta_{B(E)}^{1/2}\xi)(x^{-1}g\theta(x)) := (\delta_{B(E)}^{1/2}\xi)(t).
$$

It is clear that if $g$ is $\theta$-compact, then so is $t$: since $\mathcal Ng$ is semi-simple, it is stably-conjugate to $\mathcal Nt$.  But now setting $\xi' = \delta_{B(E)}^{1/2}\xi$, we see that that since $\xi' \circ \theta = \xi'$ and $\mathcal Nt$ is a compact element (contained, without loss of generality, in $T(F)$ \footnote{To see this, it helps to use the concrete norm $N$ in place of $\mathcal N$.  This is legitimate, after passing to a $z$-extension of $G$.}), we have
$$
(\xi'(t))^r = \xi'(\mathcal Nt) = \xi'_u(\mathcal Nt) = (\xi'_u(t))^r.
$$
It follows that $\xi'(t) = \xi'_u(t)$, and our claim is proved.

\section{Clozel's temperedness argument} \label{temperedness_section}

We need the following variant of \cite{Cl90}, Lemma 5.5.  Recall that the inclusion of $Z(\mathcal H_J(G(E)))$ into 
$\mathcal C_J(G(E))$ induces an injective homomorphism $Z(\mathcal H_J(G(E))) \hookrightarrow \mathcal C_J(G(E))/{\rm ker}$.

\begin{lemma} \label{temperedness_lemma}
Let $t_i \,\, (i = 1, \dots, N)$ be distinct elements of $\widehat{A^E}/W(E)$. Consider the linear form
\begin{equation}
\phi \mapsto \sum_i c_i \, \widehat{\phi}(t_i) \hspace{.5in} (c_i \neq 0)
\end{equation}
on $Z(\mathcal H_J(G(E)))$.  Suppose that this linear form is continuous with respect to the topology on $Z(\mathcal H_J(G(E)))$ inherited from the Schwartz topology on $\mathcal C_J(G(E))/{\rm ker}$ (or equivalently, from that on $\mathcal C_J(G(E))$).  Then, for all $i$, we have $t_i \in \widehat{A^E}_u/W(E)$.
\end{lemma}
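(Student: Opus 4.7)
The plan is to translate the statement via the Bernstein isomorphism (Theorem \ref{Bernstein_isomorphism}) into a problem about polynomials restricted to the compact real torus $\widehat{A^E}_u/W(E)$, and then, assuming some $t_i$ is non-unitary, exhibit an explicit sequence of test polynomials converging to zero in $C^\infty(\widehat{A^E}_u/W(E))$ while the functional stays bounded away from $0$.

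First, I would use the Bernstein isomorphism $B : \mathbb C[\widehat{A^E}]^{W(E)} \,\widetilde\rightarrow\, Z(\mathcal H_J(G(E)))$ to identify central functions with $W(E)$-invariant regular functions on $\widehat{A^E}$.  Since any $z \in Z(\mathcal H_J(G(E)))$ acts on $i^{G(E)}_{B(E)}(t)^{J(E)}$ by the scalar $t^{-1}(B^{-1}z)$, and since $\dim i^{G(E)}_{B(E)}(t)^{J(E)}$ is the constant $|B(E)\backslash G(E)/J(E)|$ independent of $t$, the given linear form $\phi \mapsto \sum_i c_i \widehat{\phi}(t_i)$ becomes, up to a nonzero multiplicative constant, the functional $r \mapsto \sum_i c_i r(s_i)$ on $\mathbb C[\widehat{A^E}]^{W(E)}$, where $s_i := t_i^{-1}$ are again distinct in $\widehat{A^E}/W(E)$ and each $s_i$ is unitary iff $t_i$ is.  Proposition \ref{arthur_prop} identifies $\mathcal C_J(G(E))/{\rm ker}$ with $C^\infty(\widehat{A^E}_u)^{W(E)}$ as topological vector spaces, so (using that $\widehat{A^E}_u$ is Zariski-dense in $\widehat{A^E}$, which makes the restriction map injective) the Schwartz topology pulled back to $Z(\mathcal H_J(G(E)))$ coincides with the $C^\infty$-topology on $\mathbb C[\widehat{A^E}]^{W(E)}$ viewed as a subspace of $C^\infty(\widehat{A^E}_u)^{W(E)}$.

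Suppose for contradiction that some $s_{i_0}$ is non-unitary.  I would set $r_n := P \cdot Q^n$, where $P,Q \in \mathbb C[\widehat{A^E}]^{W(E)}$ are to be chosen so that $P(s_{i_0}) = 1$ and $P(s_i) = 0$ for $i \neq i_0$ (existence by Nullstellensatz separating the finitely many distinct closed points $s_i$ of the affine variety $\widehat{A^E}/W(E)$), while $|Q(s_{i_0})| \geq 1$ and $\|Q\|_{\widehat{A^E}_u,\infty} < 1$.  Given such a $Q$, the values $r_n(s_i)$ vanish for $i \neq i_0$ and equal $Q(s_{i_0})^n$ at $s_{i_0}$, so $\sum_i c_i r_n(s_i) = c_{i_0} Q(s_{i_0})^n$ stays bounded away from $0$; meanwhile, on the compact torus $\widehat{A^E}_u$ the $C^k$-seminorms of $r_n$ are dominated by polynomials in $n$ times $\|Q\|_{\widehat{A^E}_u,\infty}^n$, and hence $r_n \to 0$ in $C^\infty(\widehat{A^E}_u)^{W(E)}$, contradicting the assumed Schwartz-continuity of the functional.

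The main obstacle is the construction of $Q$.  Writing $s_{i_0} = s_u \cdot a$ for its polar decomposition with $s_u \in \widehat{A^E}_u$ and $a : X^*(A^E) \to \mathbb R_{>0}$ a nontrivial real character (nontriviality encoding non-unitarity), one picks $\chi \in X^*(A^E)$ with $a(\chi) > 1$ and considers the Weyl-invariant polynomial $Q_N(t) := (|W(E)|+1)^{-1} \sum_{w \in W(E)} t(w\chi)^N$, so that $\|Q_N\|_{\widehat{A^E}_u,\infty} \leq |W(E)|/(|W(E)|+1) < 1$ uniformly in $N$.  Let $M := \max_w a(w\chi) > 1$ and $W_{\max} := \{w \in W(E): a(w\chi) = M\}$; then $(|W(E)|+1)\, Q_N(s_{i_0}) = M^N \, S_N + O\!\left((M-\delta)^N\right)$ for some $\delta > 0$, where $S_N := \sum_{w \in W_{\max}} s_u(w\chi)^N$ is a finite sum of $N$-th powers of unit complex numbers.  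An elementary Vandermonde/linear-recurrence argument shows that $S_N$ cannot tend to $0$ (otherwise the coefficients $1$ would be forced to be $0$), so $|S_N| \geq \epsilon$ for arbitrarily large $N$, and choosing such an $N$ sufficiently large makes $|Q_N(s_{i_0})| \geq 1$, completing the construction and thereby the proof.
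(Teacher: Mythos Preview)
Your argument is correct and follows essentially the same route as the paper, which in turn defers to Clozel's proof of \cite[Lemma~5.5]{Cl90}: transport the problem via the Bernstein isomorphism and Proposition~\ref{arthur_prop} to $\mathbb C[\widehat{A^E}]^{W(E)} \subset C^\infty(\widehat{A^E}_u)^{W(E)}$, then test against Weyl-symmetrized monomials (the paper's $z^J_{\lambda_0}$, your $Q_N$) which stay bounded on $\widehat{A^E}_u$ but blow up at any non-unitary point. Your introduction of the separating polynomial $P$ to isolate a single $s_{i_0}$ is a mild streamlining of Clozel's presentation; one small slip is that $\chi$ should lie in $X_*(A^E)$ (characters of $\widehat{A^E}$), not $X^*(A^E)$.
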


\begin{proof}
Since $\mathcal C_J(G(E))/{\rm ker}$ is topologically isomorphic to $C^\infty[\widehat{A^E}_u]^{W(E)}$ (Proposition \ref{arthur_prop}), Clozel's proof goes over almost word-for-word to the present situation.  Indeed, we consider the following diagram in place of the one in loc.cit.:
$$
\xymatrix{
Z(\mathcal H_J(G(E))) \ar[r]_{\sim}^{\phi \mapsto \widehat{\phi}} \ar[d] & \mathbb C[\widehat{A^E}]^{W(E)} \ar[d] \\
\mathcal C_J(G(E))/{\rm ker} \ar[r]_{\sim}^{\phi \mapsto \widehat{\phi}} & C^\infty[\widehat{A^E}_u]^{W(E)}.
}
$$ 

We make only the following additional remark.  At the end of Clozel's proof, he makes use of the family  of spherical functions (indexed by dominant coweights $\lambda_0$) whose Satake transforms are the functions $\sum_{\lambda \in W(E)\lambda_0} t_\lambda$.  In our context, we may use instead the Bernstein functions $z^J_{\lambda_0}$, which have the analogous property with respect to the Bernstein isomorphism.
\end{proof}  

\begin{Remark}
In Lemma 5.5 of \cite{Cl90}, Clozel assumes that the linear form in question extends to the Schwartz space.  We do not make the analogous assumption that our linear form extends from $Z(\mathcal H_J(G(E)))$ to $\mathcal C_J(G(E))/{\rm ker}$.  In fact the assumption is unnecessary: only the continuity of the form on its  original domain is used in Clozel's proof.  This is fortunate, since in our application of this lemma (see section \ref{putting_it_all_together_section}) it is quite unclear whether one could extend the linear form we are working with to $\mathcal C_J(G(E))/{\rm ker}$, or even to $\mathcal C_J(G(E))$!  
\end{Remark}

\section{The global trace formula and existence of local data} \label{global_trace_formula_section}

\subsection{The global set-up} \label{global_set-up_subsection}

In this subsection we will summarize a global argument of Clozel
relying on the (twisted) Deligne-Kazhdan trace formula and Kottwitz' 
stabilization of its geometric side, which plays a crucial
role in the proof of Theorem \ref{fl} for strongly regular elliptic elements.

We assume $G$ is adjoint (this is necessary, mainly because it is essential for subsection 
\ref{deformation_subsection}).  Clozel's argument is stated for certain 
groups $G$ with $G_{\rm der} = G_{\rm sc}$.  
Our goal here is to show how it goes over almost unchanged for adjoint groups, as long as the elements in question are always {\em strongly} ($\theta$-) regular, as we can and shall assume.

The first step is to embed the local situation into an appropriate
global one. We may assume $G$ is split over an unramified
extension $K/F$ such that $E \subset K$. We choose a degree $[K:F]$ 
cyclic extension of
(totally complex) number fields ${\un K}/{\un F}$ and a finite
place $v_0$ of ${\un F}$ over $p$ such that ${\un K}_{v_0}$ is a
field and ${\un K}_{v_0}/{\un F}_{v_0} \cong K/F$.  There is then
a degree $r = [E:F]$ cyclic extension ${\un E}/{\un F}$ with ${\un E} \subset {\un
K}$ and ${\un E}_{v_0}/{\un F}_{v_0} \cong E/F$.

Using the Tchebotarev density theorem (e.g. \cite{Ser}, I-2.2), 
we find a place $v_1$ of ${\un F}$, $v_1 \neq v_0$, such that the corresponding prime ideal remains prime in $K$, and  
${\rm Gal}({\un K}_{v_1}/{\un F}_{v_1}) = {\rm Gal}({\un K}/{\un F})$.\footnote{This contrasts with Clozel's assumption that $v_1$ splits completely in ${\un K}$.  Our argument in subsection \ref{kappa_vanishing_subsection} requires that $v_1$ remain inert (we need the equality ${\rm Gal}({\un K}/{\un F}) = {\rm Gal}({\un K}_{v_1}/{\un F}_{v_1})$ there), and this explains why we use different ``stabilizing'' functions than Clozel -- see item (b) in the next subsection.  This all seems necessary because we need to work with adjoint groups rather than groups with simply connected derived group.}     
In addition we fix two more auxiliary finite places $v_2$ and $v_3$ 
of ${\un F}$ where ${\un E}/{\un F}$ splits completely.  We can assume $v_0 \notin \{v_1,v_2,v_3 \}$.

There is a quasi-split group ${\un G}$ over ${\un F}$ with the
property that ${\un G} \times_{{\un F}} {\un F}_{v_0} \cong G$.  We set $\widetilde{{\un G}} = {\rm Res}_{{\un E}/{\un F}} {\un G}_{\un E}$.  Let $\theta$ denote the ${\un F}$-linear automorphism of $\widetilde{\un G}$ coming from $\theta \in {\rm Gal}(E/F) \cong {\rm Gal}({\un E}/{\un F})$.  

The groups ${\un G}$ and $\widetilde{\un G}$ are adjoint, and we may assume they are 
split over ${\un K}$.  
Further, they satisfy the Hasse principle for
$H^1$, e.g. ${\rm ker}^1({\un F}, {\un G}) = 1$ (see \cite{Cl90}, p. 293; 
the Hasse principle is used in the stabilization of the
geometric side of the (twisted) trace formula; see $\S 6$ of
\cite{Cl90}, and subsection \ref{kappa_vanishing_subsection} below).  We have for $i=2,3$ an identification
$$
{\un G}({\un E}_{v_i}) = {\un G}({\un F}_{v_i}) \times \cdots
\times {\un G} ({\un F}_{v_i})
$$
($r$ factors), the Galois group acting by cyclic permutations.  By construction ${\un E}/{\un F}$ also 
splits at every archimedean
place $v$ of ${\un F}$, and so ${\un G}({\un E}_v)$ decomposes similarly.

As in \cite{Cl90}, p. 295, we may assume we have (simultaneous) weak approximation for $\widetilde{\un G}$ at the places $v_0, v_1, v_2, v_3$.  This is used in the proof of Proposition \ref{Clozel}, wherein it is necessary to  choose a strongly $\theta$-regular $\delta \in \widetilde{\un G}({\un F})$ which is $\theta$-elliptic at $v_1,v_2,v_3$, close to a given (strongly $\theta$-regular) element $\delta_0 \in G(E)$ at $v_0$, and close to 1 at $v_2$.

We will write ${\un \phi} = \phi^{v_0} \otimes \phi_{v_0} =
\otimes_v {\phi}_v$ for a pure tensor element of $C^\infty_c({\un
G}(\A_{{\un E}}))$. Similarly we will write ${\un f} = f^{v_0}
\otimes  f_{v_0} = \otimes_v f_v$ for a pure tensor element of
$C^\infty_c({\un G}(\A_{{\un F}}))$. We will write $\phi$ for
$\phi_{v_0}$ and $f$ for $f_{v_0}$.

We will always use $S_3$ to denote a finite set of finite places
of ${\un F}$ such that $v_3 \in S_3$ and $v_1, v_2 \notin S_3$. We
will consider {\em triples} $(\phi^{v_0}, f^{v_0}, S_3)$
satisfying the following conditions.

\medskip
\noindent (a) At any finite place $v \notin S_3 \cup \lbrace
v_1,v_2 \rbrace$, the group ${\un G} \times_{{\un F}} {\un F}_v$
and the extension ${\un E}_v / {\un F}_v$ are unramified.

\smallskip
\noindent (b)  $f_{v_1}$ resp. $\phi_{v_1}$ is (up to a scalar) a {\em (generalized) Kottwitz function} on ${\un G}({\un F}_{v_1})$ resp. $\widetilde{\un G}({\un F}_{v_1})$, in the sense of Labesse \cite{Lab99}, $\S 3.9$.  We shall discuss their properties below.

\smallskip
\noindent (c)  $f'_{v_2}$ is a coefficient of a supercuspidal
representation,  $\phi_{v_2} = f'_{v_2} \otimes \cdots \otimes
f'_{v_2}$, and $f_{v_2} = f'_{v_2} * \cdots * f'_{v_2}$.  
Thus $(\phi_{v_2},f_{v_2})$ are associated (\cite{Lan}, $\S8$), and have 
non-vanishing (twisted) orbital integrals at ($\theta$-)elliptic strongly ($\theta$-)regular elements which are close to the identity.

\smallskip
\noindent (d)  For any $v \in S_3$, $f_v$ (resp. $\phi_v$) is
supported on the set of strongly regular elements (resp. elements
with strongly regular norms), and ($\phi_v$, $f_v$) are associated.  
Moreover, at $v_3$, the function
$f_{v_3}$ (resp. $\phi_{v_3}$) is supported on the set of elliptic
elements (resp. elements with elliptic norms).

\smallskip
\noindent (e)  For any finite place $v \notin S_3 \cup \lbrace
v_0, v_1, v_2 \rbrace$, the function $f_v$ (resp. $\phi_v$) is the
unit element of the spherical Hecke algebra of 
${\un G}({\un F}_v)$ (resp. ${\un G}({\un E}_v)$).  Then $(\phi_v,f_v)$ are associated by \cite{Ko86b}.

\smallskip
\noindent (f)  At every place $v$ of ${\un F}$ where ${\un E}/{\un
F}$ splits, we have $\phi_v = f'_v \otimes \cdots \otimes f'_v$ and $f_v = f'_v * \cdots * f'_v$, for an appropriate function $f'_v$ (so that as in (c), $(\phi_v,f_v)$ are associated).

\smallskip

{\em Note: at all places $v \neq v_0$, the functions $f_v$ and
$\phi_v$ are assumed to be associated.}

\subsection{(Generalized) Kottwitz functions} \label{Kottwitz_fcn_subsection}

Suppose again $G$ is adjoint (more generally we require $G$ to have $F$-anisotropic center).  In \cite{Ko88}, Kottwitz introduced the Euler Poincar\'e functions $f_{EP}$ on $G(F)$, compactly-supported functions whose orbital integrals are non-zero only on elliptic semi-simple elements and are constant on stable conjugacy classes in $G(F)$.  One can introduce $\theta$-twisted analogues $\phi_{EP}$ of these on $G(E)$, and Kottwitz' proof goes over without difficulty (cf. \cite{Lab99}, Prop. 3.9.1) to show that their twisted orbital integrals are non-zero only on $\theta$-elliptic $\theta$-semi-simple elements and are constant on stable $\theta$-conjugacy classes.

Labesse \cite{Lab99} calls both $f_{EP}$ and the twisted variants $\phi_{EP}$ {\em Kottwitz functions} (the $\phi_{EP}$ are called Lefschetz functions in \cite{Cl93}).

One can use Kottwitz functions to construct associated ``stabilizing'' functions at the (inert) place $v_1$.  The following theorem of Labesse (\cite{Lab99}, Prop. 3.9.2) explains how to define the functions $\phi_{v_1}$ and $f_{v_1}$ in \ref{global_set-up_subsection}, item (b).

From now on, write $E/F$ in place of ${\un E}_{v_1}/{\un F}_{v_1}$.  We adapt slightly the notation of loc.~cit., letting $\xi = \sum \chi$ denote the sum of characters of ${\bf H}^0_{ab}(F,G)$ which are trivial on the norms from $E$, and putting $f_\xi(x) = f(x)\xi(x)$.  If $f$ is a Kottwitz function, Labesse calls $f_\xi$ a {\em generalized Kottwitz function}.

\begin{prop} [Labesse]   Let $\phi \in C^\infty_c(G(E))$ and $f \in C^\infty_c(G(F))$ be Kottwitz functions.  Let $h = h^1/h^0$, where $h^0$ denotes the number of characters $\chi$ appearing in $\xi$, and $h^1 := |{\rm ker}[H^1(F,G) \rightarrow H^1(F,\widetilde{G})]|$.  Then $\phi$ and $h f_\xi$ are associated functions.
\end{prop}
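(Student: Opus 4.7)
The plan is to verify Definition~\ref{associated_defn} directly by inspecting stable (twisted) orbital integrals at each semi-simple $\gamma \in G(F)$, and the first step is a straightforward support reduction. By the defining property of Kottwitz functions (cf.~\cite{Ko88} and \cite{Lab99}, \S3.9), ${\rm SO}^G_\gamma(f)$ vanishes unless $\gamma$ is elliptic semi-simple, and ${\rm SO}^{G(E)}_{\delta\theta}(\phi)$ vanishes unless $\delta$ is $\theta$-elliptic $\theta$-semi-simple. Hence both sides of the desired identity are automatically zero away from the elliptic locus, and it suffices to work at elliptic $\gamma$, checking both that ${\rm SO}^G_\gamma(h f_\xi)$ vanishes when $\gamma$ is not a norm and that the required matching holds when $\gamma = \mathcal N \delta$.

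Next I would use character orthogonality to isolate the contribution of norms to $f_\xi$. Since $\xi$ is inflated from a character of $\mathbf H^0_{ab}(F,G)$, it is constant on stable conjugacy classes, so
\[
  {\rm SO}^G_\gamma(f_\xi) \;=\; \xi(\gamma)\,{\rm SO}^G_\gamma(f).
\]
Orthogonality of the $h^0$ characters $\chi$ appearing in $\xi$ gives $\xi(\gamma) = h^0$ precisely when the class of $\gamma$ in $\mathbf H^0_{ab}(F,G)$ lies in the image of $\mathbf H^0_{ab}(E,G)$ under the norm, and $\xi(\gamma) = 0$ otherwise. By Kottwitz's characterization of norms (\cite{Ko82}), this condition is equivalent to $\gamma$ being a norm from $G(E)$. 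Consequently ${\rm SO}^G_\gamma(h f_\xi) = 0$ for $\gamma$ not a norm (as required by Definition~\ref{associated_defn}), and the problem reduces to establishing
\[
  h^1 \, {\rm SO}^G_\gamma(f) \;=\; {\rm SO}^{G(E)}_{\delta\theta}(\phi)
\]
for $\gamma = \mathcal N \delta$ elliptic semi-simple.

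To close the argument, I would invoke the explicit volume-theoretic description of Kottwitz functions: on the elliptic locus, each stable (twisted) orbital integral is, up to the sign $e(\cdot)$ of \cite{Ko83}, an Euler--Poincar\'e characteristic of the connected (twisted) centralizer, computed with respect to compatible Haar measures. Since $G^\circ_{\delta\theta}$ is an inner form of $G^\circ_\gamma$, these Euler--Poincar\'e characteristics coincide, so the two sides can differ only by a combinatorial factor counting stable $\theta$-conjugacy classes of $\delta'$'s lying over the stable class of $\gamma$. A standard Galois-cohomological analysis (along the lines of \cite{Ko82}, \S5) identifies this count with $|{\rm ker}[H^1(F,G) \to H^1(F,\widetilde G)]| = h^1$; the factors $a(\delta')$ in the definition of ${\rm SO}^{G(E)}_{\delta\theta}(\phi)$ are exactly what is needed to make the sum over stable $\theta$-conjugates of $\delta$ collapse to this integer multiple of a single Euler--Poincar\'e characteristic.

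The principal obstacle is this last cohomological bookkeeping: ensuring that after accounting for the signs $e(\cdot)$, the factors $a(\delta')$, the matching of Haar measures on inner forms, and the precise identification of stable $\theta$-conjugacy classes over a fixed stable norm, the integer emerging is exactly $h^1$, not some proper subquotient thereof. The other ingredients---the support reduction and the character-orthogonality argument---are essentially formal.
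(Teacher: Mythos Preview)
The paper does not give its own proof of this proposition: it is quoted as Labesse's result (\cite{Lab99}, Prop.~3.9.2), and the surrounding discussion only explains why such ``stabilizing'' functions are needed at the inert place $v_1$.  So there is no paper-proof to compare against, only Labesse's.

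Your sketch is the right strategy and is essentially Labesse's.  The support reduction to the ($\theta$-)elliptic locus is exactly Prop.~3.9.1 of \cite{Lab99} (the twisted extension of \cite{Ko88}).  The character-orthogonality step is correct: by construction $\xi$ is the sum of the $h^0$ characters of $\mathbf H^0_{ab}(F,G)$ trivial on norms from $E$, so $\xi(\gamma) = h^0$ or $0$ according as the abelianized class of $\gamma$ is a norm or not, and for elliptic semi-simple $\gamma$ this is equivalent to $\gamma$ being a norm from $G(E)$ (this is \cite{Ko82}, or in the language used here, \cite{Lab99}, Prop.~2.5.3).  The final reduction to a single Euler--Poincar\'e quantity is also right: the defining property of Kottwitz functions is that $e(G^\circ_{\gamma'})\,{\rm O}_{\gamma'}(f)$ (resp.\ $e(G^\circ_{\delta'\theta})\,{\rm TO}_{\delta'\theta}(\phi)$) equals~$1$ for the Euler--Poincar\'e measure on each ($\theta$-)elliptic class, so the stable integrals collapse to pure counts $\sum_{\gamma'} a(\gamma')$ and $\sum_{\delta'} a(\delta')$.

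The gap you flag is genuine and is the substance of Labesse's argument.  One must show that, for $\gamma = \mathcal N\delta$ elliptic, the ratio $\bigl(\sum_{\delta'} a(\delta')\bigr)\big/\bigl(\sum_{\gamma'} a(\gamma')\bigr)$ equals $h^1$.  Unwinding the definitions, $\sum_{\gamma'} a(\gamma') = \bigl|\ker[H^1(F,G^\circ_\gamma)\to H^1(F,G)]\bigr|$ and $\sum_{\delta'} a(\delta') = \bigl|\ker[H^1(F,G^\circ_{\delta\theta})\to H^1(F,\widetilde G)]\bigr|$; since $G^\circ_{\delta\theta}$ is an inner form of $G^\circ_\gamma$ (so their $H^1$'s have the same order over a $p$-adic field), one is left with showing that the cokernel picks up exactly the factor $\bigl|\ker[H^1(F,G)\to H^1(F,\widetilde G)]\bigr| = h^1$.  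Labesse does this via his abelianized cohomology formalism; you would need to supply that computation (or cite it) to complete the proof.
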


\subsection{Continuing the global argument}

The {\em very simple trace formula} of Deligne-Kazhdan (see \cite{DKV} $\S
A.1$) asserts that if ${\un f}$ satisfies conditions (c) and (d)
above, then convolution by ${\un f}$ has image in the cuspidal
spectrum.  A similar assertion holds for ${\un \phi}$.  Moreover,
the geometric side of this very simple (twisted) trace formula involves
only ($\theta$-)elliptic strongly ($\theta$-)regular terms.  Since we have also
imposed condition (b), the geometric side is automatically
``stable'': there are no (twisted) $\kappa$-orbital integrals for
$\kappa \neq 1$.   Indeed, it is not hard to adapt the proof of Lemma 6.5 of \cite{Cl90} to adjoint groups.  A few small differences do appear, which we explain in subsection \ref{kappa_vanishing_subsection}.

Let $r({\un f})$ denote the action of ${\un f}$ on the cuspidal
spectrum $L^2_{cusp}({\un G}({\un F})\backslash
{\un G}(\A_{{\un F}}))$. Similarly, let $R({\un \phi})$ denote the
action of ${\un \phi}$ on $L^2_{cusp}({\un G}({\un E}) 
\backslash {\un G}(\A_{{\un E}}))$. Let $I_\theta$ denote the
intertwining operator on the same space given by the action of $\theta$, that is, $I_\theta(\psi)(x) = \psi(\theta^{-1}(x))$ for $\psi \in L^2_{cusp}({\un G}({\un E}) 
\backslash {\un G}(\A_{{\un E}}))$ and $x \in {\un G}(\A_{{\un E}})$.

The following result of Clozel (\cite{Cl90}, $\S 6.3$) gives a
crucial relation between a family of global character identities
and the property that $f_{v_0}$ and $\phi_{v_0}$ are associated at 
strongly regular norms.  We say that $f_{v_0}$ and $\phi_{v_0}$
are associated at strongly regular (elliptic) norms if the condition in
Definition \ref{associated_defn} holds for every strongly regular (elliptic) semi-simple
norm $\gamma = \mathcal N \delta$ in $G(F)$.

\begin{prop}[Clozel] \label{Clozel}
There is a constant $c \neq 0$ (depending only on $({\un G}, {\un
E}/{\un F})$) with the following property: the functions $f_{v_0}$
and $\phi_{v_0}$ are associated at strongly regular norms if and only if
we have the equality of traces
\begin{equation} \label{1st_spectral_identity}
{\rm trace}\, (R({\phi}^{v_0} \otimes \phi_{v_0}) I_\theta) =
c \,\,{\rm trace} \, r({f}^{v_0} \otimes f_{v_0}),
\end{equation}
for every triple $({\phi}^{v_0}, {f}^{v_0}, S_3)$ satisfying
conditions {\rm (a)-(f)} above.
\end{prop}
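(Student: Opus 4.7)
The plan is to apply the very simple trace formula of Deligne--Kazhdan to each side and then read off the matching at $v_0$ from what remains after the trace formula has been stabilized and the matching at the other places has been fed in.

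First I would compute both sides of (\ref{1st_spectral_identity}) via the very simple trace formulas. Conditions (c) and (d) guarantee that both trace formulas are ``very simple,'' so their spectral sides give precisely the traces on the cuspidal spectra appearing on the right and left of (\ref{1st_spectral_identity}), while their geometric sides are finite sums over strongly ($\theta$-)regular ($\theta$-)elliptic conjugacy classes in ${\un G}({\un F})$ (resp.\ $\widetilde{{\un G}}({\un F})$). I would then stabilize those geometric sides using Kottwitz' procedure. The content of condition (b)---the generalized Kottwitz functions of Labesse at the inert place $v_1$---is precisely that the contribution of every $\kappa\neq 1$ vanishes on each geometric side (this is where the Hasse principle ${\rm ker}^1({\un F},{\un G})=1$ together with the inertness of $v_1$ enters, as discussed in subsection \ref{kappa_vanishing_subsection} below and in Lemma \ref{A_injectivity}); what survives is a sum of global stable $(\theta\text{-})$orbital integrals.

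Next I would factor each global stable $(\theta\text{-})$orbital integral into a product of local stable $(\theta\text{-})$orbital integrals and substitute the matching of $(\phi_v,f_v)$ for $v\neq v_0$. By conditions (b) (up to the Labesse constant $h=h^1/h^0$), (c), (e), and (f) the pair $(\phi_v,f_v)$ is associated at every $v\neq v_0$; all the local factors of the stable $(\theta\text{-})$orbital integrals at places $v\neq v_0$ are therefore equal in the two formulas, up to an overall constant $c\neq 0$ depending only on $({\un G},{\un E}/{\un F})$ (absorbing the Tamagawa/volume ratios, the $h$-factor from $v_1$, and the ratios coming from the split-product structure at $v_2$ and archimedean places). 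After this substitution, the difference of the two sides of (\ref{1st_spectral_identity}) becomes a sum, over strongly regular elliptic semi-simple stable conjugacy classes $\gamma_0\in{\un G}({\un F})$ which are norms of some $\delta\in\widetilde{{\un G}}({\un F})$, of a global coefficient times $\big[\,c\cdot {\rm SO}^{G(E)}_{\delta\theta}(\phi_{v_0})\,-\,{\rm SO}^{G}_{\gamma_0}(f_{v_0})\,\big]$ (the $v_0$-local difference), where $\gamma_0$ ranges over strongly regular elliptic norms. Non-elliptic strongly regular norms at $v_0$ never appear on the geometric side, but the matching at those is already available from descent (Corollary \ref{b_constant_term_compatibility}, Lemma \ref{non-elliptic_norms}, and the induction on semisimple rank), so establishing the proposition for strongly regular elliptic norms suffices.

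The forward direction is then immediate: if $(f_{v_0},\phi_{v_0})$ are associated at strongly regular norms, every bracket vanishes, so the two sides of (\ref{1st_spectral_identity}) agree. The reverse direction is the harder half and is the main obstacle: one must verify that by varying the triple $(\phi^{v_0},f^{v_0},S_3)$ subject to (a)--(f) the global coefficients sweep out a separating family, so that vanishing of the full sum for every such triple forces each bracket to vanish individually. This I would do by a standard isolation argument: given a strongly regular elliptic $\delta_0\in G(E)$ with norm $\gamma_0$, use simultaneous weak approximation at $\{v_0,v_1,v_2,v_3\}$ to produce a strongly $\theta$-regular global element $\delta\in\widetilde{{\un G}}({\un F})$ that is $\theta$-elliptic at $v_1,v_2,v_3$, arbitrarily close to $\delta_0$ at $v_0$, and close to $1$ at $v_2$; then the supercuspidal-coefficient functions at $v_2$ (condition (c)) pick out only elements close to $1$ at $v_2$, the strongly-regular / elliptic supports at the places of $S_3$ (condition (d)) cut the sum down further, and the Kottwitz function at $v_1$ contributes a nonzero factor at our chosen $\theta$-elliptic element. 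The remaining freedom in $f^{v_0}$ at $v_3$ and at the other auxiliary places allows one to separate the single global stable class of $\delta$ from all others, which reduces (\ref{1st_spectral_identity}) to the desired identity ${\rm SO}^G_{\gamma_0}(f_{v_0})=c\cdot {\rm SO}^{G(E)}_{\delta_0\theta}(\phi_{v_0})$ up to a nonzero scalar that can be absorbed into $c$. A careful bookkeeping of these scalars, using the non-vanishing of stable twisted orbital integrals of supercuspidal coefficients on strongly $\theta$-regular $\theta$-elliptic elements near the identity, pins down the universal constant $c$ and completes the equivalence.
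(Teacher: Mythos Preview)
Your proposal is correct and follows essentially the same route as the paper's (and Clozel's) argument: the very simple trace formula, Kottwitz' stabilization with the $\kappa\neq 1$ vanishing coming from the generalized Kottwitz functions at the inert place $v_1$, factorization into local stable (twisted) orbital integrals, matching at $v\neq v_0$, and then for the harder ``if'' direction the weak-approximation/isolation step followed by a continuity argument to pass from the approximating global $\delta$ back to $\delta_0$.

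One small correction: your claim that ``non-elliptic strongly regular norms at $v_0$ never appear on the geometric side'' is false---a \emph{globally} $\theta$-elliptic element can perfectly well be locally non-elliptic at $v_0$, so the global argument already handles all strongly regular norms at $v_0$ directly (the paper's sketch does not restrict $\gamma_0$ to be elliptic at $v_0$), and your detour through descent, while not wrong, is unnecessary here. Also, the constant $c$ lives in the spectral identity (\ref{1st_spectral_identity}), not in the local matching: once you isolate a single term and divide out the nonzero common factors at $v\neq v_0$, the local identity at $v_0$ reads ${\rm SO}_{\gamma_{v_0}}(f_{v_0})={\rm SO}_{\delta_{v_0}\theta}(\phi_{v_0})$ with no residual scalar.
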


\begin{Remark}
For $G = G_{\rm ad}$, the constant $c$ is given by 
$$
c = \dfrac{|Z(\widehat{{\un G}})^\Gamma|}{|{\rm Im} \, Z(\widehat{\widetilde{\un G}})^\Gamma|},
$$
in the notation of subsection \ref{kappa_vanishing_subsection}.  The constant $c = c_1 c(D)$ appearing in \cite{Cl90} has 
$$
c(D) = \dfrac{|\pi_0(\widehat{D}^\Gamma)|}{|{\rm Im} \, \pi_0(\widehat{\widetilde{D}}^\Gamma)|}, 
$$
where $D$ resp. $\widetilde{D}$ denotes the cocenter of ${\un G}$ resp. $\widetilde{\un G}$.
\end{Remark}

The details appear on p.~295 of \cite{Cl90}.  The idea for the ``if'' statement is as follows.  If $\gamma_{0} = \mathcal N \delta_{0}$ is given at the place $v_0$, we approximate $\delta_{0}$ by $\delta_{v_0}$ for an appropriate global element $\delta \in \widetilde{\un G}({\un F})$; we require $\delta_{v_i}$ to be $\theta$-elliptic and strongly $\theta$-regular at $v_i = v_1,v_2,v_3$ and close to 1 at $v_2$.  Thus $\delta$ is $\theta$-elliptic and strongly $\theta$-regular.  Then we may choose the set $S_3$ and the associated functions $f_{S_3}$ and $\phi_{S_3}$ such that the geometric sides (of the very simple trace formulas) corresponding to (\ref{1st_spectral_identity}) -- which thanks to \ref{kappa_vanishing_subsection} below take the stabilized form of equations (6.20) resp. (6.22) of \cite{Cl90} -- involve only the term indexed by $\gamma := \mathcal N \delta$.  The sum of adelic (twisted) orbital integrals remaining (on either side) can be written as a single product over all places of local stable (twisted) orbital integrals, and at all places except possibly $v_0$, these are non-zero (we may arrange) and matching.  The character identities (\ref{1st_spectral_identity}) thus force the matching at $v_0$, namely ${\rm SO}_{\delta_{v_0}\theta}(\phi_{v_0}) = {\rm SO}_{\gamma_{v_0}}(f_{v_0})$.   A continuity argument then forces the desired identity ${\rm SO}_{\delta_{0}\theta}(\phi_{v_0}) = {\rm SO}_{\gamma_{0}}(f_{v_0})$.

\subsection{Vanishing of certain $\kappa$-orbital integrals ($G$ adjoint)} \label{kappa_vanishing_subsection}

As stated above, we need to justify how our choice of functions in (b) at the place $v_1$ ensures that there are no (twisted) $\kappa$-orbital integrals for $\kappa \neq 1$.  We need to alter the argument of  Clozel \cite{Cl90}, Lemma 6.5, so that it works for adjoint groups.  However, we temporarily assume $G$ is only connected and reductive.

 Let us switch (mostly) to Clozel's notation, writing $K$ resp. $E$ resp. $F$ for our ${\un K}$ resp. ${\un E}$ resp. ${\un F}$, and $F_{v_1}$ for ${\un F}_{v_1}$, etc.  Set $\Gamma = {\rm Gal}(\overline{F}/F)$ and $\Gamma_1 = {\rm Gal}(\overline{F}_{v_1}/F_{v_1})$.  Also, we write $G$ resp. $\widetilde{G}$ in place of ${\un G}$ resp. $\widetilde{\un G}$. 

Now assume $T$ is an $F$-subtorus of $G$ of the form $T = G_\gamma$ for a strongly regular element $\gamma \in G(F)$.  We have an inclusion
\begin{equation}
T \hookrightarrow G \hookrightarrow \widetilde{G},
\end{equation}
where the second inclusion is the diagonal $F$-embedding which identifies $G$ with the group $\widetilde{G}^\theta$ of elements in $\widetilde{G}$ invariant under the $F$-automorphism $\theta$ (cyclic permutation of coordinates).  Dually, this yields $\Gamma$-equivariant maps 
\begin{equation} \label{1st_dual}
Z(\widehat{\widetilde{G}}) \twoheadrightarrow Z(\widehat{G}) \hookrightarrow \widehat{T}.
\end{equation}
We can identify $Z(\widehat{G})$ canonically with $Z(\widehat{\widetilde{G}})_\theta$, and then with $Z(\widehat{\widetilde{G}})^\theta$ using the isomorphism
$$
N: Z(\widehat{\widetilde{G}})_\theta ~ \widetilde{\rightarrow} ~ Z(\widehat{\widetilde{G}})^\theta 
$$
given by the norm relative to $\theta(t_1, \dots, t_r) = (t_2, \dots, t_r, t_1)$.  When this is done, 
the first map in (\ref{1st_dual}) is identified with the norm map from $Z(\widehat{\widetilde{G}})$ to itself.

Now it makes sense to define the finite abelian groups
\begin{align} \label{A(T/F)}
A(T/F) &= \pi_0(\widehat{T}^\Gamma)/{\rm Im} \, \pi_0(Z(\widehat{\widetilde{G}})^\Gamma) \\
A(T/F_{v_1}) &= \pi_0(\widehat{T}^{\Gamma_1})/{\rm Im} \, \pi_0(Z(\widehat{\widetilde{G}})^{\Gamma_1}). \notag
\end{align}
(Compare with \cite{Cl90}, Definition 6.3.)  

\smallskip

Suppose that $\delta = (\delta_v)_v \in \widetilde{G}(\A_{F})$ is such that $\mathcal N\delta_v = \gamma$ for all places $v$ of $F$.  Using the Hasse principle for $\widetilde{G}$ as in \cite{Cl90}, $\S 6.2$, we associate to $\delta$ an element
\begin{equation} \label{obs}
{\rm obs}(\delta) \in H^1(\A_F/F, T)
\end{equation}
which vanishes if and only if $\delta$ is $\theta$-conjugate in $\widetilde{G}(\A_F)$ to an element of $\widetilde{G}(F)$.  

We claim that ${\rm obs}(\delta)$ has trivial image under the composition (see \cite{Ko86a})
$$ 
H^1(\A_F/F,T) ~ \widetilde{\rightarrow} ~ \pi_0(\widehat{T}^\Gamma)^D  \rightarrow 
\pi_0(Z(\widehat{\widetilde{G}})^\Gamma)^D
$$ 
(notation as in loc.~cit.).  To check this we may pass to a $z$-extension of $G$, adapted to $E/F$ in the sense of \cite{Ko82}, and reduce to the case $G_{\rm der} = G_{\rm sc}$.  Let $D = G/G_{\rm der}$ and set $\widetilde{D} = {\rm Res}_{E/F}D_E$ resp. $\widetilde{T} = {\rm Res}_{E/F}T_E$.  The following diagram commutes
$$
\xymatrix{
H^1(\A_F/F, T) \ar[r] \ar[d] & \pi_0(\widehat{T}^\Gamma)^D \ar[d] \ar[dr] &  \\
H^1(\A_F/F, \widetilde{D}) \ar[r] & \pi_0(\widehat{\widetilde{D}}^\Gamma)^D & 
\pi_0(Z(\widehat{\widetilde{G}})^\Gamma)^D \ar[l]_{\sim}.}
$$
By construction ${\rm obs}(\delta)$ is the image in $H^1(\A_F/F,T)$ of a $T(\overline{\A}_F) \widetilde{T}(\overline{F})$-valued 1-cocycle which becomes a 1-coboundary in $\widetilde{G}(\overline{\A}_F)$.  Therefore ${\rm obs}(\delta)$ has trivial image in $H^1(\A_F/F, \widetilde{D})$, and then the diagram yields the claim.

Therefore for $\kappa \in A(T/F)$ we may define the pairing $\langle {\rm obs}(\delta), \kappa \rangle \in \mathbb C$.  It is clear that 
\begin{equation*}
{\rm obs}(\delta) = 1 \,\,\,\,\,\,\, \mbox{if and only if} \,\,\,\,\,\,\ 
\langle {\rm obs}(\delta), \kappa \rangle = 1 \,\, \mbox{for all $\kappa \in A(T/F)$}.
\end{equation*}

This means that $A(T/F)$ is the group coming into the stabilization of the twisted trace formula, and elements $\kappa \in A(T/F)$ are used to define the twisted $\kappa$-orbital integrals.  Consider the sum
\begin{equation} \label{kappa_orb_int}
\sum_{\delta} \langle {\rm obs}(\delta), \kappa \rangle 
{\rm TO}_{\delta\theta}({\un \phi}),
\end{equation}
where $\delta$ ranges over elements in $G(\A_E)$ such that $\mathcal N \delta = \gamma$, taken up to $\theta$-conjugacy under $G(\A_E)$.  (See \cite{Cl90}, equation (6.15).)

At this point, we once again assume $G$ is an adjoint group.  Finally we have reached the goal of this subsection, which is to check that (\ref{kappa_orb_int}) vanishes for $\kappa \neq 1$, whenever $\phi_{v_1}$ is a Kottwitz function as in \ref{Kottwitz_fcn_subsection}.  (The analogue for $f_{v_1}$ is similar, but easier.)

Since Kottwitz functions have non-zero twisted orbital integrals only on $\theta$-elliptic elements, and are constant over stable $\theta$-conjugacy classes, the proof works exactly as in \cite{Cl90}, Lemma 6.5, modulo the following ingredient.

\begin{lemma} \label{A_injectivity} Suppose $T$ is a maximal $F$-torus in $G$ which is elliptic at $v_1$, the place fixed in subsection \ref{global_set-up_subsection}.  Then the canonical map
\begin{equation} \label{A_map}
A(T/F) \rightarrow A(T/F_{v_1})
\end{equation}
is injective.
\end{lemma}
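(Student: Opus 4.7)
My plan is to exploit adjointness aggressively in order to reduce the statement to a direct comparison of two finite abelian groups. Write $Z := Z(\widehat{\widetilde{G}})$, and let $N : Z \to \widehat{T}$ denote the inclusion arising from $T \hookrightarrow G \hookrightarrow \widetilde{G}$. The first key observation is that since $G$ is adjoint, so is $\widetilde G = \mathrm{Res}_{{\un E}/{\un F}}{\un G}_{\un E}$, hence $\widehat{\widetilde G}$ is simply connected and $Z$ is a \emph{finite} group. In particular $\pi_0(Z^\Gamma) = Z^\Gamma$ and $\pi_0(Z^{\Gamma_1}) = Z^{\Gamma_1}$.

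Next, I would show that $Z^\Gamma = Z^{\Gamma_1}$ as subgroups of $Z$. Since ${\un G}$ (and hence $\widetilde{\un G}$) splits over ${\un K}$, the action of $\Gamma$ on $\widehat{\widetilde G}$ factors through $\mathrm{Gal}({\un K}/{\un F})$, and the same holds for $\Gamma_1$ acting on the same object. But by our choice of $v_1$, the composition $\Gamma_1 \to \Gamma \to \mathrm{Gal}({\un K}/{\un F})$ is surjective (indeed bijective onto $\mathrm{Gal}({\un K}_{v_1}/{\un F}_{v_1}) = \mathrm{Gal}({\un K}/{\un F})$). Therefore both $Z^\Gamma$ and $Z^{\Gamma_1}$ coincide with $Z^{\mathrm{Gal}({\un K}/{\un F})}$, and the images of $\pi_0(Z^\Gamma)$ and $\pi_0(Z^{\Gamma_1})$ in $\widehat{T}$ are the same finite subgroup $N(Z^\Gamma)$.

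Now I would use the ellipticity hypothesis. Because $T$ is elliptic in $G$ at $v_1$ and $G$ is adjoint, the maximal $F_{v_1}$-split subtorus of $T_{v_1}$ is contained in $Z(G)_{v_1} = 1$, hence is trivial. A fortiori the maximal $F$-split subtorus $A_T$ of $T$ is trivial, so $X_*(T)^\Gamma$ is finite and $\widehat{T}^{\Gamma,0} = 1$. Thus $\widehat{T}^\Gamma$ is finite and $\pi_0(\widehat{T}^\Gamma) = \widehat{T}^\Gamma$; likewise, elliptic-at-$v_1$ forces $\widehat{T}^{\Gamma_1}$ finite, so $\pi_0(\widehat{T}^{\Gamma_1}) = \widehat{T}^{\Gamma_1}$. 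Putting everything together, the map $A(T/{\un F}) \to A(T/{\un F}_{v_1})$ becomes the natural map of finite groups
\[
\widehat{T}^\Gamma / N(Z^\Gamma) \longrightarrow \widehat{T}^{\Gamma_1} / N(Z^\Gamma)
\]
induced by the inclusion $\widehat{T}^\Gamma \subseteq \widehat{T}^{\Gamma_1}$. Injectivity is now tautological: if $x \in \widehat{T}^\Gamma$ lies in $N(Z^{\Gamma_1}) = N(Z^\Gamma)$ when viewed inside $\widehat{T}^{\Gamma_1}$, then $x \in N(Z^\Gamma) \subseteq \widehat{T}^\Gamma$, so its class in $A(T/{\un F})$ is trivial.

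There is no real obstacle here; the work has all been done in the setup. The only point one must be careful about is verifying that the hypotheses ($G$ adjoint, $T$ elliptic at $v_1$, and the Chebotarev condition on $v_1$) collectively reduce everything to finite groups in which the two centers $Z^\Gamma$ and $Z^{\Gamma_1}$ agree. This is exactly the reason the place $v_1$ was required to be inert (with full decomposition group) rather than split — if it split one would only have $\Gamma_1 = 1$ in the quotient and the equality $Z^\Gamma = Z^{\Gamma_1}$ would fail, which is what motivates using generalized Kottwitz functions at $v_1$ in subsection~\ref{global_set-up_subsection}(b) in place of Clozel's stabilizing functions.
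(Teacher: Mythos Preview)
Your proof is correct and follows essentially the same approach as the paper's: reduce $\pi_0$ to the group itself by finiteness (ellipticity for $\widehat{T}$, semi-simplicity/adjointness for $Z(\widehat{\widetilde G})$), use the splitting over ${\un K}$ together with the inert choice of $v_1$ to get $Z^\Gamma = Z^{\Gamma_1}$, and conclude that the map is the obvious inclusion modulo a common subgroup. The only cosmetic difference is that the paper invokes semi-simplicity of $\widetilde{G}$ (so $Z(\widehat{\widetilde G})^\Gamma$ is finite) rather than adjointness (so $Z(\widehat{\widetilde G})$ itself is finite), and accordingly notes that the argument goes through for $G$ semi-simple, not only adjoint.
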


\begin{proof}
  Since $T$ is anisotropic over $F_{v_1}$ (resp. $\widetilde{G}$ is semi-simple) the groups $\widehat{T}^\Gamma$ and $\widehat{T}^{\Gamma_1}$ (resp. $Z(\widehat{\widetilde{G}})^\Gamma$ and $Z(\widehat{\widetilde{G}})^{\Gamma_1}$) are finite.  Write $\overline{\Gamma}$ for the group ${\rm Gal}(K/F) = {\rm Gal}(K_{v_1}/F_{v_1})$.  Since $\widetilde{G}$ splits over $K$, we have
$$
Z(\widehat{\widetilde{G}})^{\Gamma} = Z(\widehat{\widetilde{G}})^{\overline{\Gamma}} = 
Z(\widehat{\widetilde{G}})^{\Gamma_1}.
$$
Then (\ref{A_map}) becomes the canonical map
$$
\widehat{T}^{\Gamma}/N(Z(\widehat{\widetilde{G}})^{\overline{\Gamma}}) \rightarrow \widehat{T}^{\Gamma_1}/N(Z(\widehat{\widetilde{G}})^{\overline{\Gamma}}), 
$$
where $N$ is the norm homomorphism (the first arrow in (\ref{1st_dual})).  This is clearly injective.
\end{proof}

We remark that the proof does not require that $G$ be adjoint, but only semi-simple.  The idea of using an injectivity statement like the one in this lemma goes back to Kottwitz \cite{Ko88}.

\subsection{Existence of local data}

Again we assume $G = G_{\rm ad}$ we let $F$ denote our $p$-adic field.  Let $\gamma$ denote a strongly regular element of $G(F)$, and $\phi$ an element of $Z(\mathcal H_J(G(E)))$.   For an element $\delta \in G(E)$, define
$$
\Delta(\gamma,\delta) = \begin{cases} 1, \,\,\,\ \mbox{if $\mathcal N\delta = \gamma$} \\
                                      0, \,\,\,\, \mbox{if $\mathcal N\delta \neq \gamma$}. \end{cases}
$$
Also, define
$$
\Lambda(\gamma,\phi) := \sum_\delta \Delta(\gamma,\delta) {\rm SO}_{\delta \theta}(\phi) - {\rm SO}_\gamma(b\phi),
$$
where the sum ranges over stable $\theta$-conjugacy classes of elements $\delta \in G(E)$.  The fundamental lemma is then  equivalent to
$$
\Lambda(\gamma,\phi) = 0
$$
holding for all strongly regular $\gamma$ (see section \ref{reductions_section}).
 
Following Hales \cite{Ha95}, we define {\em local data} for $G(E),G(F)$ to consist of the data (a),(b), and (c), subject to conditions (1) and (2) below.  Let ${\rm Irr}_I(G)$ (resp. ${\rm Irr}^\theta_I(G(E))$) denote the set of irreducible (resp. irreducible $\theta$-stable) admissible representations of $G(F)$ (resp. $G(E)$) having non-zero Iwahori-fixed vectors.

(a) An indexing set $\mathcal I$ (possibly infinite);

(b) A collection of complex numbers $a_i(\pi)$ for $i \in \mathcal I$ and $\pi \in {\rm Irr}_I(G)$;

(c) A collection of complex numbers $b_i(\Pi)$ for $i \in \mathcal I$ and $\Pi \in {\rm Irr}^\theta_I(G(E))$.

\smallskip

(1) For $i$ fixed, the constants $a_i(\pi)$ and $b_i(\Pi)$ are zero for all but finitely many $\pi$ and $\Pi$.

(2) For every function $\phi \in Z(\mathcal H_J(G(E)))$, the following are equivalent:

\hspace{.5in}  (A)  For all $i$, we have $\sum_{\pi} a_i(\pi) \langle {\rm trace} \, \pi, b\phi  \rangle + \sum_{\Pi} b_i(\Pi) \langle {\rm trace} \, \Pi I_\theta , \phi \rangle = 0 $;

\hspace{.5in}  (B)  For all strongly regular $\gamma$, we have $\Lambda(\gamma, \phi) = 0$.

\bigskip

The existence of local data for $G(E),G(F)$ follows from Proposition \ref{Clozel}.  Indeed, varying the functions $\phi_v, f_v$ at the archimedean places shows that the identity (\ref{1st_spectral_identity}) is equivalent to a family of identities, indexed by the archimedean components $\pi_\infty$ of the representations of ${\un G}(\A_{\un F})$.  Fixing the functions at the places other than $v_0$, for each $\pi_\infty$ we get an identity of the form
$$
\sum_{\pi_{v_0}} a_{\pi_\infty}(\pi_{v_0}) \langle {\rm trace} \, \pi_{v_0}, f_{v_0}  \rangle + \sum_{\Pi_{v_0}}  b_{\pi_\infty}(\Pi_{v_0}) \langle {\rm trace} \, \Pi_{v_0} I_\theta , \phi_{v_0} \rangle = 0.
$$
Assuming $f_{v_0}$ and $\phi_{v_0}$ range only over functions bi-invariant under a fixed Iwahori subgroup, these sums involve only a fixed finite number of terms (the archimedean components and the level at all finite places being fixed -- use Harish-Chandra's finiteness theorem for cusp forms, \cite{BJ}).  In other words, the finiteness conditions in the definition of local data are satisfied.  Finally, Proposition \ref{Clozel} says that the functions $\phi_{v_0} = \phi$ and $f_{v_0} = b\phi$ are associated at all strongly regular norms if and only if these identities hold.  Since we may as well assume in (B) that $\gamma$ is a norm (section \ref{reductions_section}), we have verified the equivalence of (A) and (B) in condition (2) of the definition of local data.

\section{Proof of the theorem in the strongly regular elliptic case} \label{putting_it_all_together_section}

In this section we assume $G = G_{\rm ad}$.  Let $\gamma$ denote a strongly regular element of $G(F)$.  
We use the symbol $\phi$ to denote an element in  $Z(\mathcal H_J(G(E)))$.

Recall that the fundamental lemma for $(\gamma,\phi)$ is equivalent to 
\begin{equation} \label{Lambda_vanishes}
\Lambda(\gamma,\phi) = 0.
\end{equation}
By the existence of local data, it is enough to show that for every $i$, the sum of traces in (A) above vanishes.  Fixing $i$ and dropping it from our notation, we need to show 
\begin{equation} \label{*}
\sum_{\pi} a(\pi) \langle {\rm trace} \, \pi, b\phi \rangle + \sum_{\Pi} b(\Pi) \langle {\rm trace} \, \Pi I_\theta , \phi \rangle = 0,
\end{equation}
when this is viewed as a distribution on $\phi \in Z(\mathcal H_J(G(E)))$.  
Roughly, our plan is to prove the vanishing of the left hand side of (\ref{*}) by writing it in two different ways, which can represent the same quantity only if the left hand side of (\ref{*}) is identically zero as a distribution on $Z(\mathcal H_J(G(E)))$.  Our argument is parallel to that in \cite{Cl90}, end of $\S$6.  At certain key points, we use minor variations on the arguments of Hales in \cite{Ha95}.

\smallskip

Let us first make a preliminary remark on (\ref{*}).  By descent and our induction hypothesis that the fundamental lemma holds for groups with smaller semi-simple rank, we know that (\ref{*}) holds provided that $\Lambda(\gamma,\phi) = 0$ for all regular elliptic elements $\gamma$ (see sections \ref{descent}, \ref{reductions_section}).  In particular, if we had
\begin{enumerate}
\item[(a)]  ${\rm O}_\gamma(b\phi) = 0, \,\,\,\,\, \mbox{for all regular elliptic elements $\gamma$}$, and 
\item[(b)]  ${\rm TO}_{\delta \theta} (\phi)  = 0, \,\,\, \mbox{for all $\theta$-regular $\theta$-elliptic elements $\delta$}$,
\end{enumerate}
then (\ref{*}) would hold.  (Note that if $\gamma$ is (regular) elliptic, then of course any $\delta$ appearing in 
$\Lambda(\gamma,\phi)$ is ($\theta$-regular) $\theta$-elliptic.)  Now by Howe's conjecture and its twisted version (see \cite{Cl85}, Prop.~1, and \cite{Cl90}, Thm.~2.8), there exist finite sequences $\gamma_1, \cdots, \gamma _r$ and $\delta_1,\cdots, \delta_s$ of regular elliptic resp. $\theta$-regular $\theta$-elliptic elements of $G(F)$ resp. $G(E)$ such that (a) and (b) above are equivalent to
\begin{enumerate}
\item[(a')]  ${\rm O}_{\gamma_i}(b\phi) = 0, \,\,\,\,\, \mbox{for all $i = 1, \cdots, r$}$, and 
\item[(b')]  ${\rm TO}_{\delta_j \theta} (\phi)  = 0, \,\,\, \mbox{for all $j = 1, \cdots, s$}$.
\end{enumerate}
Thus, by linear algebra, the distribution in (\ref{*}) can be written as 
\begin{equation} \label{*_as_O+TO}
\sum_{i} a_i \, {\rm O}_{\gamma_i}(b\phi) + \sum_j b_j \, {\rm TO}_{\delta_j\theta} (\phi),
\end{equation}
for certain scalars $a_i, b_j \in \mathbb C$.
\smallskip

Next note that since $\phi$ resp. $b\phi$ acts by a scalar on the $J$-fixed vectors in $\Pi$ resp. $\pi$, we can express the traces in terms of the Fourier transform $\widehat{\phi}$.  In particular, we can write the distribution in (\ref{*}) as a sum
\begin{equation} \label{*_as_FT}
\sum_i c_i ~ \widehat{\phi}(t_i),
\end{equation}
for scalars $c_i \in \mathbb C$ and pairwise distinct parameters $t_i \in \widehat{A^E}/W(E)$, where $i = 1, \dots, N$.

\medskip

The first way to express the left hand side of (\ref{*}) is a consequence of Clozel's temperedness argument (section \ref{temperedness_section}).   Let us consider the distribution $\phi \mapsto \sum_i a_i \, {\rm O}_{\gamma_i}(b\phi)$ on $\phi \in Z(\mathcal H_J(G(E)))$, where $Z(\mathcal H_J(G(E)))$ is given the Schwartz topology it inherits from $\mathcal C_J(G(E))$ (cf. Cor. \ref{b_is_Schwartz_continuous}).  This distribution is Schwartz-continuous, since $b$ is (Cor. \ref{b_is_Schwartz_continuous}) and since orbital integrals at regular semi-simple elements are tempered distributions.   Furthermore, twisted orbital integrals at $\theta$-regular elements are also tempered distributions (\cite{Cl90}, Prop. 5.2).  Thus the distribution 
$$
\phi \mapsto \sum_j b_j \, {\rm TO}_{\delta_j\theta}(\phi)
$$
on $\mathcal H_J(G(E))$ is also Schwartz-continuous.   Thus we conclude that (\ref{*_as_O+TO}) hence (\ref{*_as_FT}) is a Schwartz-continuous linear form on $Z(\mathcal H_J(G(E)))$.  Applying Clozel's temperedness argument (Lemma \ref{temperedness_lemma}) to (\ref{*_as_FT}) yields:
$$
\mbox{in (\ref{*_as_FT}), each $t_i \in \widehat{A^E}_u/W(E)$.}
$$
Therefore the left hand side of (\ref{*}) for $\phi = z_\mu$ is a sum of the form
\begin{equation} \label{*_as_FT_mu}
\sum_{i=1}^N \sum_{\lambda \in W(E)\mu} c_i \, \lambda(t_i),
 \end{equation}
where the $t_i$ are pairwise distinct elements in $\widehat{A^E}_u/W(E)$.
(The scalars $c_i$ here differ from those in (\ref{*_as_FT}) by an overall non-zero scalar.)

\medskip

The second way of expressing the left hand side of (\ref{*}) comes from the inverted compact trace identity of section \ref{compact_trace_identity_section}.  First we claim that the distribution (\ref{*_as_O+TO}) can be expressed as a finite linear combination of {\em compact traces} of $\phi$ and $b\phi$ on certain tempered representations.  This assertion is a consequence of Howe's conjecture, the Kazhdan density theorem, their twisted analogues, and the fact that the ($\theta$-)regular ($\theta$-)elliptic elements $\gamma_i$ (resp. $\delta_j$) are ($\theta$-)compact (see the proof of \cite{Ha93}, Cor.1.3).  Thus there exist parameters $s_i \in \widehat{A}$ and $S_j \in \widehat{A^E}$ along with scalars $a'_i,b'_j \in \mathbb C$ ($1 \leq i \leq m$, $1 \leq j \leq M$) such that (\ref{*}) can be written as
\begin{equation} \label{first_*_from_compact_trace}
\sum_i a'_i \, \langle {\rm trace} \, \pi_{i}, b\phi \rangle_c + \sum_j b'_j \, \langle {\rm trace} \, \Pi_{j}I_\theta, \phi \rangle_{\theta-c},
\end{equation}
where $\pi_i$ resp. $\Pi_j$ is an irreducible constituent of $\pi_{s_i}$ resp. $\Pi_{S_j}$.  

In case $s_i$ resp. $S_j$ is {\em unitary}, then the result of Keys \cite{Keys} implies that $\pi_i$ resp. $\Pi_j$ is a full unramified principal series, i.e., that $\pi_i = \pi_{s_i}$ resp. $\Pi_j = \Pi_{S_j}$.  But then by subsection \ref{deformation_subsection}, its compact trace is the same as that for a principal series induced from a {\em non-unitary} parameter.  Therefore we may assume in the expression above that all the parameters $s_i$ and $S_j$ are {\em non-unitary}.

\smallskip

We now write (\ref{first_*_from_compact_trace}) more explicitly, using Proposition \ref{simplification_prop} (along with its non-twisted antecedent).  We can write (\ref{first_*_from_compact_trace}) for $\phi = z_\mu$ in the form
\begin{equation} \label{second_*_from_compact_trace_mu}
\sum_P \sum_{j_P} \sum_{\lambda \in W(E)\mu}  c'_{j_P} \, \widehat{\chi}_N(\varpi^\lambda) \, \lambda(t'_{j_P}),
\end{equation}
where $P = MN$ ranges over standard $F$-parabolics, and $j_P$ ranges over a finite index set depending on $P$.  Also, the parameters $t'_{j_P} \in \widehat{A^E}/W(E)$  are all {\em non-unitary}. 

Now following \cite{Ha95}, p. 986, there exists a finite collection of hyperplanes $\{H_i\}_{i=1}^l$ through the origin in $X_*(A^E)_{\mathbb R}$ such that $\widehat{\chi}_N(\varpi^{w\lambda}) = \widehat{\chi}_N(\varpi^{w\lambda'})$, for all $N$ and all $w \in W(E)$, as long as $\lambda$ and $\lambda'$ belong to the same component of $X_*(A^E)_{\mathbb R} \backslash (H_1 \cup \cdots \cup H_l)$.  (As in loc.cit., we take the collection consisting of all root hyperplanes and all $W(E)$-conjugates of the walls of all obtuse Weyl chambers ${\rm supp}(\widehat{\tau}^G_P)$.)   Fix one component $C$, on which for each $P$ the function $\lambda \mapsto \widehat{\chi}_N(\varpi^\lambda)$ takes the value 1.  
There is a subset $W'(P) \subset W(E)$, depending on $C$ and $P$, with the property that for $\mu$ any element of $C \cap X_*(A^E)$, we have 
$$
\widehat{\chi}_N(\varpi^{w\mu}) = \begin{cases} 1, \,\,\,\, \mbox{if $w \in W'(P)$} \\
                                               0, \,\,\,\, \mbox{if $w \notin W'(P)$}. \end{cases}
$$
Using this we can rewrite (\ref{second_*_from_compact_trace_mu}) for $\mu \in C$, and we can compare the result with (\ref{*_as_FT_mu}).  We get an equation of the form
\begin{equation} \label{comparison}
\sum_{i=1}^N\sum_{w \in W(E)} c_i \, ^w\mu(t_i) = \sum_{P} \sum_{j_P} \sum_{w' \in W'(P)} c'_{j_P} \, ^{w'}\mu(t'_{j_P}).
\end{equation}
The parameters $t_i$ are unitary, and the parameters $t'_{j_P}$ are non-unitary.  
By an independence of characters argument each side must be identically zero as a function of $\mu \in C \cap X_*(A^E)$.  Since $X_*(A^E)$ is generated as an abelian group by $C \cap X_*(A^E)$, it follows that the left hand side of (\ref{comparison}) vanishes for all $\mu \in X_*(A^E)$.  Consequently, the distribution (\ref{*_as_FT}) hence the left hand side of (\ref{*}) vanishes identically on $Z(\mathcal H_J(G(E)))$.  This completes the proof of Theorem \ref{fl}. \qed

\small
\bigskip
\obeylines
\noindent
University of Maryland
Department of Mathematics
College Park, MD 20742-4015 U.S.A.
email: tjh@math.umd.edu


\end{document}